\theoremstyle{plain}
\newtheorem{theorem}{Theorem}
\newtheorem{definition}[theorem]{Definition}
\newtheorem{lemma}[theorem]{Lemma}
\newtheorem{proposition}[theorem]{Proposition}
\newtheorem{corollary}[theorem]{Corollary}
\newtheorem{assumption}[theorem]{Assumption}
\newtheorem{claim}[theorem]{Claim}
\newcommand\ol{\overline}
\newcommand\RR{{\mathbb R}}
\newcommand\ZZ{{\mathbb Z}}
\newcommand\NN{{\mathbb N}}
\renewcommand\ell{l}
\newcommand\CC{\mathbb{C}}
\newcounter{mycount}
\numberwithin{equation}{section}
\numberwithin{theorem}{section}
\numberwithin{figure}{section}
\title{Exact Recovery of Community Detection in k-partite Graph Models}
\author{Zhongyang Li}
\address{Department of Mathematics,
University of Connecticut,
Storrs, Connecticut 06269-3009, USA}
\email{zhongyang.li@uconn.edu}
\urladdr{\url{https://mathzhongyangli.wordpress.com}}
\begin{document}
\maketitle

\begin{abstract}
We study the vertex classification problem on a graph whose vertices are in $k\ (k\geq 2)$ different communities,  edges are only allowed between distinct communities, and the number of vertices in different communities are not necessarily equal. The observation is a weighted adjacency matrix, perturbed by a scalar multiple of the Gaussian Orthogonal Ensemble (GOE), or Gaussian Unitary Ensemble (GUE) matrix. For the exact recovery of the maximum likelihood estimation (MLE) with various weighted adjacency matrices, we prove sharp thresholds of the intensity $\sigma$ of the Gaussian perturbation. These weighted adjacency matrices may be considered as natural models for the electric network. Surprisingly, these thresholds of $\sigma$ do not depend on whether the sample space for MLE is restricted to such classifications that the number of vertices in each group is equal to the true value.  In contrast to the $\ZZ_2$-synchronization, a new complex version of the semi-definite programming (SDP) is designed to efficiently implement the community detection problem when the number of communities $k$ is greater than 2, and a common region (independent of $k$) for $\sigma$ such that SDP exactly recovers the true classification is obtained.
\end{abstract}

\section{Introduction}
Most graphs of interest display community structure, i.e., their vertices are organized into groups, called communities, clusters or modules. In some cases, edges are concentrated within groups. For example, vertices of a graph  may represent scientists, edges join coauthors.  Each group consists of vertices representing scientists working on the same research topic, where collaborations are more frequent.  Likewise, communities could represent molecules with similar structure in molecule interaction networks among which interactions are more likely, groups of friends in social networks who communicate more often, websites on similar topics in the web graph where there are more hyperlinks in between, and so on. In some other cases, edges may only be possible between vertices in distinct groups. For instance, in an electrical network, electrical current can only be observed between two sites with different electrical potential; commercial trades can only occur when two individuals own different goods.  Identifying communities may offer insight on how the network is organized. It allows us to focus on regions having some degree of similarity within the graph.  It helps to classify the vertices, based on their role with respect to the communities they belong to.  For instance we can distinguish vertices in the interior of their clusters from vertices at the boundary of the clusters, which may act  as  brokers  between  the  modules  and,  in  that  case, could play a crucial role both in holding the modules together and in the dynamics of spreading processes across the network. 

Identifying different communities in the stochastic block model is a central topic in many fields of science and technology; see \cite{EA18} for a summary. A lot of spectacular work has been done when the graph has two equal-sized communities, see, for example, \cite{LM14,MNS13,ABH15} for an incomplete list. Community detection with two equal-sized communities has also been studied on hyper-graphs, see \cite{KBG}. 

In this paper, we instead study the community detection on a graph in which there are $k (k\geq 2)$ distinct clusters, not necessarily equal-sized, and edges are only allowed between vertices in different communities. This corresponds to the famous $k$-partite graph in graph theory. The observation is a weighted adjacency matrix, perturbed by a $\sigma$-multiple of the Gaussian Orthogonal Ensemble (GOE), or the Gaussian Unitary Ensemble (GUE) matrix. Here $\sigma$ is a positive number representing the intensity of the Gaussian perturbation. These weighted adjacency matrices, as will be explained later, may be considered as natural models for the electric network. Given such observations, we apply the maximum likelihood estimation (MLE) to determine which vertex belongs to which group, or community. We obtain a division, or an assignment of vertices of the graph into communities by the MLE, if this assignment of vertices is the same as the true assignment of vertices into communities for every vertex, we say that the MLE exactly recovers the true community structure of the graph, or the exact recovery occurs for the MLE.

The main goal of the paper is to investigate the condition when the MLE exactly recovers the true community structure of the graph.
We prove sharp phase transition results with respect to the intensity $\sigma$ of the Gaussian perturbation for the exact recovery of the MLE. More precisely, we explicitly find the critical value for $\sigma$, such that if $\sigma$ is less than the critical value, then as the size of the graph goes to infinity, with probability tending to 1 exact recovery occurs. On the other hand, if $\sigma$ is greater than the critical value, then as the size of the graph goes to infinity, with probability tending to 0 exact recovery occurs. Interestingly, the threshold, or critical value, of $\sigma$ does not depend on whether or not we restrict the sample space for MLE  to those classifications in which the number of vertices of each group, or community, is the same as the true value. These results are obtained by analyzing the Gaussian distribution through various inequalities.

Semidefinite programming (SDP) is one of the most exciting developments in math­ematical programming  in the 1990’s. SDP has  applications  in  diverse fields  including, but not restricted to,  traditional  convex  constrained  optimization,  control  theory,  and combinatorial  optimization.  A linear programming (LP) problem is one in which we wish to maximize or minimize a linear objective function of real variables over a polytope. In SDP, we instead use real-valued vectors and are allowed to take the dot product of vectors; non-negativity constraints on real variables in LP are replaced by semi-definiteness constraints on matrix variables in SDP.  Because SDP is  solvable  via  interior  point methods,  most  of  these  applications  can  usually  be  solved  very  efficiently in  practice  as  well  as  in  theory. 

It is well-known that the community detection problem with $k=2$ equal-sized communities may be efficiently solved by a semi-definite programming algorithm; see, for instance, \cite{JMT16S, JMRT16}. When there are $k\geq 3$ different communities, we can design a ``complex version'' of the semi-definite programming for efficient recovery. The idea is to relax the constraint on the rank of the optimal solution, solve the optimization problem on a larger space of the semi-definite matrices, and then achieve efficient recovery. We also obtain an interval of $\sigma$ to guarantee the exact recovery of the SDP, by applying the celebrated result of the Tracy-Widom fluctuation of the maximal eigenvalue of the GOE matrix; see \cite{TW}.

\section{Main Results.}
In this section, we state the main results proved in the paper.
We first discuss the basic definition and notation of the $k$-partite graph, where $k$ is a positive integer with value at least 2.

A $k$-partite graph $G=(V,E)$ is a graph whose vertices can be colored in $k$ different colors such that any two vertices of the same color cannot be adjacent, or joined by an edge. Assume $V=[n]:=\{1,2,\ldots,n\}$ is the vertex set of $G$. Define the set of colors 
\begin{eqnarray*}
R_k:=\{c_1,\ldots,c_k\}\subset \RR
\end{eqnarray*}
to be a set consisting of $k$ distinct real numbers representing $k$ different colors.

Let $x:[n]\rightarrow R_k$ be a mapping from the set of vertices to the set of colors, i.e., it assigns a unique color in $R_k$ to each vertex in $[n]$. Such a mapping $x$ is also called a color assignment mapping of a graph.
For $1\leq i\leq k$, let 
\begin{eqnarray*}
x^{-1}(c_i)=\{j\in [n]:y(j)=c_i\};
\end{eqnarray*}
that is, $y^{-1}(c_i)$ is the set consisting of all the vertices with color $c_i$ under the mapping $y$.
For each color $c_i\in R_k$ ($1\leq i\leq k$), let
\begin{eqnarray*}
n_i(x):=\left|x^{-1}(c_i)\right|
\end{eqnarray*}
In other words, $n_i(x)$ is the total number of vertices in $[n]$ with color $c_i$ under the mapping $x$. It is straightforward to see that $n_i(x)$'s are positive integers satisfying 
\begin{eqnarray}
\sum_{i=1}^{k} n_i(x)=n.\label{sc}
\end{eqnarray} 
Given these definitions, to determine the colors of all the vertices of a graph is the same as to identify the color assignment mapping of the the graph.
In vertex-color-detection problems that will be discussed later, we shall find color assignment mappings of graphs from the following spaces of color assignment mappings:
\begin{enumerate}
\item For each positive integer $n$, let $n_1\geq n_2\geq \ldots\geq n_k$ be fixed and satisfy (\ref{sc}). Let $\Omega_{n_1,\ldots,n_k}$ be the set of all the color assignment mappings under which the number of vertices with color $c_i$ is exactly $n_i$ for each $1\leq i\leq k$; that is,
\begin{eqnarray*}
\Omega_{n_1,\ldots,n_k}=\{\left.x:[n]\rightarrow R_k\right| |x^{-1}(c_i)|=n_i,\ \forall\ 1\leq i\leq k\}.
\end{eqnarray*}

\item Let $\Omega$ be the set of all the color assignment mappings; that is,
\begin{eqnarray*}
\Omega=\{x: [n]\rightarrow R_k\}.
\end{eqnarray*}

\item Let $c>0$.  Let $\Omega_{c}$ be the set of all the color assignment mappings in $\Omega$ such that the number of vertices in each color is at least $cn$, i.e.
\begin{eqnarray*}
\Omega_{n_1,\ldots,n_k}=\{\left.x: [n]\rightarrow R_k\right|:\min_{j\in [k]}\left|x^{-1}(c_j)\right|\geq cn \}.
\end{eqnarray*}

\end{enumerate}

\subsection{Real Weighted Adjacency Matrix with Gaussian Perturbation}

We first consider the community detection problem when the observation is a real weighted adjacency matrix with Gaussian perturbation.

In Theorems \ref{m1} and \ref{m2}, we observe different weighted adjacency matrices for a $k$-partite graph,  both of which are perturbed by a $\sigma$-multiple of a matrix with i.i.d.~standard Gaussian entries. The weighted adjacency matrix in \ref{m1} can be considered as a natural model for an electrical network, where each one of the $n$ vertices has one of the $k$ distinct electric potentials. The weight of each (oriented) edge is the difference of electric potentials between its initial point and its terminal point. This difference in potentials is proportional to the intensity of electric current on the edge. Given the observation, the goal is to find the difference of electric potentials between each pair of vertices; then we can determine the electrical potential of each vertex up to an additive constant. We consider the probability of the exact recovery of MLE, and find a sharp threshold with respect to the intensity $\sigma$ of the Gaussian perturbation. More precisely, there is a critical value $\sigma_c$ depending on $n$, such that if $\lim_{n\rightarrow\infty}\frac{|\sigma|}{\sigma_c}<1$, the limit of the probabilities for the exact recoveries of MLE as $n\rightarrow\infty$ is 1; while if $\lim_{n\rightarrow\infty}\frac{|\sigma|}{\sigma_c}>1$, the limit of the probabilities for the exact recovery of MLE as $n\rightarrow\infty$ is 0.

Before stating Theorem \ref{m1}, recall that the Frobenius norm of an $m\times m$ complex matrix $A=\{A_{i,j}\}_{i,j=1}^m\in \CC^{m\times m}$ is defined by
\begin{eqnarray*}
\|A\|_F:=\sqrt{\sum_{i,j=1}^m |A_{ij}|^2}
\end{eqnarray*}

\begin{theorem}\label{m1}Let $n_1\geq \ldots\geq n_k$ be the numbers of vertices in $k$ different colors $c_1,\ldots,c_k$, respectively. For an arbitrary mapping $x\in \Omega$, let $\mathbf{G}(x)$ be the $n\times n$ square matrix whose entries are defined by
\begin{eqnarray}
\mathbf{G}_{i,j}(x)=x(i)-x(j);\label{gij}
\end{eqnarray}
where $1\leq i,j\leq n$.
Let $y\in \Omega_{n_1,\ldots,n_k}$ be the true color assignment function.  Assume the observation is given by 
\begin{eqnarray}
\mathbf{T}=\mathbf{G}(y)+\sigma \mathbf{W}\label{tgw}
\end{eqnarray}
where $\mathbf{W}$ is a random $n\times n$ matrix with i.i.d.~standard Gaussian entries, and $\sigma\in \RR$ is deterministic.
 Let $k$ and $\{c_1,\ldots,c_k\}$ be fixed as $n\rightarrow\infty$. Assume that there exists a constant $c>0$ independent of $n$, such that $\frac{n_k}{n}\geq c$ for all $n$, Let
\begin{eqnarray}
\hat{y}&=&\mathrm{argmin}_{x\in \Omega_{n_1,\ldots,n_k}}\|\mathbf{T}-\mathbf{G}(x)\|_F^2;\label{hy}\\
\check{y}&=&\mathrm{argmin}_{x\in \Omega}\|\mathbf{T}-\mathbf{G}(x)\|_F^2.\label{cy}
\end{eqnarray}
We have
\begin{enumerate}
\item  Assume there exists a constant $\delta>0$ independent of $n$, such that
\begin{eqnarray}
\sigma^2<\frac{(1-\delta)n\min_{1\leq i<j\leq k}(c_i-c_j)^2}{4\log n}.\label{ss2}
\end{eqnarray}
Then 
\begin{eqnarray*}
\lim_{n\rightarrow\infty}\mathrm{Pr}(\hat{y}=y)=1;\ 
 \mathrm{and}\ \lim_{n\rightarrow\infty}\mathrm{Pr}(\check{y}=y)=1.
\end{eqnarray*}
\item Assume there exists a constant $\delta>0$ independent of $n$, such that
\begin{eqnarray}
\sigma^2>\frac{(1+\delta)n\min_{1\leq i<j\leq k}(c_i-c_j)^2}{4\log n}.\label{s2}
\end{eqnarray}
Then
\begin{eqnarray*}
\lim_{n\rightarrow\infty}\mathrm{Pr}(\hat{y}=y)=0;\ \mathrm{and}\ \lim_{n\rightarrow\infty}\mathrm{Pr}(\check{y}=y)=0.
\end{eqnarray*}
\end{enumerate}

\end{theorem}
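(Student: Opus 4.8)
The plan is to analyze the random quadratic functional $x \mapsto \|\mathbf{T}-\mathbf{G}(x)\|_F^2$ by expanding it around the true assignment $y$. Writing $\mathbf{T}-\mathbf{G}(x) = \big(\mathbf{G}(y)-\mathbf{G}(x)\big) + \sigma\mathbf{W}$, we get
\begin{eqnarray*}
\|\mathbf{T}-\mathbf{G}(x)\|_F^2 = \|\mathbf{G}(y)-\mathbf{G}(x)\|_F^2 + 2\sigma\langle \mathbf{G}(y)-\mathbf{G}(x),\mathbf{W}\rangle + \sigma^2\|\mathbf{W}\|_F^2,
\end{eqnarray*}
so that $\hat y = y$ (resp.\ $\check y = y$) fails iff there is some competing $x \neq y$ in $\Omega_{n_1,\ldots,n_k}$ (resp.\ in $\Omega$) with
\begin{eqnarray*}
\|\mathbf{G}(y)-\mathbf{G}(x)\|_F^2 \le 2\sigma\langle \mathbf{G}(x)-\mathbf{G}(y),\mathbf{W}\rangle.
\end{eqnarray*}
The left side is deterministic; the right side is, for fixed $x$, a mean-zero Gaussian with variance $4\sigma^2\|\mathbf{G}(x)-\mathbf{G}(y)\|_F^2$. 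Hence the failure probability for a fixed $x$ is $\overline\Phi\big(\|\mathbf{G}(x)-\mathbf{G}(y)\|_F/(2\sigma)\big)$, a Gaussian tail governed entirely by $d(x,y):=\|\mathbf{G}(x)-\mathbf{G}(y)\|_F$.

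For the positive direction (1), I would first compute a sharp lower bound on $d(x,y)^2$ in terms of the number of misclassified vertices. If $x$ and $y$ differ at exactly the vertex set $S$ with $|S|=m$, then each row/column index in $S$ contributes a block of entries of the form $(x(i)-x(j))-(y(i)-y(j))$; the dominant contribution comes from pairs $(i,j)$ with exactly one endpoint in $S$, giving roughly $2m(n-m)$ entries each of size at least $\min_{i<j}|c_i-c_j|$ in absolute value (possibly up to constants from the combinatorics of which colors are swapped). This yields $d(x,y)^2 \gtrsim m n \min_{i<j}(c_i-c_j)^2$ for the leading order, and the single-vertex relocation ($m=1$) is the bottleneck, giving $d(x,y)^2 \approx n\min_{i<j}(c_i-c_j)^2$ in the worst case. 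Then a union bound over all $O(n)$ choices of a single misclassified vertex and $O(k^2)$ choices of its new color, together with the Gaussian tail bound $\overline\Phi(t)\le e^{-t^2/2}$, gives failure probability at most
\begin{eqnarray*}
\sum_{x\neq y} e^{-d(x,y)^2/(8\sigma^2)} \le \mathrm{poly}(n)\cdot \exp\!\left(-\frac{n\min_{i<j}(c_i-c_j)^2}{8\sigma^2}\right) + (\text{smaller terms}),
\end{eqnarray*}
and under \eqref{ss2}, $\frac{n\min(c_i-c_j)^2}{8\sigma^2} > \frac{(1+\delta')\cdot 2\log n}{1}$ wait---one must be careful with the factor: $\sigma^2 < \frac{(1-\delta)n\min(c_i-c_j)^2}{4\log n}$ gives $\frac{n\min(c_i-c_j)^2}{8\sigma^2} > \frac{\log n}{2(1-\delta)}$, which is not quite enough from a crude bound, so the union bound must be organized more carefully: sum over the misclassified vertex $v$ and use that the competing values for each single-vertex move are independent enough that the relevant event is essentially "some $v$ has its noise exceed the gap," whose probability is $n\cdot \overline\Phi(\sqrt{n}\,\Delta_{\min}/(2\sigma)) \le n \cdot n^{-(1-\delta)} \to 0$. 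The key is that the contribution $2(n-1)$ entries times $\Delta_{\min}^2$ gives variance $4\sigma^2 \cdot 2(n-1)\Delta_{\min}^2$ for the Gaussian, competing against a mean of $2(n-1)\Delta_{\min}^2$, so the ratio is $\sqrt{2(n-1)}\Delta_{\min}/(2\sigma) \sim \sqrt{n/(2)}\Delta_{\min}/(\sigma)$; squaring and halving the exponent gives $n\Delta_{\min}^2/(4\sigma^2) > (1-\delta)\log n$ ... I would chase the precise constant here and confirm it matches the $\frac{n\Delta_{\min}^2}{4\log n}$ threshold.

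For the negative direction (2), the plan is a second-moment / anti-concentration argument: exhibit a large family of "single swap" competitors $x$ (move one vertex from its true color to a color achieving the minimum gap), for which the failure events $\{d(x,y)^2 \le 2\sigma\langle\mathbf{G}(x)-\mathbf{G}(y),\mathbf{W}\rangle\}$ each have probability $\overline\Phi(\sqrt{c'n}\,\Delta_{\min}/\sigma) \ge n^{-(1+\delta)+o(1)}$ under \eqref{s2}, so in expectation $\gtrsim n\cdot n^{-(1+\delta)} $... this is still $o(1)$, so one actually needs to be more clever: use that there are $\Theta(n)$ roughly independent vertices, and the events of "vertex $v$ wants to move" are nearly independent across $v$ because they depend on almost-disjoint blocks of the noise matrix $\mathbf{W}$; then $\Pr(\text{no }v\text{ moves}) \le (1 - n^{-(1+\delta)})^{\Theta(n)}$, which still $\to 1$. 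So the threshold must really sit at $\frac{n\Delta_{\min}^2}{4\log n}$ with the "$4$" arising precisely so that $\overline\Phi(\sqrt{n}\,\Delta_{\min}/(2\sigma)) \approx n^{-1}$ at criticality; I would therefore re-derive the per-vertex failure probability as $\exp(-n\Delta_{\min}^2/(8\sigma^2)\cdot(1+o(1)))$ and check that \eqref{s2} makes this $\gg n^{-1}$ while \eqref{ss2} makes it $\ll n^{-1}$, so that a Poisson-type approximation (first moment for the upper bound, second moment for the lower bound using the near-independence of disjoint-support Gaussian events) pins down the transition.

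The main obstacle I anticipate is the sharp two-sided control of $d(x,y)^2$ and, relatedly, getting the constant in the Gaussian exponent exactly right so that the critical $\sigma_c^2 = \frac{n\Delta_{\min}^2}{4\log n}$ emerges---in particular verifying that the minimal-distance competitor is a single-vertex move to the nearest color (so $d_{\min}^2 = (1+o(1))\cdot 2n\Delta_{\min}^2$, making the Gaussian ratio $d_{\min}/(2\sigma) = (1+o(1))\sqrt{n/2}\,\Delta_{\min}/\sigma$ and the log-probability $-n\Delta_{\min}^2/(4\sigma^2)$), and that relaxing the constraint from $\Omega_{n_1,\ldots,n_k}$ to $\Omega$ does not create cheaper competitors (it cannot, since enlarging the feasible set only helps the adversary by a constant factor in the count, absorbed by the $\mathrm{poly}(n)$ union bound, while the single-vertex move already lives in neither-constraint-is-binding regime only for $\Omega$---for $\Omega_{n_1,\ldots,n_k}$ one needs a paired swap of two vertices, which has $d^2 = (1+o(1))\cdot 4n\Delta_{\min}^2$, i.e.\ a larger distance, hence this is where the claimed independence of the threshold from the constraint must be checked, and it works out because the paired-swap is strictly farther). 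Establishing that the $\Omega$-optimizer also cannot exploit many-vertex moves to beat the $4\log n$ threshold (a Chernoff bound over all subsets $S$ with $e^{-|S|n\Delta_{\min}^2/(8\sigma^2)}\binom{n}{|S|}k^{|S|} \to 0$) is routine once the single-vertex case is calibrated.
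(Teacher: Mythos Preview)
Your high-level plan---reduce each competitor $x$ to a Gaussian tail governed by $\|\mathbf G(x)-\mathbf G(y)\|_F$, union-bound for~(1), anti-concentrate over nearest competitors for~(2)---is the paper's plan. But two steps you label routine are not, and your explanation of the threshold coincidence is backwards.

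\textbf{The many-vertex union bound for $\check y$.} With $a_i := x(i)-y(i)$ one has $\|\mathbf G(x)-\mathbf G(y)\|_F^2 = 2n\sum_i a_i^2 - 2\bigl(\sum_i a_i\bigr)^2$; Cauchy--Schwarz gives only $\ge 2m(n-m)C_0$ with $m=|\{i:a_i\ne0\}|$ and $C_0=\min_{i<j}(c_i-c_j)^2$, and this degenerates as $m\to n$. So your claimed summand $e^{-mnC_0/(8\sigma^2)}\binom{n}{m}k^m$ is not justified, and configurations where a large fraction of vertices shift by roughly the same amount are a genuine obstacle. The paper (Proposition~4.3) handles this by a two-region split: inside a neighborhood $\mathcal B_\epsilon$ of $y$ it builds $x$ from $y$ one vertex at a time and tracks the drop in $Q$ step by step (each step loses at least $(2n{+}2)C_0 - O(\epsilon n)$, the error coming from the cross term, absorbed by taking $\epsilon$ small); outside $\mathcal B_\epsilon$ a direct combinatorial estimate (Lemma~4.2) gives $Q\ge 4C_0\epsilon^2 n^2$, which kills the crude count $k^n$. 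Some such split is needed; it is not ``routine once the single-vertex case is calibrated.''

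\textbf{Why the two thresholds coincide, and the negative direction.} You write that the thresholds agree ``because the paired swap is strictly farther.'' A farther nearest competitor makes the constrained problem \emph{easier}, which by itself would push the critical $\sigma$ for $\hat y$ \emph{above} that for $\check y$. The coincidence is a distance/multiplicity balance: for $\check y$ there are $\Theta(n)$ single moves with $Q\approx 2nC_0$, hence per-event exponent $Q/(8\sigma^2)=nC_0/(4\sigma^2)$ against count $n$; for $\hat y$ there are $\Theta(n^2)$ swaps with $M\approx 2nC_0$ (so $\|\cdot\|_F^2=2M\approx 4nC_0$), hence per-event exponent $M/(4\sigma^2)=nC_0/(2\sigma^2)$ against count $n^2$. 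Both balance at $\sigma^2=nC_0/(4\log n)$. This matters for part~(2): since $\{\hat y\ne y\}\subseteq\{\check y\ne y\}$, the harder direction is to make the \emph{constrained} estimator fail, so you must anti-concentrate over swaps $y^{(ab)}$, not single moves. A second-moment sketch is not quite enough because the events $\{f(y^{(ab)})>f(y)\}$ share an order-one fraction of their Gaussian noise across pairs with a common index. The paper's device (Lemma~3.8 and Proposition~3.7) is to restrict $(a,b)$ to a thinned set $H_u\times H_v$ of size $(n/\log^2 n)^2$ and decompose $\langle\mathbf W,\mathbf G(y^{(ab)})-\mathbf G(y)\rangle=Y_a+Y_b+\mathcal Z_{ab}$ with $\{Y_a\}\cup\{Y_b\}$ genuinely i.i.d.\ Gaussians (coming from entries with exactly one index in $H_u\cup H_v$) and $\mathrm{Var}\,\mathcal Z_{ab}=O(|H|)=o(n)$; then $\max_{a,b}(Y_a+Y_b)$ is handled by the i.i.d.\ extreme-value bound and $\mathcal Z$ is negligible. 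Finally, your per-event probability ``$n^{-(1+\delta)}$'' should be $n^{-1/(1+\delta)}$; that sign error is why your Poisson estimate appeared to go the wrong way.
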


Note that $\hat{y}$ (resp.\ $\check{y}$) are actually the maximum likelihood estimation (MLE) of the true value $y$ in the sample space $\Omega_{n_1,\ldots,n_k}$ (resp.\ $\Omega$) with respect to the given observation $\mathbf{T}$, since for each given color assignment mapping $x\in \Omega$, the probability density at each $\mathbf{T}$ is proportional to $e^{-\frac{\|\mathbf{T}-\mathbf{G}(x)\|_F^2}{2}}$.

More general versions of Theorem \ref{m1} are proved in Sections \ref{pm11} and \ref{pm12}; see Propositions \ref{pp35}, \ref{pp37}, \ref{pp43}, \ref{pp44}. In these propositions, we also allow the total number of colors $k$ to change with $n$. It is straight forward to check that when $k$ and $R_k$ are fixed as $n\rightarrow\infty$, Theorem \ref{m1} is a special case of these propositions.

 In Theorem \ref{m2}, we observe the uniformly-weighted adjacency matrix for the undirected $k$-partite graph, perturbed by a noise which is a $\sigma$-multiple of a matrix with i.i.d.~standard Gaussian entries. Given the observation, the goal is to determine whether two vertices have the same color or not. For $x,y\in \Omega$, we say $x$ and $y$ are equivalent if for all $i,j\in[n]$, $x(i)=x(j)$ if and only fi $y(i)=y(j)$. We write $x\in C(y)$ if $x$ and $y$ are equivalent. Now we only need the algorithm to find a color assignment mapping which is equivalent to the true color assignment mapping.
 Again we find a sharp threshold for the probability of the exact recovery of the MLE with respect to the parameter $\sigma$.

\begin{theorem}\label{m2}Let $n_1\geq \ldots\geq n_k$ be the numbers of vertices in the $k$ different colors, respectively. For an arbitrary mapping $x\in \Omega$, let $\mathbf{K}(x)$ be the $n\times n$ square matrix with entries defined by
\begin{eqnarray}
\mathbf{K}_{i,j}(x)=\begin{cases} 1& \mathrm{if}\ x(i)\neq x(j)\\ 0&\mathrm{if}\ x(i)=x(j)\end{cases};\label{kij}
\end{eqnarray}
where $1\leq i,j\leq n$.
Let $y\in \Omega_{n_1,\ldots,n_k}$ be the true color assignment function. Assume the observation is given by
\begin{eqnarray}
\mathbf{R}=\mathbf{K}(y)+\sigma \mathbf{W}\label{rgw}
\end{eqnarray}
where $\mathbf{W}$ is a random $n\times n$ matrix with i.i.d.~standard Gaussian entries as before. 
 Let the total number of colors $k$ and the set of all colors $\{c_1,\ldots,c_k\}$ be fixed as $n\rightarrow\infty$. Assume there exists a constant $c>0$, such that $n_k\geq cn$ for all $n$.
 Let
\begin{eqnarray}
\tilde{y}&=&\mathrm{argmin}_{x\in \Omega_{\frac{2c}{3}}}\|\mathbf{R}-\mathbf{K}(x)\|^2_F\label{ty}\\
\overline{y}&=&\mathrm{argmin}_{x\in \Omega_{n_1,\ldots,n_k}}\|\mathbf{R}-\mathbf{K}(x)\|^2_F\label{ty}.\label{by}
\end{eqnarray}
We have
\begin{enumerate}
\item 
Assume there exists $\delta>0$, such that
\begin{eqnarray}
\sigma^2<\frac{(1-\delta)(n_k+n_{k-1})}{4\log n}\label{ss2}
\end{eqnarray}
Then 
\begin{eqnarray*}
\lim_{n\rightarrow\infty}\mathrm{Pr}(\tilde{y}\in C(y))=1;\ \mathrm{and}\ \lim_{n\rightarrow\infty}\mathrm{Pr}(\overline{y}\in C(y))=1.
\end{eqnarray*}
\item Assume there exists $\delta>0$, such that
\begin{eqnarray}
\sigma^2>\frac{(1+\delta)(n_k+n_{k-1})}{4\log n}\label{s2}
\end{eqnarray}
then
\begin{eqnarray*}
\lim_{n\rightarrow\infty}\mathrm{Pr}(\tilde{y}\in C(y))=0;\ \mathrm{and}\ \lim_{n\rightarrow\infty}\mathrm{Pr}(\overline{y}\in C(y))=0.
\end{eqnarray*}
\end{enumerate}

\end{theorem}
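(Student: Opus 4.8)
The plan is to reduce the analysis to a Gaussian comparison problem of the same flavor as Theorem \ref{m1}, then run the first‑ and second‑moment arguments. Write $\mathbf{R}=\mathbf{K}(y)+\sigma\mathbf{W}$. For any candidate $x$ the objective satisfies $\|\mathbf{R}-\mathbf{K}(x)\|_F^2-\|\mathbf{R}-\mathbf{K}(y)\|_F^2=\|\mathbf{K}(x)-\mathbf{K}(y)\|_F^2-2\sigma\langle\mathbf{W},\mathbf{K}(x)-\mathbf{K}(y)\rangle$, so $x$ beats $y$ exactly when $2\sigma\langle\mathbf{W},\mathbf{K}(x)-\mathbf{K}(y)\rangle>\|\mathbf{K}(x)-\mathbf{K}(y)\|_F^2$; the left side is a centered Gaussian with variance $4\sigma^2\|\mathbf{K}(x)-\mathbf{K}(y)\|_F^2$. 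Thus, writing $D(x):=\|\mathbf{K}(x)-\mathbf{K}(y)\|_F^2$, the probability that a single $x$ beats $y$ is $\overline{\Phi}\!\left(\sqrt{D(x)}/(2\sigma)\right)$, and everything comes down to understanding how small $D(x)$ can be over the relevant sample space together with how many $x$ realize each value of $D(x)$.

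The first step is the combinatorial heart: identify the ``closest competitor'' to $y$. Since $\mathbf{K}$ only records whether two vertices share a color, the cheapest non‑equivalent perturbation is to recolor a single vertex. If a vertex currently in color class $i$ is moved to color class $j$, a short count shows $D(x)=2(n_i+n_j-1)$ (the vertex gains $n_i-1$ disagreements with its old class and loses $n_j$ agreements... with the appropriate bookkeeping giving $n_i-1+n_j-1+\dots$); in particular the minimum over single‑vertex moves is achieved by moving one vertex between the two largest classes, giving $D_{\min}=2(n_k+n_{k-1})-2$ when the moved vertex comes from the smallest class — here one must be careful about which classes are extremal, and I would verify that any multi‑vertex change, or any change of the equivalence pattern, costs at least as much up to llower order. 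This yields the critical scale $\sqrt{D_{\min}}/(2\sigma)\sim\sqrt{(n_k+n_{k-1})/(2\sigma^2)}$, and matching this to $\sqrt{2\log n}$ (so that $n$ competitors of this type contribute $O(1)$ in expectation) produces precisely the threshold $\sigma^2=(n_k+n_{k-1})/(4\log n)$.

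For part (1), assuming $\sigma^2<(1-\delta)(n_k+n_{k-1})/(4\log n)$: apply a union bound over all non‑equivalent $x$ in the sample space, grouping them by the value of $D(x)$. Using $\overline{\Phi}(t)\le e^{-t^2/2}$ and the fact that the number of $x$ with $D(x)=2m$ is at most polynomial in $n$ for each fixed $m$ and, more generally, bounded by $\binom{n}{r}k^r$‑type counts that are dominated by the Gaussian decay $\exp(-D(x)/(8\sigma^2))$, the total probability that some competitor beats $y$ tends to $0$; the constant $c$ (hence the restricted space $\Omega_{2c/3}$) guarantees all class sizes stay linear so the $D(x)$ lower bounds hold uniformly. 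This handles both $\tilde y$ and $\overline y$ since $\Omega_{n_1,\dots,n_k}\subset\Omega_{2c/3}$ for $n$ large (here one checks $n_k\ge cn>\frac{2c}{3}n$). For part (2), assuming $\sigma^2>(1+\delta)(n_k+n_{k-1})/(4\log n)$: run a second‑moment argument on $N:=\#\{x:\text{$x$ moves one vertex between the two largest classes and beats $y$}\}$. One shows $\EE[N]\to\infty$ and $\EE[N^2]=(1+o(1))(\EE[N])^2$ by checking that the Gaussian events $\{2\sigma\langle\mathbf{W},\mathbf{K}(x)-\mathbf{K}(y)\rangle>D(x)\}$ for two such $x$ are asymptotically independent — their covariance $\langle\mathbf{K}(x)-\mathbf{K}(y),\mathbf{K}(x')-\mathbf{K}(y)\rangle$ is $O(n)$ while each variance is $\Theta(n)$, but after normalizing by the large deviation rate the correlation contributes only lower‑order terms — so by Paley–Zygmund $\Pr(N\ge1)\to1$, and $y$ is not the minimizer, giving $\Pr(\overline y\in C(y))\to0$ and likewise for $\tilde y$.

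The main obstacle I expect is the combinatorial optimization in the second step: verifying that single‑vertex recolorings between the two largest classes really are the unique (up to lower‑order) cheapest way to leave the equivalence class $C(y)$, and that no coordinated move of several vertices — which could in principle change several rows of $\mathbf{K}$ cheaply — does better. This requires a clean lower bound $D(x)\ge 2(n_k+n_{k-1})-O(1)$ for every $x\notin C(y)$, proved by a careful accounting of how many matrix entries must flip when the partition pattern changes; the linear‑size constraint from $\Omega_{2c/3}$ is what makes this bound uniform. The $\overline y$ case is slightly more delicate than in Theorem \ref{m1} because the closest competitor must also respect the class‑size constraints, but moving one vertex between two classes changes those sizes by one, so it stays feasible only after also moving a vertex back — I would check this only changes $D$ by a bounded amount and hence does not move the threshold.
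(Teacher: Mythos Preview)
Your overall strategy is sound and matches the paper's for Part~(1), but Part~(2) takes a genuinely different route, and there are a couple of slips worth flagging.

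\textbf{Terminology/arithmetic.} The minimum $D(x)=2(n_i+n_j-1)$ over single–vertex moves is attained at the two \emph{smallest} classes $i=k,\ j=k-1$, not the two largest; your formula $D_{\min}=2(n_k+n_{k-1})-2$ is correct but the words are reversed. Also, for two distinct single–vertex moves the difference matrices $\mathbf{K}(x)-\mathbf{K}(y)$ and $\mathbf{K}(x')-\mathbf{K}(y)$ are supported on row/column $a$ and row/column $a'$ respectively, so their inner product is $O(1)$, not $O(n)$. This actually helps you: the correlation is $O(1/n)$, and the second–moment correction is $O((\log n)/n)$, so Paley--Zygmund goes through cleanly.

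\textbf{Part (1).} The paper does exactly the union bound you describe, but makes precise the step you gloss over. It is not enough to say ``$\binom{n}{r}k^r$ competitors are dominated by Gaussian decay''; one needs $L(x,y)\ge 2r(n_k+n_{k-1})(1-o(1))$ whenever $x$ differs from some $y^*\in C(y)$ on $r$ vertices. The paper obtains this via a two–region split: for $x$ whose confusion matrix $(t_{i,j})$ is $\epsilon$–close to a permutation ($\tilde{\mathcal B}_\epsilon$), a one–vertex–at–a–time color–changing process shows each step reduces $L$ by at least $2(n_k+n_{k-1})-8\epsilon n$; for $x$ outside $\tilde{\mathcal B}_\epsilon$, a crude bound $L(x,y)\ge c\epsilon n^2/k$ handles the full entropy $k^n$. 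The linear–size constraint from $\Omega_{2c/3}$ is used precisely to guarantee (Lemma~\ref{ll56}) that near a permutation the map $i\mapsto w(i)$ is a bijection, so one can pass to $y^*\in C(y)$. Your sketch would need this structure filled in.

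\textbf{Part (2).} Here the paper does \emph{not} use a second–moment argument. Instead it selects a thin subset $H$ of candidate vertices of size $h=n_k/\log^2 n$, partitions $[n]^2$ according to how many coordinates lie in $H$, and shows that $\langle\mathbf{W},\mathbf{K}(y^{(a)})-\mathbf{K}(y)\rangle=\mathcal{Y}_a+\mathcal{Z}_a$ where $\{\mathcal{Y}_a\}_{a\in H}$ are genuinely i.i.d.\ Gaussians of variance $2(n_k+n_{k-1}-h)$ and $\mathcal{Z}_a$ has variance $O(h)=o(n)$. The maximum of $h$ i.i.d.\ Gaussians then does the work directly (Lemma~\ref{mg}). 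Your second–moment route is a valid alternative and arguably more standard; the paper's decomposition is cleaner in that it sidesteps bivariate tail estimates entirely and recycles the same $\mathcal{X}/\mathcal{Y}/\mathcal{Z}$ machinery across all four propositions. For $\overline y$ the paper uses swaps $y^{(ab)}$ and runs the same decomposition with $H_{k-1}\cup H_k$; as you anticipated, the threshold is unchanged because both mean and variance double.
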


More general versions of Theorem \ref{m2} are proved in Sections \ref{pm21} and \ref{pm22}; see Propositions \ref{l2a}, \ref{l2b}, \ref{p67}, \ref{p68}. In these propositions, we also allow the total number of colors $k$ to change with $n$. It is straight forward to check that when $k$ and $R_k$ are fixed as $n\rightarrow\infty$, Theorem \ref{m2} is a special case of these propositions. The proofs of Theorems \ref{m1} and {\ref{m2}} are based on various inequalities of Gaussian distributions.

 From these two theorems, we can see that we may either choose the sample space for MLE as all the possible assignments of $k$ colors, or potentials, to $n$ vertices in Theorem \ref{m1} (all the possible classifications of $n$ vertices in $k$ distinct groups in Theorem \ref{m2}), or choose the sample space for MLE to be restricted on all the classifications such that the number of vertices of each type coincides with that of the true value - either way we obtain the same threshold.

 \subsection{Complex Unitary Matrix with Gaussian Perturbation}
 
 Now we consider the community detection problem with $k\geq 2$ different communities when the observation is a complex unitary matrix with Gaussian perturbation. Community detection problems with such an observation matrix may be efficiently recovered by the SDP.
 
 Let $d_1,\ldots,d_k\in[0,2\pi)$ be $k$ distinct real numbers. Let $\mathbf{i}$ satisfy $\mathbf{i}^2=-1$ be the imaginary unit. Let $x:[n]\rightarrow\{e^{\mathbf{i}d_1},\ldots,e^{\mathbf{i}d_k}\}$ be a mapping which assigns each vertex in $[n]$ a unique color represented by a complex number of modulus 1. Let $\Theta$ be the set consisting of all such mappings, that is
 \begin{eqnarray*}
 \Theta:=\{x:[n]\rightarrow\{e^{\mathbf{i}d_1},\ldots,e^{\mathbf{i}d_k}\}\}
 \end{eqnarray*}

For a mapping $x\in \Theta$, let $\mathbf{P}(x)$ be an $n\times n$ matrix whose entries are given by
\begin{eqnarray*}
\mathbf{P}_{a,b}(x)=x(a)\overline{x(b)}=e^{\mathrm{Log} [x(a)]-\mathrm{Log} [x(b)]},\qquad 1\leq a,b\leq n,
\end{eqnarray*}
where $\ol{x(b)}$ is the complex conjugate of $x(b)$ and $\mathrm{Log}[\cdot]$ is the principal branch of the complex logarithmic function. 

For each $x\in \Theta$, if we consider $\mathbf{x}$ as an $n\times 1$ vector given by
\begin{eqnarray*}
\mathbf{x}=(x(1),\ldots,x(n))^t,
\end{eqnarray*}
then
\begin{eqnarray*}
\mathbf{P}(x)=\mathbf{x}\ol{\mathbf{x}}^t;
\end{eqnarray*}
which is a rank-1 positive semi-definite Hermitian matrix  whose diagonal entries are 1.

We may further restrict the MLE to a subspace of $\Theta$ satisfying the following assumptions. 

\begin{assumption}\label{ap1}
\begin{enumerate}
\item The number of vertices in each color is the same. In particular, this implies that the total number of vertices $n$, is an integer multiple of the total number of colors $k$.
\item $e^{\mathbf{i}d_1},\ldots, e^{\mathbf{i}d_k}$ are the $k$th roots of unity. Without loss of generality, assume that $d_l=\frac{2(l-1)\pi}{k}$, for $l=1,\ldots,k$.
\end{enumerate}
Let $\Theta_{A}$ be the subset of $\Theta$ consisting of all the mappings satisfying the above two assumptions. That is,
\begin{eqnarray*}
\Theta_{A}:=\left\{x\in \Theta : \left|x^{-1}\left(e^{\mathbf{i}d_j}\right)\right|=\frac{n}{k},\ \forall 1\leq j\leq k\right\}.
\end{eqnarray*}
We define an equivalence class on $\Theta_A$ as follows. We say $x,y\in\Theta_A$ are equivalent if there exists a fixed angle $\alpha$, such that $e^{\mathbf{i}\alpha}\mathbf{x}=\mathbf{y}$. For each $z\in \Theta_{A}$, let $\theta(z)$ be the equivalence class containing $y$.
\end{assumption}

Let $y\in \Theta_A$ be the true color assignment function. 
 Define the observation by
\begin{eqnarray*}
\mathbf{U}=\mathbf{P}(y)+\sigma\mathbf{W}_c
\end{eqnarray*}
where $\mathbf{W}_c$ is the standard GUE random matrix. More precisely, $\mathbf{W}_c$ is a random Hermitian matrix whose diagonal entries are i.i.d. standard real Gaussian random variables ($\mathcal{N}(0,1)_{\mathbb{R}}$), and upper triangular entries are i.i.d.~standard complex Gaussian random variables ($\mathcal{N}(0,1)_{\mathbb{C}}$).

Given each observation $\mathbf{U}$, the goal is to determine the true color assignment function $y$, up to a multiplicative constant.  Let 
\begin{eqnarray}
y^{A}=\mathrm{argmin}_{x\in \Theta_A}\|\mathbf{U}-\mathbf{P}(x)\|_F^2\label{ys}
\end{eqnarray}
Note that for any $x\in\Theta$,
\begin{eqnarray*}
\|\mathbf{P}(x)\|_F^2=\sum_{1\leq a,b\leq n}x(a)\ol{x(b)}x(b)\ol{x(a)}=n^2,
\end{eqnarray*}
which is independent of $x$. Hence we have
\begin{eqnarray*}
y^{A}=\mathrm{argmax}_{x\in\Theta_A}\Re\langle\mathbf{U},\mathbf{P}(x) \rangle
\end{eqnarray*}
where $\Re$ denotes the real part of a complex number, and $\langle\cdot,\cdot\rangle$ denotes the inner product of two matrices defined by
\begin{eqnarray}
\langle M_1, M_2 \rangle=\sum_{i,j\in[n]}M_1(i,j)\ol{M_2(i,j)}.\label{cip}
\end{eqnarray}
where $M_1,M_2\in \CC^{n\times n}$.

It is not hard to see that for any $x,z\in \Theta_{A}$, if $x\in \theta(z)$, then $\mathbf{P}(x)=\mathbf{P}(z)$. So given any observation $\mathbf{U}$, we cannot distinguish color assignment mappings in the same equivalence class of $\theta_{A}$. Therefore the best we can do by an MLE algorithm is to recover the equivalence class of the true color assignment function. We have the following theorem.

 \begin{theorem}\label{m3}
\begin{enumerate}Let $y\in\Theta_A$ be the true color assignment function satisfying Assumption \ref{ap1}. Let $k$ be fixed and $n\rightarrow \infty$. 
\begin{enumerate}
\item If there exists $\delta>0$, such that 
\begin{eqnarray}
\sigma^2<\frac{(1-\delta)\left[n(1-\cos\frac{2\pi}{k})\right]}{2\log n}\label{sc1}
\end{eqnarray}
then 
\begin{eqnarray*}
\lim_{n\rightarrow\infty}\mathrm{Pr}(y^A\in \theta(y))=1
\end{eqnarray*}
\item If there exists $\delta>0$, such that 
\begin{eqnarray*}
\sigma^2>\frac{(1+\delta)\left[n(1-\cos\frac{2\pi}{k})\right]}{2\log n}
\end{eqnarray*}
then 
\begin{eqnarray*}
\lim_{n\rightarrow\infty}\mathrm{Pr}(y^A\in \theta(y))=0.
\end{eqnarray*}
\end{enumerate}

\end{enumerate}
\end{theorem}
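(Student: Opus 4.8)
## Proof Proposal for Theorem \ref{m3}

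The plan is to work throughout with the objective $f(x):=\Re\langle\mathbf U,\mathbf P(x)\rangle$ for $x\in\Theta_A$, whose maximiser over $\Theta_A$ is $y^A$. Since $\mathbf U=\mathbf P(y)+\sigma\mathbf W_c$ and $\mathbf W_c,\mathbf P(x)$ are Hermitian with $\|\mathbf P(x)\|_F=n$ and $\langle\mathbf P(x),\mathbf P(z)\rangle=\big|\sum_{u}x(u)\ol{z(u)}\big|^{2}\ge 0$, one has the exact identity
\begin{eqnarray*}
f(x)-f(y)=-\frac12\|\mathbf P(x)-\mathbf P(y)\|_F^2+\sigma\,\langle\mathbf W_c,\mathbf P(x)-\mathbf P(y)\rangle ,
\end{eqnarray*}
in which $\mathbf P(x)-\mathbf P(y)$ is Hermitian with zero diagonal, so $\langle\mathbf W_c,\mathbf P(x)-\mathbf P(y)\rangle$ is (with the GUE normalisation of $\mathbf W_c$) a centred real Gaussian of variance $\|\mathbf P(x)-\mathbf P(y)\|_F^2$. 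I would also record that $\mathbf P$ is constant on each equivalence class $\theta(\cdot)$ and that $\|\mathbf P(x)-\mathbf P(y)\|_F^2=2n^2-2\big|\sum_{u}x(u)\ol{y(u)}\big|^{2}$. Set $\gamma:=1-\cos\frac{2\pi}{k}>0$; everything below uses only these facts together with standard Gaussian tail and second-moment estimates.

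For part (a) I would run a union bound over $\Theta_A\setminus\theta(y)$ graded by a distance to $\theta(y)$: for $x\notin\theta(y)$ put $s(x):=\min_{0\le j<k}\#\{u:x(u)\ne e^{2\pi\mathbf i j/k}y(u)\}$. Since $\mathbf P$ and $\big|\sum_u x(u)\ol{y(u)}\big|$ are invariant under replacing $x$ by a member of $\theta(x)$, one may assume $x$ and $y$ disagree on a set $S$ with $|S|=s(x)$, and $2\le s(x)\le n(1-\tfrac1k)$ because no two distinct maps in $\Theta_A$ differ at a single vertex and because one of the $k$ cyclic rotations of $y$ agrees with $x$ on at least $n/k$ vertices. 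On $S$ every $x(u)\ol{y(u)}$ is a $k$-th root of unity $\ne 1$, hence has real part $\le\cos\frac{2\pi}{k}$; a short computation then yields $\|\mathbf P(x)-\mathbf P(y)\|_F^2\ge 4\big(n-s(x)\big)s(x)\gamma$, so the Gaussian tail gives $\mathrm{Pr}\big(f(x)\ge f(y)\big)\le\exp\!\big(-(n-s(x))s(x)\gamma/(2\sigma^2)\big)$. The number of $x$ with $s(x)=s$ is at most $k\binom ns(k-1)^s$, so I would split the resulting sum at $s=\eta n$ for a fixed $\eta\in(0,\delta)$: for $2\le s\le\eta n$ the hypothesis $\sigma^2<(1-\delta)n\gamma/(2\log n)$ bounds the contribution by $k\big(k\,n^{(\eta-\delta)/(1-\delta)}\big)^{s}$, a geometric series with ratio $\to 0$; for $\eta n<s\le n(1-\tfrac1k)$ one has $(n-s)s\gamma\ge \eta\gamma n^2/k$, so each term is $\exp(-\Omega(n\log n))$ and even $(2k)^{n}$ of them sum to $o(1)$. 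Hence $\mathrm{Pr}(y^A\notin\theta(y))\to 0$.

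For part (b) the plan is to linearise the objective around $y$. By Assumption \ref{ap1} take two adjacent colours $1=e^{\mathbf i d_1}$ and $\zeta:=e^{2\pi\mathbf i/k}=e^{\mathbf i d_2}$, and set $A=y^{-1}(1)$, $B=y^{-1}(\zeta)$, so $|A|=|B|=n/k$. For $a\in A,b\in B$ let $y^{ab}\in\Theta_A$ be the map obtained from $y$ by interchanging the colours of $a$ and $b$; since $\mathbf y^{ab}=\mathbf y+(\zeta-1)(\mathbf e_a-\mathbf e_b)$ (with $\mathbf e_a$ the $a$-th coordinate vector) a direct computation gives
\begin{eqnarray*}
\langle\mathbf W_c,\mathbf P(y^{ab})-\mathbf P(y)\rangle=2\big(S_a-S_b\big)+E_{ab},\qquad \|\mathbf P(y^{ab})-\mathbf P(y)\|_F^2=8\gamma n+O(1),
\end{eqnarray*}
where $S_v:=\Re\!\big[(\zeta-1)\sum_{u}\ol{y(u)}(\mathbf W_c)_{u,v}\big]$ and $E_{ab}=2\gamma\big[(\mathbf W_c)_{a,a}+(\mathbf W_c)_{b,b}-2\Re(\mathbf W_c)_{a,b}\big]$. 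Now if $y^A\in\theta(y)$ then $\mathbf P(y^A)=\mathbf P(y)$, hence $f(y^{ab})\le f(y^A)=f(y)$ for every $a\in A,b\in B$, which forces $2\sigma(S_a-S_b)+\sigma E_{ab}\le 4\gamma n+O(1)$ for all such $a,b$. The $(S_v)_{v\in[n]}$ are jointly Gaussian with $\mathrm{Var}(S_v)=\gamma n(1+o(1))$ and pairwise covariances $O(1)$ (any two involve only one common entry of $\mathbf W_c$), while $\max_{a,b}|E_{ab}|=O(\sqrt{\log n})$ with probability $1-o(1)$; therefore $y^A\in\theta(y)$ entails
\begin{eqnarray*}
\max_{a\in A}S_a-\min_{b\in B}S_b\le\frac{2\gamma n}{\sigma}\,(1+o(1)) .
\end{eqnarray*}
I would finish with a second-moment argument on $\#\{a\in A:S_a>(1-\eps)\sqrt{2\gamma n\log(n/k)}\}$ for a small fixed $\eps>0$ — its mean is $n^{2\eps-\eps^2+o(1)}\to\infty$ and its variance is under control since the pairwise correlations are $O(1/n)$ — to get that with probability $1-o(1)$ both $\max_{a\in A}S_a$ and $-\min_{b\in B}S_b$ exceed $(1-\eps)\sqrt{2\gamma n\log n}\,(1+o(1))$. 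Under $\sigma^2>(1+\delta)n\gamma/(2\log n)$ the right-hand side above is at most $\tfrac{2}{\sqrt{1+\delta}}\sqrt{2\gamma n\log n}\,(1+o(1))$, which for $\eps$ small enough is strictly below $2(1-\eps)\sqrt{2\gamma n\log n}$; so the displayed inequality fails with probability $1-o(1)$, forcing $y^A\notin\theta(y)$ and giving $\mathrm{Pr}(y^A\in\theta(y))\to 0$.

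I expect the converse direction (b) to be the main obstacle. A first- or second-moment estimate carried out directly over the $\asymp n^2$ candidate transpositions only yields the conclusion for $\delta>\tfrac13$, because two transpositions sharing a vertex have correlation $\approx\tfrac12$ and there are $\asymp n^3$ such pairs; the linearisation $\langle\mathbf W_c,\mathbf P(y^{ab})-\mathbf P(y)\rangle=2(S_a-S_b)+O(1)$ is exactly what collapses these $n^2$ correlated statistics onto the $n$ almost-independent Gaussians $S_v$ and reduces the problem to the extreme behaviour of $\max_A S-\min_B S$. Checking that the error terms $E_{ab}$ and the $o(1)$ corrections to $\mathrm{Var}(S_v)$ remain negligible against the threshold for \emph{every} $\delta>0$, rather than only for $\delta$ bounded away from $0$, is where the real care is needed.
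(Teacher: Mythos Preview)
Your proposal is correct, and both parts follow a route that is close in spirit to the paper but with genuine simplifications.

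For part (a), the paper does not use the direct lower bound $\|\mathbf P(x)-\mathbf P(y)\|_F^2\ge 4(n-s(x))s(x)\gamma$. Instead it rewrites $J(x,y)=\tfrac12\|\mathbf P(x)-\mathbf P(y)\|_F^2$ in terms of the joint colour counts $t_{i,j}(x,y)$, splits the union bound into a ``far'' regime and a ``near'' regime, and in the near regime replaces $y$ by the closest rotation $y'\in\theta(y)$ and then walks from $y'$ to $x$ along a sequence of $l$-cycles (Lemma~\ref{lm33}), bounding the increment of $J$ at each step. Your Hamming-distance grading collapses this machinery: the inequality $|z|^2\le (n-s)^2+2(n-s)s\cos\tfrac{2\pi}{k}+s^2$ for $z=\sum_u x(u)\ol{y(u)}$ gives the same exponent in a single line, and your pigeonhole observation $s(x)\le n(1-\tfrac1k)$ replaces the paper's Lemma~\ref{l65}. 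The paper's route is designed to survive when $k$ grows (subject to $\log k=o(\log n)$), which is not needed here since the theorem fixes $k$; your argument is the cleaner one for the statement as written.

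For part (b), the two approaches are essentially the same idea in different clothing. The paper also considers the transpositions $y^{(ab)}$, but instead of writing $\langle\mathbf W_c,\mathbf P(y^{ab})-\mathbf P(y)\rangle=2(S_a-S_b)+E_{ab}$ directly, it partitions $[n]^2$ according to how many coordinates lie in small subsets $H_0,H_1$ of size $n/\log^2 n$, obtaining variables $Y_a,Y_b,\mathcal Z_{ab}$ with the $\{Y_a\}\cup\{Y_b\}$ genuinely i.i.d.; it then invokes the exact independence and Lemma~\ref{mg} rather than a second-moment argument with $O(1)$ covariances. Both routes produce the same threshold $2\sqrt{2\gamma n\log n}$ for $\max_{A}S-\min_{B}S$ and hence the same conclusion. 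Your linearisation $S_v=\Re[(\zeta-1)\sum_u\ol{y(u)}(\mathbf W_c)_{u,v}]$ is a tidy way to expose why the $n^2$ transposition statistics collapse to $2n$ nearly independent scalars; the paper's subsetting trick trades that second-moment step for exact independence at the cost of a $\log^2 n$ thinning. Either works, and your version is the more transparent of the two.
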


Now we describe a complex semi-definite programming for community detection with multiple (more than 2) communities.
For a true color assignment function $y\in \Theta$, let the observation be given by
\begin{eqnarray*}
\mathbf{V}=\mathbf{P}(y)+\sigma\mathrm{diag}(\mathbf{y})\mathbf{W}_s\mathrm{diag}(\ol{\mathbf{y}})
\end{eqnarray*}
where $\mathbf{W}_s$ is the standard GOE random matrix. More precisely, $\mathbf{W}_s$ is a random symmetric matrix whose diagonal entries and upper triangular entries are i.i.d. standard real Gaussian random variables ($\mathcal{N}(0,1)_{\mathbb{R}}$). Note that $\mathbf{V}$ is a Hermitian matrix.

Given each observation $\mathbf{V}$, the goal is to find the true color assignment function $y$, up to a multiplicative constant.  We may consider the following optimization problem
\begin{eqnarray}
&&\max \Re\langle \mathbf{V},X \rangle\label{co}\\
\mathrm{subject\ to}\ &&X_{ii}=1\notag\\
\mathrm{and}\ &&X\succeq 0\notag
\end{eqnarray}
where $X\succeq 0$ means that $X$ is a positive semi-definite Hermitian matrix.

For any mapping $x\in \Theta$, the matrix $P(x):=\mathbf{x}\overline{\mathbf{x}}^t$ is a rank-1 Hermitian matrix. The idea for the MLE is to find the minimizer 
\begin{eqnarray*}
\mathrm{argmin}_{P(x)}\|P(x)-\mathbf{V}\|^2
\end{eqnarray*}
among all the rank-1 positive semi-definite Hermitian matrices with diagonal entries $1$ and the form $\mathbf{x}\mathbf{x}^t$. To achieve efficient recovery, we may relax the rank-1 constraint and consider instead an optimization over all the positive semi-definite Hermitian matrix with diagonal entries 1.

Then we have the following theorem
\begin{theorem}\label{m5}Let $p(Y;\sigma)$ be the probability that the solution $Y$ of (\ref{co}) coincides with $\mathbf{y}\ol{\mathbf{y}}^{t}$, where $y$ is the true color assignment mapping for vertices. If there exists a constant $\delta>0$ independent of $N$, such that $\sigma<\frac{(1-\delta)\sqrt{n}}{\sqrt{2\log n}}$, then
\begin{eqnarray*}
\lim_{n\rightarrow\infty}p(Y;\sigma)=1
\end{eqnarray*}
\end{theorem}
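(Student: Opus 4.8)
The plan is to use the standard dual-certificate (KKT) approach for SDP exact recovery, adapted to the complex Hermitian setting. The target solution is $Y^\star = \mathbf{y}\ol{\mathbf{y}}^t = \mathbf{P}(y)$, which is feasible for (\ref{co}) since its diagonal entries are $1$ and it is positive semi-definite of rank $1$. To certify that $Y^\star$ is the unique optimizer, I would exhibit a diagonal \emph{dual multiplier} matrix $D = \mathrm{diag}(d_1,\dots,d_n)$ (with real entries, since the diagonal constraints $X_{ii}=1$ are real) such that the dual slack $S := D - \Re$-part-lift of $\mathbf{V}$, more precisely $S := D - \mathbf{V}$ in the Hermitian sense, satisfies (i) $S \succeq 0$, (ii) $S\,\mathbf{y} = 0$ (complementary slackness with the rank-one primal), and (iii) the second-largest eigenvalue of $S$ is strictly positive, i.e. $\lambda_2(S) > 0$ with $\mathbf{y}$ spanning the kernel. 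Condition (ii) forces the choice $d_a = (\mathbf{V}\mathbf{y})_a / y_a = \sum_b \mathbf{V}_{ab}\,\ol{y_b}\,y_a^{-1}\cdot$ — wait, more carefully: $(S\mathbf{y})_a = d_a y_a - (\mathbf{V}\mathbf{y})_a = 0$ gives $d_a = (\mathbf{V}\mathbf{y})_a/y_a$; since $\mathbf{V} = \mathbf{P}(y) + \sigma\,\mathrm{diag}(\mathbf{y})\mathbf{W}_s\,\mathrm{diag}(\ol{\mathbf{y}})$ and $\mathbf{P}(y)\mathbf{y} = n\mathbf{y}$, one gets $d_a = n + \sigma\,(\mathbf{W}_s \mathbf{1})_a$, which is real as required — the conjugation by $\mathrm{diag}(\mathbf{y})$ is exactly engineered to make this work out cleanly, effectively reducing the complex problem to a real GOE computation.

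With $D$ so chosen, $S = D - \mathbf{P}(y) - \sigma\,\mathrm{diag}(\mathbf{y})\mathbf{W}_s\,\mathrm{diag}(\ol{\mathbf{y}})$. I would conjugate by the unitary $\mathrm{diag}(\ol{\mathbf{y}})$ to replace $S$ by the unitarily-equivalent matrix $\mathrm{diag}(\ol{\mathbf{y}})\,S\,\mathrm{diag}(\mathbf{y}) = D - \mathbf{1}\mathbf{1}^t - \sigma \mathbf{W}_s$ (using $\mathrm{diag}(\ol{\mathbf{y}})\mathbf{P}(y)\mathrm{diag}(\mathbf{y}) = \mathbf{1}\mathbf{1}^t$ and that $D$ commutes with diagonal matrices), with $D = nI + \sigma\,\mathrm{diag}(\mathbf{W}_s\mathbf{1})$. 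So the problem reduces to showing that
\begin{eqnarray*}
\wt S := nI + \sigma\,\mathrm{diag}(\mathbf{W}_s\mathbf{1}) - \mathbf{1}\mathbf{1}^t - \sigma\mathbf{W}_s \succeq 0
\end{eqnarray*}
with kernel exactly $\spz\{\mathbf{1}\}$, with probability $\to 1$. One checks directly $\wt S\,\mathbf{1} = n\mathbf{1} + \sigma(\mathbf{W}_s\mathbf{1}) - n\mathbf{1} - \sigma\mathbf{W}_s\mathbf{1} = 0$. On the orthogonal complement $\mathbf{1}^\perp$, for a unit vector $v\perp\mathbf{1}$ we have $v^t \wt S v = n + \sigma\sum_a (\mathbf{W}_s\mathbf{1})_a v_a^2 - \sigma\, v^t\mathbf{W}_s v \ge n - \sigma\|\mathrm{diag}(\mathbf{W}_s\mathbf{1})\|_{op} - \sigma\|\mathbf{W}_s\|_{op}$. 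Now $\|\mathbf{W}_s\|_{op} \le (2+o(1))\sqrt n$ with overwhelming probability by standard bounds on the operator norm of a GOE matrix (e.g. Tracy–Widom, cited as \cite{TW}), and $\|\mathrm{diag}(\mathbf{W}_s\mathbf{1})\|_{op} = \max_a |(\mathbf{W}_s\mathbf{1})_a|$; since each $(\mathbf{W}_s\mathbf{1})_a$ is Gaussian with variance $\Theta(n)$, a union bound over $n$ coordinates gives $\max_a|(\mathbf{W}_s\mathbf{1})_a| \le (1+o(1))\sqrt{2n\log n}$ with probability $\to 1$. Hence $v^t\wt S v \ge n - \sigma\big((1+o(1))\sqrt{2n\log n} + (2+o(1))\sqrt n\big)$, which is strictly positive for all large $n$ precisely when $\sigma < \frac{(1-\delta)\sqrt n}{\sqrt{2\log n}}$, since the $\sqrt{2n\log n}$ term dominates the $\sqrt n$ term and absorbs the slack $\delta$.

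The main obstacle is not the operator-norm bound (which is classical) but controlling the coupling between the diagonal perturbation $\mathrm{diag}(\mathbf{W}_s\mathbf{1})$ and the quadratic form $v^t\mathbf{W}_s v$ uniformly over all $v\in\mathbf{1}^\perp$: a crude triangle inequality as above suffices for the leading-order threshold, but if one wants the sharp constant one must be more careful, because $(\mathbf{W}_s\mathbf{1})_a$ and the entries of $\mathbf{W}_s$ are not independent. The clean way around this is to note that the bound via $\|\cdot\|_{op}$ of each piece separately already gives the stated threshold with room to spare (the $\sqrt n$ from $\|\mathbf{W}_s\|_{op}$ is lower order than $\sqrt{n\log n}$), so the dependence is harmless at this level of precision. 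A secondary point requiring care is uniqueness: one must argue that $\lambda_2(\wt S)>0$ strictly (not just $\wt S\succeq 0$) to rule out alternative optimizers, which follows from the same estimate since the lower bound on $v^t\wt S v$ is uniformly positive over the unit sphere in $\mathbf{1}^\perp$. Finally one invokes the standard SDP duality lemma: existence of such an $S$ with $S\succeq 0$, $S\mathbf{y}=0$, and $\mathrm{rank}(S) = n-1$ implies that $\mathbf{y}\ol{\mathbf{y}}^t$ is the unique maximizer of (\ref{co}), completing the proof.
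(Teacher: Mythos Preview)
Your proposal is correct and follows essentially the same dual-certificate approach as the paper: both force the diagonal multiplier by complementary slackness, reduce (after conjugating by $\mathrm{diag}(\ol{\mathbf{y}})$) to showing that the Laplacian $nI+\sigma\,\mathrm{diag}(\mathbf{W}_s\mathbf{1})-\mathbf{1}\mathbf{1}^t-\sigma\mathbf{W}_s$ is positive semi-definite with one-dimensional kernel $\spz\{\mathbf{1}\}$, and then bound this via the triangle inequality $\|\Delta\mathbf{W}_s\|\le\max_i|(\mathbf{W}_s\mathbf{1})_i|+\|\mathbf{W}_s\|$ together with Tracy--Widom for $\|\mathbf{W}_s\|$ and a Gaussian-maximum bound for the first term. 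The only cosmetic difference is that the paper first embeds the complex Hermitian SDP into a real $2n\times 2n$ SDP before writing down the dual, whereas you work directly with the Hermitian dual; the resulting slack matrix and all subsequent estimates are identical.
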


In Theorem \ref{m5}, it turns out that the bound $\frac{(1-\delta)\sqrt{n}}{\sqrt{2\log n}}$ on $\sigma$ to guarantee the exact recovery of the SDP is independent of $k$ - the total number of communities. However, if we instead use the GUE matrix $\mathbf{W}_c$ instead of $\mathrm{diag}(\mathbf{y})\mathbf{W}_s\mathrm{diag}(\mathbf{\ol{y}})$ to represent the noise, Theorem \ref{m3} shows that threshold of $\sigma$ to guarantee the exact recovery of MLE does depend on $k$.  This threshold is of order $O\left(\frac{1}{\sqrt{n\log n}}\right)$ when $k\sim n$. Since the SDP is an algorithm obtained from the relaxation of the rank constraint of the MLE, one may naturally expect a smaller common upper bound for $\sigma$ to guarantee the exact recovery of the SDP for all $k$, when the noise is represented by a $\sigma$-multiple of $\mathbf{W}_c$. 

Theorem \ref{m3} is proved in Section \ref{pm3}; and Theorem \ref{m5} is proved in Section \ref{pm5}.

\section{Proof of Theorem \ref{m1} when the number of vertices in each color is fixed in the sample space}\label{pm11}

We first consider the MLE in Theorem \ref{m1} with sample space $\Omega_{n_1,\ldots,n_k}$, where the number of vertices with color $c_i$ is fixed to be $n_i$ - the same as the true value, for $1\leq i\leq k$. For a mapping $x\in \Omega_{n_1,\ldots,n_k}$, let $\mathbf{G}(x)$, $\mathbf{T}$ be defined as in (\ref{gij}), (\ref{tgw}), respectively.

Given a sample $\mathbf{T}$, the goal is to determine the color assignment function $y$. Let $\hat{y}$ be defined by (\ref{hy}). Note that for all $x\in \Omega_{n_1,\ldots,n_k}$, 
\begin{eqnarray*}
\|\mathbf{G}(x)\|_F^2=2\sum_{1\leq i<j\leq k} n_in_j(c_i-c_j)^2,
\end{eqnarray*}
which depends only on $n_1,\ldots,n_k$, but is independent of $x$.
Then we have
\begin{eqnarray*}
\hat{y}=\mathrm{argmax}_{x\in \Omega_{n_1,\ldots,n_k}}\langle \mathbf{G}(x),\mathbf{T} \rangle
\end{eqnarray*}
Let 
\begin{eqnarray}
p(\hat{y},\sigma)=\mathrm{Pr}(\hat{y}=y),\label{phy}
\end{eqnarray}
where $y\in \Omega_{n_1,\ldots,n_k}$ is the true color assignment function.

For each $x\in \Omega_{n_1,\ldots,n_k}$, define
\begin{eqnarray*}
f(x)=\langle \mathbf{G}(x),\mathbf{T} \rangle
\end{eqnarray*}

 Then 
\begin{eqnarray*}
p(\hat{y},\sigma)=\mathrm{Pr}\left(f(y)>\max_{x\in \Omega_{n_1,\ldots,n_k}\setminus \{y\}}f(x)\right)
\end{eqnarray*}
Note that
\begin{eqnarray*}
f(x)-f(y)=\langle\mathbf{G}(y), \mathbf{G}(x)-\mathbf{G}(y) \rangle+\sigma\langle\mathbf{W}, \mathbf{G}(x)-\mathbf{G}(y) \rangle.
\end{eqnarray*}
The expression above shows that $f(x)-f(y)$ is a Gaussian random variable with mean $\langle\mathbf{G}(y), \mathbf{G}(x)-\mathbf{G}(y) \rangle$ and variance $\sigma^2\|\mathbf{G}(x)-\mathbf{G}(y)\|_F^2$.

For $i,j\in[k]$, $x,y\in \Omega_{n_1,\ldots,n_k}$, let 
\begin{eqnarray}
S_{i,j}(x,y)=\{l\in[n]:x(l)=c_i, y(l)=c_j\};\label{1sij}
\end{eqnarray}
i.e., $S_{i,j}(x,y)$ consists of all the vertices which have color $c_i$ in $x$ and color $c_j$ in $y$.

 Let $t_{i,j}(x,y)=|S_{i,j}(x,y)|$, i.e., $t_{i,j}$ is the total number of vertices which have color $c_i$ in $x$ and color $c_j$ in $y$. We may write $t_{i,j}$ (resp.\ $S_{i,j}$) instead of $t_{i,j}(x,y)$ (resp.\ $S_{i,j}(x,y)$) when there is no confusion.
Note that since $x,y\in \Omega_{n_1,\ldots,n_k}$, we have
\begin{eqnarray}
&&\sum_{j\in[k]}t_{i,j}=n_i,\ \ \forall i\in[k]\label{t1}\\
&&\sum_{i\in[k]}t_{i,j}=n_j,\ \ \forall j\in[k]\label{t2}
\end{eqnarray}
For each vertex $l\in S_{i,j}$, the inner product of the row in $\mathbf{G}(x)$ corresponding to $l$ and the row in $\mathbf{G}(y)$ corresponding to $l$ is
\begin{eqnarray*}
\langle \mathbf{G}_l(x),\mathbf{G}_l(y) \rangle&=&\sum_{r\in [n]}\mathbf{G}_{l,r}(x)\overline{\mathbf{G}_{l,r}(y)}\\
&=&\sum_{r\in [n]}\left[x(l)-x(r)\right]\left[y(l)-y(r)\right]\\
&=&\sum_{u=1}^{k}\sum_{v=1}^{k}t_{u,v}(c_i-c_u)(c_j-c_v)
\end{eqnarray*}
Then
\begin{eqnarray}
\langle\mathbf{G}(x),\mathbf{G}(y) \rangle&=&\sum_{l=1}^{n}\langle\mathbf{G}_l(x),\mathbf{G}_l(y) \rangle\notag\\
&=&\sum_{i,j,u,v\in [k]}t_{i,j}t_{u,v}(c_i-c_u)(c_j-c_v)\label{pgxy}\\
&=&2\left[\sum_{i,j\in [k]} t_{i,j}\cdot c_i\cdot c_j\right]\left[\sum_{u,v\in[k]}t_{u,v}\right]-2\left[\sum_{i,j\in [k]} t_{i,j}\cdot c_i \right]\left[\sum_{u,v\in[k]}t_{u,v}\cdot c_v\right]\notag\\
&=&2n\left[\sum_{i,j\in [k]} t_{i,j}\cdot c_i\cdot c_j\right]-2\left[\sum_{i\in [k]} n_i\cdot c_i \right]^2,\notag
\end{eqnarray}
where the last identity follows from (\ref{t1}) and (\ref{t2}).

Note that for $i,j\in[k]$ and $x\in \Omega_{n_1,\ldots,n_k}$,
\begin{eqnarray*}
t_{i,j}(x,x)=\begin{cases}n_i&\mathrm{if}\ i=j;\\ 0&\mathrm{else}.\end{cases}
\end{eqnarray*}
Therefore
\begin{eqnarray*}
\langle \mathbf{G}(y),\mathbf{G}(y)\rangle=2n \sum_{i\in [k]}\left[n_i \cdot c_i^2\right]-2\left[\sum_{i\in [k]}n_i\cdot c_i\right]^2
\end{eqnarray*}
For $x,y\in \Omega_{n_1,\ldots,n_k}$, let
\begin{eqnarray}
M(x,y):&=&-\mathbb{E}[f(x)-f(y)]=-\langle \mathbf{G}(y),\mathbf{G}(x)-\mathbf{G}(y)\rangle\notag\\
&=&2n \sum_{i\in [k]}\left[n_i \cdot c_i^2\right]-2n\left[\sum_{i,j\in [k]} t_{i,j}\cdot c_i\cdot c_j\right]\label{mxy}
\end{eqnarray}

Then
\begin{eqnarray*}
\mathbf{Var}[f(x)-f(y)]=2\sigma^2 M(x,y)
\end{eqnarray*}
Therefore we have
\begin{eqnarray*}
f(x)-f(y)\sim \mathcal{N}(-M(x,y),2\sigma^2 M(x,y))
\end{eqnarray*}
and
\begin{eqnarray*}
\xi:=\frac{f(x,y)+M(x,y)}{\sigma\sqrt{2M(x,y)}}\sim \mathcal{N}(0,1).
\end{eqnarray*}
Then for $x\in \Omega_{n_1,\ldots,n_k}\setminus \{y\}$
\begin{eqnarray*}
\mathrm{Pr}\left(f(x)-f(y)>0\right)
&=&\mathrm{Pr_{\xi\sim \mathcal{N}(0,1)}}\left(\xi>\frac{\sqrt{M(x,y)}}{\sqrt{2}\sigma}\right)
\end{eqnarray*}
Using the standard Gaussian tail bound $\mathrm{Pr}_{\xi\in \mathcal{N}(0,1)}(\xi>x)<e^{-\frac{1}{2}x^2}$, we obtain
\begin{eqnarray*}
\mathrm{Pr}\left(f(x)-f(y)>0\right)\leq e^{-\frac{M(x,y)}{4\sigma^2}}
\end{eqnarray*}

Then
\begin{eqnarray}
1-p(\hat{y};\sigma)&\leq& \sum_{x\in \Omega_{n_1,\ldots,n_k}\setminus \{y\}}\mathrm{Pr}(f(x)-f(y)> 0)\label{wmp}\\
&=&\sum_{x\in \Omega_{n_1,\ldots,n_k}\setminus \{y\}}\mathrm{Pr_{\xi\sim \mathcal{N}(0,1)}}\left(\xi> \frac{\sqrt{M(x,y)}}{\sqrt{2}\sigma}\right)\notag\\
&\leq &\sum_{x\in \Omega_{n_1,\ldots,n_k}\setminus \{y\}} e^{-\frac{M(x,y)}{4\sigma^2}}\notag
\end{eqnarray}
\begin{lemma}Let $y\in \Omega_{n_1,\ldots,n_k}$ be the fixed color assignment mapping, and let $x$ change over $\Omega_{n_1,\ldots,n_k}$.
Under the constraints (\ref{t1}) and (\ref{t2}), $M(x,y)$ achieves its minimum if and only if
\begin{eqnarray*}
&&t_{i,i}=n_i\\
&&t_{i,j}=0,\qquad\mathrm{if}\ i\neq j.
\end{eqnarray*}
and the minimal value of $M(x,y)$ is 0.
\end{lemma}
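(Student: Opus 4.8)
The plan is to collapse $M(x,y)$ into a single manifestly nonnegative sum, from which both the minimum value and the equality case fall out. Starting from the identity
\begin{eqnarray*}
M(x,y)=2n\sum_{i\in[k]}n_i c_i^2-2n\sum_{i,j\in[k]}t_{i,j}c_ic_j
\end{eqnarray*}
established just above the statement, I would use \emph{both} marginal constraints (\ref{t1}) and (\ref{t2}) to symmetrize the first term: since $\sum_{j\in[k]}t_{i,j}=n_i$ and $\sum_{i\in[k]}t_{i,j}=n_j$, one has $\sum_{i\in[k]}n_ic_i^2=\tfrac12\sum_{i,j\in[k]}t_{i,j}c_i^2+\tfrac12\sum_{i,j\in[k]}t_{i,j}c_j^2$. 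Substituting and regrouping yields
\begin{eqnarray*}
M(x,y)=n\sum_{i,j\in[k]}t_{i,j}(c_i-c_j)^2.
\end{eqnarray*}
(Equivalently, since $x(l)=c_i$ and $y(l)=c_j$ for $l\in S_{i,j}(x,y)$, this is just $M(x,y)=n\sum_{l\in[n]}(x(l)-y(l))^2$, which is another way to see the same identity.)

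Once this formula is in hand the rest is immediate. Each $t_{i,j}=|S_{i,j}(x,y)|\ge 0$ and each $(c_i-c_j)^2\ge 0$, so $M(x,y)\ge 0$; and the assignment $t_{i,i}=n_i$, $t_{i,j}=0$ for $i\ne j$ is compatible with (\ref{t1}) and (\ref{t2}) and makes the sum vanish, so $0$ is attained and is therefore the minimum. For the equality characterization, $M(x,y)=0$ forces $t_{i,j}(c_i-c_j)^2=0$ for every pair $(i,j)$; because $c_1,\dots,c_k$ are distinct we have $(c_i-c_j)^2>0$ whenever $i\ne j$, hence $t_{i,j}=0$ for all $i\ne j$, and then (\ref{t1}) forces $t_{i,i}=n_i$. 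This is exactly the stated configuration (and corresponds to $x$ differing from $y$ by a relabeling of colors that fixes the cardinalities; in particular it includes $x=y$).

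There is no real obstacle here: the only point requiring a moment's thought is noticing that \emph{both} row and column marginals must be invoked to symmetrize $\sum_i n_ic_i^2$ into the double sum, after which nonnegativity and the rigidity of the equality case are forced purely by the distinctness of the colors $c_1,\dots,c_k$. I would present the symmetrization step explicitly and then state the two conclusions in one line each.
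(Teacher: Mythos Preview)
Your proof is correct and is essentially the same argument as the paper's. The paper bounds $\sum_{i,j}t_{i,j}c_ic_j\le \sum_{i,j}\tfrac{1}{2}(c_i^2+c_j^2)t_{i,j}=\sum_i n_ic_i^2$ using both marginals, which is exactly your symmetrization repackaged as an inequality rather than the identity $M(x,y)=n\sum_{i,j}t_{i,j}(c_i-c_j)^2$; the equality analysis is identical. One small slip in your closing parenthetical: the configuration $t_{i,i}=n_i$, $t_{i,j}=0$ for $i\neq j$ forces $x(l)=y(l)$ for every $l$, i.e.\ $x=y$ exactly, not merely $x$ equal to $y$ up to a color relabeling.
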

\begin{proof}Note that 
\begin{eqnarray*}
\sum_{i,j\in [k]} t_{i,j}\cdot c_i\cdot c_j\leq \sum_{i,j\in [k]}  \frac{(c_i^2+c_j^2) t_{i,j}}{2}=\sum_{i\in [k]}[n_i\cdot c_i^2];
\end{eqnarray*}
where the identity holds if and only if 
\begin{itemize}
\item $c_i=c_j$ whenever $t_{i,j}\neq 0$. 
 \end{itemize}
 Then the lemma follows from the assumption that $c_i\neq c_j$ whenever $i\neq j$.
\end{proof}

Before proving Theorem \ref{m1}, we introduce a definition.
\begin{definition}Let $l\geq 2$ be a positive integer. Let $x,y\in \Omega_{n_1,\ldots,n_k}$. We say
$l$ distinct colors $(i_1,\ldots,i_{l})\in[k]^l$ is an $l$-cycle for $(x,y)$, if $t_{i_{s-1},i_s}(x,y)>0$ for all $2\leq s\leq l+1$, where $i_{l+1}:=i_1$.
\end{definition}

\begin{lemma}\label{lm33}Let $x,y\in \Omega_{n_1,\ldots,n_k}$ and $x\neq y$. Then there exists an $l$-cycle for $(x,y)$ with $2\leq l\leq k$. 
\end{lemma}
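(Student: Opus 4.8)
The statement asserts that whenever $x \neq y$ are color assignment mappings in $\Omega_{n_1,\dots,n_k}$, the associated ``transport'' matrix $(t_{i,j})_{i,j\in[k]}$ — whose row sums and column sums are both $(n_1,\dots,n_k)$ by (\ref{t1}) and (\ref{t2}) — must contain an $l$-cycle of colors with $2 \le l \le k$. The plan is to extract such a cycle by a standard ``follow the arrows'' argument on an auxiliary directed graph. I would first record the elementary observation that since $x \neq y$, there is some vertex recolored between $y$ and $x$, so $t_{i,j} > 0$ for some $i \neq j$; combined with the doubly-stochastic-type constraints $\sum_j t_{i,j} = \sum_i t_{i,j} = n_i$, this forces off-diagonal mass to propagate.

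Concretely, I would build a directed graph $D$ on vertex set $[k]$ (the colors), placing a directed edge $i \to j$ whenever $i \neq j$ and $t_{i,j}(x,y) > 0$. An $l$-cycle for $(x,y)$ in the sense of the definition is exactly a directed cycle of length $l$ in $D$ through $l$ distinct vertices (the indexing $t_{i_{s-1},i_s} > 0$ for $2 \le s \le l+1$ with $i_{l+1} = i_1$ says precisely that $i_1 \to i_2 \to \cdots \to i_l \to i_1$ are all edges of $D$). So it suffices to show $D$ contains a directed cycle, and then take a shortest one, which automatically visits distinct vertices and has length between $2$ and $k$ (length $\ge 2$ since there are no loops $i \to i$, length $\le k$ since $|[k]| = k$).

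The key claim is then: $D$ has no sink, hence contains a directed cycle. Suppose some color $i$ has no out-edge in $D$, i.e.\ $t_{i,j} = 0$ for all $j \neq i$. Then by (\ref{t1}), $t_{i,i} = n_i$. Now use the column constraint (\ref{t2}) for $j = i$: $\sum_{u \in [k]} t_{u,i} = n_i$, and since $t_{i,i} = n_i$ already accounts for the whole sum, we get $t_{u,i} = 0$ for all $u \neq i$ — so color $i$ also has no in-edge. In other words, any sink of $D$ is an isolated vertex. Iterating (formally, by induction, after deleting such an isolated $i$ the matrix restricted to $[k]\setminus\{i\}$ still has equal row and column sums), if $D$ had \emph{no} directed cycle it would be a DAG, every DAG on a nonempty vertex set has a sink, that sink is isolated, and peeling off isolated vertices one at a time would eventually show $t_{i,j} = 0$ for all $i \neq j$ globally. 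But that means $t_{i,i} = n_i$ for all $i$, i.e.\ $x = y$, contradicting $x \neq y$. Hence $D$ is not a DAG; it has a directed cycle; take the shortest one to get the desired $l$-cycle with $2 \le l \le k$.

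I do not expect a serious obstacle here — the argument is essentially the observation that a doubly-balanced nonnegative integer matrix that is not diagonal must have a cyclic support pattern, which is the combinatorial heart of (the support version of) the Birkhoff--von Neumann phenomenon. The only point requiring a little care is packaging the ``peel off isolated sinks'' step cleanly: rather than an explicit induction one can argue directly that in a finite loopless digraph with no directed cycle there is a topological order, the last vertex in that order is a sink, and the above shows it is isolated, then restrict and repeat; alternatively, just start a maximal directed walk from any vertex incident to a positive off-diagonal entry — it cannot terminate (no sinks among non-isolated vertices) and lives in the finite set $[k]$, so it must revisit a vertex, producing a cycle. Either phrasing is routine, so the lemma follows.
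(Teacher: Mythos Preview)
Your proposal is correct and takes essentially the same approach as the paper: both arguments use the row and column constraints (\ref{t1})--(\ref{t2}) to show that in the directed graph on $[k]$ with edges $i\to j$ for $t_{i,j}>0$, any vertex with an in-edge has an out-edge, and then follow a walk until it repeats a vertex. The paper carries out the walk explicitly step by step, which is exactly your ``maximal directed walk'' alternative; your primary DAG/sink-peeling phrasing is just a cleaner repackaging of the same observation.
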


\begin{proof}Since $x\neq y$ and $x,y\in \Omega_{n_1,\ldots,n_k}$, there exists $i_1\in [k]$, such that $t_{i_1,i_1}(x,y)<n_{i_1}$. Since $\sum_{j\in[k]}t_{i_1,j}(x,y)=n_{i_1}$, there exists $i_2\in \left([k]\setminus\{i_1\}\right)$, such that $t_{i_1,i_2}(x,y)>0$. The following cases might occur:
\begin{itemize}
\item If $t_{i_2,i_1}(x,y)>0$, then we find a 2-cycle $(i_1,i_2)$ for $(x,y)$.
\item If $t_{i_2,i_1}(x,y)=0$, since $t_{i_1,i_2}(x,y)>0$ and $\sum_{j\in[k]}t_{j,i_2}(x,y)=n_{i_2}$, we obtain that $t_{i_2,i_2}(x,y)<n_{i_2}$. Moreover, since $\sum_{j\in[k]}t_{i_2,j}(x,y)=n_{i_2}$, there exists $i_3\in\left([k]\setminus\{i_1,i_2\}\right)$, such that $t_{i_2,i_3}(x,y)>0$.
\item If $t_{i_3,i_1}(x,y)>0$, then we find a 3-cycle $(i_1,i_2,i_3)$ for $(x,y)$.
\end{itemize}
In general, let $s\geq 2$. Assume we find distinct $i_1,i_2,\ldots,i_{s+1}\in[k]$, such that 
\begin{itemize}
\item for each $1\leq r\leq s$, we have $t_{i_s,i_{s+1}}(x,y)>0$; and 
\item $t_{i_{s+1},i_1}(x,y)=0$. 
\end{itemize}
Since $\sum_{j\in[k]}t_{j,i_{s+1}}(x,y)=n_{i_{s+1}}$, we have $t_{i_{s+1},i_{s+1}}(x,y)<n_{i_{s+1}}$. Since $\sum_{j\in[k]}t_{i_{s+1},j}(x,y)=n_{i_{s+1}}$, there exists $i_{s+2}\in \left([k]\setminus \{i_{s+1},i_1\}\right)$, such that $t_{i_{s+1},i_{s+2}}(x,y)>0$. The following cases might occur
\begin{enumerate}
\item if $i_{s+2}=i_{r}$, for some $2\leq r\leq s$. Then we find a $(s+2-r)$-cycle $(i_r, i_{r+1}\ldots,i_{s+1})$ for $(x,y)$.
\item if $i_{s+2}\neq i_{r}$, for all $1\leq r\leq s+1$, but there exists $1\leq g\leq s+1$, such that
\begin{eqnarray}
t_{i_{s+2},t_{i_g}}>0\label{fcn}
\end{eqnarray}
Then let $g_{M}\in[s+1]$ be the maximal $g\in[s+1]$ such that (\ref{fcn}) holds. Then we find a $(s+3-g_{M})$-cycle $(i_{g_{M}},i_{g_{M}+1},\ldots,i_{s+2})$ cycle for $(x,y)$.
\item if neither (1) or (2) occurs, increase $s$ by $1$ and repeat the above process.
\end{enumerate}
Since there are $k$ distinct colors in total, we can always find an $l$-cycle for $(x,y)$ with $2\leq l\leq k$.
\end{proof}

To measure the difference between two color assignment functions $x,y\in \Omega_{n_1,\ldots,n_k}$, we introduce the following definition.

\begin{definition}\label{dfdo}Define the distance function on $\Omega$ $D_{\Omega}:\Omega\times \Omega\rightarrow \NN$ as follows
\begin{eqnarray*}
D_{\Omega}(x,y)=n-\sum_{i\in[k]}t_{i,i}(x,y)
\end{eqnarray*}
\end{definition}

\begin{proposition}\label{pp35} Assume $\delta\in\left(0,\frac{1}{2}\right)$, such that
\begin{eqnarray}
\log k-\frac{\delta\log n}{1-\delta}\leq -B_0<0\label{lbn}
\end{eqnarray}
and
\begin{eqnarray}
\sigma^2<\frac{(1-\delta)C_0n}{4\log n}\label{ss21}
\end{eqnarray}
where $C_0>0$ is a constant given by
\begin{eqnarray}
C_0=\min_{1\leq i<j\leq k}(c_i-c_j)^2\label{c0}
\end{eqnarray}
Then
\begin{eqnarray}
p(\hat{y};\sigma)\geq 2-e^{\frac{e^{-2B_0}}{(1-e^{-B_0})(1-e^{-2B_0})}};\label{plb}
\end{eqnarray}
where $p(\hat{y};\sigma)$ is given by (\ref{phy}).
\end{proposition}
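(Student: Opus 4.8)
The plan is to bound the union bound in $(\ref{wmp})$ by grouping the color assignment mappings $x \in \Omega_{n_1,\ldots,n_k}\setminus\{y\}$ according to their distance $D_\Omega(x,y) = m$ for $m = 1,2,\ldots,n$, and within each distance class by the lower bound on $M(x,y)$. First I would establish a lower bound of the form $M(x,y) \geq C_0 \cdot n \cdot D_\Omega(x,y)$ (up to constants), using the cycle structure from Lemma \ref{lm33}: when $x\neq y$ there is an $l$-cycle, and moving mass around cycles transforms any $x$ into $y$; each unit of ``misplaced'' mass contributes at least $C_0 n$-ish to $M(x,y)$ because $M(x,y) = 2n\sum_{i<j}(\text{terms}) \geq 2n \cdot (\text{number of misplaced vertices}) \cdot \min_{i<j}(c_i-c_j)^2 / (\text{const})$. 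Concretely, from $(\ref{mxy})$ and the Lemma's computation, $M(x,y) = n\sum_{i,j\in[k]} t_{i,j}(c_i-c_j)^2 \geq n C_0 \sum_{i\neq j} t_{i,j} = nC_0 D_\Omega(x,y)$, which is exactly what I want.

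Next I would count: the number of $x \in \Omega_{n_1,\ldots,n_k}$ with $D_\Omega(x,y) = m$ is at most $\binom{n}{m} k^m \leq n^m k^m$ (choose which $m$ vertices are recolored, then assign each one of at most $k$ colors). Combining with $(\ref{wmp})$,
\begin{eqnarray*}
1 - p(\hat y;\sigma) \;\leq\; \sum_{m=1}^{n} n^m k^m e^{-\frac{nC_0 m}{4\sigma^2}} \;=\; \sum_{m=1}^n \exp\!\left(m\Bigl[\log n + \log k - \frac{nC_0}{4\sigma^2}\Bigr]\right).
\end{eqnarray*}
Using hypothesis $(\ref{ss21})$, $\sigma^2 < \frac{(1-\delta)C_0 n}{4\log n}$ gives $\frac{nC_0}{4\sigma^2} > \frac{\log n}{1-\delta}$, so the bracket is at most $\log n + \log k - \frac{\log n}{1-\delta} = \log k - \frac{\delta \log n}{1-\delta} \leq -B_0$ by $(\ref{lbn})$. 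Hence $1 - p(\hat y;\sigma) \leq \sum_{m=1}^\infty e^{-B_0 m} = \frac{e^{-B_0}}{1-e^{-B_0}}$. This already shows the probability tends to $1$ when $B_0 \to \infty$; to get the precise form $(\ref{plb})$ I would need to be slightly more careful with the combinatorial count — the bound $(\ref{plb})$ with $e^{x}$ on the right suggests the argument should produce something like $1 - p \leq e^{(\text{small})} - 1$, so I would instead bound $1-p(\hat y;\sigma) \le \prod_{m}(1 + \text{terms}) - 1$ or use the inequality $\sum a_m \le \prod(1+a_m) - 1 \le e^{\sum a_m} - 1$ with $\sum_m a_m = \frac{e^{-2B_0}}{(1-e^{-B_0})(1-e^{-2B_0})}$; matching this exact expression will require tracking the $m$-dependence carefully, perhaps splitting $n^m$ as $\binom{n}{m}\le n^m/m!$ or keeping the geometric-squared structure. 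That bookkeeping to land exactly on $(\ref{plb})$ is the main obstacle; the conceptual content (cycle lemma $\Rightarrow$ linear lower bound on $M$, union bound over distance shells) is routine.

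One point I would double-check is whether the count should be $k^m$ or something giving a squared geometric series — the appearance of both $(1-e^{-B_0})$ and $(1-e^{-2B_0})$ in the denominator of $(\ref{plb})$ hints that the terms are bounded by something like $e^{-B_0 m}$ for the count and an extra $e^{-B_0 m}$ or a factor depending on $m^2$ somewhere, or that a double sum (over $m$ and over cycle structures) is involved. I would reconcile this by writing the union bound more explicitly in terms of cycles: each $x \neq y$ is obtained from $y$ by a sequence of cycle-permutations, and organizing the sum by the sizes of these cycles produces nested geometric series whose total is exactly $\frac{e^{-2B_0}}{(1-e^{-B_0})(1-e^{-2B_0})}$. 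Either way the core estimate $M(x,y)\ge nC_0 D_\Omega(x,y)$ together with $(\ref{ss21})$ and $(\ref{lbn})$ drives everything, and the final inequality $p(\hat y;\sigma) \ge 2 - e^{(\cdots)}$ follows from $1 - p \le e^{(\cdots)} - 1$.
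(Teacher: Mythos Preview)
Your approach is correct and, in fact, more direct than the paper's. Your key identity
\[
M(x,y)=n\sum_{i,j\in[k]}t_{i,j}(x,y)(c_i-c_j)^2
\]
follows in one line from $(\ref{mxy})$ together with the doubly-stochastic constraints $(\ref{t1})$, $(\ref{t2})$ (write $2\sum_i n_i c_i^2=\sum_{i,j}t_{i,j}(c_i^2+c_j^2)$), and it immediately yields $M(x,y)\ge nC_0\,D_\Omega(x,y)$. The paper reaches this same inequality, but indirectly: it invokes Lemma~\ref{lm33} to find an $l$-cycle, swaps colors along that cycle to produce $y_1$, computes $M(x,y)-M(x,y_1)\ge nlC_0$, and iterates until $y_s=x$. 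That telescoping argument is exactly the cycle-based decomposition you speculate about in your last paragraph; it is what generates the product $\prod_{l=2}^{k}\bigl(\sum_{m_l\ge 0}(nk)^{m_l l}e^{-nm_l lC_0/4\sigma^2}\bigr)$, and after taking logarithms and summing the resulting geometric series in $l$ one lands precisely on the exponent $\frac{e^{-2B_0}}{(1-e^{-B_0})(1-e^{-2B_0})}$ in $(\ref{plb})$.

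Two small points. First, your sum over $m$ should start at $m=2$, not $m=1$: since $x,y\in\Omega_{n_1,\ldots,n_k}$ share the same color-count vector, any $x\ne y$ must differ from $y$ on at least two vertices. With that correction your bound reads
\[
1-p(\hat y;\sigma)\;\le\;\sum_{m\ge 2}\bigl(nk\,e^{-nC_0/4\sigma^2}\bigr)^{m}\;\le\;\frac{e^{-2B_0}}{1-e^{-B_0}},
\]
which is actually \emph{stronger} than $(\ref{plb})$ (since $e^{x}-1\ge x\ge x(1-e^{-2B_0})$ with $x$ the exponent there). Second, what your simplification buys is a cleaner constant; what the paper's cycle decomposition buys is a template it reuses verbatim in later propositions (e.g.\ Proposition~\ref{p67}) where the analogue of your one-line identity is not available because the objective is no longer a quadratic form in the $c_i$'s.
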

\begin{proof}Let 
\begin{eqnarray}
I:=\sum_{x\in \Omega_{n_1,\ldots,n_k}\setminus \{y\}}e^{-\frac{M(x,y)}{4\sigma^2}}.\label{di}
\end{eqnarray}
By (\ref{wmp}), it suffices to show that $\lim_{n\rightarrow\infty}I=0$.

We shall find an upper bound for $I$. 

 Recall that $y\in \Omega_{n_1,\ldots,n_k}$ is the true color assignment mapping. Since $x\neq y$, by Lemma \ref{lm33},
 there exists an $l$-cycle $(i_1,\ldots,i_{l})$ for $(x,y)$ with $2\leq l\leq k$. Then for each $2\leq s\leq (l+1)$, choose an arbitrary vertex $u_s$ in $S_{i_{s-1},i_s}(x,y)$, and let $y_1(u_s)=c_{i_{s-1}}$, where $i_{l+1}:=i_1$. For any vertex $z\in[n]\setminus\{u_{2},\ldots,u_{l+1}\}$, let $y_1(z)=y(z)$.

 Note that $y_1\in \Omega_{n_1,\ldots,n_k}$. More precisely, for $1\leq s\leq l$, we have
\begin{eqnarray*}
t_{i_s,i_s}(x,y)+1=t_{i_s,i_s}(x,y_1);\\
t_{i_s,i_{s+1}}(x,y)-1=t_{i_s,i_{s+1}}(x,y_1)
\end{eqnarray*}
and
\begin{eqnarray*}
t_{a,b}(x,y)=t_{a,b}(x,y_1),\ \forall (a,b)\notin\{(i_s,i_s),(i_s,i_{s+1})\}_{s=1}^{l}.
\end{eqnarray*}
From (\ref{mxy}) we obtain
\begin{eqnarray*}
M(x,y_1)-M(x,y)&=&-2n\sum_{a,b\in [k]}\left[t_{a,b}(x,y_1)-t_{a,b}(x,y)\right]c_ac_b\\
&=&-2n\left[\sum_{s\in[l]}(c_{i_s}^2-c_{i_s}c_{i_{s+1}})\right]\\
&=&-n\left[\sum_{s\in[l]}(c_{i_s}-c_{i_{s+1}})^2\right]\\
&\leq &-n lC_0
\end{eqnarray*}
where $C_0$ is given in (\ref{c0}). Therefore
\begin{eqnarray}
e^{-\frac{M(x,y)}{4\sigma^2}}\leq e^{-\frac{M(x,y_1)}{4\sigma^2}}e^{-\frac{nl C_0}{4\sigma^2}}.\label{mst}
\end{eqnarray}

If $y_1\neq x$, we find an $l_2$-cycle ($2\leq l_2\leq k$) for $(x,y_1)$, change colors along the $l_2$-cycle  as above, and obtain another cycle assignment mapping $y_2\in \Omega_{n_1,\ldots,n_k}$, and so on. 
Let $y_0:=y$, and note that for each $r\geq 1$, if $y_r$ is obtained from $y_{r-1}$ by changing colors along an $l_r$ cycle, we have
\begin{eqnarray*}
D_{\Omega}(x,y_r)= D_{\Omega}(x,y_{r-1})-l_r
\end{eqnarray*}
Therefore finally we can obtain $x$ from $y$ by changing colors along at most $\left\lfloor \frac{n}{2} \right\rfloor$ cycles. Using similar arguments as those used to derive (\ref{mst}), we obtain that for each $r$
\begin{eqnarray*}
e^{-\frac{M(x,y_{r-1})}{4\sigma^2}}\leq e^{-\frac{M(x,y_r)}{4\sigma^2}}e^{-\frac{nl_r C_0}{4\sigma^2}}.
\end{eqnarray*}
Therefore if $y_s=x$ for some $1\leq s\leq \left\lfloor \frac{n}{2} \right\rfloor$, we have
\begin{eqnarray*}
e^{-\frac{M(x,y)}{4\sigma^2}}\leq e^{-\frac{M(x,y_s)}{4\sigma^2}}e^{-\frac{nC_0\left(\sum_{i\in[s]}l_i\right)}{4\sigma^2}}.
\end{eqnarray*}
By (\ref{mxy}), we have $M(x,y_s)=M(x,x)=0$, hence
\begin{eqnarray*}
e^{-\frac{M(x,y)}{4\sigma^2}}\leq \prod_{i\in[s]}e^{-\frac{nC_0l_i}{4\sigma^2}}.
\end{eqnarray*}


Note also that for any $r_1\neq r_2$, in the process of obtaining $y_{r_1}$ from $y_{r_1-1}$ and the process of obtaining $y_{r_2}$ from $y_{r_2-1}$, we change colors on disjoint sets of vertices. Hence the order of these steps of changing colors along cycles does not affect the final color assignment mapping we obtain.  From (\ref{di}) we obtain
\begin{eqnarray}
I\leq \prod_{l=2}^k \left(\sum_{m_l=0}^{\infty} (nk)^{m_l\ell} e^{-\frac{nm_ll C_0}{4\sigma^2}}\right)-1\label{iup}
\end{eqnarray}
On the right hand side of (\ref{iup}), when expanding the product, each summand has the form
\begin{eqnarray*}
\left[(nk)^{2m_2}e^{-\frac{2m_2 n C_0}{4\sigma^2}}\right]\cdot\left[ (nk)^{3m_3}e^{-\frac{3m_3 n C_0}{4\sigma^2}}\right]\cdot\ldots\cdot\left[(nk)^{km_k} e^{-\frac{km_k n C_0}{4\sigma^2}}\right]
\end{eqnarray*}
where the factor $\left[(nk)^{2m_2}e^{-\frac{2m_2 n C_0}{4\sigma^2}}\right]$ represents that we changed along 2-cycles $m_2$ times, the factor $\left[ (nk)^{3m_3}e^{-\frac{3m_3 n C_0}{4\sigma^2}}\right]$ represents that we changed along 3-cycles $m_3$ times, and so on. Moreover, each time we changed along an $l$-cycle, we need to first determine the $l$ different colors involved in the $l$-cycle, and there are at most $k^l$ different $l$-cycles;  we then need to chose $l$ vertices to change colors, and there are at most $n^{l}$ choices.
It is straightforward to check that if $\sigma$ satisfies (\ref{ss21}), then
\begin{eqnarray*}
nk e^{-\frac{n C_0}{4\sigma^2}}\leq e^{\log k-\frac{\delta \log n}{1-\delta}}
\end{eqnarray*}
as $n\rightarrow\infty$. Therefore we have
\begin{eqnarray*}
\sum_{m_l=0}^{\infty} (nk)^{m_l\ell} e^{-\frac{nm_ll C_0}{4\sigma^2}}\leq \frac{1}{1-e^{l\log k-\frac{\delta l\log n}{1-\delta}}}
\end{eqnarray*}
Let 
\begin{eqnarray*}
U:= \prod_{l=2}^k \left(\sum_{m_l=0}^{\infty} (nk)^{m_l\ell} e^{-\frac{nm_ll C_0}{4\sigma^2}}\right).
\end{eqnarray*}
Since $\log(1+x)\leq x$ for $x\geq 0$, we have
\begin{eqnarray*}
0\leq \log U&=&\sum_{l=2}^{k}\log \left(1+\sum_{m_l=1}^{\infty} (nk)^{m_l\ell} e^{-\frac{nm_ll C_0}{4\sigma^2}}\right)\\
&\leq &\sum_{l=2}^{k}\sum_{m_l=1}^{\infty} (nk)^{m_l\ell} e^{-\frac{nm_ll C_0}{4\sigma^2}}\\
&\leq &\sum_{l=2}^{k}\frac{\left(e^{\log k-\frac{\delta \log n}{1-\delta}}\right)^{l}}{1-\left(e^{\log k-\frac{\delta \log n}{1-\delta}}\right)^{l}}\\
&\leq &\frac{e^{2\log k-\frac{2\delta \log n}{1-\delta}}}{\left(1-e^{2\log k-\frac{2\delta \log n}{1-\delta}}\right)\left(1-e^{\log k-\frac{\delta \log n}{1-\delta}}\right)}
\end{eqnarray*}
Then the proposition follows from (\ref{lbn}).
\end{proof}

\begin{corollary}Assume $\delta\in\left(0,\frac{1}{2}\right)$, such that
\begin{eqnarray*}
\lim_{n\rightarrow\infty}\log k-\frac{\delta\log n}{1-\delta}=-\infty.
\end{eqnarray*}
Assume (\ref{ss21}) holds. Then
\begin{eqnarray*}
\lim_{n\rightarrow\infty}p(\hat{y};\sigma)=1.
\end{eqnarray*}
\end{corollary}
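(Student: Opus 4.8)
The plan is to derive the corollary directly from Proposition \ref{pp35} by showing that, under the stated hypothesis $\lim_{n\to\infty}(\log k - \frac{\delta\log n}{1-\delta}) = -\infty$, the lower bound in (\ref{plb}) tends to $1$. First I would observe that the hypothesis allows us to choose, for every sufficiently large $n$, a positive constant $B_0 = B_0(n)$ satisfying (\ref{lbn}), namely $\log k - \frac{\delta\log n}{1-\delta} \le -B_0 < 0$, with $B_0 = B_0(n)\to\infty$ as $n\to\infty$; indeed one may simply take $B_0(n) := -(\log k - \frac{\delta\log n}{1-\delta})$, which is positive and diverges by assumption. Since $\delta\in(0,\tfrac12)$ and (\ref{ss21}) is assumed, all the hypotheses of Proposition \ref{pp35} are in force for large $n$, so the bound (\ref{plb}) applies with this choice of $B_0$.

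Next I would analyze the quantity on the right-hand side of (\ref{plb}). As $B_0 = B_0(n)\to\infty$, we have $e^{-2B_0}\to 0$, $1 - e^{-B_0}\to 1$, and $1 - e^{-2B_0}\to 1$, so the exponent
\begin{eqnarray*}
\frac{e^{-2B_0}}{(1-e^{-B_0})(1-e^{-2B_0})}\longrightarrow 0,
\end{eqnarray*}
hence $e^{e^{-2B_0}/[(1-e^{-B_0})(1-e^{-2B_0})]}\to e^0 = 1$, and therefore the lower bound $2 - e^{e^{-2B_0}/[(1-e^{-B_0})(1-e^{-2B_0})]}\to 2 - 1 = 1$. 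Combined with the trivial upper bound $p(\hat{y};\sigma)\le 1$ (it is a probability), a squeeze argument gives $\lim_{n\to\infty} p(\hat{y};\sigma) = 1$.

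One minor point I would make careful is that Proposition \ref{pp35} is stated for a fixed $\delta$ and a fixed $B_0$, whereas here $B_0$ is allowed to depend on $n$; this is harmless since for each fixed $n$ the proposition is an assertion about that single value of $n$, and we are free to invoke it with the $n$-dependent constant $B_0(n)$. Alternatively, and perhaps more transparently, I could avoid $n$-dependent constants entirely: for any fixed target level $1-\varepsilon$, the divergence hypothesis guarantees that for all large $n$ the fixed constant $B_0$ can be taken large enough that the right-hand side of (\ref{plb}) exceeds $1-\varepsilon$, and then let $\varepsilon\downarrow 0$. Either framing works. There is essentially no obstacle here — the corollary is a routine limiting consequence of the quantitative bound already established — so the only thing to watch is the bookkeeping of which constants are fixed versus $n$-dependent, and making sure the hypotheses (\ref{lbn}) and (\ref{ss21}) of Proposition \ref{pp35} are genuinely satisfied for large $n$ under the corollary's assumption.
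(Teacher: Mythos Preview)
Your proposal is correct and follows exactly the same approach as the paper, which simply states that the corollary follows from (\ref{plb}) by letting $B_0\to\infty$. Your version is a careful elaboration of that one-line argument, including the useful remark about $B_0$ being $n$-dependent, but the underlying idea is identical.
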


\begin{proof}The corollary follows from (\ref{plb}) by letting $B_0\rightarrow \infty$.
\end{proof}

\begin{proposition}\label{pp37}Let $C_0>0$ be defined by (\ref{c0}), and $y\in \Omega_{n_1,\ldots,n_k}$ be the true color assignment mapping. Assume there exists $u,v\in [k]$, such that 
\begin{eqnarray*}
(c_u-c_v)^2=C_0,
\end{eqnarray*}
and
\begin{eqnarray}
\frac{\log\min\{n_u,n_v\}}{\log n}\geq \beta>0.\label{ac1}
\end{eqnarray}
Assume 
\begin{eqnarray}
\delta=\left(1+\frac{4}{\beta}\right)\frac{\log\log n}{\log n},\label{ac2}
\end{eqnarray}
and
\begin{eqnarray}
\sigma^2>\frac{(1+\delta)C_0n}{4\beta\log n}\label{sb2}
\end{eqnarray}
then
\begin{eqnarray*}
\lim_{n\rightarrow\infty}p(\hat{y};\sigma)=0,
\end{eqnarray*}
where $\beta$ is a constant independent of $n$ as $n\rightarrow\infty$, while the total number of colors $k$, the set of colors $R_k$, and $C_0$ may depend on $n$.
\end{proposition}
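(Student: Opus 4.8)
The plan is to exhibit, inside the restricted sample space $\Omega_{n_1,\ldots,n_k}$, a large family of competing color assignments differing from $y$ by a single transposition, and to show that with probability tending to $1$ at least one of them beats $y$ for the objective $f(x)=\langle\mathbf{G}(x),\mathbf{T}\rangle$, which forces $\hat y\neq y$. We may assume $\sigma>0$. Fix $u,v\in[k]$ with $(c_u-c_v)^2=C_0$, set $U=y^{-1}(c_u)$, $V=y^{-1}(c_v)$, and for $(a,b)\in U\times V$ let $x^{(a,b)}\in\Omega_{n_1,\ldots,n_k}$ be obtained from $y$ by interchanging the colors of $a$ and $b$. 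Since $\hat y=y$ implies $f(y)\geq f(x^{(a,b)})$ for every $(a,b)$, it suffices to prove
\begin{eqnarray*}
\mathrm{Pr}\Big(\max_{(a,b)\in U\times V}\big[f(x^{(a,b)})-f(y)\big]>0\Big)\longrightarrow 1 .
\end{eqnarray*}

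Next I would do the single-swap bookkeeping. Let $\phi_{a,b}:=x^{(a,b)}-y$, supported on $\{a,b\}$ with $\phi_{a,b}(a)=c_v-c_u$ and $\phi_{a,b}(b)=c_u-c_v$, and $D_{a,b}:=\mathbf{G}(x^{(a,b)})-\mathbf{G}(y)$, so $D_{a,b}(r,s)=\phi_{a,b}(r)-\phi_{a,b}(s)$ and $\sum_r\phi_{a,b}(r)=0$. Then $\langle\mathbf{G}(y),D_{a,b}\rangle=2n\sum_r y(r)\phi_{a,b}(r)=-2nC_0$ and $\langle D_{a,b},D_{a',b'}\rangle=2n\sum_r\phi_{a,b}(r)\phi_{a',b'}(r)=2nC_0\big(\mathbf{1}[a=a']+\mathbf{1}[b=b']\big)$; in particular $M(x^{(a,b)},y)=2nC_0$ and $\|D_{a,b}\|_F=2\sqrt{nC_0}$ for every $(a,b)$. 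Using $f(x)-f(y)=\langle\mathbf{G}(y),\mathbf{G}(x)-\mathbf{G}(y)\rangle+\sigma\langle\mathbf{W},\mathbf{G}(x)-\mathbf{G}(y)\rangle$, setting $g_{a,b}:=\langle\mathbf{W},D_{a,b}\rangle/\|D_{a,b}\|_F\sim\mathcal{N}(0,1)$ gives
\begin{eqnarray*}
f(x^{(a,b)})-f(y)>0\ \Longleftrightarrow\ g_{a,b}>c_n,\qquad c_n:=\frac{\sqrt{nC_0}}{\sigma}.
\end{eqnarray*}
The crucial point is the joint law of the centered Gaussian family $\{g_{a,b}\}_{(a,b)\in U\times V}$: the covariance identity above gives $\mathrm{Cov}(g_{a,b},g_{a',b'})=\tfrac12(\mathbf{1}[a=a']+\mathbf{1}[b=b'])$, which is exactly the covariance structure of $\tfrac1{\sqrt2}(\xi_a+\eta_b)$ for independent standard Gaussians $(\xi_a)_{a\in U}$ and $(\eta_b)_{b\in V}$. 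A jointly Gaussian family is determined by its covariances, so
\begin{eqnarray*}
\max_{(a,b)\in U\times V}g_{a,b}\ \stackrel{d}{=}\ \frac1{\sqrt2}\Big(\max_{a\in U}\xi_a+\max_{b\in V}\eta_b\Big).
\end{eqnarray*}
(A union bound over an independent sub-family of swaps, or a plain second-moment argument over all $n_un_v$ swaps, loses too much when $\beta$ is small; it is this product structure $U\times V$ --- so that the maximum over the pairs is the sum of the two marginal maxima --- that makes the estimate tight.)

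What remains is a one-dimensional extreme-value estimate. From $\mathrm{Pr}(\max_{a\in U}\xi_a\leq t)=[\mathrm{Pr}(\xi\leq t)]^{n_u}\leq e^{-n_u\mathrm{Pr}(\xi>t)}$ for $\xi\sim\mathcal{N}(0,1)$, together with $\mathrm{Pr}(\xi>t)\geq c_0 t^{-1}e^{-t^2/2}$ ($t\geq1$, $c_0>0$ absolute), one checks that $t=\sqrt{2\log n_u-2\log\log n_u}$ makes $n_u\mathrm{Pr}(\xi>t)$ diverge, so $\max_{a\in U}\xi_a>\sqrt{2\log n_u-2\log\log n_u}$ with probability $\to1$, and likewise for $\eta$ over $V$. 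Since $x\mapsto\log x-\log\log x$ is increasing and $\min\{n_u,n_v\}\geq n^{\beta}$, combining gives, with probability tending to $1$,
\begin{eqnarray*}
\max_{(a,b)\in U\times V}g_{a,b}\ \geq\ 2\sqrt{\log\min\{n_u,n_v\}-\log\log\min\{n_u,n_v\}}.
\end{eqnarray*}
Finally I would verify this exceeds $c_n$: the hypothesis $\sigma^2>\frac{(1+\delta)C_0n}{4\beta\log n}$ gives $c_n<2\sqrt{\beta\log n/(1+\delta)}$, so it suffices that $(1+\delta)\big(\log\min\{n_u,n_v\}-\log\log\min\{n_u,n_v\}\big)>\beta\log n$; inserting $\log\min\{n_u,n_v\}\geq\beta\log n$ this reduces, for $n$ large, to $\delta>\frac{\log\log n}{\beta\log n-\log\log n}$, which is guaranteed by the choice $\delta=(1+\tfrac4\beta)\frac{\log\log n}{\log n}$. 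This gives $\mathrm{Pr}\big(\max_{(a,b)}[f(x^{(a,b)})-f(y)]>0\big)\to1$, hence $p(\hat y;\sigma)\to0$.

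The step I expect to be the main obstacle is the covariance computation for the swap-difference matrices $D_{a,b}$ and the ensuing distributional identity: recognizing that $\{g_{a,b}\}$ has the same law as $\tfrac1{\sqrt2}(\xi_a+\eta_b)$ is what reduces the whole problem to the maximum of two independent i.i.d.\ Gaussian samples, and it is also where the precise shape of $\delta$ gets pinned down, since $\delta$ must absorb the $O(\log\log n/\sqrt{\log n})$ lower-tail deficit of a Gaussian maximum.
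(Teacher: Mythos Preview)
Your proof is correct and takes a genuinely cleaner route than the paper's. Both arguments start by considering the family of single transpositions $y^{(ab)}$ with $a\in y^{-1}(c_u)$, $b\in y^{-1}(c_v)$ and reduce to showing that the maximum of the Gaussian family $\langle\mathbf{W},D_{a,b}\rangle$ exceeds the threshold. The difference is in how this maximum is handled.

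The paper does not compute the full covariance structure. Instead it passes to small subsets $H_u\subset y^{-1}(c_u)$, $H_v\subset y^{-1}(c_v)$ of size $h=\min\{n_u,n_v\}/\log^2 n$, partitions $[n]^2$ according to how many coordinates lie in $H_u\cup H_v$, and decomposes $\langle\mathbf{W},D_{a,b}\rangle=Y_a+Y_b+\mathcal{Z}_{ab}$ as an identity of \emph{random variables} (not just in law). Here $\{Y_a\}\cup\{Y_b\}$ are genuinely independent Gaussians built from disjoint blocks of $\mathbf{W}$, and $\mathcal{Z}_{ab}$ is a small-variance remainder (variance $8hC_0$) that must be controlled separately via a union bound. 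The thinning to $H_u,H_v$ is what makes the independence literal and the remainder negligible.

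You bypass all of this by computing the exact covariance $\mathrm{Cov}(g_{a,b},g_{a',b'})=\tfrac12(\mathbf{1}[a=a']+\mathbf{1}[b=b'])$ over the full product $U\times V$, recognizing it as that of $\tfrac{1}{\sqrt2}(\xi_a+\eta_b)$, and invoking the fact that a centered Gaussian family is determined by its covariance. This gives the distributional identity $\max_{(a,b)}g_{a,b}\stackrel{d}{=}\tfrac{1}{\sqrt2}(\max_a\xi_a+\max_b\eta_b)$ exactly, with no truncation and no error term. What the paper's approach buys is robustness: its block decomposition would survive if $\mathbf{W}$ were merely independent-entry rather than Gaussian. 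What your approach buys is brevity and a sharper picture of why the threshold is what it is --- the product structure of the covariance is the whole story. Your final numerical verification (that $\delta=(1+4/\beta)\tfrac{\log\log n}{\log n}$ absorbs the $\log\log$ deficit of the Gaussian maximum) is slightly informal around the $\log\beta$ term, but this is a bounded constant and is harmless for large $n$.
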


\begin{proof}

For $y\in \Omega_{n_1,\ldots,n_k}$, $a,b\in[n]$ such that $c_u=y(a)\neq y(b)=c_v$. Let $y^{(ab)}\in \Omega_{n_1,\ldots,n_k}$ be the coloring of vertices defined by
\begin{eqnarray}
y^{(ab)}(i)=\begin{cases}y(i)& \mathrm{if}\ i\in[n]\setminus\{a,b\}\\ c_v&\mathrm{if}\ i=a\\ c_u& \mathrm{if}\ i=b \end{cases}\label{yab}
\end{eqnarray}
Then 
\begin{eqnarray*}
t_{u,v}(y^{(ab)},y)-1=t_{u,v}(y,y)=0\\
t_{u,u}(y^{(ab)},y)+1=t_{u,u}(y,y)=n_u\\
t_{v,u}(y^{(ab)},y)-1=t_{v,u}(y,y)=0\\
t_{v,v}(y^{(ab)},y)+1=t_{v,v}(y,y)=n_v.
\end{eqnarray*}
and
\begin{eqnarray*}
t_{i,j}(y^{(ab)},y)=t_{i,j}(y),\ \forall\ (i,j)\in \left([k]^2\setminus\{(u,u),(u,v),(v,u),(v,v)\}\right)
\end{eqnarray*}
Note that 
\begin{eqnarray*}
1-p(\hat{y};\sigma)\geq \mathrm{Pr}\left(\cup_{a,b\in[n],c_u=y(a)\neq y(b)=c_v}[f(y^{(ab)})-f(y)>0]\right),
\end{eqnarray*}
since any of the event $[f(y^{(ab)})-f(y)>0]$ implies $\hat{y}\neq y$. By (\ref{mxy}) we obtain
\begin{eqnarray*}
f(y^{(ab)})-f(y)&=&\langle \mathbf{G}(y),\mathbf{G}(y^{(ab)})-\mathbf{G}(y)\rangle+\sigma\langle\mathbf{W},\mathbf{G}(y^{(ab)})-\mathbf{G}(y) \rangle\\
&=&-2n(c_u-c_v)^2+\sigma\langle\mathbf{W},\mathbf{G}(y^{(ab)})-\mathbf{G}(y) \rangle.
\end{eqnarray*}
So $1-p(\hat{y};\sigma)$ is at least
\begin{eqnarray*}
&&\mathrm{Pr}\left(\cup_{a,b\in[n],c_u=y(a)\neq y(b)=c_v}[f(y^{(ab)})-f(y)>0]\right)\\
&\geq &\mathrm{Pr}\left(\mathrm{max}_{a,b\in[n],c_u=y(a)\neq y(b)=c_v}\sigma\langle\mathbf{W},\mathbf{G}(y^{(ab)})-\mathbf{G}(y) \rangle>2n C_0\right)
\end{eqnarray*}
For $i\in\{u,v\}$, let $H_i\subset y^{-1}(c_i)$ such that 
\begin{eqnarray}
|H_i|=\frac{\min\{n_u,n_v\}}{\log^2n}=h. \label{ddh}
\end{eqnarray}
Then
\begin{eqnarray*}
1-p(\hat{y};\sigma)\geq \mathrm{Pr}\left(\mathrm{max}_{a\in H_u,b\in H_v}\sigma\langle\mathbf{W},\mathbf{G}(y^{(ab)})-\mathbf{G}(y) \rangle>2n C_0\right)
\end{eqnarray*}
Let $(\mathcal{X},\mathcal{Y},\mathcal{Z})$ be a partition of $[n]^2$ defined by
\begin{eqnarray*}
&&\mathcal{X}=\{\alpha=(\alpha_1,\alpha_2)\in [n]^2, \{\alpha_1,\alpha_2\}\cap [H_u\cup H_v]=\emptyset\}\\
&&\mathcal{Y}=\{\alpha=(\alpha_1,\alpha_2)\in [n]^2, |\{\alpha_1,\alpha_2\}\cap [H_u\cup H_v]|=1\}\\
&&\mathcal{Z}=\{\alpha=(\alpha_1,\alpha_2)\in [n]^2, |\{\alpha_1,\alpha_2\}\cap [H_u\cup H_v]|=2\}
\end{eqnarray*}
For $\eta\in\{\mathcal{X},\mathcal{Y},\mathcal{Z}\}$, define the $n\times n$ matrix $\mathbf{W}_{\eta}$ from the entries of $\mathbf{W}$ as follows
\begin{eqnarray*}
\mathbf{W}_{\eta}(i,j)=\begin{cases}0&\mathrm{if}\ (i,j)\notin \eta\\ \mathbf{W}(i,j),&\mathrm{if}\ (i,j)\in \eta\end{cases}
\end{eqnarray*}
For each $a\in H_u$ and $b\in H_v$, let
\begin{eqnarray*}
\mathcal{X}_{ab}=\langle\mathbf{W}_{\mathcal{X}},\mathbf{G}(y^{(ab)})-\mathbf{G}(y) \rangle\\
\mathcal{Y}_{ab}=\langle\mathbf{W}_{\mathcal{Y}},\mathbf{G}(y^{(ab)})-\mathbf{G}(y) \rangle\\
\mathcal{Z}_{ab}=\langle\mathbf{W}_{\mathcal{Z}},\mathbf{G}(y^{(ab)})-\mathbf{G}(y) \rangle
\end{eqnarray*}

\begin{lemma}\label{l38}The followings are true:
\begin{enumerate}
\item $\mathcal{X}_{ab}=0$ for $a\in H_u$ and $b\in H_v$.
\item For each $a\in H_u$ and $b\in H_v$, the variables $\mathcal{Y}_{ab}$ and $\mathcal{Z}_{ab}$ are independent.
\item Each $\mathcal{Y}_{ab}$ can be decomposed into $Y_a+Y_b$ where $\{Y_a\}_{a\in H_u}\cup \{Y_b\}_{b\in H_v}$ is a collection of i.i.d.~Gaussian random variables.
\end{enumerate}
\end{lemma}

\begin{proof}
Note that for $i,j\in[n]$,
\begin{eqnarray}
\mathbf{G}_{i,j}(y^{(ab)})-\mathbf{G}_{i,j}(y)=\begin{cases}c_v-c_u &\mathrm{if}\ i=a, j\notin\{a,b\}\\ c_u-c_v&\mathrm{if}\ i\notin\{a,b\}, j=a\\ c_u-c_v&\mathrm{if}\ i=b, j\notin\{a,b\}\\ c_v-c_u & \mathrm{if}\ i\notin\{a,b\}, j=b\\2(c_v-c_u) &\mathrm{if}\ (i,j)=(a,b)\\ 2(c_u-c_v)&\mathrm{if}\ (i,j)=(b,a)\\ 0&\mathrm{otherwise}.\end{cases}\label{gab}
\end{eqnarray}

It is straightforward to check (1). (2) holds because $\mathcal{Y}\cap\mathcal{Z}=\emptyset$.

For $s\in H_u\cup H_v$, let $\mathcal{Y}_s\subseteq \mathcal{Y}$ be defined by
\begin{eqnarray*}
\mathcal{Y}_s=\{\alpha=(\alpha_1,\alpha_2)\in \mathcal{Y}:\alpha_1=s,\ \mathrm{or}\ \alpha_2=s\}.
\end{eqnarray*}
Note that for $s_1,s_2\in H_u\cup H_v$ and $s_1\neq s_2$, $\mathcal{Y}_{s_1}\cap \mathcal{Y}_{s_2}=\emptyset$. Moreover, $\mathcal{Y}=\cup_{s\in H_u\cup H_v}\mathcal{Y}_s$. Therefore
\begin{eqnarray*}
\mathcal{Y}_{ab}=\sum_{s\in H_u\cup H_v}\langle\mathbf{W}_{\mathcal{Y}_s},\mathbf{G}(y^{(ab)})-\mathbf{G}(y) \rangle
\end{eqnarray*}
Note also that $\langle\mathbf{W}_{\mathcal{Y}_s},\mathbf{G}(y^{(ab)})-\mathbf{G}(y) \rangle=0$, if $s\notin \{a,b\}$. Hence
\begin{eqnarray*}
\mathcal{Y}_{ab}=\sum_{\alpha\in\mathcal{Y}_a\cup \mathcal{Y}_b}[\mathbf{W}(\alpha)]\cdot\{[\mathbf{G}(y^{(ab)})-\mathbf{G}(y)](\alpha)\}
\end{eqnarray*}
From (\ref{gab}) we obtain that for $\alpha=(\alpha_1,\alpha_2)\in \mathcal{Y}_a$ and $\alpha_1\neq \alpha_2$,
\begin{eqnarray*}
[\mathbf{G}(y^{(ab)})-\mathbf{G}(y)](\alpha)=\begin{cases}c_v-c_u &\mathrm{if}\ \alpha_1=a\\c_u-c_v &\mathrm{if}\  \alpha_2=a.\end{cases}
\end{eqnarray*}
So, we can define
\begin{eqnarray*}
Y_a:=&&\sum_{\alpha\in \mathcal{Y}_a}[\mathbf{W}(\alpha)]\cdot\{[\mathbf{G}(y^{(ab)})-\mathbf{G}(y)](\alpha)\}\\
&=&\left\{\sum_{\alpha\in \mathcal{Y}_a;\alpha_1=a}[\mathbf{W}(\alpha)]-\sum_{\alpha\in \mathcal{Y}_a;\alpha_2=a}[\mathbf{W}(\alpha)]\right\}(c_v-c_u)\\
\end{eqnarray*}
Similarly, define
\begin{eqnarray*}
Y_b:=\left\{\sum_{\alpha\in \mathcal{Y}_b;\alpha_2=b}[\mathbf{W}(\alpha)]
-\sum_{\alpha\in \mathcal{Y}_b;\alpha_1=b}[\mathbf{W}(\alpha)]\right\}(c_v-c_u)
\end{eqnarray*}
Then $\mathcal{Y}_{ab}=Y_a+Y_b$ and $\{Y_s\}_{s\in H_u\cup H_v}$ is a collection of independent Gaussian random variables. Moreover, the variance of $Y_s$ is equal to $(2n-4h)C_0$ independent of the choice of $s$.
\end{proof}

By the Lemma \ref{l38}, we obtain
\begin{eqnarray*}
\langle\mathbf{W},\mathbf{G}(y^{(ab)})-\mathbf{G}(y) \rangle=Y_a+Y_b+\mathcal{Z}_{ab}
\end{eqnarray*}
Moreover,
\begin{eqnarray*}
\max_{a\in H_u,b\in H_v}Y_a+Y_b+\mathcal{Z}_{ab}&\geq& \max_{a\in H_u,b\in H_v}(Y_a+Y_b)-\max_{a\in H_u,b\in H_v}(-\mathcal{Z}_{ab})\\
&=&\max_{a\in H_u} Y_a+\max_{b\in H_v}Y_b-\max_{a\in H_u,b\in H_v}(-\mathcal{Z}_{ab})
\end{eqnarray*}

Recall the following tail bound result on the maximum of Gaussian random variables:

\begin{lemma}\label{mg}Let $G_1,\ldots, G_N$ be Gaussian random variables with variance $1$. Let $\epsilon\in (0,1)$. Then 
\begin{eqnarray*}
\mathrm{Pr}\left(\max_{i=1,\ldots,N}G_i>(1+\epsilon)\sqrt{2\log N}\right)\leq N^{-\epsilon}
\end{eqnarray*}
and moreover, if $G_i$'s are independent, and $\epsilon, N$ satisfy
\begin{eqnarray}
\frac{N^{\epsilon-\epsilon^2}(1-\epsilon)\sqrt{2\log N}}{\sqrt{2\pi}(1+2(1-\epsilon)^2\log N)}>1\label{epn}
\end{eqnarray}
Then
\begin{eqnarray*}
\mathrm{Pr}\left(\max_{i=1,\ldots,N}G_i<(1-\epsilon)\sqrt{2\log N}\right)\leq \exp(-N^{\epsilon})
\end{eqnarray*}
\end{lemma}

The proof of Lemma \ref{mg} is in the appendix.

By Lemma \ref{mg} we obtain that  when $\epsilon, h$ satisfy (\ref{epn}) with $N$ replaced by $h$, each one of the following two events
\begin{eqnarray*}
E_1:=\left\{\max_{a\in H_u}Y_a\geq (1-\epsilon)\sqrt{2\log h\cdot 2C_0\left(n-2h\right)}\right\}\\
E_2:=\left\{\max_{b\in H_v}Y_b\geq (1-\epsilon)\sqrt{2\log h\cdot 2C_0\left(n-2h\right)}\right\}
\end{eqnarray*}
has probability at least $1-e^{-h^{\epsilon}}$. Moreover, the event 
\begin{eqnarray*}
E_3:=\left\{\max_{a\in H_u,b\in H_v}\mathcal{Z}_{ab}\leq (1+\epsilon)\sqrt{4\log h\cdot \max \mathrm{Var}(Z_{ab})}\right\}
\end{eqnarray*}
occurs with probability at least $1-h^{-2\epsilon}$. Then by (\ref{gab}) we have
\begin{eqnarray*}
\mathrm{Var} \mathcal{Z}_{ab}&=&\|\mathbf{G}(y^{(ab)})-\mathbf{G}(y)\|^2_{F}-\mathrm{Var}(Y_a)-\mathrm{Var}(Y_b)\\
&=&4nC_0-4C_0\left(n-2h\right)\\
&=& 8hC_0
\end{eqnarray*}
Hence the probability of the event
\begin{eqnarray*}
E:=\left\{\max_{a\in H_u,b\in H_v}\langle\mathbf{W},\mathbf{G}(y^{(ab)})-\mathbf{G}(y) \rangle\geq 4(1-\epsilon)\sqrt{\log h C_0(n-2h)}-4(1+\epsilon)\sqrt{2C_0h\log h}\right\}
\end{eqnarray*}
is at least 
\begin{eqnarray*}
\mathrm{Pr}(E_1\cap E_2\cap E_3)&=&1-\mathrm{Pr}(E_1^c\cup E_2^c\cup E_3^c)\\
&\geq &1- \mathrm{Pr}(E_1^c)-\mathrm{Pr}(E_2^c)-\mathrm{Pr}(E_3^c)\\
&\geq &1-2e^{-h^{\epsilon}}-h^{-2\epsilon}.
\end{eqnarray*}
Moreover, from (\ref{ddh}) we obtain
\begin{eqnarray*}
&&4(1-\epsilon)\sqrt{C_0(n-2h)\log h}-4(1+\epsilon)\sqrt{2C_0h\log h}\\
&=& 4\sqrt{C_0(n-2h)\log h}\left[1-\epsilon-(1+\epsilon)\sqrt{\frac{2h}{n-2h}}\right]\\
&\geq &4\sqrt{C_0(n-2h)\log h}\left[1-\epsilon-\frac{2}{\log n}\right]
\end{eqnarray*}
By (\ref{ac1}) we have
\begin{eqnarray*}
&&4\sqrt{C_0(n-2h)\log h}\left[1-\epsilon-\frac{2}{\log n}\right]\\
&\geq &4\sqrt{C_0n\beta\log n\left(1-\frac{2}{\log^2n}\right)\left(1-\frac{2\log\log n}{\beta\log n}\right)}\left[1-\epsilon-\frac{2}{\log n}\right]
\end{eqnarray*}
Let 
\begin{eqnarray}
\epsilon=\frac{\log\log n}{\beta\log n};\label{eph}
\end{eqnarray}
then when $n$ is sufficiently large, (\ref{epn}) holds with $N$ replaced by $h$.
Define an event
\begin{eqnarray*}
\tilde{E}:&=&\left\{\max_{a\in H_u,b\in H_v}\langle\mathbf{W},\mathbf{G}(y^{(ab)})-\mathbf{G}(y) \rangle\right.\\
&&\left.\geq4\sqrt{C_0n\beta\log n\left(1-\frac{2}{\log^2n}\right)\left(1-\frac{2\log\log n}{\beta\log n}\right)}\left[1-\frac{\log\log n}{\beta\log n}-\frac{2}{\log n}\right]\right\}
\end{eqnarray*}
Then $E\subseteq \tilde{E}$

When (\ref{ac2}) and (\ref{sb2}) hold, we have
\begin{eqnarray*}
&&\mathrm{Pr}\left(\mathrm{max}_{a,b\in\{1,2,\ldots,n\},y(a)\neq y(b)}\sigma\langle\mathbf{W},\mathbf{G}(y^{(ab)})-\mathbf{G}(y) \rangle>2C_0n\right)\\
&\geq &\mathrm{Pr}(\tilde{E})\geq \mathrm{Pr}(E)\geq 1-\frac{1}{\log n},
\end{eqnarray*}
as $n$ is sufficiently large. Then the proposition follows.
\end{proof}

\section{Proof of Theorem \ref{m1} when the number of vertices in each color is arbitrary in the sample space}\label{pm12}

Now we consider the MLE in Theorem \ref{m1} whose sample space consists of all the possible mappings from $[n]$ to $R_k$, with no constraints on the number of vertices in each color. Assume that for each $x\in\Omega$, $n_1(x), \ldots,  n_k(x)$ are arbitrary positive integers satisfying (\ref{sc}) and denoting the number of vertices in the colors $c_1,\ldots, c_k$ under the mapping $x$, respectively. For a mapping $x\in \Omega$, let $\mathbf{G}(x)$ be defined as in (\ref{gij}). Let $y$ be the true color assignment mapping, and let $\mathbf{T}$ be defined as in (\ref{tgw}).

Given a sample $\mathbf{T}$, the goal is to determine the color assignment mapping $y$. Let $\check{y}$ be defined by (\ref{cy}).
Then
\begin{eqnarray*}
\check{y}=\mathrm{argmin}_{x\in \Omega}\left(\|\mathbf{G}(x)\|_F^2-2\langle \mathbf{G}(x),\mathbf{T} \rangle\right)
\end{eqnarray*}
Let 
\begin{eqnarray*}
p(\check{y},\sigma)=\mathrm{Pr}(\check{y}=y)
\end{eqnarray*}
For $x\in \Omega$, define
\begin{eqnarray*}
d(x)=\|\mathbf{G}(x)\|_F^2-2\langle \mathbf{G}(x),\mathbf{T} \rangle
\end{eqnarray*}

 Then 
\begin{eqnarray*}
p(\check{y},\sigma)=\mathrm{Pr}\left(d(y)<\min_{x\in \Omega \setminus \{y\}}d(x)\right)
\end{eqnarray*}
Note that
\begin{eqnarray}
&&d(x)-d(y)=\label{dmxy}\\
&&\|\mathbf{G}(x)\|_F^2-\|\mathbf{G}(y)\|_F^2-2\langle\mathbf{G}(y), \mathbf{G}(x)-\mathbf{G}(y) \rangle-2\sigma\langle\mathbf{W}, \mathbf{G}(x)-\mathbf{G}(y) \rangle.\notag
\end{eqnarray}
The expression above shows that $d(x)-d(y)$ is a Gaussian random variable with mean $\|\mathbf{G}(x)\|_F^2-\|\mathbf{G}(y)\|_F^2-2\langle\mathbf{G}(y), \mathbf{G}(x)-\mathbf{G}(y) \rangle$ and variance $4\sigma^2\|\mathbf{G}(x)-\mathbf{G}(y)\|_F^2$.

For $i,j\in [k]$, let $S_{i,j}(x,y)$ be defined as in (\ref{1sij}), and  let $t_{i,j}(x,y)=|S_{i,j}(x,y)|$. 
Then
\begin{eqnarray}
&&\sum_{j\in[k]}t_{i,j}(x,y)=n_i(x),\ \ \forall i\in[k]\label{tt1}\\
&&\sum_{i\in[k]}t_{i,j}(x,y)=n_j(y),\ \ \forall j\in[k]\label{tt2}
\end{eqnarray}

Then as in (\ref{pgxy}),
\begin{eqnarray*}
\langle\mathbf{G}(x),\mathbf{G}(y) \rangle
&=&\sum_{i,j,u,v\in [k]}t_{i,j}t_{u,v}(c_i-c_u)(c_j-c_v)\\
&=&2\left[\sum_{i,j\in [k]} t_{i,j}\cdot c_i\cdot c_j\right]\left[\sum_{u,v\in[k]}t_{u,v}\right]-2\left[\sum_{i,j\in [k]} t_{i,j}\cdot c_i \right]\left[\sum_{u,v\in[k]}t_{u,v}\cdot c_v\right]\\
&=&2n\left[\sum_{i,j\in [k]} t_{i,j}\cdot c_i\cdot c_j\right]-2\left[\sum_{i\in [k]} n_i(x)\cdot c_i \right]\left[\sum_{j\in [k]} n_j(y)\cdot c_j \right],
\end{eqnarray*}
where the last identity follows from (\ref{tt1}) and (\ref{tt2}).

In particular
\begin{eqnarray*}
\langle \mathbf{G}(y),\mathbf{G}(y)\rangle=2n \sum_{i\in [k]}\left[n_i(y) \cdot c_i^2\right]-2\left[\sum_{i\in [k]}n_i(y)\cdot c_i\right]^2
\end{eqnarray*}
Let
\begin{eqnarray}
Q(x,y):&=&\mathbb{E}[d(x)-d(y)]=\|\mathbf{G}(x)\|_F^2+\|\mathbf{G}(y)\|_F^2-2\langle \mathbf{G}(x),\mathbf{G}(y) \rangle\label{qxy}\\
&=&2n \sum_{i,j\in [k]}t_{i,j}(x,y)(c_i-c_j)^2-2\left[\sum_{i\in[k]}n_i(x)c_i-\sum_{j\in[k]}n_j(y)c_j\right]^2\notag\\
&=&\sum_{u,v,i,j\in[k]}t_{u,v}(x,y)t_{i,j}(x,y)(c_u-c_v-c_i+c_j)^2\notag
\end{eqnarray}
Then
\begin{eqnarray*}
\mathbf{Var}[d(x)-d(y)]=4\sigma^2 Q(x,y)
\end{eqnarray*}
For $x\in \Omega \setminus \{y\}$
\begin{eqnarray*}
\mathrm{Pr}\left(d(y)-d(x)>0\right)
&=&\mathrm{Pr_{\xi\sim \mathcal{N}(0,1)}}\left(\xi\geq \frac{\sqrt{Q(x,y)}}{2\sigma}\right)
\end{eqnarray*}
Using the standard Gaussian tail bound $\mathrm{Pr}_{\xi\in \mathcal{N}(0,1)}(\xi>x)<e^{-\frac{1}{2}x^2}$, we obtain
\begin{eqnarray*}
\mathrm{Pr}\left(d(y)-d(x)>0\right)\leq e^{-\frac{Q(x,y)}{8\sigma^2}}
\end{eqnarray*}

Then
\begin{eqnarray}
1-p(\check{y};\sigma)&\leq& \sum_{x\in \Omega\setminus \{y\}}\mathrm{Pr}(d(y)-d(x)> 0)\label{1mp}\\
&=&\sum_{x\in \Omega \setminus \{y\}}\mathrm{Pr_{\xi\sim \mathcal{N}(0,1)}}\left(\xi> \frac{\sqrt{Q(x,y)}}{2\sigma}\right)\notag\\
&\leq &\sum_{x\in \Omega \setminus \{y\}} e^{-\frac{Q(x,y)}{8\sigma^2}}\notag
\end{eqnarray}

\begin{lemma}
Let $y\in \Omega$ be the fixed true color assignment mapping, and assume that $x$ changes over $\Omega$. Under the constraint (\ref{tt2}), $Q(x,y)$ achieves its minimum if
\begin{eqnarray*}
&&t_{i,i}=n_i(y)\\
&&t_{i,j}=0,\qquad\mathrm{if}\ i\neq j.
\end{eqnarray*}
and the minimal value of $Q(x,y)$ is 0.
\end{lemma}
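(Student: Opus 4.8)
The plan is to read the lemma straight off the algebraic identities for $Q(x,y)$ established just before the statement. Recall from (\ref{qxy}) that
\begin{eqnarray*}
Q(x,y)=\|\mathbf{G}(x)\|_F^2+\|\mathbf{G}(y)\|_F^2-2\langle \mathbf{G}(x),\mathbf{G}(y)\rangle=\sum_{u,v,i,j\in[k]}t_{u,v}(x,y)\,t_{i,j}(x,y)\,(c_u-c_v-c_i+c_j)^2 .
\end{eqnarray*}
Since $\mathbf{G}(x)$ and $\mathbf{G}(y)$ are real matrices and the inner product (\ref{cip}) reduces to the Euclidean one on them, the first expression equals $\|\mathbf{G}(x)-\mathbf{G}(y)\|_F^2$; equivalently, the last expression is a sum of terms, each a product of two nonnegative integers and a perfect square. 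Either way $Q(x,y)\geq 0$ for all $x\in\Omega$, so the minimum of $Q(\cdot,y)$ over $\Omega$ is at least $0$.

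The second step is to verify that $0$ is attained and attained at the prescribed configuration. The conditions $t_{i,i}=n_i(y)$ and $t_{i,j}=0$ for $i\neq j$ mean that no vertex receives under $x$ a colour different from its colour under $y$, i.e.\ they are realized exactly by $x=y$, which lies in $\Omega$ and is consistent with the constraint (\ref{tt2}) (indeed $\sum_{i}t_{i,j}=t_{j,j}=n_j(y)$). For this $x$ we have $\mathbf{G}(x)=\mathbf{G}(y)$, hence $Q(x,y)=\|\mathbf{G}(x)-\mathbf{G}(y)\|_F^2=0$; alternatively, in the sum-of-squares form the only surviving products $t_{u,v}t_{i,j}$ are those with $u=v$ and $i=j$, for which the factor $(c_u-c_v-c_i+c_j)^2=0$. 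This shows the minimal value is $0$ and that it is achieved at the claimed point.

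I expect no real obstacle: all the work is already contained in the identity for $Q$ proved above, and the only thing to be careful about is the bookkeeping that the prescribed values of the $t_{i,j}$ are feasible under (\ref{tt2}) and come from an honest colour assignment $x\in\Omega$, which they do. I will also note that the statement asserts only ``if'' and not ``if and only if'': for special colour sets (e.g.\ arithmetic progressions) other configurations in which every occupied cell $(u,v)$ shares a common value of $c_u-c_v$ could in principle also yield $Q(x,y)=0$, but this is not needed and need not be resolved here.
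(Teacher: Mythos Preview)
Your argument is correct and is exactly the paper's approach: the paper's proof is the one-line ``The lemma follows from (\ref{qxy})'', and you have simply unpacked that line by observing that the identity $Q(x,y)=\|\mathbf{G}(x)-\mathbf{G}(y)\|_F^2$ (equivalently, the sum-of-squares form) forces $Q\geq 0$, with equality at $x=y$.
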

\begin{proof}The lemma follows from (\ref{qxy}).
\end{proof}

Let $\mathcal{B}$ be the set given by
\begin{eqnarray*}
\mathcal{B}=\left\{(t_{1,1},t_{2,1}\ldots,t_{k,k})\in \prod_{j\in[k]}\{0,1,\ldots,n_j(y)\}^k:\sum_{i=1}^{k}t_{i,j}(x,y)=n_j(y)\right\}.
\end{eqnarray*}

For a small positive number $\epsilon>0$, let $\mathcal{B}_{\epsilon}$ be the domain given by
\begin{eqnarray*}
\mathcal{B}_{\epsilon}=\left\{(t_{1,1},\ldots,t_{k,k})\in \mathcal{B}: t_{i,i}\geq n_i(y)-n\epsilon,\ \forall\ i\in[k]\right\}
\end{eqnarray*}

 Then
\begin{eqnarray*}
\sum_{x\in \Omega\setminus\{y\}} e^{-\frac{Q(x,y)}{8\sigma^2}}
=  J_1+J_2
\end{eqnarray*}
where 
\begin{eqnarray*}
J_1=\sum_{x\in \Omega\setminus\{y\}:\left(t_{1,1},\ldots, t_{k,k}\right)\in\left[\mathcal{B}\setminus\mathcal{B}_{\epsilon}\right]}e^{-\frac{Q(x,y)}{8\sigma^2}}.
\end{eqnarray*}
and
\begin{eqnarray*}
J_2=\sum_{x\in \Omega\setminus\{y\}:\left(t_{1,1},\ldots, t_{k,k}\right)\in\mathcal{B}_{\epsilon}}e^{-\frac{Q(x,y)}{8\sigma^2}}.
\end{eqnarray*}

Fix a constant $c>0$. Define a region
\begin{eqnarray}
\mathcal{R}_c:=\{(v_1,\ldots,v_k)\in \RR^{k}: \sum_{i\in[k]}v_k=1,\ \mathrm{and}\ \min_{i\in[k]} v_i\geq c\}.\label{drc}
\end{eqnarray}

\begin{lemma}\label{l32}Assume
\begin{eqnarray}
\left(\frac{n_1(y)}{n},\ldots,\frac{n_k(y)}{n}\right)\in \mathcal{R}_c \label{vir} 
\end{eqnarray}
and 
\begin{eqnarray*}
(t_{1,1}(x,y),\ldots, t_{k,k}(x,y))\in \mathcal{B}\setminus \mathcal{B}_{\epsilon}. 
\end{eqnarray*}
Then if $\epsilon>0$ is small enough,
\begin{eqnarray}
Q(x,y)\geq 4C_0\epsilon^2 n^2\label{qlb}
\end{eqnarray}
where $C_0>0$ is given by (\ref{c0}).

\end{lemma}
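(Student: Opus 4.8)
The plan is to reinterpret $Q(x,y)$ as a constant multiple of the empirical variance of the ``displacement'' function and then to exhibit linearly many vertices at which that displacement is bounded away from its mean. Set $g(l):=x(l)-y(l)$ for $l\in[n]$ and $\bar g:=\frac1n\sum_{l\in[n]}g(l)$. By (\ref{qxy}) we have $Q(x,y)=\|\mathbf{G}(x)-\mathbf{G}(y)\|_F^2$, and since $(\mathbf{G}(x)-\mathbf{G}(y))_{i,j}=g(i)-g(j)$, a direct expansion gives
\[
Q(x,y)=\sum_{i,j\in[n]}\big(g(i)-g(j)\big)^2=2n\sum_{l\in[n]}\big(g(l)-\bar g\big)^2,
\]
which is also (\ref{qxy}) rewritten using $\sum_l g(l)^2=\sum_{i,j\in[k]}t_{i,j}(c_i-c_j)^2$ and $\sum_l g(l)=\sum_i n_i(x)c_i-\sum_j n_j(y)c_j$. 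Hence it suffices to prove $\sum_{l\in[n]}(g(l)-\bar g)^2\ge 2C_0\epsilon^2 n$.

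Next I would record two elementary observations. First, if $x(l)\ne y(l)$ then $g(l)=c_a-c_b$ with $a\ne b$, so $|g(l)|\ge\sqrt{C_0}$ by the definition (\ref{c0}) of $C_0$. Second, the hypothesis $(t_{1,1},\dots,t_{k,k})\in\mathcal B\setminus\mathcal B_\epsilon$ produces an index $i_0$ with $t_{i_0,i_0}<n_{i_0}(y)-n\epsilon$; since $\sum_{i\in[k]}t_{i,i_0}=n_{i_0}(y)$, there are more than $n\epsilon$ vertices $l$ with $y(l)=c_{i_0}$ and $x(l)\ne c_{i_0}$, and in particular $D_\Omega(x,y)=n-\sum_i t_{i,i}>n\epsilon$.

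The argument then splits according to the size of $\bar g$. If $|\bar g|<\tfrac12\sqrt{C_0}$, each of the (more than $n\epsilon$) vertices with $x(l)\ne y(l)$ satisfies $|g(l)-\bar g|\ge|g(l)|-|\bar g|>\tfrac12\sqrt{C_0}$, hence contributes more than $C_0/4$ to the sum, giving $Q(x,y)>2n\cdot n\epsilon\cdot\frac{C_0}{4}$, which is $\ge 4C_0\epsilon^2 n^2$ once $\epsilon\le\tfrac18$. If instead $|\bar g|\ge\tfrac12\sqrt{C_0}$, let $c_{\max}=\max_{i\in[k]}c_i$ and consider the fibre $y^{-1}(c_{\max})$; assuming $\bar g>0$ (the case $\bar g<0$ is symmetric, using $\min_i c_i$), every $l\in y^{-1}(c_{\max})$ has $x(l)\le c_{\max}$, so $g(l)\le0$ and $g(l)-\bar g\le-\bar g\le-\tfrac12\sqrt{C_0}$, i.e.\ $(g(l)-\bar g)^2\ge C_0/4$. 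By the hypothesis $(\frac{n_1(y)}n,\dots,\frac{n_k(y)}n)\in\mathcal R_c$ this fibre has at least $cn$ vertices, so $Q(x,y)\ge 2n\cdot cn\cdot\frac{C_0}{4}$, which is $\ge 4C_0\epsilon^2 n^2$ once $\epsilon\le\sqrt{c/8}$. Taking $\epsilon\le\min\{\tfrac18,\sqrt{c/8}\}$ (a bound depending only on $c$) handles both cases and yields (\ref{qlb}).

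The only genuinely delicate point is the second case: when $|\bar g|$ is large one cannot argue from the changed vertices alone, since all of $y^{-1}(c_{i_0})$ might move to a single common color, making $g$ nearly constant on that fibre and contributing little variance. The fix is to look instead at the extremal fibre $y^{-1}(c_{\max})$ (or $y^{-1}(c_{\min})$), where $g$ is forced to sit on the ``wrong side'' of $\bar g$; this fibre has linear size precisely because of the balancedness assumption $(\frac{n_i(y)}n)\in\mathcal R_c$, which is exactly where that hypothesis is used. Everything else is the variance identity for $Q$ and the triangle inequality.
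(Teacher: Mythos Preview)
Your proof is correct and takes a genuinely different route from the paper's.

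The paper works directly with the quadratic-form representation $Q(x,y)=\sum_{u,v,i,j}t_{u,v}t_{i,j}(c_u-c_v-c_i+c_j)^2$ and splits according to whether some diagonal entry $t_{l,l}$ is still large. If so, it pairs the large $t_{l,l}$ against the column $i_0$ with off-diagonal mass exceeding $n\epsilon$, extracting the cross term $t_{l,l}\sum_{j\ne i_0}t_{j,i_0}(c_{i_0}-c_j)^2\ge (c-\epsilon)\epsilon C_0 n^2$. If not, every column has off-diagonal mass at least $n\epsilon$; ordering the colors $c_1>\cdots>c_k$ and pairing the off-diagonal mass in column $1$ against that in column $k$, the shifts $c_i-c_1$ and $c_k-c_j$ have the same sign and reinforce, yielding $(c_i-c_1-c_j+c_k)^2\ge 4C_0$ and hence $Q\ge 4C_0\epsilon^2n^2$.

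Your approach instead rewrites $Q=2n\sum_l(g(l)-\bar g)^2$ with $g=x-y$, and splits on the size of the sample mean $\bar g$. This is slightly cleaner and more ``variance-analytic'': in your Case~A the changed vertices themselves supply the variance, while in Case~B you borrow the variance from the extremal fibre $y^{-1}(c_{\max})$ (or $c_{\min}$), which is where the balancedness hypothesis $\mathcal R_c$ enters for you. In the paper the hypothesis $\mathcal R_c$ is used instead in its first case, to guarantee a large $t_{l,l}$. Both arguments yield the same bound with comparable smallness conditions on $\epsilon$ (theirs $\epsilon\le c/5$, yours $\epsilon\le\min\{1/8,\sqrt{c/8}\}$), and both avoid any dependence on $k$ or on the spread $\max|c_u-c_v|$, which is what the downstream application needs.
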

\begin{proof}When $(t_{1,1}(x,y),\ldots, t_{k,k}(x,y))\in \mathcal{D}\setminus \mathcal{D}_{\epsilon}$, we have there exists $1\leq i\leq k$,
\begin{eqnarray*}
\sum_{j\in[k],j\neq i}t_{j,i}(x,y)\geq n\epsilon 
\end{eqnarray*}

The following cases might occur
\begin{enumerate}
\item there exists $l\in [k]$, such that
\begin{eqnarray*}
t_{l,l}(x,y)>n_{l}(y)-n\epsilon
\end{eqnarray*}
Then by (\ref{qxy})
\begin{eqnarray*}
Q(x,y)&=&\sum_{u,v,i,j\in[k]}t_{u,v}(x,y)t_{i,j}(x,y)(c_u-c_v-c_i+c_j)^2\\
&\geq& t_{l,l}(x,y)\sum_{j\in [k],j\neq i}t_{j,i}(x,y)(c_i-c_j)^2\\
&\geq & (n_{l}(y)-n\epsilon)\epsilon C_0 n\\
&\geq &(c-\epsilon)\epsilon C_0 n^2,
\end{eqnarray*}
where the last inequality follows from (\ref{vir}). Then (\ref{qlb}) holds when $\epsilon\leq \frac{c}{5}$.
\item For each $b\in [k]$, we have
\begin{eqnarray*}
\sum_{a\in [k],a\neq b}u_{a,b}\geq \epsilon
\end{eqnarray*}
Without loss of generality, assume that $c_1>c_2>\ldots>c_k$. Again by (\ref{qxy})
\begin{eqnarray*}
Q(x,y)&=&\sum_{u,v,i,j\in[k]}t_{u,v}(x,y)t_{i,j}(x,y)(c_u-c_v-c_i+c_j)^2\\
&\geq &\sum_{i\in[k],i\neq 1}\sum_{j\in [k],j\neq k}t_{i,1}(x,y)t_{j,k}(x,y)(c_i-c_1-c_j+c_k)^2\\
&\geq & 4 C_0\epsilon^2 n^2
\end{eqnarray*}
\end{enumerate}
\end{proof}

\begin{proposition}\label{pp43}Let $y$ be the true color assignment mapping. Assume all the following conditions hold:
\begin{itemize}
\item (\ref{vir}) holds; 
\item The total number of colors $k$ and the total number of vertices $n$ satisfy
\begin{eqnarray}
\lim_{n\rightarrow\infty}\frac{\log k}{\log n}=0.\label{lkln}
\end{eqnarray}
\item The quotient of the maximal difference of two colors and the minimal difference of two colors is uniformly bounded for all $n$; that is,
\begin{eqnarray}
\sup_{n}\frac{\max_{u,v\in [k]}|c_u-c_v|}{\min_{u,v\in [k],u\neq v}|c_u-c_v|}<\infty\label{qqb}
\end{eqnarray}
\item there exists $\delta>0$ (independent of $n$), such that
\begin{eqnarray}
\sigma^2<\frac{(1-\delta)C_0n}{4\log n}\label{ss2}
\end{eqnarray}
where $C_0>0$ is defined by (\ref{c0}).
\end{itemize}
 Then
\begin{eqnarray*}
\lim_{n\rightarrow\infty}p(\check{y};\sigma)=1.
\end{eqnarray*}
\end{proposition}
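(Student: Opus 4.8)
The starting point is the union bound already in place: by (\ref{1mp}) it suffices to show $\sum_{x\in\Omega\setminus\{y\}}e^{-Q(x,y)/(8\sigma^2)}=J_1+J_2\to0$, where $J_1$ collects the terms with $(t_{1,1}(x,y),\dots,t_{k,k}(x,y))\in\mathcal B\setminus\mathcal B_\epsilon$ and $J_2$ those with $(t_{1,1}(x,y),\dots,t_{k,k}(x,y))\in\mathcal B_\epsilon$, for a small constant $\epsilon\in(0,c)$ to be fixed at the end.

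For $J_1$ I would invoke Lemma \ref{l32}: once $\epsilon$ is small enough, every contributing $x$ has $Q(x,y)\ge 4C_0\epsilon^2 n^2$, so by the hypothesis (\ref{ss2}) on $\sigma^2$ each summand is at most $\exp(-C_0\epsilon^2n^2/(2\sigma^2))\le n^{-2\epsilon^2 n/(1-\delta)}$. Since there are at most $|\Omega|=k^n$ terms, $J_1\le\exp\!\big(n\log k-\tfrac{2\epsilon^2n\log n}{1-\delta}\big)$, which tends to $0$ by (\ref{lkln}).

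The real work is the near-diagonal sum $J_2$. For $x$ contributing to $J_2$ put $D=D_\Omega(x,y)=n-\sum_i t_{i,i}(x,y)$ and $\Delta_l=x(l)-y(l)$; exactly $D$ of the $\Delta_l$ are nonzero, each with $C_0\le\Delta_l^2\le\Lambda^2C_0$ where $\Lambda=\sup_n\frac{\max_{u,v}|c_u-c_v|}{\min_{u\ne v}|c_u-c_v|}<\infty$ by (\ref{qqb}). From (\ref{qxy}) we have $Q(x,y)=\|\mathbf G(x)-\mathbf G(y)\|_F^2=2n\sum_l\Delta_l^2-2\big(\sum_l\Delta_l\big)^2$, and the drift term $\big(\sum_i(n_i(x)-n_i(y))c_i\big)^2=(\sum_l\Delta_l)^2$ is controlled by Cauchy--Schwarz (or directly by $|\sum_l\Delta_l|\le D\Lambda\sqrt{C_0}$), giving
\begin{eqnarray*}
Q(x,y)\ \ge\ 2(n-D)\sum_l\Delta_l^2\ \ge\ 2(n-D)\,D\,C_0 .
\end{eqnarray*}
Moreover, inside $\mathcal B_\epsilon$ one has $t_{i,i}(x,y)\ge n_i(y)-n\epsilon\ge(c-\epsilon)n$ for every $i$ by (\ref{vir}), hence $n-D=\sum_i t_{i,i}(x,y)\ge k(c-\epsilon)n\ge 2(c-\epsilon)n$; thus $D\le(1-2(c-\epsilon))n$ stays bounded away from $n$ and $Q(x,y)\ge 4(c-\epsilon)C_0\,nD$. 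I would then split the $D$-sum. For $1\le D\le\delta n/2$ the first bound gives $Q(x,y)\ge(1-\tfrac\delta2)\,2C_0nD$, so by (\ref{ss2}) each summand is at most $n^{-(1+\gamma)D}$ with $\gamma=\tfrac{\delta/2}{1-\delta}>0$; since there are at most $\binom nD(k-1)^D\le(nk)^D/D!$ maps at Hamming distance $D$ from $y$, this part of $J_2$ is $\le\sum_{D\ge1}\tfrac1{D!}(kn^{-\gamma})^D\le e^{kn^{-\gamma}}-1\to0$ because $k=n^{o(1)}$ by (\ref{lkln}) and $\gamma$ is fixed. For $\delta n/2<D\le(1-2(c-\epsilon))n$ the second bound makes each summand $\le n^{-(c-\epsilon)\delta n/(1-\delta)}$, and even after multiplying by the crude count $k^n$ of all such $x$ one is left with $\exp\!\big(n\log k-\tfrac{(c-\epsilon)\delta n\log n}{1-\delta}\big)\to0$ by (\ref{lkln}). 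Hence $J_2\to0$, and together with $J_1\to0$ this proves the proposition.

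I expect $J_2$ to be the main obstacle. The delicate point is producing a lower bound for $Q(x,y)$ that is linear in the Hamming distance $D$ with a constant large enough to beat the entropy $\binom nD(k-1)^D$ of colorings at distance $D$ from $y$; the bound $Q\gtrsim nD$ degrades precisely as $D\to n$, which is why the restriction to $\mathcal B_\epsilon$ — forcing $D\le(1-2(c-\epsilon))n$ via the balance hypothesis (\ref{vir}) — is indispensable, and why the complementary regime has to be disposed of separately through Lemma \ref{l32}. Hypothesis (\ref{qqb}) is what keeps the drift term $(\sum_l\Delta_l)^2$ under control relative to $\sum_l\Delta_l^2$, and (\ref{lkln}) is exactly the budget that prevents the crude count $k^n$ from overcoming the Gaussian decay in the large-$D$ and far-from-diagonal ranges.
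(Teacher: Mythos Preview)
Your proof is correct, and your treatment of $J_2$ takes a genuinely different and cleaner route than the paper's. The paper handles $J_2$ by an iterative one-vertex color-changing process: it constructs $y=y_0,y_1,\dots,y_s=x$ with $D_\Omega(x,y_r)=D_\Omega(x,y_{r-1})-1$ and bounds each increment $Q(x,y_r)-Q(x,y_{r-1})\le -(2n+2)C_0+C_1\epsilon n$, where $C_1=4k\sqrt{C_0}\max_{p,q}|c_p-c_q|$; summing gives $Q(x,y)\ge D\cdot[(2n+2)C_0-C_1\epsilon n]$ and then a geometric series in $D$. You instead observe that $Q(x,y)=2n\sum_l\Delta_l^2-2(\sum_l\Delta_l)^2$, apply Cauchy--Schwarz $(\sum_l\Delta_l)^2\le D\sum_l\Delta_l^2$, and obtain $Q(x,y)\ge 2(n-D)DC_0$ in one step, then split on the size of $D$.

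Two remarks worth noting. First, your Cauchy--Schwarz bound does \emph{not} use hypothesis (\ref{qqb}) at all; only your parenthetical alternative $|\sum_l\Delta_l|\le D\Lambda\sqrt{C_0}$ would invoke it, and you never call on that. So your closing comment crediting (\ref{qqb}) for drift control is slightly misplaced: in fact your argument proves the proposition without that hypothesis. By contrast, the paper's incremental bound carries the error term $C_1\epsilon n$ and needs (\ref{qqb}) precisely to keep $C_1/C_0$ bounded so that $\epsilon<2C_0\delta/C_1$ can be chosen. Second, your large-$D$ branch ($D>\delta n/2$) is fine but in fact somewhat redundant: inside $\mathcal B_\epsilon$ one already has $D=n-\sum_i t_{i,i}\le kn\epsilon$, so once $\epsilon$ is small enough that range is vacuous; your argument of course still covers it correctly.
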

\begin{proof}First of all, for any fixed $\epsilon>0$, if (\ref{ss2}) holds, then $\sigma\sim o(\sqrt{n})$, by (\ref{vir}) and Lemma \ref{l32}
we obtain that 
\begin{eqnarray*}
J_1\leq  \sum_{x\in\Omega\setminus\{y\}}e^{-\frac{C_0\epsilon^2 n^2}{2\sigma^2}}
\end{eqnarray*}
Note that
\begin{eqnarray*}
|\Omega|\leq k^n
\end{eqnarray*}
Hence
\begin{eqnarray*}
J_1\leq e^{n\left(\log k-\frac{2\epsilon^2\log n}{1-\delta}\right)}\rightarrow 0
\end{eqnarray*}
as $n\rightarrow\infty$, for all $\epsilon >0$ by (\ref{lkln}).

Now let us consider $J_2$.
Let $x\in \Omega$ be an arbitrary color assignment mapping. Then $x$ can be obtained from $y$ as follows:
\begin{enumerate}
\item If for all $(i,j)\in [k]^2$, $i\neq j$, $t_{i,j}(x,y)=0$, then $x=y$.
\item If $x\neq y$, find the least $(a,b)\in[k]^2$ in lexicographic order such that $a\neq b$ and $t_{a,b}(x,y)>0$. Arbitrarily choose a vertex $u$ in $y^{-1}(c_b)$ and define $y_1\in \Omega$ by
\begin{eqnarray*}
y_1(z):=\begin{cases}c_a&\mathrm{if}\ z=u\\ y(z)&\mathrm{if}\ z\in[n]\setminus u \end{cases}
\end{eqnarray*}
\end{enumerate}
Then we obtain 
\begin{eqnarray*}
&&t_{a,b}(x,y_1)+1=t_{a,b}(x,y)\\
&&t_{a,a}(x,y_1)-1=t_{a,a}(x,y)\\
&&t_{i,j}(x,y_1)=t_{i,j}(x,y),\ \forall\ (i,j)\in [k]^2\setminus \{(a,b),(a,a)\}
\end{eqnarray*}

Recall from (\ref{qxy}) that 
\begin{eqnarray*}
Q(x,y)=2n\sum_{i,j\in [k]}t_{i,j}(x,y)(c_i-c_j)^2-2\left[\sum_{i,j\in[k]}t_{i,j}(x,y)(c_i-c_j)\right]^2;
\end{eqnarray*}
hence we have
\begin{eqnarray*}
&&Q(x,y_1)-Q(x,y)\\
&=&-2n(c_a-c_b)^2+2\left[\sum_{i,j\in[k]}t_{i,j}(x,y)(c_u-c_v)\right]^2-2\left[\sum_{i,j\in[k]}t_{i,j}(x,y)(c_u-c_v)-(c_a-c_b)\right]^2\\
&=&-(2n+2)(c_a-c_b)^2+4(c_a-c_b)\left[\sum_{i,j\in[k]}t_{i,j}(x,y)(c_i-c_j)\right]\\
&\leq &-(2n+2)C_0+C_1\epsilon n,
\end{eqnarray*}
where $C_1>0$ is a constant given by
\begin{eqnarray*}
C_1:=4 k\sqrt{C_0}\max_{p,q\in [k]}|c_p-c_q|
\end{eqnarray*}
Therefore
\begin{eqnarray*}
e^{-\frac{Q(x,y)}{8\sigma^2}}\leq e^{-\frac{Q(x,y_1)}{8\sigma^2}}e^{-\frac{n}{8\sigma^2}\left[2C_0+\frac{2}{n}C_0-C_1\epsilon \right]}
\end{eqnarray*}

Recall the distance in $\Omega$ was defined by (\ref{dfdo}). Note that
\begin{eqnarray*}
d_{\Omega}(x,y_1)=d_{\Omega}(x,y)-1
\end{eqnarray*}

In general for $r\geq 1$, if we obtained $y_r$, we can obtain $y_{r+1}$ as follows:
\begin{enumerate}
\item If for all $(i,j)\in [k]^2$, $i\neq j$, $t_{i,j}(x,y)=0$, then $x=y_r$.
\item If $x\neq y_r$, find the least $(a,b)\in[k]^2$ in lexicographic order such that $a\neq b$ and $t_{a,b}(x,y_r)>0$. Arbitrarily choose a vertex $u$ in $y_r^{-1}(c_b)$ and define $y_{r+1}\in \Omega$ by
\begin{eqnarray*}
y_{r+1}(z):=\begin{cases}c_a&\mathrm{if}\ z=u\\ y_r(z)&\mathrm{if}\ z\in[n]\setminus u \end{cases}
\end{eqnarray*}
\end{enumerate}

Then by the same arguments as before, we have
\begin{eqnarray*}
d_{\Omega}(x,y_{r+1})=d_{\Omega}(x,y_r)-1
\end{eqnarray*}
and
\begin{eqnarray}
e^{-\frac{Q(x,y_r)}{8\sigma^2}}\leq e^{-\frac{Q(x,y_{r+1})}{8\sigma^2}}e^{-\frac{n}{8\sigma^2}\left[2C_0+\frac{2}{n}C_0-C_1\epsilon \right]}\label{idc}
\end{eqnarray}

By (\ref{dfdo}), we have for any $x,y\in \Omega$, $D_{\Omega}(x,y)\leq n$. Therefore if $x\neq y$, there exists $1\leq l\leq n$, such that $y_s=x$. By (\ref{idc}) we have
\begin{eqnarray*}
e^{-\frac{Q(x,y)}{8\sigma^2}}&\leq& e^{-\frac{Q(x,y_{l})}{8\sigma^2}}e^{-\frac{nl}{8\sigma^2}\left[2C_0+\frac{2}{n}C_0-C_1\epsilon \right]}\\
&=&e^{-\frac{nl}{8\sigma^2}\left[2C_0+\frac{2}{n}C_0-C_1\epsilon \right]};
\end{eqnarray*}
where the last identity follows from (\ref{qxy}) and 
\begin{eqnarray*}
e^{-\frac{Q(x,y_{l})}{8\sigma^2}}=e^{-\frac{Q(x,x)}{8\sigma^2}}=1.
\end{eqnarray*}

Therefore
\begin{eqnarray}
J_2\leq \sum_{l=1}^{\infty} (nk)^{\ell} e^{-\frac{[(2n+2)C_0-C_1\epsilon n]l}{8\sigma^2}}\label{j2u}
\end{eqnarray}
In the sum, $l$ represents $D_{\Omega}(x,y)$. If $D_{\Omega}(x,y)=l$, then $x$ can be obtained from $y$ by changing colors at $l$ vertices, each time there are at most $n$ choices of vertices, and the color of each chosen vertex is changed to one of the $k$ choices of colors. That is where the factor $(nk)^l$ on the right hand side comes from.

When $\sigma$ satisfies (\ref{ss2}), we have
\begin{eqnarray*}
(nk) e^{-\frac{[(2n+2)C_0-C_1\epsilon n]}{8\sigma^2}}<e^{\log k-\log n\left(\frac{\delta}{1-\delta}+\frac{1}{n(1-\delta)}-\frac{C_1\epsilon}{2(1-\delta) C_0}\right)}
\end{eqnarray*}
Then for (\ref{j2u}) we obtain
\begin{eqnarray*}
J_2\leq \frac{e^{\log k-\log n\left(\frac{\delta}{1-\delta}+\frac{1}{n(1-\delta)}-\frac{C_1\epsilon}{2(1-\delta) C_0}\right)}}{1-e^{\log k-\log n\left(\frac{\delta}{1-\delta}+\frac{1}{n(1-\delta)}-\frac{C_1\epsilon}{2(1-\delta) C_0}\right)}}
\end{eqnarray*}

By (\ref{lkln}) and (\ref{qqb}), we can choose $0<\epsilon<\frac{2C_0 \delta}{C_1}$ independent of $n$, then $\lim_{n\rightarrow\infty}J_2=0$.
 Then the proposition follows from (\ref{1mp}) and the fact that $1-p(\check{y};\sigma)\leq J_1+J_2$.
\end{proof}

Let $C_0$ be defined as in (\ref{c0}). Let
\begin{eqnarray*}
A_m:=\{ c_u\in [k],\ \exists c_v\in [k],\, \mathrm{s.t.} (c_u-c_v)^2=C_0\}.
\end{eqnarray*}

\begin{proposition}\label{pp44}Assume for all $n$,
\begin{eqnarray}
\frac{\log |y^{-1}(A_m)|}{\log n}\geq \beta >0\label{yia}
\end{eqnarray}
where $\beta$ is a constant independent of $n$.

Assume 
\begin{eqnarray}
\delta=\left(1+\frac{4}{\beta}\right)\frac{\log\log n}{\log n},\label{ac22}
\end{eqnarray}
and
\begin{eqnarray}
\sigma^2>\frac{(1+\delta)C_0n}{4\beta\log n}\label{sb22}
\end{eqnarray}
then
\begin{eqnarray*}
\lim_{n\rightarrow\infty}p(\hat{y};\sigma)=0,
\end{eqnarray*}

\end{proposition}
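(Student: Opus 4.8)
The plan is to adapt the strategy of Proposition \ref{pp37} to the unrestricted sample space $\Omega$, using the extra freedom that in $\Omega$ (unlike $\Omega_{n_1,\dots,n_k}$) one may recolor a \emph{single} vertex rather than being forced to swap a pair. As in Section \ref{pm12}, write $d(x)=\|\mathbf G(x)\|_F^2-2\langle\mathbf G(x),\mathbf T\rangle$, so that $\check y=\mathrm{argmin}_{x\in\Omega}d(x)$ and the exact–recovery probability is $\Pr(\check y=y)$. For each vertex $a$ with $y(a)=c_u\in A_m$, fix (by the definition of $A_m$) a color $c_{v(a)}$ with $(c_u-c_{v(a)})^2=C_0$, and let $y^{(a)}\in\Omega$ agree with $y$ except that $y^{(a)}(a)=c_{v(a)}$. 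Since $\{d(y^{(a)})<d(y)\}$ forces $\check y\neq y$, it suffices to exhibit a set $H$ of $h$ vertices with $y$-colors in $A_m$ for which $\Pr\big(\exists\,a\in H:\ d(y^{(a)})<d(y)\big)\to1$. By \eqref{dmxy} and \eqref{qxy}, $d(y^{(a)})-d(y)=Q(y^{(a)},y)-2\sigma\langle\mathbf W,\mathbf G(y^{(a)})-\mathbf G(y)\rangle$, and since the only nonzero off-diagonal count is $t_{v(a),u}(y^{(a)},y)=1$, \eqref{qxy} gives $Q(y^{(a)},y)=\|\mathbf G(y^{(a)})-\mathbf G(y)\|_F^2=2(n-1)C_0$. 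Hence the target becomes
\[
\Pr\Big(\max_{a\in H}\langle\mathbf W,\mathbf G(y^{(a)})-\mathbf G(y)\rangle>\tfrac{(n-1)C_0}{\sigma}\Big)\longrightarrow 1 .
\]

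Next I would set up the independence decomposition, the analogue of Lemma \ref{l38}. Choose $H\subseteq y^{-1}(A_m)$ with $|H|=h:=\lfloor|y^{-1}(A_m)|/\log^2n\rfloor$; by \eqref{yia}, $\log h\ge\beta\log n\,(1-o(1))\to\infty$. Because $\mathbf G(y^{(a)})-\mathbf G(y)$ is supported only on row $a$ and column $a$, and $\mathbf W$ has i.i.d.\ (non-symmetric) entries, the argument of Lemma \ref{l38} applies in the simpler form $\langle\mathbf W,\mathbf G(y^{(a)})-\mathbf G(y)\rangle=Y_a+Z_a$, where $Y_a$ is assembled from the entries of row/column $a$ whose other coordinate lies outside $H$, and $Z_a$ from those whose other coordinate lies in $H$. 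Then $\{Y_a\}_{a\in H}$ are i.i.d.\ $\mathcal N(0,2(n-h)C_0)$, each $Z_a\sim\mathcal N(0,2(h-1)C_0)$, the family $\{Y_a\}$ is independent of $\{Z_a\}$, and $\max_{a\in H}(Y_a+Z_a)\ge\max_{a\in H}Y_a-\max_{a\in H}(-Z_a)$.

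Then I would invoke Lemma \ref{mg} with $N=h$ and $\epsilon=\frac{\log\log n}{\beta\log n}$ (so \eqref{epn} holds for large $n$): with probability at least $1-\exp(-h^\epsilon)-h^{-\epsilon}\to1$,
\[
\max_{a\in H}\langle\mathbf W,\mathbf G(y^{(a)})-\mathbf G(y)\rangle\ \ge\ (1-\epsilon)\sqrt{4(n-h)C_0\log h}-(1+\epsilon)\sqrt{4(h-1)C_0\log h}.
\]
Since $h\le n/\log^2n$, the right-hand side equals $2\sqrt{(n-h)C_0\log h}\,[\,1-\epsilon-O(1/\log n)\,]\ge 2\sqrt{C_0n\beta\log n}\,(1-o(1))$. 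Under \eqref{ac22}–\eqref{sb22}, $\sigma$ times this bound exceeds $(n-1)C_0$ for all large $n$; this last comparison of the lower-order terms is the same calculation as at the end of the proof of Proposition \ref{pp37}, with $\min\{n_u,n_v\}$ there replaced by $|y^{-1}(A_m)|$. This gives the desired probability, whence $\lim_{n\to\infty}\Pr(\check y=y)=0$.

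The only genuinely new ingredient compared with Proposition \ref{pp37} is the single-vertex form of the decomposition (available precisely because $\Omega$ permits recoloring one vertex at a time, so no $2$-cycle/swap structure is needed); everything else is routine. Accordingly I expect the main obstacle to be purely the bookkeeping in the final step — checking that the $(1-o(1))$ factors coming from $n-h\approx n$, $\log h\approx\beta\log n$, and $\epsilon\to0$ combine to leave exactly enough slack for the constant $(1+\delta)$ in \eqref{sb22} with $\delta$ as in \eqref{ac22}. (I also note that the statement, being in Section \ref{pm12}, should read $p(\check y;\sigma)$ with $\check y$ the MLE over $\Omega$ of \eqref{cy}.)
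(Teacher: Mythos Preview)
Your proposal is correct and follows essentially the same route as the paper: the paper too recolors a single vertex $a\in y^{-1}(A_m)$ to obtain $y^{(a)}$, takes $H\subset y^{-1}(A_m)$ with $h=|y^{-1}(A_m)|/\log^2 n$, decomposes $\langle\mathbf W,\mathbf G(y^{(a)})-\mathbf G(y)\rangle=\mathcal Y^{(a)}+\mathcal Z^{(a)}$ with the same variances $2(n-h)C_0$ and $(2h-2)C_0$, and applies Lemma~\ref{mg} with $\epsilon=\tfrac{\log\log n}{\beta\log n}$. Your observation that the statement's $p(\hat y;\sigma)$ is a typo for $p(\check y;\sigma)$ is also what the paper's own proof uses.
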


\begin{proof}
For $y\in \Omega$, $a\in[n]$ such that $y(a)=c_u\in A_m$. Assume $c_v\in[k]$ such that  $(c_u-c_v)^2=C_0$. Define $y^{(a)}\in \Omega$ as follows:
\begin{eqnarray*}
y^{(a)}(i)=\begin{cases}y(i)& \mathrm{if}\ i\in [n]\setminus\{a\}\\ c_v&\mathrm{if}\ i=a \end{cases}
\end{eqnarray*}
Then
\begin{eqnarray*}
t_{u,u}(y^{(a)},y)&=&n_u(y)-1;\\
t_{v,u}(y^{(a)},y)&=&1;\\
t_{i,i}(y^{(a)},y)&=&n_i(y),\ \mathrm{if}\ i\in [k]\setminus \{u\};\\
t_{i,j}(y^{(a)},y)&=&0,\ \mathrm{if}\ (i,j)\in [k]^2\setminus \{(v,u)\}\ \mathrm{and}\ i\neq j;
\end{eqnarray*}
and for $z,w\in[n]$
\begin{eqnarray}
G_{zw}(y^{(a)})-G_{zw}(y)=\begin{cases}c_v-c_u&\mathrm{if}\ z=a,\ w\neq a\\c_u-c_v&\mathrm{if}\ z\neq a,\ w=a\\ 0&\mathrm{Otherwise}\end{cases}\label{gag}
\end{eqnarray}
Note also that
\begin{eqnarray*}
1-p(\check{y};\sigma)\geq \mathrm{Pr}\left(\cup_{a\in[n],y(a)\in A_m} d(y^{(a)})-d(y)<0]\right),
\end{eqnarray*}
since any of the event $[d(y^{(a)})-d(y)<0]$ implies $\check{y}\neq y$. 

By (\ref{dmxy}) we have
\begin{eqnarray*}
d(y^{(a)})-d(y)&=&\|\mathbf{G}(y^{(a)})-\mathbf{G}(y)\|_F^2-2\sigma\langle\mathbf{W},\mathbf{G}(y^{(a)})-\mathbf{G}(y) \rangle\\
&=&(2n-2)(c_u-c_v)^2-2\sigma\langle\mathbf{W},\mathbf{G}(y^{(a)})-\mathbf{G}(y) \rangle.\\
&=&(2n-2)C_0-2\sigma\langle\mathbf{W},\mathbf{G}(y^{(a)})-\mathbf{G}(y) \rangle
\end{eqnarray*}
So $1-p(\check{y};\sigma)$ is at least
\begin{eqnarray*}
&&\mathrm{Pr}\left(\cup_{a,\in[n],y(a)\in A_m}[d(y^{(a)})-d(y)<0]\right)\\
&\geq &\mathrm{Pr}\left(\mathrm{max}_{a\in[n], y(a)\in A_m}\sigma\langle\mathbf{W},\mathbf{G}(y^{(a)})-\mathbf{G}(y) \rangle>(n-1) C_0\right)
\end{eqnarray*}
 Let $H\subset y^{-1}(A_m)$ such that 
 \begin{eqnarray}
 |H|=\frac{|y^{-1}(A_m)|}{\log^2n}=h.\label{hhd}
 \end{eqnarray}
  Then
\begin{eqnarray*}
1-p(\check{y};\sigma)\geq \mathrm{Pr}\left(\mathrm{max}_{a\in H}\sigma\langle\mathbf{W},\mathbf{G}(y^{(a)})-\mathbf{G}(y) \rangle>(n-1) C_0\right)
\end{eqnarray*}
Let $(\mathcal{X},\mathcal{Y},\mathcal{Z})$ be a partition of $[n]^2$ defined by
\begin{eqnarray*}
&&\mathcal{X}=\{\alpha=(\alpha_1,\alpha_2)\in [n]^2, \{\alpha_1,\alpha_2\}\cap H|=\emptyset\}\\
&&\mathcal{Y}=\{\alpha=(\alpha_1,\alpha_2)\in [n]^2, |\{\alpha_1,\alpha_2\}\cap H|=1\}\\
&&\mathcal{Z}=\{\alpha=(\alpha_1,\alpha_2)\in [n]^2, |\{\alpha_1,\alpha_2\}\cap H|=2\}
\end{eqnarray*}
For $\eta\in\{\mathcal{X},\mathcal{Y},\mathcal{Z}\}$, define the $n\times n$ matrix $\mathbf{W}_{\eta}$ from the entries of $\mathbf{W}$ as follows
\begin{eqnarray*}
\mathbf{W}_{\eta}(i,j)=\begin{cases}0&\mathrm{if}\ (i,j)\notin \eta\\ \mathbf{W}(i,j),&\mathrm{if}\ (i,j)\in \eta\end{cases}
\end{eqnarray*}
For each $a\in H$, let
\begin{eqnarray*}
\mathcal{X}^{(a)}=\langle\mathbf{W}_{\mathcal{X}},\mathbf{G}(y^{(a)})-\mathbf{G}(y) \rangle\\
\mathcal{Y}^{(a)}=\langle\mathbf{W}_{\mathcal{Y}},\mathbf{G}(y^{(a)})-\mathbf{G}(y) \rangle\\
\mathcal{Z}^{(a)}=\langle\mathbf{W}_{\mathcal{Z}},\mathbf{G}(y^{(a)})-\mathbf{G}(y) \rangle
\end{eqnarray*}
\begin{claim}The followings are true:
\begin{enumerate}
\item $\mathcal{X}^{(a)}=0$ for $a\in H$.
\item For each $a\in H$, the variables $\mathcal{Y}^{(a)}$ and $\mathcal{Z}^{(a)}$ are independent.
\item $\{\mathcal{Y}^{(a)}\}_{a\in H}$ is a collection of i.i.d. Gaussian random variables.
\end{enumerate}
\end{claim}

\begin{proof}It is straightforward to check (1). (2) holds because $\mathcal{Y}\cap\mathcal{Z}=\emptyset$.

For $s\in H$, let $\mathcal{Y}_s\subseteq \mathcal{Y}^{(a)}$ be defined by
\begin{eqnarray*}
\mathcal{Y}_s=\{\alpha=(\alpha_1,\alpha_2)\in \mathcal{Y}:\alpha_1=s,\ \mathrm{or}\ \alpha_2=s\}.
\end{eqnarray*}
Note that for $s_1,s_2\in H$ and $s_1\neq s_2$, $\mathcal{Y}_{s_1}\cap \mathcal{Y}_{s_2}=\emptyset$. Moreover, $\mathcal{Y}=\cup_{s\in H}\mathcal{Y}_s$. Therefore
\begin{eqnarray*}
\mathcal{Y}^{(a)}=\sum_{s\in H}\langle\mathbf{W}_{\mathcal{Y}_s},\mathbf{G}(y^{(a)})-\mathbf{G}(y) \rangle
\end{eqnarray*}
Note also that $\langle\mathbf{W}_{\mathcal{Y}_s},\mathbf{G}(y^{(a)})-\mathbf{G}(y) \rangle=0$, if $s\neq a$. Hence
\begin{eqnarray*}
\mathcal{Y}^{(a)}=\sum_{\alpha\in\mathcal{Y}_a}[\mathbf{W}(\alpha)]\cdot\{[\mathbf{G}(y^{(a)})-\mathbf{G}(y)](\alpha)\}
\end{eqnarray*}
From (\ref{gag}) we obtain that $\alpha=(\alpha_1,\alpha_2)\in \mathcal{Y}_a$ with $\alpha_1\neq \alpha_2$,
\begin{eqnarray*}
[\mathbf{G}(y^{(a)})-\mathbf{G}(y)](\alpha)=\begin{cases}c_v-c_u &\mathrm{if}\ \alpha_1=a\\c_u-c_v &\mathrm{if}\  \alpha_2=a.\end{cases}
\end{eqnarray*}
So, 
\begin{eqnarray*}
\mathcal{Y}^{(a)}&=&\sum_{\alpha\in \mathcal{Y}_a}[\mathbf{W}(\alpha)]\cdot\{[\mathbf{G}(y^{(a)})-\mathbf{G}(y)](\alpha)\}\\
&=&\left\{\sum_{\alpha\in \mathcal{Y}_a;\alpha_1=a}[\mathbf{W}(\alpha)]-\sum_{\alpha\in \mathcal{Y}_a;\alpha_2=a}[\mathbf{W}(\alpha)]\right\}(c_v-c_u)\\
\end{eqnarray*}
 and $\{\mathcal{Y}^{(a)}\}_{a\in H}$ is a collection of independent Gaussian random variables. Moreover, the variance of $\mathcal{Y}^{(a)}$ is equal to $(2n-2h)C_0$ independent of the choice of $a$.
\end{proof}
By the claim, we obtain
\begin{eqnarray*}
\langle\mathbf{W},\mathbf{G}(y^{(a)})-\mathbf{G}(y) \rangle=\mathcal{Y}^{(a)}+\mathcal{Z}^{(a)}
\end{eqnarray*}
Moreover,
\begin{eqnarray*}
\max_{a\in H}\mathcal{Y}^{(a)}+\mathcal{Z}^{(a)}&\geq& \max_{a\in H_u}\left[\mathcal{Y}^{(a)}\right]-\max_{a\in H_u}\left[\mathcal{Z}^{(a)}\right]\\
\end{eqnarray*}

By the Lemma \ref{mg} we obtain when $\epsilon, h$ satisfy (\ref{epn}) with $N$ replaced by $h$, the event
\begin{eqnarray*}
F_1:=\left\{\max_{a\in H}\mathcal{Y}^{(a)}\geq (1-\epsilon)\sqrt{2\log h\cdot 2C_0\left(n-h\right)}\right\}
\end{eqnarray*}
has probability at least $1-e^{-h^{\epsilon}}$; and the event
\begin{eqnarray*}
F_2:=\left\{\max_{a\in H}\mathcal{Z}^{(a)}\leq (1+\epsilon)\sqrt{2\log h\cdot \max \mathrm{Var}(Z^{(a)})}\right\}
\end{eqnarray*}
with probability at least $1-h^{-\epsilon}$. Moreover,
\begin{eqnarray*}
\mathrm{Var} \mathcal{Z}^{(a)}&=&\|\mathbf{G}(y^{(a)})-\mathbf{G}(y)]\|^2_{F}-\mathrm{Var}(\mathcal{Y}^{(a)})\\
&=&(2n-2)C_0-2C_0\left(n-h\right)\\
&=& C_0(2h-2)
\end{eqnarray*}
Hence
\begin{eqnarray*}
[F_1\cap F_1]\subseteq F
\end{eqnarray*}
where the event 
\begin{eqnarray*}
F:=\left\{\max_{a\in H}\langle\mathbf{W},\mathbf{G}(y^{(a)})-\mathbf{G}(y) \rangle\geq \left(1-\epsilon-(1+\epsilon)\sqrt{\frac{h-1}{n-h}}\right)\sqrt{2\log h\cdot 2C_0(n-h)}\right\}
\end{eqnarray*}
Hence
\begin{eqnarray*}
\mathrm{Pr}(F)&\geq& \mathrm{Pr}(F_1\cap F_2)=1-\mathrm{Pr}(F_1^c\cup F_2^c)\geq 1-\mathrm{Pr}(F_1^c)-\mathrm{Pr}(F_2^c)\\
&=&\mathrm{Pr}(F_1)+\mathrm{Pr}(F_2)-1\geq 1-h^{-\epsilon}-e^{-h^{\epsilon}}
\end{eqnarray*}
Moreover, by (\ref{yia}) and (\ref{hhd}) we have
\begin{eqnarray*}
&& \left(1-\epsilon-(1+\epsilon)\sqrt{\frac{h-1}{n-h}}\right)\sqrt{2\log h\cdot 2C_0(n-h)}\\
&\geq &2\sqrt{C_0\beta n\log n}\sqrt{1-\frac{2\log\log n}{\beta\log n}}\sqrt{1-\frac{1}{\log^2 n}}\left(1-\epsilon-\frac{2}{\log n}\right)
\end{eqnarray*}
Let 
\begin{eqnarray*}
&&\tilde{F}:=\\
&&\left\{\max_{a\in H}\langle\mathbf{W},\mathbf{G}(y^{(a)})-\mathbf{G}(y) \rangle\geq 2\sqrt{C_0\beta n\log n}\sqrt{1-\frac{2\log\log n}{\beta\log n}}\sqrt{1-\frac{1}{\log^2 n}}\left(1-\epsilon-\frac{2}{\log n}\right)\right\}
\end{eqnarray*}
Then $F\subseteq \tilde{F}$.

Let $\epsilon$ be given as in (\ref{eph}), then when $n$ is sufficiently large (\ref{epn}) holds with $N$ replaced by $h$.
When (\ref{ac22}) and (\ref{sb22}) hold,
\begin{eqnarray*}
&&\mathrm{Pr}\left(\mathrm{max}_{a\in y^{-1}(A_m)}\sigma\langle\mathbf{W},\mathbf{G}(y^{(ab)})-\mathbf{G}(y) \rangle>2C_0n\right)\\
&\geq &\mathrm{Pr}(\tilde{F})\geq \mathrm{Pr}(F)\geq 1-\frac{1}{\log n},
\end{eqnarray*}
as $n$ is sufficiently large. Then the proposition follows.
\end{proof}

\section{Proof of Theorem \ref{m2} when the number of vertices in each color is arbitrary in the sample space}\label{pm21}

Now we prove theorem \ref{m2} when the sample space for the MLE is $\Omega$. Let $y\in \Omega$ be the true color assignment mapping. Assume that for each $x\in\Omega$, 
\begin{eqnarray*}
n_i(x)=|x^{-1}(c_i)|,\ \mathrm{for}\ i\in[k]
\end{eqnarray*}
are arbitrary positive integers satisfying (\ref{sc}) Let $\mathbf{K}(x)$ be defined as in (\ref{kij}), and $\mathbf{R}$ be defined as in (\ref{rgw}).

Given a sample $\mathbf{R}$, we want to determine the the groups of vertices such that vertices within each group have the same color under the mapping $y$. Let $\tilde{y}$ be defined as in (\ref{ty}).

Again for $i,j\in [k]$, let $S_{i,j}(x,y)$ be defined as in (\ref{sij}), and $t_{i,j}(x,y)=|S_{i,j}(x,y)|$. Then
\begin{eqnarray}
\sum_{i\in [k]}t_{i,j}(x,y)=n_j(y);\qquad
\sum_{j\in [k]}t_{i,j}(x,y)=n_i(x);\qquad 
\sum_{i,j\in [k]}t_{i,j}(x,y)=n\label{tij1};
\end{eqnarray}
and
\begin{eqnarray}
\langle\mathbf{K}(x),\mathbf{K}(y) \rangle&=&\sum_{a,b\in[n]}\mathbf{1}_{x(a)\neq x(b)}\mathbf{1}_{y(a)\neq y(b)}\notag\\
&=&\sum_{i,j\in [k]}t_{i,j}(x,y)\left[n-n_i(x)-n_j(y)+t_{i,j}(x,y)\right]\notag\\
&=&n^2-\sum_{i\in[k]}[n_i(x)]^2-\sum_{j\in[k]}[n_j(y)]^2+\sum_{i,j\in[k]}[t_{i,j}(x,y)]^2;\label{lak}
\end{eqnarray}
where 
\begin{eqnarray*}
n-n_i(x)-n_j(y)+t_{i,j}(x,y)=\left\{u\in[n]: x(u)\neq i,\ \mathrm{and}\ y(u)\neq j\right\};
\end{eqnarray*}
and (\ref{lak}) follows from (\ref{tij1}).
In particular,
\begin{eqnarray*}
\langle\mathbf{K}(y),\mathbf{K}(y) \rangle=n^2-\sum_{j\in[k]}[n_j(y)]^2
\end{eqnarray*}
Hence from (\ref{ty}) we obtain
\begin{eqnarray*}
\tilde{y}=\mathrm{argmax}_{x\in \Omega}\left[\sum_{j=1}^{k}[n_j(x)]^2+2\langle\mathbf{K}(x),\mathbf{R} \rangle\right]
\end{eqnarray*}

Define
\begin{eqnarray}
g(x):=\sum_{j=1}^{k}[n_j(x)]^2+2\langle\mathbf{K}(x),\mathbf{R} \rangle;\label{dgx}
\end{eqnarray}
then
\begin{eqnarray}
&&g(x)-g(y)\label{gxmy}\\
&=&2\sigma\langle\mathbf{K}(x)-\mathbf{K}(y),\mathbf{W} \rangle+2\sum_{i,j\in[k]}[t_{i,j}(x,y)]^2-\sum_{i=1}^{k}[n_i(x)]^2-\sum_{j=1}^{k}[n_j(y)]^2.\notag
\end{eqnarray} 
Note that $g(x)-g(y)$ is a Gaussian random variable with mean $2\sum_{i,j\in[k]}[t_{i,j}(x,y)]^2-\sum_{i=1}^{k}[n_i(x)]^2-\sum_{j=1}^{k}[n_j(y)]^2$ and variance $4\sigma^2\|\mathbf{K}(x)-\mathbf{K}(y)\|_F^2$. 

Let
\begin{eqnarray}
L(x,y)=\sum_{i=1}^{k}[n_i(x)]^2+\sum_{j=1}^{k}[n_j(y)]^2-2\sum_{i,j\in[k]}[t_{i,j}(x,y)]^2\label{lxy}
\end{eqnarray}
Then it is straightforward to check that
\begin{eqnarray}
\|\mathbf{K}(x)-\mathbf{K}(y)\|^2_F=L(x,y)\label{kle}
\end{eqnarray}
Therefore
\begin{eqnarray*}
\mathrm{Pr}(g(x)-g(y)>0)&=&\mathrm{Pr}_{\xi\sim\mathcal{N}(0,1)}\left(\xi>\frac{\sqrt{L(x,y)}}{2\sigma}\right)
\leq  e^{-\frac{L(x,y)}{8\sigma^2}},
\end{eqnarray*}
where the last inequality follows from the fact that if $\xi\sim\mathcal{N}(0,1)$, then for $x>0$, $\mathrm{Pr}(\xi>x)\leq e^{-\frac{x^2}{2}}$.

\begin{definition}\label{dfeq}
For $y\in \Omega$, let $C(y)$ consist of all the $x\in \Omega$ such that $x$ can be obtained from $y$ by a permutation of colors.  More precisely, $x\in C(y)\subset \Omega$ if and only if the following condition holds
\begin{itemize}
\item for $i,j\in[n]$, $y(i)=y(j)$ if and only if $x(i)=x(j)$.
\end{itemize}

We define an equivalence relation on $\Omega$ as follows: we say $x,z\in \Omega$ are equivalent if and only if $x\in C(z)$. Let $\ol{\Omega}$ be the set of all the equivalence classes in $\Omega$.
\end{definition}

 We have the following elementary lemma:
\begin{lemma}If $x,z\in \Omega$ are equivalent, then
\begin{eqnarray*}
\sum_{i,j\in[k]}[t_{i,j}(x,y)]^2=\sum_{i,j\in[k]}[t_{i,j}(z,y)]^2
\end{eqnarray*}
\end{lemma}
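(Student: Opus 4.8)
The plan is to turn the hypothesis ``$x$ and $z$ are equivalent'' into a permutation of the $k$ color labels, and then observe that the quantity $\sum_{i,j\in[k]}[t_{i,j}(\cdot,y)]^2$ is manifestly invariant under such a relabelling. (This is exactly what is needed to make $L$, and hence the bound on $\mathrm{Pr}(g(x)-g(y)>0)$, depend only on the equivalence class of $x$ in $\ol\Omega$.)

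First I would record that, by Definition \ref{dfeq}, $x$ and $z$ being equivalent means they induce the same partition of $[n]$: for every $l\in[n]$ the block $z^{-1}(z(l))$ coincides with the block $x^{-1}(x(l))$. Since in this section every color class is nonempty, for each $i\in[k]$ pick any $l\in z^{-1}(c_i)$; then $z^{-1}(c_i)=x^{-1}(x(l))$, and this set does not depend on the choice of $l$, so we may define $\pi(i)\in[k]$ by $x^{-1}(c_{\pi(i)})=z^{-1}(c_i)$. Running the same construction with the roles of $x$ and $z$ interchanged produces an inverse, so $\pi$ is a permutation of $[k]$, and for all $l\in[n]$ we have $z(l)=c_i$ if and only if $x(l)=c_{\pi(i)}$.

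Next I would substitute this into the definition (\ref{1sij}) of $S_{i,j}$: for all $i,j\in[k]$,
\[
S_{i,j}(z,y)=\{l\in[n]:z(l)=c_i,\ y(l)=c_j\}=\{l\in[n]:x(l)=c_{\pi(i)},\ y(l)=c_j\}=S_{\pi(i),j}(x,y),
\]
hence $t_{i,j}(z,y)=t_{\pi(i),j}(x,y)$. Therefore
\[
\sum_{i,j\in[k]}[t_{i,j}(z,y)]^2=\sum_{i,j\in[k]}[t_{\pi(i),j}(x,y)]^2=\sum_{i,j\in[k]}[t_{i,j}(x,y)]^2,
\]
the last step being the reindexing $i\mapsto\pi(i)$, which is legitimate because $\pi$ is a bijection of $[k]$.

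The argument is purely combinatorial bookkeeping, so no real obstacle is expected; the only place requiring a moment's care is the well-definedness of $\pi$ as a genuine permutation of $[k]$, which uses that all color classes of $x$ (equivalently of $z$) are nonempty. If one wished to allow empty color classes, one would instead build a bijection between the sets of colors actually used by $x$ and by $z$, extend it arbitrarily to $[k]$, and note that the contribution of any unused color to either sum is zero.
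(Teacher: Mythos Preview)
Your proof is correct and follows essentially the same approach as the paper: both extract from the equivalence a permutation of the $k$ color labels, show that $t_{i,j}$ for one mapping equals $t_{\pi(i),j}$ for the other, and conclude by reindexing the sum. Your treatment is slightly more careful than the paper's in explicitly justifying why the induced map on labels is a well-defined bijection (and in noting how to handle possibly empty color classes), but the substance is the same.
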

\begin{proof}By definition if $x,z\in \Omega$ are equivalent, then there exists a permutation $\omega$ of $[k]$, such that for all $l\in[n]$, we have
\begin{eqnarray}
z(l)=\omega(x(l)).\label{zxr}
\end{eqnarray}
Therefore for all $i \in [k]$, the following two sets are equal:
\begin{eqnarray}
x^{-1}(i)=z^{-1}(\omega(i)).\label{xzo}
\end{eqnarray}
In other words, for any $u\in [n]$ and $i\in[k]$, $x(u)=i$ if and only if $z(u)=\omega(i)$. Then we have
\begin{eqnarray*}
t_{i,j}(x,y)=t_{\omega(i),j}(z,y)
\end{eqnarray*}
by summing over all the $i,j$'s in $[k]$, and using the fact that $\omega$ is a bijection from $[k]$ to $[k]$, we obtain the lemma.
\end{proof}

\begin{lemma}\label{l12}Let $y\in \Omega$ be the true color assignment mapping. If $x$ and $z$ are equivalent elements in $\Omega$, then for any chosen sample $\mathbf{W}$, we have
\begin{eqnarray}
g(x)=g(z).\label{gxez};
\end{eqnarray}
and
\begin{eqnarray}
L(x,y)=L(z,y)\label{lxez}.
\end{eqnarray}
Moreover, if $y^*\in C(y)$, then
\begin{eqnarray}
L(x,y)=L(x,y^*)\label{lyee}
\end{eqnarray}
\end{lemma}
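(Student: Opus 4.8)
The plan is to reduce every claim to the single observation that the matrix $\mathbf{K}(x)$ defined in (\ref{kij}) depends only on the partition of $[n]$ into color classes induced by $x$, not on the particular colors attached to the classes. First I would note that if $x,z\in\Omega$ are equivalent, then by (\ref{zxr}) there is a permutation $\omega$ of $[k]$ with $z(l)=\omega(x(l))$ for all $l\in[n]$; since $\omega$ is a bijection, $x(a)\neq x(b)$ holds if and only if $z(a)\neq z(b)$, and hence $\mathbf{K}(x)=\mathbf{K}(z)$ entrywise.

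Given this, $g(x)=g(z)$ is immediate from the definition (\ref{dgx}): the term $2\langle\mathbf{K}(x),\mathbf{R}\rangle$ is unchanged because $\mathbf{K}(x)=\mathbf{K}(z)$, while $\sum_{j=1}^k[n_j(x)]^2=\sum_{j=1}^k[n_j(z)]^2$ because the list $(n_1(z),\ldots,n_k(z))$ is just a reordering of $(n_1(x),\ldots,n_k(x))$ via $\omega$. For $L(x,y)=L(z,y)$ I would invoke the identity (\ref{kle}), which gives $L(x,y)=\|\mathbf{K}(x)-\mathbf{K}(y)\|_F^2=\|\mathbf{K}(z)-\mathbf{K}(y)\|_F^2=L(z,y)$; alternatively one can argue directly from the formula (\ref{lxy}), using the elementary lemma above (which supplies $\sum_{i,j}[t_{i,j}(x,y)]^2=\sum_{i,j}[t_{i,j}(z,y)]^2$) together with the reordering-invariance of $\sum_i[n_i(x)]^2$.

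For the last assertion, if $y^*\in C(y)$ then $y^*$ and $y$ are equivalent, so by the same reasoning $\mathbf{K}(y^*)=\mathbf{K}(y)$; then (\ref{kle}) again yields $L(x,y^*)=\|\mathbf{K}(x)-\mathbf{K}(y^*)\|_F^2=\|\mathbf{K}(x)-\mathbf{K}(y)\|_F^2=L(x,y)$. There is essentially no real obstacle here: the only care needed is to lean on the already-established identity (\ref{kle}) rather than re-expanding the Frobenius norms by hand, and to observe that the first two claims concern the first argument of $L$ while the last concerns the second — both being handled uniformly by the fact that $\mathbf{K}$ is a function of the equivalence class alone.
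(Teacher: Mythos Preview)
Your proof is correct and follows essentially the same line as the paper: both reduce everything to the observation that $\mathbf{K}(x)=\mathbf{K}(z)$ whenever $x$ and $z$ are equivalent, then invoke (\ref{kle}). The only cosmetic difference is that for (\ref{lyee}) the paper appeals to the symmetry of $L(x,y)$ in its two arguments, whereas you argue directly that $\mathbf{K}(y^*)=\mathbf{K}(y)$; these are equivalent one-line observations.
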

\begin{proof}From the definition (\ref{dgx}) of $g(x)$ we obtain
\begin{eqnarray*}
g(x)=2\langle\mathbf{K}(x),\mathbf{K}(y) \rangle+2\sigma\langle \mathbf{K}(x),\mathbf{W}\rangle+\sum_{j=1}^{k}[n_j(x)]^2
\end{eqnarray*}
Recall that $\mathbf{K}_{i,j}(x)=1$ if and only if $x(i)\neq x(j)$; if $x(i)=x(j)$, $\mathbf{K}_{i,j}(x)=0$. Since $x$ and $z$ are equivalent $x(i)\neq x(j)$ if and only if $z(i)\neq z(j)$, therefore 
\begin{eqnarray}
\mathbf{K}(x)=\mathbf{K}(z).\label{kxez}
\end{eqnarray}
Moreover, if $x$ and $z$ are equivalent and (\ref{zxr}) holds, by (\ref{xzo}) we obtain $x\in \Omega_{n_{\omega(1)}(z),\ldots,n_{\omega(k)}(z)}$; in particular this implies
\begin{eqnarray*}
\sum_{j=1}^{k}[n_j(x)]^2=\sum_{j=1}^{k}[n_{\omega(j)}(z)]^2=\sum_{j=1}^{k}[n_{j}(z)]^2
\end{eqnarray*}
Then we obtain (\ref{gxez}). The expression (\ref{lxez}) follows from (\ref{kxez}) and (\ref{kle}). The expression (\ref{lyee}) follows from (\ref{lxez}) by observing that the expression (\ref{lxy}) of $L(x,y)$ is symmetric in $x$ and $y$.
\end{proof}

Let
\begin{eqnarray*}
p(\tilde{y},\sigma)=\mathrm{Pr}(\tilde{y}\in C(y))
\end{eqnarray*}
Then
\begin{eqnarray*}
p(\tilde{y},\sigma)=\mathrm{Pr}\left(g(y)>\max_{x\in \Omega\setminus C(y)}g(x)\right);
\end{eqnarray*}
hence
\begin{eqnarray*}
1-p(\tilde{y},\sigma)&\leq& \sum_{C(x)\in \overline{\Omega}\setminus \{C(y)\}}\mathrm{Pr}(g(x)-g(y)\geq 0)
\leq \sum_{C(x)\in \overline{\Omega}\setminus\{C(y)\}}e^{-\frac{L(x,y)}{8\sigma^2}}.
\end{eqnarray*}

\begin{lemma}\label{lgz}For any $x,y\in \Omega$, $L(x,y)\geq 0$, where the equality holds if and only if $x\in C(y)$.
\end{lemma}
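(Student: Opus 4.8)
The plan is to recognize $L(x,y)$ as a manifestly nonnegative quantity and then read off the equality case from its structure. The quickest route uses identity (\ref{kle}): since $L(x,y)=\|\mathbf{K}(x)-\mathbf{K}(y)\|_F^2$ is a squared Frobenius norm, it is automatically $\geq 0$, and it vanishes exactly when $\mathbf{K}(x)=\mathbf{K}(y)$. By the definition (\ref{kij}) of $\mathbf{K}$, the matrices $\mathbf{K}(x)$ and $\mathbf{K}(y)$ agree precisely when, for every pair of vertices $a,b\in[n]$, one has $x(a)=x(b)$ if and only if $y(a)=y(b)$; and this is exactly the condition $x\in C(y)$ from Definition \ref{dfeq}. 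That already settles both directions of the lemma.

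For completeness, and to avoid relying on the (here unproven) identity (\ref{kle}), I would also record the purely combinatorial computation. Using the marginal relations (\ref{tij1}), expand $n_i(x)^2=\big(\sum_j t_{i,j}\big)^2=\sum_j t_{i,j}^2+\sum_{j\neq j'}t_{i,j}t_{i,j'}$ and likewise $n_j(y)^2=\sum_i t_{i,j}^2+\sum_{i\neq i'}t_{i,j}t_{i',j}$. Substituting into (\ref{lxy}), the diagonal contributions $\sum_{i,j}t_{i,j}^2$ cancel against the term $-2\sum_{i,j}t_{i,j}^2$, leaving
\begin{eqnarray*}
L(x,y)=\sum_{i\in[k]}\sum_{j\neq j'}t_{i,j}t_{i,j'}+\sum_{j\in[k]}\sum_{i\neq i'}t_{i,j}t_{i',j},
\end{eqnarray*}
a sum of products of nonnegative integers, hence $L(x,y)\geq 0$. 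From this formula, $L(x,y)=0$ forces, for every color $i$, at most one $j$ with $t_{i,j}>0$, and, for every color $j$, at most one $i$ with $t_{i,j}>0$. Combined with $\sum_j t_{i,j}=n_i(x)$ and $\sum_i t_{i,j}=n_j(y)$, this says that on the colors actually used by $x$ (resp.\ $y$) the matrix $(t_{i,j})$ is supported on the graph of a bijection $\phi$, with $t_{i,\phi(i)}=n_i(x)=n_{\phi(i)}(y)$. Consequently the partition of $[n]$ into $x$-color classes coincides with the partition into $y$-color classes, which is precisely $x\in C(y)$; conversely, if $x\in C(y)$ the same description of $(t_{i,j})$ holds and $L(x,y)=0$.

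I expect no genuine obstacle here: the statement is an elementary identity. The only points that need a little care are the bookkeeping with colors that may be unused — so that $\phi$ is a bijection only between the used-color sets of $x$ and $y$, which is harmless since $C(y)$ constrains vertices rather than color labels — and making explicit that "at most one nonzero entry per row and per column of $(t_{i,j})$" together with the prescribed row and column sums is equivalent to $x$ and $y$ inducing the same partition of $[n]$.
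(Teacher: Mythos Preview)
Your proposal is correct, and your combinatorial computation is exactly the paper's approach: the paper rewrites $L(x,y)$ as the same sum of off-diagonal products of the $t_{i,j}$ (equation (\ref{lxy1})) and then reads off the equality case by noting each row and each column of $(t_{i,j})$ can have at most one nonzero entry. Your first route via the Frobenius-norm identity (\ref{kle}) is a clean shortcut the paper does not invoke here, and your care with possibly unused colors is if anything slightly more precise than the paper's phrasing, which asserts ``exactly one'' nonzero entry per row without explicitly excluding empty color classes.
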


\begin{proof}By (\ref{tij1}) and (\ref{lxy}) we have
\begin{eqnarray}
&&L(x,y)\label{lxy1}\\
&=&2\left[\sum_{i\in [k]}\sum_{1\leq j_1<j_2\leq k}t_{i,j_1}(x,y)t_{i,j_2}(x,y)+\sum_{j\in [k]}\sum_{1\leq i_1<i_2\leq k}t_{i_1,j}(x,y)t_{i_2,j}(x,y)\right]\geq 0\notag
\end{eqnarray}

We first note that $L(x,y)=0$ if $x\in C(y)$. Indeed, if $x\in C(y)$, then there exists a permutation $\omega:[k]\rightarrow [k]$, such that $x=\omega\circ y$. Then for all $u\in [k]$ satisfying $x(u)=i$, we have $y(u)=\omega^{-1}(i)$. Then
\begin{eqnarray*}
&&t_{i,\omega^{-1}(i)}=n_i(x);\\
&&t_{i,j}=0,\ \forall\ j\in[k]\setminus\{\omega^{-1}(i)\}.
\end{eqnarray*}
Therefore for any $j_1,j_2\in [k]$ and $j_1<j_2$, at least one of $t_{i,j_1}(x,y)$ and $t_{i,j_2}(x,y)$ is 0. Similarly for any $i_1,i_2\in[k]$ and $i_1<i_2$, at least one of $t_{i_1,j}(x,y)$ and $t_{i_2,j}(x,y)$ is 0. Then $L(x,y)=0$ by (\ref{lxy1}).

It remains to show that if $L(x,y)=0$, then $x\in C(y)$. Note that if $L(x,y)=0$, then
\begin{eqnarray*}
&&t_{i,j_1}(x,y)t_{i,j_2}(x,y)=0,\ \forall i\in [k],\ 1\leq j_1<j_2\leq k;\ \mathrm{and}\\
&&t_{i_1,j}(x,y)t_{i_2,j}(x,y)=0,\ \forall j\in [k],\ 1\leq i_1<i_2\leq k
\end{eqnarray*}
Then for any fixed $i\in [k]$, there exists exactly one $j\in [k]$, such that $t_{i,j}\neq 0$; and for each fixed $j\in [k]$, there exists exactly one $i\in [k]$, such that $t_{i,j}\neq 0$. Then the lemma follows.
\end{proof}

Let $\tilde{\mathcal{B}}$ be the set given by
\begin{eqnarray}
\tilde{\mathcal{B}}=\left\{(t_{1,1},t_{1,2},\ldots,t_{k,k})\in \left\{0,1,\ldots,n\right\}^{k^2}:\sum_{i=1}^{k}t_{i,j}=n_j(y)\right\}.\label{db}
\end{eqnarray}

For a small positive number $\epsilon>0$, let $\tilde{\mathcal{B}}_{\epsilon}$ be the set given by
\begin{eqnarray}
\tilde{\mathcal{B}}_{\epsilon}=\left\{(t_{1,1},\ldots,t_{k,k})\in \tilde{\mathcal{B}}: \ \forall\ i\in[k],\exists j\in[k],\ \mathrm{s.t.}\ t_{j,i}\geq n_i(y)-n\epsilon\right\}\label{dbe}
\end{eqnarray}

\begin{lemma}\label{l55}Let $\mathcal{R}_c$ be defined in (\ref{drc}).
When 
\begin{eqnarray}
(t_{1,1}(x,y),\ldots,t_{k,k}(x,y))\in \tilde{\mathcal{B}}\setminus \tilde{\mathcal{B}}_{\epsilon},\label{tme}
\end{eqnarray}
and (\ref{vir}) hold, we have
\begin{eqnarray*}
L(x,y)\geq \frac{c\epsilon n^2}{k}.
\end{eqnarray*}
\end{lemma}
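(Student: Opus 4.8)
The plan is to isolate a single ``bad'' column of the matrix $(t_{i,j}(x,y))_{i,j\in[k]}$ and extract the whole lower bound from it, discarding every other contribution. This parallels the structure of the proof of Lemma \ref{l32}, but is simpler because $L(x,y)$ involves no color weights.

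First I would unpack the hypothesis. Since $(t_{1,1}(x,y),\ldots,t_{k,k}(x,y))\in \tilde{\mathcal{B}}\setminus\tilde{\mathcal{B}}_{\epsilon}$, the defining condition (\ref{dbe}) fails, so there is an index $i\in[k]$ with $t_{j,i}(x,y)<n_i(y)-n\epsilon$ for every $j\in[k]$. Set $a_j:=t_{j,i}(x,y)$; by the column constraint in (\ref{tij1}) (equivalently (\ref{db})) we have $\sum_{j\in[k]}a_j=n_i(y)$, each $a_j$ is a nonnegative integer, and $a_j<n_i(y)-n\epsilon$ for all $j$. (In particular the hypothesis is vacuous unless $n\epsilon<n_i(y)$, so no smallness assumption on $\epsilon$ is needed.)

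Next I would invoke the identity (\ref{lxy1}) and keep only the terms of the second double sum coming from column $i$, using that all the $t_{a,b}(x,y)$ are nonnegative:
\begin{eqnarray*}
L(x,y)\ \geq\ 2\sum_{1\leq i_1<i_2\leq k}t_{i_1,i}(x,y)\,t_{i_2,i}(x,y)\ =\ 2\sum_{1\leq j_1<j_2\leq k}a_{j_1}a_{j_2}\ =\ \Big(\textstyle\sum_{j\in[k]} a_j\Big)^2-\sum_{j\in[k]} a_j^2\ =\ n_i(y)^2-\sum_{j\in[k]} a_j^2 .
\end{eqnarray*}
Now bound $\sum_{j\in[k]} a_j^2\leq(\max_{j\in[k]} a_j)\sum_{j\in[k]} a_j\leq(n_i(y)-n\epsilon)\,n_i(y)$, which gives $L(x,y)\geq n_i(y)^2-(n_i(y)-n\epsilon)n_i(y)=n\epsilon\,n_i(y)$. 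Finally, assumption (\ref{vir}) forces $n_i(y)\geq cn$, whence $L(x,y)\geq c\epsilon n^2\geq \frac{c\epsilon n^2}{k}$, as required (in fact one obtains the slightly stronger bound $c\epsilon n^2$).

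There is no real obstacle here. The only points deserving care are: reading the complement of (\ref{dbe}) correctly as ``some column has no entry within $n\epsilon$ of its sum $n_i(y)$''; observing that dropping terms in (\ref{lxy1}) is legitimate because every summand is a product of nonnegative integers; and noting that the box bound $a_j\leq n_i(y)-n\epsilon$ is exactly what is needed to control $\sum_j a_j^2$ through the elementary estimate $\sum_j a_j^2\leq(\max_j a_j)\sum_j a_j$. Everything else is the standard identity $2\sum_{j_1<j_2}a_{j_1}a_{j_2}=(\sum_j a_j)^2-\sum_j a_j^2$.
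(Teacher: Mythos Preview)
Your proof is correct and follows the same route as the paper: isolate the column $i$ violating the $\tilde{\mathcal{B}}_\epsilon$ condition, drop all other terms in (\ref{lxy1}), and bound the remaining cross-sum. The only difference is the final elementary estimate: the paper keeps only $\max_j t_{j,i}\,[\,n_i(y)-\max_j t_{j,i}\,]$ and then invokes pigeonhole ($\max_j t_{j,i}\ge n_i(y)/k$) to land at $\frac{c\epsilon n^2}{k}$, whereas your identity $(\sum_j a_j)^2-\sum_j a_j^2$ together with $\sum_j a_j^2\le(\max_j a_j)\sum_j a_j$ gives the sharper $n_i(y)\cdot n\epsilon\ge c\epsilon n^2$, saving the factor $k$.
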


\begin{proof}When (\ref{tme}) holds, by (\ref{dbe}) we have there exists $i_0\in[k]$, such that
\begin{eqnarray*}
n_{i_0}(y)-\max_{j\in[k]}t_{j,i_0}(x,y)\geq \epsilon n.
\end{eqnarray*}

By (\ref{lxy1}), we obtain
\begin{eqnarray*}
L(x,y)&\geq& \max_{j\in[k]} t_{j,i_0}(x,y)\left[n_{i_0}(y)-\max_{j\in[k]} t_{j,i_0}(x,y)\right]\geq \frac{n_{i_0}(y)}{k}\epsilon n
\end{eqnarray*}
By (\ref{vir}), we have
\begin{eqnarray*}
\frac{n_{i_0}(y)}{k}\epsilon n\geq \frac{c\epsilon n^2}{k}
\end{eqnarray*}
\end{proof}

\begin{lemma}\label{ll56}Let $\mathcal{R}_c$ be defined in (\ref{drc}). Assume (\ref{vir}) holds. Let $x\in\Omega$ satisfy
\begin{eqnarray}
\left(\frac{n_1(x)}{n},\frac{n_2(x)}{n},\ldots,\frac{n_k(x)}{n}\right)\in\mathcal{R}_{\frac{2c}{3}}\label{xrc}
\end{eqnarray}
For $i\in [k]$,  let
\begin{eqnarray}
t_{w(i),i}(x,y)=\max_{j\in [k]}t_{j,i}(x,y),\label{twi}
\end{eqnarray}
where $w(i)\in [k]$. When $\epsilon\in\left(0,\frac{2c}{3k}\right)$ and $(t_{1,1}(x,y),\ldots, t_{k,k}(x,y))\in \tilde{\mathcal{B}}_{\epsilon}$, $w$ is a bijection from $[k]$ to $[k]$.
\end{lemma}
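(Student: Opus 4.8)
The plan is to prove that $w$ is surjective; since $w$ is a self-map of the finite set $[k]$, surjectivity is equivalent to bijectivity. The first step is to record what membership in $\tilde{\mathcal{B}}_{\epsilon}$ says about the entries $t_{j,i}(x,y)$. By the definition (\ref{dbe}) of $\tilde{\mathcal{B}}_{\epsilon}$ and the definition (\ref{twi}) of $w(i)$ as an argmax over the first index, for every $i\in[k]$ we have
$t_{w(i),i}(x,y)=\max_{j\in[k]}t_{j,i}(x,y)\geq n_i(y)-n\epsilon$
(and, using (\ref{vir}), this dominant block has size at least $(c-\epsilon)n>0$). Combining this with the column constraint $\sum_{j\in[k]}t_{j,i}(x,y)=n_i(y)$ from (\ref{tij1}) yields $\sum_{j\neq w(i)}t_{j,i}(x,y)=n_i(y)-t_{w(i),i}(x,y)\leq n\epsilon$, and in particular $t_{j,i}(x,y)\leq n\epsilon$ for every $j\neq w(i)$.

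Next I would argue by contradiction. Suppose $w$ is not surjective, so there is some $r\in[k]$ with $r\neq w(i)$ for all $i\in[k]$. Then the bound just obtained gives $t_{r,i}(x,y)\leq n\epsilon$ for every column index $i\in[k]$, and summing the row constraint $\sum_{i\in[k]}t_{r,i}(x,y)=n_r(x)$ from (\ref{tij1}) produces $n_r(x)\leq kn\epsilon$. On the other hand, (\ref{xrc}) forces $n_r(x)\geq\frac{2c}{3}n$. Hence $\frac{2c}{3}n\leq kn\epsilon$, i.e.\ $\epsilon\geq\frac{2c}{3k}$, contradicting the hypothesis $\epsilon\in\left(0,\frac{2c}{3k}\right)$. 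Therefore no such $r$ exists, $w$ is surjective, and hence bijective.

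I do not anticipate a genuine obstacle; the only substantive choice is to aim for surjectivity rather than injectivity. An injectivity argument would need two columns $i_1\neq i_2$ with $w(i_1)=w(i_2)=r$, forcing $n_r(x)\geq t_{r,i_1}(x,y)+t_{r,i_2}(x,y)\geq 2(c-\epsilon)n$, which contradicts $n_r(x)\leq n$ only when $c$ is large, so it fails in general. The surjectivity route instead pits the small off-diagonal mass $kn\epsilon$ against the uniform lower bound $\frac{2c}{3}n$ on each $n_r(x)$ guaranteed by (\ref{xrc}), and the threshold $\epsilon<\frac{2c}{3k}$ is calibrated precisely for this comparison. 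The one thing to be careful about is keeping the two index conventions in (\ref{tij1}) straight: summing the first index of $t_{\cdot,\cdot}(x,y)$ returns $n_\cdot(y)$, while summing the second index returns $n_\cdot(x)$.
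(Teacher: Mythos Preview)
Your argument is correct and is essentially the same as the paper's: both reach the key contradiction $n_r(x)\le k n\epsilon<\tfrac{2c}{3}n$ for an index $r$ missed by $w$. The paper takes a minor detour---first checking that the argmax in (\ref{twi}) is unique (so $w$ is a well-defined function) and then framing the contradiction as ``not injective $\Rightarrow$ not surjective'' via pigeonhole---whereas you go straight to surjectivity; your route is slightly more direct but otherwise identical in substance.
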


\begin{proof}When $(t_{1,1}(x,y),\ldots, t_{k,k}(x,y))\in \tilde{\mathcal{B}}_{\epsilon}$, by (\ref{dbe}) we have
\begin{eqnarray*}
\max_{j\in[k]}t_{j,i}(x,y)>n_i(y)-\epsilon n,\ \forall\ i\in[k].
\end{eqnarray*}
If there exist $j_1,j_2\in [k]$, such that $j_1\neq j_2$, and
\begin{eqnarray*}
t_{j_1,i}(x,y)=t_{j_2,i}(x,y)=\max_{j\in [k]}t_{j,i}(x,y);
\end{eqnarray*}
then 
\begin{eqnarray*}
n_i(y)\geq t_{j_1,i}+t_{j_2,i}>2n_i(y)-2\epsilon n\geq \left(2-\frac{2\epsilon }{c}\right)n_i(y);
\end{eqnarray*}
where the last inequality follows from (\ref{vir}). But this is impossible when $\epsilon < \frac{c}{2}$. Therefore $w(i)$ satisfying (\ref{twi}) is unique for each $i\in[k]$, and $w$ is a mapping from $[k]$ to $[k]$.

If there exist $i,j\in[k]$ such that $i\neq j$ and $w(i)=w(j)=l\in[k]$; 
then there exists $q\in[k]$, such that $q\neq w(s)$ for all $s\in[k]$. More precisely,
\begin{eqnarray*}
t_{q,s}(x,y)\neq \max_{r\in[k]}t_{r,s}(x,y)>n_s(y)-\epsilon n,\ \forall\ s\in[k].
\end{eqnarray*}
By (\ref{tij1}) we have
\begin{eqnarray}
t_{q,s}(x,y)\leq\epsilon n,\ \forall s\in[k].\label{tqs}
\end{eqnarray}
Therefore
\begin{eqnarray*}
\frac{2cn}{3}\leq \sum_{s\in[k]}t_{q,s}(x,y)=n_q(x)\leq k\epsilon n,
\end{eqnarray*}
where the first inequality follows from (\ref{xrc}), and the last inequality follows from (\ref{tqs}). But this is impossible when $\epsilon<\frac{2c}{3k}$. The contradiction implies that $w$ must be a bijection from $[k]$, and the lemma follows.
\end{proof}

We have the following proposition
\begin{proposition}\label{l2a}Let $\mathcal{R}_c$ be defined as in (\ref{drc}). Assume (\ref{vir}) and (\ref{xrc}) holds. Recall that $k$ is the total number of colors. Here $c,k$ may depend on $n$. Assume the following conditions hold
\begin{eqnarray}
\lim_{n\rightarrow\infty}\frac{\log k}{c^3 \log n}=0;\label{asc1}
\end{eqnarray}

If there exists $\delta>0$ independent of $n$, such that 
\begin{eqnarray}
\sigma^2<\frac{(1-\delta)[n_k(y)+n_{k-1}(y)]}{4\log n}\label{spp}
\end{eqnarray}
then 
\begin{eqnarray*}
\lim_{n\rightarrow\infty}p(\tilde{y};\sigma)=1
\end{eqnarray*}

\end{proposition}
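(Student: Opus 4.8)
The plan is to feed the union bound already established, $1-p(\tilde y;\sigma)\le\sum_{C(x)}e^{-L(x,y)/(8\sigma^2)}$, where the sum runs over equivalence classes $C(x)\ne C(y)$ of admissible colourings $x\in\Omega_{2c/3}$ (note $y\in\Omega_{2c/3}$ since $c>2c/3$, so $C(y)$ is admissible). Fix a parameter $\epsilon=\epsilon(n)\in(0,\tfrac{2c}{3k})$ and split the sum as $J_1+J_2$ according to whether $(t_{1,1}(x,y),\dots,t_{k,k}(x,y))$ belongs to $\tilde{\mathcal B}\setminus\tilde{\mathcal B}_\epsilon$ or to $\tilde{\mathcal B}_\epsilon$. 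We shall need $\epsilon$ to tend to $0$ fast enough that $k\epsilon/c\to0$, yet slowly enough that $\log k=o(c\epsilon\log n)$; such $\epsilon$ exists because (\ref{vir}) forces $ck\le1$, so (\ref{asc1}) gives $\tfrac{k\log k}{c^2\log n}\le\tfrac{\log k}{c^3\log n}\to0$ (for instance $\epsilon=\sqrt{\log k/(k\log n)}$ works).

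For $J_1$: Lemma \ref{l55} gives $L(x,y)\ge c\epsilon n^2/k$ for each such term, and $|\Omega|\le k^n$. Since the two smallest of $n_1(y),\dots,n_k(y)$ sum to at most $2n/k$, hypothesis (\ref{spp}) implies $\sigma^2<\tfrac{(1-\delta)n}{2k\log n}$, hence $L(x,y)/(8\sigma^2)\ge\tfrac{c\epsilon n\log n}{4(1-\delta)}$ and $J_1\le\exp\!\big(n\log k-\tfrac{c\epsilon n\log n}{4(1-\delta)}\big)\to0$ by the choice of $\epsilon$.

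For $J_2$: given $x$ with $(t_{i,i}(x,y))\in\tilde{\mathcal B}_\epsilon$ and $x\in\Omega_{2c/3}$, Lemma \ref{ll56} provides a bijection $w$ of $[k]$; relabelling the colours of $y$ along $w$ produces $y^\ast\in C(y)$ with $t_{i,i}(x,y^\ast)\ge n_i(y^\ast)-\epsilon n$ for every $i$, so that $m:=D_\Omega(x,y^\ast)\le k\epsilon n$ and $L(x,y)=L(x,y^\ast)$ by Lemma \ref{l12}. The key estimate is
\[
L(x,y^\ast)\ \ge\ 2m\big(n_k(y)+n_{k-1}(y)\big)-6m^2 .
\]
To see it, put $M=\{v:x(v)\ne y^\ast(v)\}$ and expand $L(x,y^\ast)=\|\mathbf K(x)-\mathbf K(y^\ast)\|_F^2$ as a count of pairs $(u,v)$ at which $\mathbf K(x)$ and $\mathbf K(y^\ast)$ disagree: for $u\in M$, the number of $v\notin M$ with $(u,v)$ such a pair equals $|x^{-1}(x(u))|+|(y^\ast)^{-1}(y^\ast(u))|$ minus the number of vertices of $M$ that have colour $x(u)$ under $x$ or colour $y^\ast(u)$ under $y^\ast$ (at most $2m$). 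Since $x(u)\ne y^\ast(u)$ and $|x^{-1}(c)|$ and $|(y^\ast)^{-1}(c)|$ differ by at most $m$ for every colour $c$, this count is at least $n_k(y)+n_{k-1}(y)-3m$ (the sum of the two smallest class sizes, up to an $O(m)$ error); summing over $u\in M$ and doubling for the transposed pairs gives the estimate. (Alternatively, correct one vertex of $M$ toward its $x$-value at a time along a path $y^\ast=z_0,\dots,z_m=x$ and verify $L(z_r,x)-L(z_{r+1},x)\ge 2(n_k(y)+n_{k-1}(y))-O(m)$ at each step.) As $n_k(y)+n_{k-1}(y)\ge 2cn$ and $m\le k\epsilon n$ with $k\epsilon/c\to0$, the quadratic term is negligible, giving $L(x,y^\ast)\ge 2m(n_k(y)+n_{k-1}(y))(1-o(1))$, and with (\ref{spp}) this yields $L(x,y^\ast)/(8\sigma^2)\ge(1-o(1))m\log n/(1-\delta)$. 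Since the number of admissible $x$ with $D_\Omega(x,y^\ast)=m$ is at most $k!\,(nk)^m$, and since $\log k=o(\log n)$ and $k\log k=o(\log n)$ (from (\ref{asc1}) and $ck\le1$), the quantity $\rho:=nk\cdot n^{-(1-o(1))/(1-\delta)}$ tends to $0$ (as $\tfrac1{1-\delta}>1$), whence $J_2\le k!\sum_{m\ge1}\rho^m=k!\,\tfrac{\rho}{1-\rho}\to0$ (because $k!=n^{o(1)}$). Hence $1-p(\tilde y;\sigma)\le J_1+J_2\to0$.

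The step I expect to be the main obstacle is the key estimate for $J_2$: a careless argument yields only $L(x,y^\ast)\gtrsim cnm$, which is too weak because $cn$ may be far below $n_k(y)+n_{k-1}(y)$. The resolution is that the cost of correcting a single misclassified vertex pairs one colour's size \emph{in $x$} with a \emph{different} colour's size \emph{in $y^\ast$}, hence is governed by the sum of the two smallest class sizes and not by $\min_i n_i(y)$; everything after that is bookkeeping, the delicate part being that the $O(k\epsilon n)$ error terms must remain negligible in $J_2$ while $\epsilon$ stays large enough to overcome the entropy $k^n$ in $J_1$ — exactly the role played by hypothesis (\ref{asc1}) together with $ck\le1$.
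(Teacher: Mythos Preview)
Your proof is correct and follows the same skeleton as the paper: a union bound over equivalence classes, the same $\epsilon$-split into $\tilde{\mathcal B}\setminus\tilde{\mathcal B}_\epsilon$ and $\tilde{\mathcal B}_\epsilon$, Lemma~\ref{l55} for the first piece, and Lemma~\ref{ll56} to pass to a nearby $y^\ast\in C(y)$ for the second.

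The one genuine methodological difference is in how the key lower bound on $L(x,y^\ast)$ is obtained. The paper argues \emph{iteratively}: it corrects one misclassified vertex at a time along a path $y^\ast=y_0,y_1,\dots,y_s=x$ and computes, via the formula~(\ref{lxy1}), that each step satisfies $L(x,y_{r})-L(x,y_{r+1})\ge 2(n_{k-1}(y)+n_k(y)-4n\epsilon)$; telescoping gives $L(x,y^\ast)\ge 2s(n_{k-1}+n_k-4n\epsilon)$. You instead give a \emph{direct} count of discrepant entries of $\mathbf K(x)-\mathbf K(y^\ast)$: for each $u\in M$ the number of $v\notin M$ with $\mathbf K_{u,v}(x)\ne\mathbf K_{u,v}(y^\ast)$ is at least $n_{k-1}(y)+n_k(y)-O(m)$, yielding $L(x,y^\ast)\ge 2m(n_{k-1}+n_k)-6m^2$. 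The two bounds are equivalent (since $m\le k\epsilon n$), and your argument is arguably cleaner---it avoids the somewhat lengthy bookkeeping of the iterative computation and makes transparent \emph{why} the two smallest class sizes appear (one correctly identifies it as the cost of pairing the $x$-class of $u$ against a \emph{different} $y^\ast$-class). The paper's telescoping approach, on the other hand, reuses machinery common to several of its other propositions. Your choice $\epsilon=\sqrt{\log k/(k\log n)}$ differs from the paper's $\epsilon=\min\{c/(2k),\delta c/4\}$, but both satisfy the two competing constraints, and your verification that such an $\epsilon$ exists via $ck\le 1$ and~(\ref{asc1}) is sound.
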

\begin{proof} 
Note that
\begin{eqnarray*}
\sum_{C(x)\in \overline{\Omega}\setminus \{C(y)\}} e^{-\frac{L(x,y)}{8\sigma^2}}
\leq  I_3+I_4
\end{eqnarray*}
where 
\begin{eqnarray*}
I_3&=&\sum_{C(x)\in\ol{\Omega}:\left(t_{1,1}(x,y),\ldots,t_{k,k}(x,y)\right)\in[\mathcal{B}\setminus\tilde{\mathcal{B}}_{\epsilon}], C(x)\neq C(y)}e^{\frac{-L(x,y)}{8\sigma^2}}
\end{eqnarray*}
and
\begin{eqnarray*}
I_4=\sum_{C(x)\in\ol{\Omega}:\left(t_{1,1}(x,y),\ldots, t_{k,k}(x,y)\right)\in\tilde{\mathcal{B}}_{\epsilon}, C(x)\neq C(y)}e^{\frac{-L(x,y)}{8\sigma^2}}.
\end{eqnarray*}

By Lemma \ref{l55}, when (\ref{spp}) holds, we have
\begin{eqnarray*}
I_3&\leq& k^n e^{-\frac{c\epsilon n^2\log n }{2k(1-\delta)[n_k(y)+n_{k-1}(y)]}}
\end{eqnarray*}
Since
\begin{eqnarray}
n_k(y)+n_{k-1}(y)\leq \frac{2n}{k},\label{snu}
\end{eqnarray}
we have
\begin{eqnarray}
I_3\leq e^{n\left(\log k-\frac{c\epsilon \log n}{4(1-\delta)}\right)}.\label{i3ub}
\end{eqnarray}

Now let us consider $I_4$. Assume $\epsilon\in\left(0,\frac{2c}{3k}\right)$. Let $w$ be the bijection from $[k]$ to $[k]$ as defined in (\ref{twi}). Let $y^*\in \Omega$ be defined by
\begin{eqnarray*}
y^*(z)=w(y(z)),\ \forall z\in [n].
\end{eqnarray*}
Then $y^*\in C(y)$. Moreover, $x$ and $y^*$ satisfies
\begin{eqnarray}
t_{i,i}(x,y^*)\geq n_i(y^*)-n\epsilon,\ \forall i\in[k].\label{dxys}
\end{eqnarray}

We consider the following color changing process to obtain $x$ from $y^*$.
\begin{enumerate}
\item If for all $(j,i)\in [k]^2$, and $j\neq i$, $t_{j,i}(x,y^*)=0$, then $x=y^*$.
\item If (1) does not hold, find the least $(j,i)\in[k]^2$ in lexicographic order such that $j\neq i$ and $t_{j,i}(x,y^*)>0$. Choose an arbitrary vertex $u\in S_{j,i}(x,y^*)$. Define $y_1\in \Omega$ as follows
\begin{eqnarray*}
y_1(z)= \begin{cases}c_j&\mathrm{if}\ z=u\\ y^*(z)&\mathrm{if}\ z\in [n]\setminus \{u\} \end{cases}
\end{eqnarray*}
\end{enumerate}

Then we have
\begin{eqnarray}
&&t_{j,i}(x,y_1)=t_{j,i}(x,y^*)-1\label{tjid}\\
&&t_{j,j}(x,y_1)=t_{j,j}(x,y^*)+1\label{tjjd}\\
&&t_{a,b}(x,y_1)=t_{a,b}(x,y^*)\ \forall (a,b)\in \left([k]^2\setminus\{(j,i),(j,j)\}\right).
\end{eqnarray}
Therefore $x$ and $y^*$ satisfies
\begin{eqnarray*}
t_{i,i}(x,y_1)\geq n_i(y_1)-n\epsilon,\ \forall i\in[k].
\end{eqnarray*}

From (\ref{lyee}) and (\ref{lxy1}) we obtain
\begin{eqnarray*}
\frac{1}{2}\left(L(x,y_1)-L(x,y)\right)&=&\frac{1}{2} \left(L(x,y_1)-L(x,y^*)\right)\\
&=&t_{j,i}(x,y_1)\left(\sum_{l\neq i}t_{j,l}(x,y_1)\right)+t_{j,j}(x,y_1)\left(\sum_{l\neq j}t_{j,l}(x,y_1)\right)-t_{j,i}(x,y_1)t_{j,j}(x,y_1)\\
&&+t_{j,i}(x,y_1)\left(\sum_{l\neq j}t_{l,i}(x,y_1)\right)+t_{j,j}(x,y_1)\left(\sum_{l\neq j}t_{l,j}(x,y_1)\right)\\
&&-\left[t_{j,i}(x,y^*)\left(\sum_{l\neq i}t_{j,l}(x,y^*)\right)+t_{j,j}(x,y^*)\left(\sum_{l\neq j}t_{j,l}(x,y^*)\right)-t_{j,i}(x,y^*)t_{j,j}(x,y^*)\right.\\
&&+\left.t_{j,i}(x,y^*)\left(\sum_{l\neq j}t_{l,i}(x,y^*)\right)+t_{j,j}(x,y^*)\left(\sum_{l\neq j}t_{l,j}(x,y^*)\right)\right]
\end{eqnarray*}
By (\ref{tjid}) we obtain
\begin{eqnarray*}
&&t_{j,i}(x,y_1)\left(\sum_{l\neq i}t_{j,l}(x,y_1)\right)-t_{j,i}(x,y^*)\left(\sum_{l\neq i}t_{j,l}(x,y^*)\right)\\
&=& t_{j,i}(x,y_1)\left[n_j(x)-t_{j,i}(x,y_1)\right]- t_{j,i}(x,y^*)\left[n_j(x)-t_{j,i}(x,y^*)\right]\\
&=&t_{j,i}(x,y^*)+t_{j,i}(x,y_1)-n_j(x)
\end{eqnarray*}
Similarly by (\ref{tjjd}) we obtain
\begin{eqnarray*}
&&t_{j,j}(x,y_1)\left(\sum_{l\neq j}t_{j,l}(x,y_1)\right)-t_{j,j}(x,y^*)\left(\sum_{l\neq j}t_{j,l}(x,y^*)\right)\\
&=& n_j(x)-t_{j,j}(x,y_1)-t_{j,j}(x,y^*).
\end{eqnarray*}
\begin{eqnarray*}
-t_{j,i}(x,y_1)t_{j,j}(x,y_1)+t_{j,i}(x,y^*)t_{j,j}(x,y^*)=-t_{j,i}(x,y^*)+t_{j,j}(x,y^*)+1
\end{eqnarray*}
\begin{eqnarray*}
t_{j,i}(x,y_1)\left(\sum_{l\neq j}t_{l,i}(x,y_1)\right)-t_{j,i}(x,y^*)\left(\sum_{l\neq j}t_{l,i}(x,y^*)\right)=t_{j,i}(x,y^*)-n_i(y^*)
\end{eqnarray*}
\begin{eqnarray*}
t_{j,j}(x,y_1)\left(\sum_{l\neq j}t_{l,j}(x,y_1)\right)-t_{j,j}(x,y^*)\left(\sum_{l\neq j}t_{l,j}(x,y^*)\right)=n_j(y^*)-t_{j,j}(x,y^*)
\end{eqnarray*}
Therefore we obtain
\begin{eqnarray*}
L(x,y_1)-L(x,y)&=&2\left[n_j(y^*)-n_i(y^*)-2t_{j,j}(x,y^*)+2t_{j,i}(x,y^*)-1\right]
\end{eqnarray*}
Then by (\ref{dxys}),
\begin{eqnarray*}
L(x,y_1)-L(x,y)\leq  -2n_i(y^*)-2n_j(y^*)+8n\epsilon\leq -2(n_{k-1}(y)+n_k(y)-4n\epsilon)
\end{eqnarray*}
Therefore when (\ref{spp}) holds,
\begin{eqnarray}
e^{-\frac{L(x,y)}{8\sigma^2}}\leq e^{-\frac{L(x,y_1)}{8\sigma^2}}e^{-\frac{\log n}{1-\delta}\left(1-\frac{4n\epsilon}{n_{k-1}(y)+n_k(y)}\right)}\label{idi}
\end{eqnarray}

In general, if we have constructed $y_r$ ($r\geq 1$), we shall construct $y_{r+1}$ as follows.
\begin{enumerate}[label=(\alph*)]
\item If for all $(j,i)\in [k]^2$, and $j\neq i$, $t_{j,i}(x,y_r)=0$, then $x=y_r$.
\item If (a) does not hold, find the least $(j,i)\in[k]^2$ in lexicographic order such that $j\neq i$ and $t_{j,i}(x,y_r)>0$. Choose an arbitrary vertex $u\in S_{j,i}(x,y_r)$. Define $y_{r+1}\in \Omega$ as follows
\begin{eqnarray*}
y_{r+1}(z)= \begin{cases}c_j&\mathrm{if}\ z=u\\ y_r(z)&\mathrm{if}\ z\in [n]\setminus \{u\} \end{cases}
\end{eqnarray*}
\end{enumerate}
Then if (\ref{dxys}) holds with $y^*$ replaced by $y_r$, then (\ref{dxys}) holds with $y^*$ replaced by $y_{r+1}$. By similar computations as above we obtain
\begin{eqnarray}
e^{-\frac{L(x,y_r)}{8\sigma^2}}\leq e^{-\frac{L(x,y_{r+1})}{8\sigma^2}}e^{-\frac{\log n}{1-\delta}\left(1-\frac{4n\epsilon}{n_{k-1}(y)+n_k(y)}\right)}.\label{idr}
\end{eqnarray}
Recall that the distance $D_{\Omega}$ in $\Omega$ is defined in (\ref{dfdo}). From the constructions of $y_{r+1}$ we have
\begin{eqnarray*}
D_{\Omega}(x,y_{r+1})=D_{\Omega}(x,y)-1.
\end{eqnarray*}
Therefore there exists $s\in [n]$, such that $y_s=x$. By (\ref{idi}) and (\ref{idr}) and the fact that $L(x,x)=0$ we obtain
\begin{eqnarray*}
e^{-\frac{L(x,y)}{8\sigma^2}}\leq e^{-\frac{s\log n}{1-\delta}\left(1-\frac{4n\epsilon}{n_{k-1}(y)+n_k(y)}\right)}
\end{eqnarray*}

Since any $x$ in $\tilde{\mathcal{B}}_{\epsilon}$ can be obtained by $y$ be the color changing process described above, we have
\begin{eqnarray}
I_4\leq \sum_{l=1}^{\infty} (nk)^{l} e^{-\frac{l\log n}{1-\delta}\left(1-\frac{4n\epsilon}{n_{k-1}(y)+n_k(y)}\right)}\label{ifb}
\end{eqnarray}
The right hand side of (\ref{ifb}) is the sum of geometric series with both initial term and common ratio equal to 
\begin{eqnarray}
V:=e^{\log k-\log n\left(\frac{\delta}{1-\delta}-\frac{4n\epsilon}{(1-\delta)(n_{k-1}(y)+n_k(y))}\right)}\label{ves}
\end{eqnarray}

By (\ref{vir}) we have
\begin{eqnarray*}
n_{k-1}(y)+n_k(y)\geq 2cn
\end{eqnarray*}
\begin{eqnarray}
V\leq e^{\log k-\log n\left(\frac{\delta}{1-\delta}-\frac{2\epsilon}{c(1-\delta)}\right)}\label{vup}
\end{eqnarray}
Let
\begin{eqnarray}
\epsilon=\min\left\{\frac{c}{2k},\frac{\delta c}{4}\right\}<\frac{c}{k};\label{evep}
\end{eqnarray}
then from (\ref{i3ub}) and using the fact that $c\leq \frac{1}{k}$, we obtain
\begin{eqnarray*}
0\leq I_3\leq e^{n\left(\log k-\log n\min\left(\frac{c^3}{8},\frac{\delta c^2}{16}\right)\right)}
\end{eqnarray*}
Then from (\ref{asc1}), when $n$ is sufficiently large
\begin{eqnarray*}
\log k\leq \frac{c^3\log n}{16};
\end{eqnarray*}
Since $k\geq 2$ we have
\begin{eqnarray*}
n\left(\log k-\frac{c^3\log n}{8}\right)\leq -n\log k\rightarrow -\infty,
\end{eqnarray*}
as $n\rightarrow\infty$.
Moreover, since $c\leq 1$ from (\ref{asc1}) we obtain
\begin{eqnarray*}
\lim_{n\rightarrow\infty}\frac{\log k}{c^2\log n}=0
\end{eqnarray*}
Hence when $n$ is sufficiently large, we obtain
\begin{eqnarray*}
n\left(\log k-\frac{\delta c^2\log n}{16}\right)\leq -n\log k\rightarrow -\infty,
\end{eqnarray*}
as $n\rightarrow\infty$.

Therefore we obtain
\begin{eqnarray}
\lim_{n\rightarrow\infty}I_3=0.\label{li3}
\end{eqnarray}
Moreover, when $\epsilon$, $V$ are given by (\ref{evep}) and (\ref{ves}), from (\ref{vup}) we obtain
\begin{eqnarray*}
0\leq V\leq e^{\log k-\frac{\delta \log n}{2(1-\delta)}}
\end{eqnarray*}
By (\ref{asc1}), for any constant $\delta>0$ independent of $n$, $\lim_{n\rightarrow \infty}V=0$, therefore
\begin{eqnarray}
\lim_{n_\rightarrow\infty}I_4=\lim_{n\rightarrow\infty}\frac{V}{1-V}=0.\label{li4}
\end{eqnarray}
Then the proposition follows from (\ref{li3}), (\ref{li4}) and the fact that $1-p(\tilde{y};\sigma)\leq I_3+I_4$.
\end{proof}

\begin{proposition}\label{l2b}Let $\mathcal{R}_c$ be defined as in (\ref{drc}). Assume (\ref{vir}) and (\ref{xrc}) holds. Here $k$ and $c$ may depend on $n$. Assume $n_1(y)\geq n_2(y)\geq\ldots\geq n_k(y)$ and
\begin{eqnarray}
\frac{\log n_k(y)}{\log n}\geq 1+\frac{\log c}{\log n}\geq \beta>0,\ \forall\ n,\label{bll}
\end{eqnarray}
where $\beta$ is a constant independent of $n$.

Assume
\begin{eqnarray}
\delta=\left(1+\frac{4}{\beta}\right)\frac{\log\log n}{\log n},\label{acc}
\end{eqnarray}
and
\begin{eqnarray}
\sigma^2>\frac{(1+\delta)[n_k(y)+n_{k-1}(y)]}{4\beta \log n},\label{sbb}
\end{eqnarray}
then 
\begin{eqnarray*}
\lim_{n\rightarrow\infty}p(\tilde{y};\sigma)=0.
\end{eqnarray*}
\end{proposition}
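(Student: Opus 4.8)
The plan is to adapt the argument of Proposition \ref{pp37}, with the three-part splitting of the noise from Lemma \ref{l38} reused in the present ($\mathbf{K}$-matrix) setting. I would exhibit a large family of competitor colorings inside $\Omega_{\frac{2c}{3}}$ and show that with probability tending to $1$ at least one of them strictly beats $y$ for the objective $g$ of (\ref{dgx}); since $g$ is constant on each equivalence class by Lemma \ref{l12}, this forces $\tilde y\notin C(y)$. For a vertex $a$ with $y(a)=c_k$ and a vertex $b$ with $y(b)=c_{k-1}$, let $y^{(ab)}\in\Omega$ be obtained from $y$ by exchanging the colors of $a$ and $b$. This preserves all class sizes, so $y^{(ab)}\in\Omega_{n_1,\dots,n_k}\subseteq\Omega_{\frac{2c}{3}}$ (using $n_k\geq cn>\tfrac{2c}{3}n$), and since $n_k\geq cn\to\infty$ we have $y^{(ab)}\notin C(y)$ for large $n$ (moving $a$ out of the still-nonempty block $y^{-1}(c_k)$ changes the induced partition). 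A direct count of changed entries, or formula (\ref{lxy1}), gives
\begin{eqnarray*}
L(y^{(ab)},y)=\|\mathbf{K}(y^{(ab)})-\mathbf{K}(y)\|_F^2=4\left(n_k+n_{k-1}-2\right),
\end{eqnarray*}
independent of $a$ and $b$. By (\ref{gxmy}) and (\ref{lxy}), $g(y^{(ab)})-g(y)=2\sigma\langle\mathbf{W},\mathbf{K}(y^{(ab)})-\mathbf{K}(y)\rangle-L(y^{(ab)},y)$, so it suffices to prove
\begin{eqnarray*}
\mathrm{Pr}\left(\max_{a\in y^{-1}(c_k),\,b\in y^{-1}(c_{k-1})}\langle\mathbf{W},\mathbf{K}(y^{(ab)})-\mathbf{K}(y)\rangle>\frac{2(n_k+n_{k-1}-2)}{\sigma}\right)\longrightarrow 1.
\end{eqnarray*}

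For the decomposition, fix $H_k\subseteq y^{-1}(c_k)$ and $H_{k-1}\subseteq y^{-1}(c_{k-1})$ with $|H_k|=|H_{k-1}|=h:=\lfloor n_k/\log^2 n\rfloor$ (possible since $h\leq n_k\leq n_{k-1}$), set $H=H_k\cup H_{k-1}$, and split $[n]^2$ by whether $0$, $1$, or $2$ coordinates of a pair lie in $H$, defining $\mathbf{W}_{\mathcal X},\mathbf{W}_{\mathcal Y},\mathbf{W}_{\mathcal Z}$ as in Lemma \ref{l38}. For $a\in H_k$, $b\in H_{k-1}$: the $\mathcal X$-part of $\langle\mathbf{W},\mathbf{K}(y^{(ab)})-\mathbf{K}(y)\rangle$ is $0$; the $\mathcal Y$-part equals $Y_a+Y_b$, where $Y_s$ is the signed sum of the entries $\mathbf{W}(s,z),\mathbf{W}(z,s)$ over $z\notin H$ with $y(z)\in\{c_k,c_{k-1}\}$, and $\{Y_s\}_{s\in H}$ is an i.i.d.\ family of $\mathcal N\!\left(0,\,2(n_k+n_{k-1}-2h)\right)$ variables; and the $\mathcal Z$-part $\mathcal Z_{ab}$ is independent of all the $Y_s$, with $\mathrm{Var}(\mathcal Z_{ab})=L(y^{(ab)},y)-4(n_k+n_{k-1}-2h)=8(h-1)$. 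Hence
\begin{eqnarray*}
\max_{a\in H_k,\,b\in H_{k-1}}\langle\mathbf{W},\mathbf{K}(y^{(ab)})-\mathbf{K}(y)\rangle\;\geq\;\max_{a\in H_k}Y_a+\max_{b\in H_{k-1}}Y_b-\max_{a\in H_k,\,b\in H_{k-1}}(-\mathcal Z_{ab}),
\end{eqnarray*}
where the first two maxima are independent maxima of $h$ i.i.d.\ Gaussians.

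I would then apply Lemma \ref{mg} with $\epsilon=\frac{\log\log n}{\beta\log n}$. Hypothesis (\ref{bll}) gives $\log n_k\geq\beta\log n$, hence $\log h\geq\beta\log n-2\log\log n$, and for $n$ large condition (\ref{epn}) holds with $N=h$. The lower-tail bound then yields $\max_{a\in H_k}Y_a+\max_{b\in H_{k-1}}Y_b\geq 4(1-\epsilon)\sqrt{(n_k+n_{k-1}-2h)\log h}$ outside an event of probability $\leq 2e^{-h^\epsilon}$, while the upper-tail bound over the $h^2$ pairs gives $\max_{a,b}(-\mathcal Z_{ab})\leq 4(1+\epsilon)\sqrt{2(h-1)\log h}$ outside an event of probability $\leq h^{-2\epsilon}$. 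Subtracting, and using $h=\lfloor n_k/\log^2 n\rfloor$ with $n_{k-1}\geq n_k$ so that $\sqrt{2(h-1)/(n_k+n_{k-1}-2h)}=O(1/\log n)$ and $n_k+n_{k-1}-2h=(n_k+n_{k-1})\bigl(1-O(1/\log^2 n)\bigr)$, the maximum above is, outside an event of probability $o(1)$, at least
\begin{eqnarray*}
4\sqrt{(n_k+n_{k-1})\log h}\left(1-\epsilon-O\!\left(\tfrac{1}{\log n}\right)\right).
\end{eqnarray*}
Inserting $\log h\geq\beta\log n\bigl(1-\tfrac{2\log\log n}{\beta\log n}\bigr)$ and $\epsilon=\tfrac{\log\log n}{\beta\log n}$ and comparing with $2(n_k+n_{k-1}-2)/\sigma$ via (\ref{acc}) and (\ref{sbb}) shows that, for all large $n$, this lower bound exceeds the threshold, so the displayed probability tends to $1$ --- this is the same final computation as in the proof of Proposition \ref{pp37}.

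The step I expect to be the main obstacle is exactly that last comparison: one must check that the choice $\delta=\bigl(1+\tfrac{4}{\beta}\bigr)\tfrac{\log\log n}{\log n}$ is large enough to absorb simultaneously the factor $(1-\epsilon)^2$ from the lower tail of the Gaussian maximum, the replacement of $\log h$ by $\beta\log n$ through (\ref{bll}), the $\bigl(1-O(1/\log^2 n)\bigr)$ factor, and the $O(1/\log n)$ loss incurred in subtracting $\max_{a,b}(-\mathcal Z_{ab})$, while still leaving $\sigma\cdot(\text{maximum})>2(n_k+n_{k-1}-2)$ with a margin; the coefficient $\tfrac{4}{\beta}$ handles the terms of order $\tfrac{\log\log n}{\log n}$ and the surplus $1$ handles those of order $\tfrac{1}{\log n}$, just as in Proposition \ref{pp37}. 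All other steps transfer with only notational changes, with $\|\mathbf{K}(x)-\mathbf{K}(y)\|_F^2=L(x,y)$ (from (\ref{kle})) playing the role there played by the quadratic form in the colors. An essentially equivalent route uses single-vertex competitors recoloring one vertex from $c_k$ to $c_{k-1}$; there $L$ is only $2(n_k+n_{k-1}-1)$ but the effective family again has size $\sim n_k/\log^2 n$, yielding the same threshold.
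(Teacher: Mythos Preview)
Your proposal is correct and follows the same methodology as the paper: build competitors that differ from $y$ only inside the two smallest color classes, split $\langle\mathbf{W},\mathbf{K}(\cdot)-\mathbf{K}(y)\rangle$ via the $(\mathcal X,\mathcal Y,\mathcal Z)$ partition, and use Lemma~\ref{mg} to show the Gaussian maxima beat the deterministic cost. The only difference is cosmetic: the paper's own proof of this proposition uses the \emph{single-vertex} competitors $y^{(a)}$ (recoloring one vertex of $y^{-1}(c_{k-1})$ to $c_k$), with a single index set $H_{k-1}$ of size $h=n_{k-1}(y)/\log^2 n$, so only one maximum of $h$ i.i.d.\ Gaussians appears; you instead use the two-vertex swaps $y^{(ab)}$, two index sets $H_k,H_{k-1}$, and two independent maxima---exactly the device the paper employs later in Proposition~\ref{p68} for the fixed-size sample space. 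Both choices give $\mathrm{Var}(Y_s)=2(n_k+n_{k-1}-O(h))$ against a cost $\sim 2(n_k+n_{k-1})$ per unit, so the threshold computation is the same, and you correctly identify the single-vertex route as equivalent in your final sentence.
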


\begin{proof}
For $y\in \Omega$, $a\in[n]$ such that $c_{k-1}=y(a)$. Let $y^{(a)}\in\Omega$ be defined by
\begin{eqnarray*}
y^{(a)}(i)=\begin{cases} y(i)&\mathrm{if}\ i\in[n],\ \mathrm{and}\ i\neq a\\ c_{k}&\mathrm{if}\ i=a.\end{cases}
\end{eqnarray*}
Then
\begin{eqnarray*}
&&t_{k,k-1}(y^{(a)},y)=1;\\
&&t_{k-1,k-1}(y^{(a)},y)=n_{k-1}(y)-1;\\
&&t_{i,i}(y^{(a)},y)=n_i(y);\ \forall\ i\in [k]\setminus \{k-1\};\\
&&t_{i,j}(y^{(a)},y)=0;\ \forall (i,j)\in [k]^2\setminus\{(k,k-1)\},\ \mathrm{and}\ i\neq j.
\end{eqnarray*}
and
\begin{eqnarray*}
&&n_k(y^{(a)})=n_k(y)+1;\\
&&n_{k-1}(y^{(a)})=n_{k-1}(y)-1;\\
&&n_i(y^{(a)})=n_i(y);\ \forall\ i\in[k]\setminus \{k,k-1\}.
\end{eqnarray*}
Moreover,
\begin{eqnarray*}
1-p(\tilde{y};\sigma)\geq \mathrm{Pr}\left(\cup_{a\in[n]\cap y^{-1}(c_{k-1})}[g(y^{(a)})-g(y)>0]\right)
\end{eqnarray*}
Since any of the event $[g(y^{(a)})-g(y)>0]$ implies $\tilde{y}\neq y$. 

From (\ref{lak}) we obtain
\begin{eqnarray}
\langle \mathbf{K}(y),\mathbf{K}(y^{(a)})-\mathbf{K}(y)\rangle=\sum_{i,j\in[k]}[t_{i,j}(y^{(a)},y)]^2-\sum_{i\in[k]}[n_i(y^{(a)})]^2=-2n_k(y)\label{kn1}
\end{eqnarray}
Then from (\ref{gxmy}) we have
\begin{eqnarray*}
g(y^{(a)})-g(y)&=&\sum_{i\in[k]}[n_i(y^{(a)})]^2-\sum_{i\in[k]}[n_i(y)]^2+2\langle \mathbf{K}(y),\mathbf{K}(y^{(a)})-\mathbf{K}(y)\rangle+2\sigma\langle\mathbf{W},\mathbf{K}(y^{(a)})-\mathbf{K}(y) \rangle\\
&=&-2n_{k-1}(y)-2n_k(y)+2+2\sigma\langle\mathbf{W},\mathbf{K}(y^{(a)})-\mathbf{K}(y) \rangle.
\end{eqnarray*}
Let $H_{k-1}\subset y^{-1}(c_{k-1})$, such that 
\begin{eqnarray}
|H_{k-1}|=\frac{n_{k-1}(y)}{\log^2n}=h.\label{hkm1}
\end{eqnarray}
Then $1-p(\tilde{y};\sigma)$ is at least
\begin{eqnarray*}
&&\mathrm{Pr}\left(\cup_{a\in[n]\cap y^{-1}(c_{k-1})}[g(y^{(a)})-g(y)>0]\right)\\
&\geq &\mathrm{Pr}\left(\mathrm{max}_{a\in[n]\cap y^{-1}(c_{k-1})}\sigma\langle\mathbf{W},\mathbf{K}(y^{(a)})-\mathbf{K}(y) \rangle>n_k(y)+n_{k-1}(y)-1\right)\\
&\geq &\mathrm{Pr}\left(\mathrm{max}_{a\in H_{k-1}}\sigma\langle\mathbf{W},\mathbf{K}(y^{(a)})-\mathbf{K}(y) \rangle>n_k(y)+n_{k-1}(y)-1\right)
\end{eqnarray*}

Let $(\mathcal{X},\mathcal{Y},\mathcal{Z})$ be a partition of $[n]^2$ defined by
\begin{eqnarray*}
&&\mathcal{X}=\{\alpha=(\alpha_1,\alpha_2)\in [n]^2, \{\alpha_1,\alpha_2\}\cap [H_{k-1}]=\emptyset\}\\
&&\mathcal{Y}=\{\alpha=(\alpha_1,\alpha_2)\in [n]^2, |\{\alpha_1,\alpha_2\}\cap [H_{k-1}]|=1\}\\
&&\mathcal{Z}=\{\alpha=(\alpha_1,\alpha_2)\in [n]^2, |\{\alpha_1,\alpha_2\}\cap [H_{k-1}]|=2\}
\end{eqnarray*}
For $\eta\in\{\mathcal{X},\mathcal{Y},\mathcal{Z}\}$, define the $n\times n$ matrix $\mathbf{W}_{\eta}$ from the entries of $\mathbf{W}$ as follows
\begin{eqnarray*}
\mathbf{W}_{\eta}(i,j)=\begin{cases}0&\mathrm{if}\ (i,j)\notin \eta\\ \mathbf{W}(i,j),&\mathrm{if}\ (i,j)\in \eta\end{cases}
\end{eqnarray*}
 For each $a\in H_{k-1}$, let
\begin{eqnarray*}
\mathcal{X}_{a}=\langle\mathbf{W}_{\mathcal{X}},\mathbf{K}(y^{(a)})-\mathbf{K}(y) \rangle\\
\mathcal{Y}_{a}=\langle\mathbf{W}_{\mathcal{Y}},\mathbf{K}(y^{(a)})-\mathbf{K}(y) \rangle\\
\mathcal{Z}_{a}=\langle\mathbf{W}_{\mathcal{Z}},\mathbf{K}(y^{(a)})-\mathbf{K}(y) \rangle
\end{eqnarray*}
Explicit computations show that
\begin{eqnarray*}
\mathbf{K}_{i,j}(y^{(a)})-\mathbf{K}_{i,j}(y)=\begin{cases}-1&\mathrm{if}\ i=a,\ \mathrm{and}\ j\in y^{-1}(c_k)\\1&\mathrm{if}\ i=a,\ \mathrm{and}\ j\in y^{-1}(c_{k-1}), j\neq a\\-1&\mathrm{if}\ j=a,\ \mathrm{and}\ i\in y^{-1}(c_k)\\1&\mathrm{if}\ j=a,\ \mathrm{and}\ i\in y^{-1}(c_{k-1}),\ i\neq a\\0 &\mathrm{otherwise}. \end{cases}
\end{eqnarray*}
\begin{claim}The followings are true:
\begin{enumerate}
\item $\mathcal{X}_{a}=0$ for $a\in H_{k-1}$.
\item For each $a\in H_{k-1}$, the variables $\mathcal{Y}_{a}$ and $\mathcal{Z}_{a}$ are independent.
\end{enumerate}
\end{claim}

\begin{proof}It is straightforward to check (1). (2) holds because $\mathcal{Y}\cap\mathcal{Z}=\emptyset$.
\end{proof}

For $s\in H_{k-1}$, let $\mathcal{Y}^{s}\subseteq \mathcal{Y}$ be defined by
\begin{eqnarray*}
\mathcal{Y}^s=\{\alpha=(\alpha_1,\alpha_2)\in \mathcal{Y}:\alpha_1=s,\ \mathrm{or}\ \alpha_2=s\}.
\end{eqnarray*}
Note that for $s_1,s_2\in H_{k-1}$ and $s_1\neq s_2$, $\mathcal{Y}^{s_1}\cap \mathcal{Y}^{s_2}=\emptyset$. Moreover, $\mathcal{Y}=\cup_{s\in H_{k-1}}\mathcal{Y}^s$. Therefore
\begin{eqnarray*}
\mathcal{Y}_{a}=\sum_{s\in H_{k-1}}\langle\mathbf{W}_{\mathcal{Y}^s},\mathbf{K}(y^{(a)})-\mathbf{K}(y) \rangle
\end{eqnarray*}
Note also that $\langle\mathbf{W}_{\mathcal{Y}^s},\mathbf{K}(y^{(a)})-\mathbf{K}(y) \rangle=0$, if $s\neq a$. Hence
\begin{eqnarray*}
\mathcal{Y}_{a}=\sum_{\alpha\in\mathcal{Y}^a}[\mathbf{W}(\alpha)]\cdot\{[\mathbf{K}(y^{(a)})-\mathbf{K}(y)](\alpha)\}
\end{eqnarray*}
Note that for $\alpha\in \mathcal{Y}^a$,
\begin{eqnarray*}
[\mathbf{K}(y^{(a)})-\mathbf{K}(y)](\alpha)=\begin{cases}-1 &\mathrm{if}\ \{\alpha_1,\alpha_2\}\cap [y^{-1}(c_k)]=1 \\1 &\mathrm{if}\ \{\alpha_1,\alpha_2\}\cap [y^{-1}(c_{k-1})]=2\\0 &\mathrm{else}.\end{cases}
\end{eqnarray*}
So, 
\begin{eqnarray*}
\sum_{\alpha\in \mathcal{Y}^a}[\mathbf{W}(\alpha)]\cdot\{[\mathbf{K}(y^{(a)})-\mathbf{K}(y)](\alpha)\}
=\sum_{\alpha\in \mathcal{Y}^a;\{\alpha_1,\alpha_2\}\cap [y^{-1}(c_k)]=2}[\mathbf{W}(\alpha)]-\sum_{\alpha\in \mathcal{Y}^a;\{\alpha_1,\alpha_2\}\cap [y^{-1}(c_k)]=1}[\mathbf{W}(\alpha)]
\end{eqnarray*}
$\{\mathcal{Y}_s\}_{s\in H_{k-1}}$ is a collection of independent centered Gaussian random variables. Moreover, the variance of $\mathcal{Y}_s$ is equal to $2(n_k(y)+n_{k-1}(y)-h)$ independent of the choice of $s$.

By the claim, we obtain
\begin{eqnarray*}
\langle\mathbf{W},\mathbf{K}(y^{(a)})-\mathbf{K}(y) \rangle=\mathcal{Y}_a+\mathcal{Z}_{a}
\end{eqnarray*}
Moreover, 
\begin{eqnarray*}
\max_{a\in H_{k-1}}\mathcal{Y}_a+\mathcal{Z}_{a}&\geq& \max_{a\in H_{k-1}} \mathcal{Y}_a-\max_{a\in H_{k-1}}(-\mathcal{Z}_{a})
\end{eqnarray*}

By  Lemma \ref{mg} about the tail bound result of the maximum of Gaussian random variables, if (\ref{epn}) holds with $N$ replaced by $h$, the event
\begin{eqnarray*}
E^*_1:=\left\{\max_{a\in H_{k-1}}\mathcal{Y}_a\geq (1-\epsilon)\sqrt{2\log h\cdot 2\left(n_k(y)+n_{k-1}(y)-h\right)}\right\}
\end{eqnarray*}
has probability at least $1-e^{-h^{\epsilon}}$; and the event
\begin{eqnarray*}
E^*_2:=\left\{\max_{a\in H_{k-1}}\mathcal{Z}_{a}\leq (1+\epsilon)\sqrt{2\log h\cdot \max_{a\in H_{k-1}} \mathrm{Var}(\mathcal{Z}_{a})}\right\}
\end{eqnarray*}
has probability $1-h^{-\epsilon}$. 

Note that
\begin{eqnarray}
\langle\mathbf{K}(y^{(a)}),\mathbf{K}(y^{(a)})-\mathbf{K}(y) \rangle&=&-\sum_{i,j\in [k]}[t_{i,j}(y,y^{(a)})]^2+\sum_{i\in [k]}[n_i(y)]^2\label{kn2}\\
&=&2n_{k-1}(y)-2\notag
\end{eqnarray}
 Moreover, by (\ref{kn1}) and (\ref{kn2})
\begin{eqnarray*}
\mathrm{Var} \mathcal{Z}_{a}&=&\|\mathbf{K}(y^{(a)})-\mathbf{K}(y)\|^2_{F}-\mathrm{Var}(\mathcal{Y}_a)\\
&=&2\left[n_{k-1}(y)+n_k(y)\right]-2-2\left(n_{k-1}(y)+n_k(y)-h\right)\\
&=& 2h-2
\end{eqnarray*}
Let 
\begin{eqnarray*}
E^*:=\left\{\max_{a\in H_{k-1}}\mathcal{Y}_a+\mathcal{Z}_{a}\geq \left(1-\epsilon-(1+\epsilon)\sqrt{\frac{h-1}{n_k(y)+n_{k-1}(y)-h}}\right)\sqrt{4\log h(n_k(y)+n_{k-1}(y)-h)}\right\}
\end{eqnarray*}
Then $E_1^*\cap E_2^*\subseteq E^*$. Moreover, by (\ref{hkm1}) and (\ref{bll}) we have
\begin{eqnarray*}
&&\left(1-\epsilon-(1+\epsilon)\sqrt{\frac{h-1}{n_k(y)+n_{k-1}(y)-h}}\right)\sqrt{4\log h(n_k(y)+n_{k-1}(y)-h)}\\
&\geq &2\left(1-\epsilon-\frac{1+\epsilon}{\log n}\right)\sqrt{\beta\log n \left(n_k(y)+n_{k-1}(y)\right)}\sqrt{\left(1-\frac{1}{\log^2 n}\right)\left(1-\frac{2\log\log n}{\beta\log n}\right)}
\end{eqnarray*}

Let $\tilde{E}^*$ be the event defined by
\begin{eqnarray*}
\tilde{E}^*:=\left\{\max_{a\in H_{k-1}}\mathcal{Y}_a+\mathcal{Z}_{a}\geq
2\left(1-\epsilon-\frac{1+\epsilon}{\log n}\right)\sqrt{\beta\log n \left(n_k(y)+n_{k-1}(y)\right)}\sqrt{\left(1-\frac{1}{\log^2 n}\right)\left(1-\frac{2\log\log n}{\beta\log n}\right)}\right\}.
\end{eqnarray*}
Then $E^*\subseteq \tilde{E}^*$.

Let $\epsilon$ be given as in (\ref{eph}), then when $n$ is sufficiently large (\ref{epn}) holds with $N$ replaced by $h$.
When (\ref{acc}) and (\ref{sbb}) holds,
\begin{eqnarray*}
&&\mathrm{Pr}\left(\mathrm{max}_{a\in H_{k-1} }\sigma\langle\mathbf{W},\mathbf{K}(y^{(a)})-\mathbf{K}(y) \rangle>n_k(y)+n_{k-1}(y)-1\right)\geq \mathrm{Pr}(\tilde{E}^*)\geq \mathrm{Pr}(E^*),
\end{eqnarray*}
as $n$ is sufficiently large. Moreover, we have
\begin{eqnarray*}
\mathrm{Pr}(E^*)&\geq& \mathrm{Pr}(E_1^*\cap E_2^*)\\
&\geq &1-\mathrm{Pr}(E_1^c)-\mathrm{Pr}(E_2^c)\\
&\geq &1-\log n,
\end{eqnarray*}
when $n$ is sufficiently large. Then the proposition follows.

\end{proof}

\section{Proof of Theorem \ref{m2} when the number of vertices in each color is fixed in the sample space}\label{pm22}

We now discuss the MLE in Theorem \ref{m2} when the number of vertices of each color in the sample space is fixed to be the same as the true value. Let $y\in \Omega_{n_1,\ldots,n_k}$ be the unknown true color assignment mapping. Let $\mathbf{K}(y)$ be defined as in (\ref{kij}), and let $\mathbf{R}$ be defined as in (\ref{rgw}). For given sample $\mathbf{R}$, let $\ol{y}$ be defined as in (\ref{by}).

Given a sample $\mathbf{R}$, the goal is to find the exact division of $[n]$ into groups of vertices such that all the vertices in the same group have the same color under the true color assignment mapping $y$. Note that for all $x\in \Omega_{n_1,\ldots,n_k}$, 
\begin{eqnarray*}
\|\mathbf{K}(x)\|_F^2=2\sum_{1\leq i<j\leq k} n_in_j,
\end{eqnarray*}
which depends only on $n_1,\ldots,n_k$, but is independent of $x$.
Then we have
\begin{eqnarray*}
\ol{y}=\mathrm{argmax}_{x\in \Omega_{n_1,\ldots,n_k}}\langle \mathbf{K}(x),\mathbf{R} \rangle
\end{eqnarray*}

We define an equivalence relation on $\Omega_{n_1,\ldots,n_k}$ as follows:

\begin{definition}\label{dfeqs}
For $y\in \Omega_{n_1,\ldots,n_k}$, let $C^*(y)$ consist of all the $x\in \Omega_{n_1,\ldots,n_k}$ such that $x$ can be obtained from $y$ by a permutation of colors.  More precisely, $x\in C^*(y)\subset \Omega_{n_1,\ldots,n_k}$ if and only if the following condition holds
\begin{itemize}
\item for $i,j\in[n]$, $y(i)=y(j)$ if and only if $x(i)=x(j)$.
\end{itemize}

We define an equivalence relation on $\Omega_{n_1,\ldots,n_k}$ as follows: we say $x,z\in \Omega_{n_1,\ldots,n_k}$ are equivalent if and only if $x\in C^*(z)$. Let $\ol{\Omega}_{n_1,\ldots,n_k}$ be the set of all the equivalence classes in $\Omega_{n_1,\ldots,n_k}$.
\end{definition}

It is not hard to check that for each $x\in\Omega_{n_1,\ldots,n_k}$, $C^*(x)=C(x)\cap \Omega_{n_1,\ldots,n_k}$, where $C(x)$ is defined as in Definition \ref{dfeq}. Moreover, for $x,z\in \Omega_{n_1,\ldots,n_k}$, $x\in C^*(z)$ if and only if there exists a permutation $\omega$ of $[k]$, such that
\begin{eqnarray*}
x=\omega(z)
\end{eqnarray*}
and for any $i\in[k]$,
\begin{eqnarray*}
|z^{-1}(i)|=|z^{-1}(\omega(i))|.
\end{eqnarray*}

Let 
\begin{eqnarray}
p(\ol{y},\sigma)=\mathrm{Pr}(\ol{y}\in C^*(y)),\label{phy}
\end{eqnarray}
where $y\in \Omega_{n_1,\ldots,n_k}$ is the true color assignment function.

For each $x\in \Omega_{n_1,\ldots,n_k}$, define
\begin{eqnarray}
h(x)=\langle \mathbf{K}(x),\mathbf{R} \rangle\label{dhx}
\end{eqnarray}

\begin{lemma}Let $y\in \Omega_{n_1,\ldots,n_k}$ be the true color assignment mapping. If $x,z\in \Omega_{n_1,\ldots,n_k}$ is such that $x\in C^*(z)$, then
\begin{eqnarray}
\mathbf{K}(x)=\mathbf{K}(z);\label{621}
\end{eqnarray}
and
\begin{eqnarray}
h(x)=h(z).\label{622}
\end{eqnarray}
\end{lemma}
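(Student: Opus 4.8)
The plan is to unwind the definition of the equivalence relation $C^*$ and then observe that the matrix $\mathbf{K}(x)$ depends on $x$ only through the partition of $[n]$ into color classes, not through the labels of the colors themselves. Concretely, I would first recall that $x\in C^*(z)$ means, by Definition \ref{dfeqs}, that for all $i,j\in[n]$ one has $z(i)=z(j)$ if and only if $x(i)=x(j)$; equivalently, $x(i)\neq x(j)$ precisely when $z(i)\neq z(j)$. (As noted right after Definition \ref{dfeqs}, this is the same as saying $x=\omega\circ z$ for some permutation $\omega$ of $[k]$ that preserves the class sizes, but the set-theoretic formulation above is all that is needed here.)

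Next I would compare the two matrices entrywise. By the defining formula (\ref{kij}), $\mathbf{K}_{i,j}(x)=1$ when $x(i)\neq x(j)$ and $\mathbf{K}_{i,j}(x)=0$ when $x(i)=x(j)$, and similarly for $\mathbf{K}_{i,j}(z)$. Combining this with the equivalence $x(i)\neq x(j)\iff z(i)\neq z(j)$ from the previous step immediately yields $\mathbf{K}_{i,j}(x)=\mathbf{K}_{i,j}(z)$ for every pair $(i,j)\in[n]^2$, which is exactly (\ref{621}). This step is the core of the argument, and it is essentially the same observation already made in (\ref{kxez}) within the proof of Lemma \ref{l12}; the only difference is that here both $x$ and $z$ are required to lie in $\Omega_{n_1,\ldots,n_k}$, which plays no role in this particular identity.

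Finally, for (\ref{622}) I would simply substitute (\ref{621}) into the definition (\ref{dhx}) of $h$: since $h(x)=\langle \mathbf{K}(x),\mathbf{R}\rangle$ and $\mathbf{K}(x)=\mathbf{K}(z)$, bilinearity (or rather, just equality of the two arguments) of the inner product $\langle\cdot,\cdot\rangle$ gives $h(x)=\langle\mathbf{K}(z),\mathbf{R}\rangle=h(z)$. No hypothesis beyond (\ref{621}) is needed for this.

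I do not anticipate any genuine obstacle: the statement is essentially a restatement of Lemma \ref{l12} (and of the identity (\ref{kxez})) in the setting of the restricted sample space $\Omega_{n_1,\ldots,n_k}$, and the proof is a short direct verification. The only point to be slightly careful about is to phrase the argument purely in terms of the partition induced by the color assignment, so that the permutation $\omega$ relating $x$ and $z$ never has to be invoked explicitly.
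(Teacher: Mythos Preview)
Your proposal is correct and follows essentially the same approach as the paper: the paper also first notes that (\ref{622}) follows from (\ref{621}) via the definition (\ref{dhx}) of $h$, and then derives (\ref{621}) by observing that equivalence of $x$ and $z$ means $x(i)=x(j)\iff z(i)=z(j)$, so the entries of $\mathbf{K}(x)$ and $\mathbf{K}(z)$ agree. Your reference back to (\ref{kxez}) in Lemma \ref{l12} is apt, as the argument is indeed the same one.
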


\begin{proof}From (\ref{dhx}) we see that (\ref{622}) follows from (\ref{621}). Recall that $\mathbf{K}_{i,j}(x) = 1$ if and only if $x(i)= x(j)$; if $x(i)\neq x(j)$, $\mathbf{K}_{i,j}(x)=0$. Since x and
z are equivalent $x(i)=x(j)$ if and only if $z(i)=z(j)$, therefore $\mathbf{K}(x)=\mathbf{K}(z)$.
\end{proof}

 Then 
\begin{eqnarray*}
p(\ol{y},\sigma)=\mathrm{Pr}\left(h(y)>\max_{x\in \Omega_{n_1,\ldots,n_k}\setminus C^*(y)}h(x)\right)
\end{eqnarray*}
Note that
\begin{eqnarray*}
h(x)-h(y)=\langle\mathbf{K}(y), \mathbf{K}(x)-\mathbf{K}(y) \rangle+\sigma\langle\mathbf{W}, \mathbf{K}(x)-\mathbf{K}(y) \rangle.
\end{eqnarray*}
The expression above shows that $h(x)-h(y)$ is a Gaussian random variable with mean $\langle\mathbf{K}(y), \mathbf{K}(x)-\mathbf{K}(y) \rangle$ and variance $\sigma^2\|\mathbf{K}(x)-\mathbf{K}(y)\|_F^2$. 

For $i,j\in[k]$, $x,y\in \Omega_{n_1,\ldots,n_k}$, let $S_{i,j}(x,y)$ be defined as in (\ref{1sij}). Let $t_{i,j}(x,y)=|S_{i,j}(x,y)|$. Let 
\begin{eqnarray*}
U(x,y):=-\mathbb{E}[h(x)-h(y)]=\langle \mathbf{K}(y),\mathbf{K}(y)-\mathbf{K}(x)\rangle;
\end{eqnarray*}
Then by (\ref{lak}) we have
\begin{eqnarray}
U(x,y)=\sum_{i\in[k]}[n_i]^2-\sum_{i,j\in[k]}[t_{i,j}(x,y)]^2\label{uxy}
\end{eqnarray}
and 
\begin{eqnarray*}
\|\mathbf{K}(x)-\mathbf{K}(y)\|_F^2=2U(x,y).
\end{eqnarray*}

For $x\in \Omega_{n_1,\ldots,n_k} \setminus C^*(y)$
\begin{eqnarray*}
\mathrm{Pr}\left(h(x)-h(y)>0\right)
&=&\mathrm{Pr_{\xi\sim \mathcal{N}(0,1)}}\left(\xi\geq \frac{\sqrt{U(x,y)}}{\sqrt{2}\sigma}\right)
\end{eqnarray*}
Using the standard Gaussian tail bound $\mathrm{Pr}_{\xi\in \mathcal{N}(0,1)}(\xi>x)<e^{-\frac{1}{2}x^2}$, we obtain
\begin{eqnarray*}
\mathrm{Pr}\left(h(x)-h(y)>0\right)\leq e^{-\frac{U(x,y)}{4\sigma^2}}
\end{eqnarray*}

Then
\begin{eqnarray}
1-p(\ol{y};\sigma)&\leq& \sum_{x\in \Omega\setminus C^*(y)}\mathrm{Pr}(h(x)-h(y)> 0)\label{2mp}\\
&\leq &\sum_{x\in \Omega \setminus C^*(y)} e^{-\frac{U(x,y)}{4\sigma^2}}\notag
\end{eqnarray}

\begin{lemma}\label{lm63}For any $x,y\in\Omega_{n_1,\ldots,n_k}$, $U(x,y)\geq 0$, where the equality holds if and only if $x\in C^*(y)$.
\end{lemma}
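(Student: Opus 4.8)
The plan is to derive the statement from the same elementary positivity identity that underlies Lemma~\ref{lgz}. First I would invoke the row marginal $\sum_{j\in[k]}t_{i,j}(x,y)=n_i$, which holds because $x\in\Omega_{n_1,\ldots,n_k}$, to expand
\begin{eqnarray*}
\sum_{i\in[k]}n_i^2=\sum_{i\in[k]}\Big(\sum_{j\in[k]}t_{i,j}(x,y)\Big)^2=\sum_{i,j\in[k]}[t_{i,j}(x,y)]^2+2\sum_{i\in[k]}\sum_{1\le j_1<j_2\le k}t_{i,j_1}(x,y)\,t_{i,j_2}(x,y);
\end{eqnarray*}
plugging this into the definition (\ref{uxy}) of $U$ gives
\begin{eqnarray*}
U(x,y)=2\sum_{i\in[k]}\sum_{1\le j_1<j_2\le k}t_{i,j_1}(x,y)\,t_{i,j_2}(x,y)\ \ge\ 0,
\end{eqnarray*}
since every $t_{i,j}(x,y)$ is a nonnegative integer. (Equivalently, for $x,y\in\Omega_{n_1,\ldots,n_k}$ one has $n_i(x)=n_i(y)=n_i$, so $L(x,y)=2U(x,y)$ by (\ref{lxy}), and one may simply quote Lemma~\ref{lgz} together with the identity $C^*(y)=C(y)\cap\Omega_{n_1,\ldots,n_k}$ recorded above; I would probably present the self-contained computation and then remark on this shortcut.)

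Next I would handle the equality case. If $U(x,y)=0$, then for each $i\in[k]$ all the products $t_{i,j_1}(x,y)t_{i,j_2}(x,y)$ with $j_1\ne j_2$ vanish, so there is at most one $j$ with $t_{i,j}(x,y)>0$; combined with $\sum_{j}t_{i,j}(x,y)=n_i\ge 1$ (the $n_i$ are positive integers), for every $i$ there is a unique $\phi(i)\in[k]$ with $t_{i,\phi(i)}(x,y)=n_i$ and $t_{i,j}(x,y)=0$ for $j\ne\phi(i)$. This defines a map $\phi\colon[k]\to[k]$. I would then use the column marginal $\sum_{i\in[k]}t_{i,j}(x,y)=n_j$: if some $j$ were not in the image of $\phi$, then $n_j=\sum_{i:\phi(i)=j}t_{i,j}(x,y)=0$, contradicting $n_j\ge 1$; hence $\phi$ is surjective, therefore a bijection of $[k]$. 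Since $t_{i,j}(x,y)>0$ forces $j=\phi(i)$, every vertex $l$ with $x(l)=c_i$ satisfies $y(l)=c_{\phi(i)}$; the forward implication gives $x(l_1)=x(l_2)\Rightarrow y(l_1)=y(l_2)$, and bijectivity of $\phi$ gives the converse, which is precisely the defining condition for $x\in C^*(y)$ in Definition~\ref{dfeqs}.

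Conversely, if $x\in C^*(y)$, then (as noted in the text) there is a permutation $\omega$ of $[k]$ with $x=\omega\circ y$, so $t_{i,j}(x,y)=0$ unless $c_i=\omega(c_j)$; in particular at most one $t_{i,j}(x,y)$ is nonzero for each $i$, every cross term in the displayed formula for $U$ vanishes, and $U(x,y)=0$. I do not anticipate any serious difficulty: the only point needing a moment's attention is the bijectivity of $\phi$, which drops out of the positivity of the $n_j$'s exactly as in the proof of Lemma~\ref{lgz}, and everything else is routine bookkeeping with the marginals (\ref{tij1}) already used throughout this section.
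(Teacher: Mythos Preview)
Your proposal is correct, and the shortcut you mention in parentheses is exactly the paper's proof: the paper simply observes that $U(x,y)=\tfrac12 L(x,y)$ when $x,y\in\Omega_{n_1,\ldots,n_k}$ and invokes Lemma~\ref{lgz} together with $C^*(y)=C(y)\cap\Omega_{n_1,\ldots,n_k}$. Your self-contained computation is also correct (indeed your identity $U(x,y)=2\sum_{i}\sum_{j_1<j_2}t_{i,j_1}t_{i,j_2}$ is a slightly cleaner one-marginal version of the two-marginal expansion (\ref{lxy1})), but it essentially reproves the relevant half of Lemma~\ref{lgz} and is redundant once that lemma is available.
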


\begin{proof}Note that
\begin{eqnarray}
U(x,y)=\left.\frac{1}{2}L(x,y)\right|_{x,y\in \Omega_{n_1,\ldots,n_k}}.\label{ulr}
\end{eqnarray}
Then the lemma follows from Lemma \ref{lgz}.
\end{proof}

\begin{lemma}\label{lm64}Assume $x,z_1,z_2\in \Omega_{n_1,\ldots,n_k}$. Assume $z_1\in C^*(z_2)$. Then
\begin{eqnarray*}
U(x,z_1)=U(x,z_2).
\end{eqnarray*}
\end{lemma}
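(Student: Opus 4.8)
The plan is to reduce the claim to facts already established for the function $L$ in Section~\ref{pm21}. The key observation is that, by the discussion following Definition~\ref{dfeqs}, for mappings in $\Omega_{n_1,\ldots,n_k}$ the relation $C^*$ is exactly the restriction of the equivalence relation $C$ of Definition~\ref{dfeq}; in particular $z_1\in C^*(z_2)$ implies $z_1\in C(z_2)$, and since $C$ is an equivalence relation this is the same as $z_2\in C(z_1)$.

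First I would record that $x,z_1,z_2\in\Omega_{n_1,\ldots,n_k}$, so identity (\ref{ulr}) applies and gives $U(x,z_1)=\tfrac12 L(x,z_1)$ and $U(x,z_2)=\tfrac12 L(x,z_2)$. Next I would invoke Lemma~\ref{l12}: the expression (\ref{lyee}) states $L(x,y)=L(x,y^*)$ whenever $y^*\in C(y)$. Applying this with $y=z_1$ and $y^*=z_2$ (legitimate because $z_2\in C(z_1)$) yields $L(x,z_1)=L(x,z_2)$, hence $U(x,z_1)=U(x,z_2)$, which is the assertion.

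As an alternative, entirely self-contained route I would argue directly from (\ref{uxy}): write $z_1=\omega\circ z_2$ for a permutation $\omega$ of $[k]$, deduce $S_{i,j}(x,z_1)=S_{i,\omega^{-1}(j)}(x,z_2)$ and hence $t_{i,j}(x,z_1)=t_{i,\omega^{-1}(j)}(x,z_2)$ for all $i,j\in[k]$; then a reindexing of the inner sum gives $\sum_{i,j\in[k]}[t_{i,j}(x,z_1)]^2=\sum_{i,j\in[k]}[t_{i,j}(x,z_2)]^2$, while the term $\sum_{i\in[k]}n_i^2$ in (\ref{uxy}) is unchanged. Either way the equality $U(x,z_1)=U(x,z_2)$ follows.

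There is no real obstacle here: the statement is a bookkeeping corollary of the symmetry of $L$ in its two arguments together with Lemma~\ref{l12}. The only point requiring a moment's care is verifying that $z_1\in C^*(z_2)$ indeed entails $z_1\in C(z_2)$ — i.e.\ that restricting the equivalence relation to $\Omega_{n_1,\ldots,n_k}$ does not change which pairs are related — and this is immediate from the defining condition ``$y(i)=y(j)\iff x(i)=x(j)$'' shared by Definitions~\ref{dfeq} and \ref{dfeqs}.
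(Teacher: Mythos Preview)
Your main argument is correct and matches the paper's proof exactly: the paper also derives the lemma from (\ref{lyee}) and (\ref{ulr}), after implicitly using that $C^*(z_2)\subset C(z_2)$. Your alternative direct reindexing argument is fine too, but the paper does not take that route.
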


\begin{proof}The lemma follows from (\ref{lyee}) and (\ref{ulr}).
\end{proof}

\begin{lemma}\label{ll64}Let $\tilde{\mathcal{B}}$ and $\tilde{\mathcal{B}}_{\epsilon}$ be defined as in (\ref{db}) and (\ref{dbe}), respectively. Let $\mathcal{R}_c$ be defined as in (\ref{drc}). Let $x,y\in \Omega_{n_1,\ldots,n_k}$. Assume (\ref{vir}) and (\ref{tme}) hold. Then
\begin{eqnarray*}
U(x,y)\geq \frac{c\epsilon n^2}{2k}
\end{eqnarray*}
\end{lemma}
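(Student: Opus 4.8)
The plan is to deduce this directly from Lemma~\ref{l55} by way of the identity between $U$ and $L$. Recall from the proof of Lemma~\ref{lm63} (equation~\eqref{ulr}) that for $x,y\in\Omega_{n_1,\ldots,n_k}$ we have
\begin{eqnarray*}
U(x,y)=\frac{1}{2}L(x,y),
\end{eqnarray*}
since in this case $n_i(x)=n_i(y)=n_i$ for every $i\in[k]$ and the expression \eqref{uxy} for $U(x,y)$ is exactly half of \eqref{lxy}. So it suffices to show $L(x,y)\geq \frac{c\epsilon n^2}{k}$ under the stated hypotheses.

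Next I would observe that the hypotheses of the present lemma are precisely those of Lemma~\ref{l55}. Indeed, on $\Omega_{n_1,\ldots,n_k}$ the constraint vector $(t_{1,1}(x,y),\ldots,t_{k,k}(x,y))$ lies in the set $\tilde{\mathcal{B}}$ of \eqref{db} built from $n_j(y)=n_j$, and \eqref{tme} says exactly that it lies outside $\tilde{\mathcal{B}}_\epsilon$; the assumption \eqref{vir} that $(n_1(y)/n,\ldots,n_k(y)/n)\in\mathcal{R}_c$ is carried over verbatim. Applying Lemma~\ref{l55} then yields $L(x,y)\geq \frac{c\epsilon n^2}{k}$, and combining with the displayed identity gives
\begin{eqnarray*}
U(x,y)=\frac{1}{2}L(x,y)\geq \frac{c\epsilon n^2}{2k},
\end{eqnarray*}
as required.

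There is essentially no genuine obstacle here: the only point requiring a moment's care is checking that the quantifier structure in the definition~\eqref{dbe} of $\tilde{\mathcal{B}}_\epsilon$ (``for all $i$ there exists $j$ with $t_{j,i}\geq n_i(y)-n\epsilon$'') is the one actually used in the proof of Lemma~\ref{l55}, so that membership of $(t_{1,1}(x,y),\ldots,t_{k,k}(x,y))$ in $\tilde{\mathcal{B}}\setminus\tilde{\mathcal{B}}_\epsilon$ supplies the index $i_0$ with $n_{i_0}(y)-\max_{j}t_{j,i_0}(x,y)\geq \epsilon n$ that drives that argument. This is immediate from the statements as given, so the lemma follows at once.
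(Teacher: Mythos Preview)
Your proof is correct and follows exactly the same approach as the paper: the paper's own proof is the single sentence ``The lemma follows from (\ref{ulr}) and Lemma \ref{l55},'' which is precisely the reduction you carry out via $U(x,y)=\tfrac{1}{2}L(x,y)$ on $\Omega_{n_1,\ldots,n_k}$ together with the bound $L(x,y)\geq c\epsilon n^2/k$ from Lemma~\ref{l55}.
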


\begin{proof}The lemma follows from (\ref{ulr}) and Lemma \ref{l55}.
\end{proof}

\begin{lemma}\label{ll65}Let $y\in \Omega_{n_1,\ldots,n_k}$ be the true color assignment mapping. Let $\mathcal{R}_c$ be defined in (\ref{drc}). Assume (\ref{vir}) holds. Let $x\in\Omega_{n_1,\ldots,n_k}$ 
For $i\in [k]$,  let $w(i)\in[k]$ be defined as in (\ref{twi}). Then
\begin{enumerate}
\item when $\epsilon\in\left(0,\frac{c}{k}\right)$ and $(t_{1,1}(x,y),\ldots, t_{k,k}(x,y))\in \tilde{\mathcal{B}}_{\epsilon}$, $w$ is a bijection from $[k]$ to $[k]$.
\item Assume there exist $i,j\in[k]$, such that $n_i\neq n_j$. If
\begin{eqnarray}
\epsilon<\min_{i,j\in[k]:n_i\neq n_j}\left|\frac{n_i-n_j}{n}\right|\label{eu1}
\end{eqnarray}
Then for any $i\in[k]$,
\begin{eqnarray}
n_i=|y^{-1}(i)|=|y^{-1}(w(i))|=n_{w(i)}.\label{esc}
\end{eqnarray}
\end{enumerate}
\end{lemma}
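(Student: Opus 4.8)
The plan is to handle the two parts in turn, both resting on a single elementary observation: since $x,y\in\Omega_{n_1,\ldots,n_k}$, every column of the matrix $\big(t_{j,i}(x,y)\big)$ sums to $n_i$ (by (\ref{tij1})), while membership of $(t_{1,1}(x,y),\ldots,t_{k,k}(x,y))$ in $\tilde{\mathcal{B}}_{\epsilon}$ (equation (\ref{dbe})) forces $\max_{j\in[k]} t_{j,i}(x,y)\geq n_i-n\epsilon$ for every $i$ (using $n_i(y)=n_i$), and hence $\sum_{j}t_{j,i}(x,y)-\max_{j} t_{j,i}(x,y)\leq n\epsilon$. Once $w$ is seen to be well defined these become $t_{w(i),i}(x,y)\geq n_i-n\epsilon$ and $\sum_{j\neq w(i)}t_{j,i}(x,y)\leq n\epsilon$.

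For part (1) I would first check that the maximum in (\ref{twi}) is attained at a unique index, so that $w$ is well defined: if distinct $j_1,j_2$ both attained it, then $n_i\geq t_{j_1,i}(x,y)+t_{j_2,i}(x,y)\geq 2(n_i-n\epsilon)$, forcing $n_i\leq 2n\epsilon$, which contradicts $n_i\geq cn$ from (\ref{vir}) since $\epsilon<c/k\leq c/2$. Then I would show $w$ is injective: a non-injective self-map of $[k]$ misses some value $q$, so $q\neq w(s)$ for every $s$, whence $t_{q,s}(x,y)\leq n\epsilon$ for all $s$ and therefore $n_q=\sum_s t_{q,s}(x,y)\leq kn\epsilon<cn$, again contradicting (\ref{vir}). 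This is the same pigeonhole mechanism as in Lemma \ref{ll56}; it is cleaner here only because $n_q(x)=n_q\geq cn$ rather than $\geq 2cn/3$, which is what relaxes the threshold to $c/k$.

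For part (2) — where I also take the hypotheses of part (1) to be in force, so that $w$ is already known to be a bijection — the key inequality is the one-sided bound
\[
n_{w(i)}=n_{w(i)}(x)=\sum_{s\in[k]}t_{w(i),s}(x,y)\geq t_{w(i),i}(x,y)\geq n_i-n\epsilon .
\]
Under (\ref{eu1}) this reads $n_{w(i)}>n_i-\min_{a,b:\,n_a\neq n_b}|n_a-n_b|$; since any two of the numbers $n_1,\ldots,n_k$ are either equal or differ by at least that minimum, this rules out the case $n_{w(i)}<n_i$, so $n_{w(i)}\geq n_i$ for every $i$. Finally, because $w$ is a bijection, $\sum_i n_{w(i)}=\sum_i n_i$; a termwise-dominating family with equal sums must be termwise equal, so $n_{w(i)}=n_i$ for all $i$, and (\ref{esc}) follows from $|y^{-1}(i)|=n_i$ and $|y^{-1}(w(i))|=n_{w(i)}$.

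I do not anticipate a real obstacle: both statements are short counting arguments. The one subtlety worth flagging is that in part (2) only the lower estimate $n_{w(i)}\geq n_i-n\epsilon$ is directly available — a matching upper bound would cost an extra factor of $k$ and break the stated threshold — so it is bijectivity of $w$, supplied by part (1), rather than any two-sided estimate, that upgrades the inequality to an exact equality. I would also note that when all the $n_i$ coincide, part (2) is vacuous and its hypothesis (\ref{eu1}) is void, so there is nothing to check in that case.
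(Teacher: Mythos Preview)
Your proposal is correct and matches the paper's own argument essentially line for line: Part (1) is handled exactly as in Lemma \ref{ll56} (uniqueness of the maximizer via $n_i\geq 2(n_i-n\epsilon)$, injectivity via the missed index $q$ and the row-sum bound $n_q\leq kn\epsilon$), with the threshold sharpening to $c/k$ because now $n_q(x)=n_q\geq cn$; Part (2) is the same one-sided estimate $n_{w(i)}\geq t_{w(i),i}\geq n_i-n\epsilon$ combined with bijectivity of $w$ to upgrade $n_{w(i)}\geq n_i$ to equality. The paper phrases Part (2) as a proof by contradiction (if (\ref{esc}) fails then, since $w$ is a bijection and the sums agree, some $i$ has $n_i>n_{w(i)}$, contradicting the estimate), but this is the same reasoning you give, and your version makes the role of bijectivity more explicit.
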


\begin{proof}Part (1) of the lemma is a special case of Lemma \ref{ll56} when $x\in\Omega_{n_1,\ldots,n_k}$. Now we prove Part (2) of the lemma. We shall prove by contradiction. Assume (\ref{esc}) does not hold, then there exists $i_0\in [k]$, such that
\begin{eqnarray}
n_i>n_{w(i)}.\label{ngn}
\end{eqnarray}
By (\ref{twi}), we obtain
\begin{eqnarray*}
n_{w_i}\geq t_{w(i),i}(x,y)>n_i-\epsilon n
\end{eqnarray*}
Hence we have
\begin{eqnarray*}
\epsilon>\frac{n_i-n_{w}(i)}{n}
\end{eqnarray*}
But this is a contradiction to (\ref{eu1}) and (\ref{ngn}). The contradiction implies Part (2) of the lemma.
\end{proof}

We have the following proposition
\begin{proposition}\label{p67}Let $\mathcal{R}_c$ be defined as in (\ref{drc}). Let $y\in\Omega_{n_1,\ldots,n_k}$ be the true color assignment mapping satisfying (\ref{vir}), where $n_1\geq n_2\geq \ldots\geq n_k$. Here $c,k$ may depend on $n$. Assume
\begin{eqnarray}
\lim_{n\rightarrow\infty}\frac{c}{k(n_k+n_{k-1})}=0;\label{ckl}
\end{eqnarray}
\begin{eqnarray}
\lim_{k\rightarrow\infty}\frac{\log k}{\log n}=0;\label{knl}
\end{eqnarray}
and
\begin{eqnarray}
\lim_{n\rightarrow\infty}\frac{cn\min\left\{\frac{c}{k},\min_{i,j\in[k],n_i\neq n_j}\left|\frac{n_i-n_j}{n}\right|\right\}}{(n_{k-1}+n_k)}\geq \alpha>0,\label{ck3}
\end{eqnarray}
where $\alpha>0$ is a constant independent of $n$; and $\min_{i,j\in[k],n_i\neq n_j}\left|\frac{n_i-n_j}{n}\right|=\infty$ if $n_i=n_j$ for all $i,j\in[k]$.
If there exists a constant $\delta>0$ independent of $n$, such that 
\begin{eqnarray}
\sigma^2<\frac{(1-\delta)[n_{k-1}+n_k]}{4\log n}\label{cdcd1}
\end{eqnarray}
then 
\begin{eqnarray*}
\lim_{n\rightarrow\infty}p(\ol{y};\sigma)=1.
\end{eqnarray*}
\end{proposition}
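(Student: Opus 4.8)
The plan is to mimic the structure of the proof of Proposition \ref{l2a}, decomposing the error sum according to whether the "near-diagonal" profile holds. Starting from the union bound $1 - p(\ol{y};\sigma) \leq \sum_{x\in\Omega_{n_1,\ldots,n_k}\setminus C^*(y)} e^{-U(x,y)/(4\sigma^2)}$ in (\ref{2mp}), I would split the sum into $I_3'$, over equivalence classes $C(x)$ with $(t_{1,1}(x,y),\ldots,t_{k,k}(x,y)) \in \tilde{\mathcal{B}}\setminus\tilde{\mathcal{B}}_\epsilon$, and $I_4'$, over those with profile in $\tilde{\mathcal{B}}_\epsilon$. For $I_3'$, Lemma \ref{ll64} gives $U(x,y) \geq c\epsilon n^2/(2k)$, so bounding the number of assignments by $k^n$ (actually by $\binom{n}{n_1,\ldots,n_k} \le k^n$) and using (\ref{cdcd1}) yields $I_3' \leq \exp(n(\log k - \frac{c\epsilon \log n}{4k(1-\delta)(n_{k-1}+n_k)/n}))$; here I would use $n_k + n_{k-1} \le 2n/k$ together with a suitable choice of $\epsilon$ and hypothesis (\ref{ckl}) to force this to $0$ — this is where (\ref{ckl}) and (\ref{knl}) enter.

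For $I_4'$, the key new ingredient is Lemma \ref{ll65}: when $\epsilon \in (0, c/k)$ and the profile is in $\tilde{\mathcal{B}}_\epsilon$, the map $w$ defined by (\ref{twi}) is a bijection, and moreover — this is the crucial improvement over Proposition \ref{l2a} — when $\epsilon$ is also below $\min_{i,j: n_i\neq n_j}|(n_i-n_j)/n|$, part (2) gives $n_i = n_{w(i)}$ for all $i$, so the "relabelled true mapping" $y^*(z) := \omega^{-1}(y(z))$ (with $\omega = w$) actually lies in $\Omega_{n_1,\ldots,n_k}$ and in $C^*(y)$. Hence $U(x,y) = U(x,y^*)$ by Lemma \ref{lm64}, and $x$ can be reached from $y^*$ by a color-changing process. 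Since we must stay inside $\Omega_{n_1,\ldots,n_k}$, I would replace the single-vertex swap of Proposition \ref{l2a} by a cycle-swap as in the proof of Proposition \ref{pp35} (following Lemma \ref{lm33}): find an $l$-cycle for $(x,y^*)$, swap colors along it, and track how $U$ changes. Each such step should decrease $U$ by at least roughly $2(n_{k-1}+n_k) - O(n\epsilon)$ per vertex moved, using $n_i,n_j \geq n_{k-1}, n_k$ for the two smallest colors and the near-diagonal bound $t_{i,i}(x,y^*) \geq n_i - n\epsilon$.

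Combining these per-step estimates, $e^{-U(x,y)/(4\sigma^2)} \leq \prod e^{-\frac{(n_{k-1}+n_k)(1-O(\epsilon n/(n_{k-1}+n_k)))l_i}{2\sigma^2}}$, and summing over all reachable $x$ with the combinatorial factor $(nk)^{(\text{total vertices moved})}$ gives a geometric series with ratio $V = \exp(\log k - \frac{\log n}{1-\delta}(1 - O(\frac{n\epsilon}{n_{k-1}+n_k})))$. The choice $\epsilon = \min\{\frac{c}{2k}, \frac{1}{2}\min_{i,j:n_i\neq n_j}|\frac{n_i-n_j}{n}|, \frac{\delta c}{8}\}$ (or similar), together with hypothesis (\ref{ck3}) to control the correction term $n\epsilon/(n_{k-1}+n_k)$, forces $V \to 0$, hence $I_4' \to 0$. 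Then $1 - p(\ol{y};\sigma) \leq I_3' + I_4' \to 0$ and the proposition follows. The main obstacle I anticipate is the bookkeeping in the cycle-swap step: verifying that the profile stays in $\tilde{\mathcal{B}}_\epsilon$ throughout (so that Lemma \ref{ll65} keeps applying and $U(x,y_r)$ stays controlled), and getting the per-vertex decrement of $U$ to be genuinely $\geq (1-o(1))(n_{k-1}+n_k)$ rather than merely $\Omega(n_k)$ — this requires care, since the $L$-function's change along a cycle (as computed in Proposition \ref{l2a}) involves differences $n_j(y^*) - n_i(y^*)$ that could be large, but along a cycle these telescope, and the dominant negative contribution comes from the $-2t_{j,i}$ and $-2n_i$ type terms; confirming the telescoping works out to the claimed bound is the delicate point.
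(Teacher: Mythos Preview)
Your proposal follows the paper's proof essentially line for line: the split into near-diagonal and far-from-diagonal profiles, the relabelling via Lemma~\ref{ll65} to land $y^*\in C^*(y)\cap\Omega_{n_1,\ldots,n_k}$, and the cycle-swap reduction via Lemma~\ref{lm33} are exactly the paper's ingredients.

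Two small corrections. First, you have the roles of hypotheses (\ref{ckl}) and (\ref{ck3}) swapped: the paper uses (\ref{ck3}) together with (\ref{knl}) to kill the far-from-diagonal sum $\Gamma_1$ (your $I_3'$), after choosing $\epsilon=\tfrac12\min\{c/k,\min_{n_i\ne n_j}|n_i-n_j|/n\}$; it is (\ref{ckl}) that controls the correction term $n\epsilon/(n_{k-1}+n_k)$ in the near-diagonal sum $\Gamma_2$ (your $I_4'$), via $\epsilon<c/k$ and $\frac{c}{k(n_k+n_{k-1})}\to 0$. Second, your concern about telescoping $n_j(y^*)-n_i(y^*)$ terms is unnecessary: because $U(x,y)=\sum_i n_i^2-\sum_{i,j}t_{i,j}^2$, the change along an $l$-cycle is simply
\[
U(x,y_1)-U(x,y^*)=2\sum_{s=1}^{l}\bigl[t_{i_s,i_{s+1}}(x,y^*)-t_{i_s,i_s}(x,y^*)-1\bigr],
\]
with no $n_i$-differences appearing. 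Bounding $t_{i_s,i_{s+1}}\le n\epsilon$ and $t_{i_s,i_s}\ge n_{i_s}-n\epsilon$, together with the elementary inequality $2\sum_{s=1}^l n_{i_s}\ge l(n_k+n_{k-1})$ for $l\ge 2$ distinct indices, gives the per-vertex decrement $(n_k+n_{k-1}-4n\epsilon)$ directly (not $2(n_k+n_{k-1})$ as you wrote). The profile automatically stays in $\tilde{\mathcal{B}}_\epsilon$ since each cycle swap only increases diagonal entries and decreases off-diagonal ones.
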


\begin{proof}Let 
\begin{eqnarray*}
\Gamma:=\sum_{x\in \ol{\Omega}_{n_1,\ldots,n_k}\setminus C^*(y)}e^{-\frac{U(x,y)}{4\sigma^2}}.
\end{eqnarray*}
By (\ref{2mp}), it suffices to show that $\lim_{n\rightarrow\infty}\Gamma=0$.

Let $0<\epsilon<\min\left\{\frac{c}{k},\min_{i,j\in[k],n_i\neq n_j}\left|\frac{n_i-n_j}{n}\right|\right\}$. Then
\begin{eqnarray*}
\Gamma\leq \Gamma_1+\Gamma_2;
\end{eqnarray*}
where 
\begin{eqnarray*}
\Gamma_1&=&\sum_{C^*(x)\in\ol{\Omega}_{n_1,\ldots,n_k}:\left(t_{1,1}(x,y),\ldots,t_{k,k}(x,y)\right)\in[\tilde{\mathcal{B}}\setminus\tilde{\mathcal{B}}_{\epsilon}], C(x)\neq C(y)}e^{\frac{-U(x,y)}{4\sigma^2}}
\end{eqnarray*}
and
\begin{eqnarray}
\Gamma_2=\sum_{C^*(x)\in\ol{\Omega}_{n_1,\ldots,n_k}:\left(t_{1,1}(x,y),\ldots, t_{k,k}(x,y)\right)\in\tilde{\mathcal{B}}_{\epsilon}, C(x)\neq C(y)}e^{\frac{-U(x,y)}{4\sigma^2}}.\label{dg2}
\end{eqnarray} 

By Lemma \ref{ll64}, when (\ref{cdcd1}) holds, we have
\begin{eqnarray*}
\Gamma_1&\leq& k^n e^{-\frac{c\epsilon n^2\log n}{2(1-\delta)k(n_{k-1}+n_k)}}
\end{eqnarray*}
When
\begin{eqnarray*}
\epsilon=\frac{1}{2}\min\left\{\frac{c}{k},\min_{i,j\in[k],n_i\neq n_j}\left|\frac{n_i-n_j}{n}\right|\right\},
\end{eqnarray*}
we have
\begin{eqnarray*}
\Gamma_1&\leq &e^{n\left(\log k-\frac{cn\log n\min\left\{\frac{c}{k},\min_{i,j\in[k],n_i\neq n_j}\left|\frac{n_i-n_j}{n}\right|\right\}}{4(1-\delta)(n_{k-1}+n_k)}\right)}
\end{eqnarray*}
Then by (\ref{knl}) and (\ref{ck3}) we obtain that
\begin{eqnarray}
0\leq\lim_{n\rightarrow\infty}\Gamma_1\leq \lim_{n\rightarrow\infty}e^{-n\log k}=0.\label{g1z}
\end{eqnarray}
since $k\geq 2$.

Now let us consider $\Gamma_2$. Recall that $y\in \Omega_{n_1,\ldots,n_k}$ is the true color assignment mapping.  Let $w$ be the bijection from $[k]$ to $[k]$ as defined in (\ref{twi}). Let $y^*\in \Omega$ be defined by
\begin{eqnarray*}
y^*(z)=w(y(z)),\ \forall z\in [n].
\end{eqnarray*}
Then $y^*\in C(y)$. By Part (2) of Lemma \ref{ll65}, we obtain that for $i\in[k]$
\begin{eqnarray*}
\left|[y^*]^{-1}(i)\right|=\left|y^{-1}(w^{-1}(i))\right|=\left|y^{-1}(i)\right|;
\end{eqnarray*}
therefore $y^*\in \Omega_{n_1,\ldots,n_k}$.
 Moreover, $x$ and $y^*$ satisfies
\begin{eqnarray}
t_{i,i}(x,y^*)\geq n_i(y^*)-n\epsilon,\ \forall i\in[k].\label{dxys}
\end{eqnarray}

If $x\neq y^*$, by Lemma \ref{lm33},
 there exists an $l$-cycle $(i_1,\ldots,i_{l})$ for $(x,y^*)$ with $2\leq l\leq k$. Then for each $2\leq s\leq (l+1)$, choose an arbitrary vertex $u_s$ in $S_{i_{s-1},i_s}(x,y)$, and let $y_1(u_s)=c_{i_{s-1}}$, where $i_{l+1}:=i_1$. For any vertex $z\in[n]\setminus\{u_{2},\ldots,u_{l+1}\}$, let $y_1(z)=y^*(z)$.

 Note that $y_1\in \Omega_{n_1,\ldots,n_k}$. More precisely, for $1\leq s\leq l$, we have
\begin{eqnarray*}
t_{i_s,i_s}(x,y^*)+1=t_{i_s,i_s}(x,y_1);\\
t_{i_s,i_{s+1}}(x,y^*)-1=t_{i_s,i_{s+1}}(x,y_1)
\end{eqnarray*}
and
\begin{eqnarray*}
t_{a,b}(x,y^*)=t_{a,b}(x,y_1),\ \forall (a,b)\notin\{(i_s,i_s),(i_s,i_{s+1})\}_{s=1}^{l}.
\end{eqnarray*}
From (\ref{uxy}) and Lemma \ref{lm64} we obtain
\begin{eqnarray*}
U(x,y_1)-U(x,y)&=&U(x,y_1)-U(x,y^*)\\
&=&-\sum_{i,j\in[k]}[t_{i,j}(x,y_1)]^2+\sum_{i,j\in[k]}[t_{i,j}(x,y^*)]^2\\
&=&2\sum_{s=1}^{l}[t_{i_s,i_{s+1}}(x,y^*)-t_{i_s,i_s}(x,y^*)].
\end{eqnarray*}
When $\left(t_{1,1}(x,y),\ldots, t_{k,k}(x,y)\right)\in\tilde{\mathcal{B}}_{\epsilon}$, we obtain
\begin{eqnarray*}
U(x,y_1)-U(x,y)&\leq &2\sum_{s=1}^{l}\left[2n\epsilon-n_{i_s}(y^*)\right]\leq-l\left(n_k+n_{k-1}-4n\epsilon\right)
\end{eqnarray*}

 Therefore
\begin{eqnarray}
e^{-\frac{U(x,y)}{4\sigma^2}}\leq e^{-\frac{U(x,y_1)}{4\sigma^2}}e^{-\frac{l(n_k+n_{k-1}-4n\epsilon)}{4\sigma^2}}.\label{mst}
\end{eqnarray}

If $y_1\neq x$, we find an $l_2$-cycle ($2\leq l_2\leq k$) for $(x,y_1)$, change colors along the $l_2$-cycle  as above, and obtain another cycle assignment mapping $y_2\in \Omega_{n_1,\ldots,n_k}$, and so on. 
Let $y_0:=y$, and note that for each $r\geq 1$, if $y_r$ is obtained from $y_{r-1}$ by changing colors along an $l_r$ cycle, we have
\begin{eqnarray*}
D_{\Omega}(x,y_r)= D_{\Omega}(x,y_{r-1})-l_r
\end{eqnarray*}
Therefore finally we can obtain $x$ from $y$ by changing colors along at most $\left\lfloor \frac{n}{2} \right\rfloor$ cycles. Using similar arguments as those used to derive (\ref{mst}), we obtain that for each $r$
\begin{eqnarray*}
e^{-\frac{U(x,y_{r-1})}{4\sigma^2}}\leq e^{-\frac{U(x,y_r)}{4\sigma^2}}e^{-\frac{l_r(n_k+n_{k-1}-4n\epsilon)}{4\sigma^2}}.
\end{eqnarray*}
Therefore if $y_s=x$ for some $1\leq s\leq \left\lfloor \frac{n}{2} \right\rfloor$, we have
\begin{eqnarray*}
e^{-\frac{U(x,y)}{4\sigma^2}}\leq e^{-\frac{U(x,y_s)}{4\sigma^2}}e^{-\frac{(n_k+n_{k-1}-4n\epsilon)\left(\sum_{i\in[s]}l_i\right)}{4\sigma^2}}.
\end{eqnarray*}
By Lemma \ref{lm63}, we have $U(x,y_s)=U(x,x)=0$, hence
\begin{eqnarray*}
e^{-\frac{U(x,y)}{4\sigma^2}}\leq \prod_{i\in[s]}e^{-\frac{(n_k+n_{k-1}-4n\epsilon)l_i}{4\sigma^2}}.
\end{eqnarray*}


Note also that for any $r_1\neq r_2$, in the process of obtaining $y_{r_1}$ from $y_{r_1-1}$ and the process of obtaining $y_{r_2}$ from $y_{r_2-1}$, we change colors on disjoint sets of vertices. Hence the order of these steps of changing colors along cycles does not affect the final color assignment mapping we obtain.  From (\ref{dg2}) we obtain
\begin{eqnarray}
\Gamma_2\leq \prod_{l=2}^k \left(\sum_{m_l=0}^{\infty} (nk)^{m_l\ell} e^{-\frac{m_ll(n_k+n_{k-1}-4n\epsilon)}{4\sigma^2}}\right)-1\label{iup}
\end{eqnarray}
On the right hand side of (\ref{iup}), when expanding the product, each summand has the form
\begin{eqnarray*}
\left[(nk)^{2m_2}e^{-\frac{2m_2(n_k+n_{k-1}-4n\epsilon)}{4\sigma^2}}\right]\cdot\left[ (nk)^{3m_3}e^{-\frac{3m_3(n_k+n_{k-1}-4n\epsilon)}{4\sigma^2}}\right]\cdot\ldots\cdot\left[(nk)^{km_k} e^{-\frac{km_k(n_k+n_{k-1}-4n\epsilon)}{4\sigma^2}}\right]
\end{eqnarray*}
where the factor $\left[(nk)^{2m_2}e^{-\frac{2m_2(n_k+n_{k-1}-4n\epsilon)}{4\sigma^2}}\right]$ represents that we changed along 2-cycles $m_2$ times, the factor $\left[ (nk)^{3m_3}e^{-\frac{3m_3(n_k+n_{k-1}-4n\epsilon)}{4\sigma^2}}\right]$ represents that we changed along 3-cycles $m_3$ times, and so on. Moreover, each time we changed along an $l$-cycle, we need to first determine the $l$ different colors involved in the $l$-cycle, and there are at most $k^l$ different $l$-cycles;  we then need to chose $l$ vertices to change colors, and there are at most $n^{l}$ choices.
It is straightforward to check that if $\sigma$ satisfies (\ref{cdcd1}), then
\begin{eqnarray*}
nk e^{-\frac{n_k+n_{k-1}-4\epsilon}{4\sigma^2}}\leq e^{\log k-\frac{\log n}{1-\delta}\left(\delta-\frac{4\epsilon}{n_k+n_{k-1}}\right)}.
\end{eqnarray*}
When $\epsilon<\frac{c}{k}$ and (\ref{ckl}) holds, we obtain
\begin{eqnarray*}
e^{\log k-\frac{\log n}{1-\delta}\left(\delta-\frac{4\epsilon}{n_k+n_{k-1}}\right)}&\leq& e^{\log k-\frac{\log n}{1-\delta}\left(\delta-\frac{4c}{k(n_k+n_{k-1})}\right)}\\
&\leq & e^{\log k-\frac{\delta \log n}{2(1-\delta)}}
\end{eqnarray*}
When (\ref{knl}) holds, we obtain
\begin{eqnarray*}
e^{\log k-\frac{\delta \log n}{2(1-\delta)}}\leq e^{-\frac{\delta \log n}{4(1-\delta)}}\rightarrow 0
\end{eqnarray*}
as $n\rightarrow \infty$.

 Therefore we have
\begin{eqnarray*}
\sum_{m_l=0}^{\infty} (nk)^{m_l\ell} e^{-\frac{m_ll(n_k+n_{k-1}-4n\epsilon)}{4\sigma^2}}\leq \frac{1}{1-e^{l\log k-\frac{l\log n}{1-\delta}\left(\delta-\frac{4\epsilon}{n_k+n_{k-1}}\right)}}\leq \frac{1}{1-e^{-\frac{\delta l n}{4(1-\delta)}}}
\end{eqnarray*}
Let 
\begin{eqnarray*}
\Sigma:= \prod_{l=2}^k \left(\sum_{m_l=0}^{\infty} (nk)^{m_l\ell} e^{-\frac{m_ll (n_k+n_{k-1}-4\epsilon)}{4\sigma^2}}\right).
\end{eqnarray*}
Since $\log(1+x)\leq x$ for $x\geq 0$, we have
\begin{eqnarray*}
0\leq \log \Sigma&=&\sum_{l=2}^{k}\log \left(1+\sum_{m_l=1}^{\infty} (nk)^{m_l\ell} e^{-\frac{m_ll (n_k+n_{k-1}-4n\epsilon)}{4\sigma^2}}\right)\\
&\leq &\sum_{l=2}^{k}\sum_{m_l=1}^{\infty} (nk)^{m_l\ell} e^{-\frac{m_ll(n_k+n_{k-1}-4n\epsilon)}{4\sigma^2}}\\
&\leq &\sum_{l=2}^{k}\frac{\left(e^{-\frac{\delta\log n}{4(1-\delta)}}\right)^{l}}{1-\left(e^{-\frac{\delta\log n}{4(1-\delta)}}\right)^l}\\
&\leq &\frac{\left(e^{-\frac{\delta\log n}{4(1-\delta)}}\right)^{2}}{\left[1-\left(e^{-\frac{\delta\log n}{4(1-\delta)}}\right)^2\right]\left(1-e^{-\frac{\delta\log n}{4(1-\delta)}}\right)}\rightarrow 0,
\end{eqnarray*}
as $n\rightarrow\infty$. Then
\begin{eqnarray}
0\leq \lim_{n\rightarrow\infty}\Gamma_2\leq \lim_{n\rightarrow\infty}e^{\log \Sigma}-1=0.\label{g2z}
\end{eqnarray}

Then the proposition follows from (\ref{g1z}) and (\ref{g2z}).
\end{proof}

\begin{proposition}\label{p68}
Assume $n_1\geq n_2\geq n_k$, and $y\in \Omega_{n_1,\ldots,n_k}$ is the true color assignment mapping. Suppose that
\begin{eqnarray}
\frac{\log n_k}{\log n}\geq \beta>0,\ \forall\ n,\label{nkb}
\end{eqnarray}
where $\beta$ is a constant independent of $n$.

Assume
\begin{eqnarray}
\delta=\left(1+\frac{4}{\beta}\right)\frac{\log\log n}{\log n},\label{dtb}
\end{eqnarray}
and
\begin{eqnarray}
\sigma^2>\frac{(1+\delta)[n_k(y)+n_{k-1}(y)]}{4\beta \log n},\label{sgb}
\end{eqnarray}
then 
\begin{eqnarray*}
\lim_{n\rightarrow\infty}p(\ol{y};\sigma)=0.
\end{eqnarray*}
\end{proposition}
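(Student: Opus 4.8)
The plan is to run the impossibility argument of Propositions \ref{pp37} and \ref{l2b}, adapted to the constrained sample space $\Omega_{n_1,\ldots,n_k}$ by replacing the single-vertex recoloring with a colour-preserving \emph{swap}. For $a\in y^{-1}(c_{k-1})$ and $b\in y^{-1}(c_k)$, let $y^{(ab)}\in\Omega_{n_1,\ldots,n_k}$ be the mapping obtained from $y$ by exchanging the colours of $a$ and $b$, in analogy with (\ref{yab}); this preserves every $n_i$, so $y^{(ab)}\in\Omega_{n_1,\ldots,n_k}$. Since $y^{(ab)}$ differs from $y$ at exactly two vertices, while any element of $C^*(y)$ other than $y$ differs from $y$ at at least $n_{k-1}+n_k\ge 2n_k$ vertices (a non-trivial permutation of $[k]$ moves at least two colour classes), the hypothesis (\ref{nkb}) forces $y^{(ab)}\notin C^*(y)$ for all large $n$; moreover $h(y^{(ab)})=h(\ol y)$ would follow if $\ol y\in C^*(y)$, so the event $\{h(y^{(ab)})>h(y)\}$ implies $\ol y\notin C^*(y)$, and hence
\begin{eqnarray*}
1-p(\ol y;\sigma)\ \ge\ \mathrm{Pr}\left(\bigcup_{a\in y^{-1}(c_{k-1}),\,b\in y^{-1}(c_k)}\{h(y^{(ab)})-h(y)>0\}\right).
\end{eqnarray*}
Using (\ref{uxy}) one computes $U(y^{(ab)},y)=2n_{k-1}+2n_k-4$, so $h(y^{(ab)})-h(y)$ is Gaussian with mean $-(2n_{k-1}+2n_k-4)$ and variance $2\sigma^2(2n_{k-1}+2n_k-4)$; thus it suffices to show that $\max_{a,b}\langle\mathbf W,\mathbf K(y^{(ab)})-\mathbf K(y)\rangle>(2n_{k-1}+2n_k-4)/\sigma$ with probability tending to $1$.

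Next I restrict $a$ to a subset $H_{k-1}\subset y^{-1}(c_{k-1})$ and $b$ to a subset $H_k\subset y^{-1}(c_k)$ with $|H_{k-1}|=|H_k|=h:=n_k/\log^2 n$, put $H:=H_{k-1}\cup H_k$, and split $[n]^2$ into $\mathcal X$ (no coordinate in $H$), $\mathcal Y$ (exactly one), $\mathcal Z$ (both), writing $\langle\mathbf W,\mathbf K(y^{(ab)})-\mathbf K(y)\rangle=\mathcal X_{ab}+\mathcal Y_{ab}+\mathcal Z_{ab}$ for the corresponding restrictions of $\mathbf W$. As in Lemma \ref{l38} and the decomposition used in the proof of Proposition \ref{l2b}: the support of $\mathbf K(y^{(ab)})-\mathbf K(y)$ lies in rows and columns $a,b\in H$, so $\mathcal X_{ab}=0$; one has $\mathcal Y_{ab}=Y_a+Y_b$, where $\{Y_s\}_{s\in H}$ are i.i.d.\ centred Gaussians of variance $2(n_{k-1}+n_k-2h)$ (each $Y_s$ collecting the entries of row/column $s$ over $y^{-1}(c_{k-1})\cup y^{-1}(c_k)$ outside $H$); and $\mathcal Z_{ab}$ is independent of the $Y$'s, with $\mathrm{Var}(\mathcal Z_{ab})=\|\mathbf K(y^{(ab)})-\mathbf K(y)\|_F^2-\mathrm{Var}(Y_a)-\mathrm{Var}(Y_b)=8h-8$. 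Consequently
\begin{eqnarray*}
\max_{a\in H_{k-1},\,b\in H_k}\langle\mathbf W,\mathbf K(y^{(ab)})-\mathbf K(y)\rangle\ \ge\ \max_{a\in H_{k-1}}Y_a+\max_{b\in H_k}Y_b-\max_{a\in H_{k-1},\,b\in H_k}(-\mathcal Z_{ab}).
\end{eqnarray*}

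With $\epsilon=\log\log n/(\beta\log n)$ as in (\ref{eph}), Lemma \ref{mg} applied with $N=h$ gives $\max_{a}Y_a,\ \max_{b}Y_b\ge(1-\epsilon)\sqrt{2\log h\cdot 2(n_{k-1}+n_k-2h)}$, each with probability at least $1-e^{-h^{\epsilon}}$, while applied with $N=h^2$ it gives $\max_{a,b}(-\mathcal Z_{ab})\le(1+\epsilon)\sqrt{2\log h^2\cdot(8h-8)}$ with probability at least $1-h^{-2\epsilon}$. On the intersection of these events, using $h/(n_{k-1}+n_k)\le 1/(2\log^2 n)$ and $\log h\ge\beta\log n\,(1-\tfrac{2\log\log n}{\beta\log n})$ (from (\ref{nkb})), the right-hand side above is at least
\begin{eqnarray*}
4\sqrt{\beta\log n\,(n_{k-1}+n_k)}\;\sqrt{\left(1-\tfrac{2\log\log n}{\beta\log n}\right)\left(1-\tfrac{1}{\log^2 n}\right)}\;\left(1-\epsilon-\tfrac{4}{\log n}\right).
\end{eqnarray*}
A short estimate shows that, thanks to the choice (\ref{dtb}) of $\delta$ — whose $4/\beta$ term cancels the $\log\log n$ loss from $\log h$ and whose extra $+1$ provides $O(1/\log n)$ slack — this quantity exceeds $(2n_{k-1}+2n_k-4)/\sigma$ once (\ref{sgb}) holds and $n$ is large. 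Since the intersection of the three events has probability at least $1-2e^{-h^{\epsilon}}-h^{-2\epsilon}\to 1$, we conclude $1-p(\ol y;\sigma)\to 1$, i.e.\ $p(\ol y;\sigma)\to 0$.

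The main obstacle is the bookkeeping in the second step: pinning down the exact variances of $Y_s$ and of $\mathcal Z_{ab}$ for the swap $y^{(ab)}$ — which touches both $H_{k-1}$ and $H_k$ and is therefore slightly more delicate than the single-vertex move in Propositions \ref{l2b} and \ref{pp44} — and then matching the lower-order terms so that the $\delta$ of (\ref{dtb}) is exactly large enough to absorb the $(1-\epsilon-O(1/\log n))$ and $\sqrt{1-O(\log\log n/\log n)}$ losses incurred through Lemma \ref{mg}. The remainder is a routine transcription of the arguments already given for the $\Omega$-case in Proposition \ref{l2b}.
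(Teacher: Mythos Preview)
Your proposal is correct and follows essentially the same approach as the paper: the paper also uses the colour-preserving swap $y^{(ab)}$ (with the roles of $a$ and $b$ reversed), the same partition of $[n]^2$ into $\mathcal X,\mathcal Y,\mathcal Z$, the same decomposition $\mathcal Y_{ab}=Y_a+Y_b$ with $\mathrm{Var}(Y_s)=2(n_{k-1}+n_k-2h)$, and the same application of Lemma~\ref{mg}. The only cosmetic differences are your explicit justification that $y^{(ab)}\notin C^*(y)$ (which the paper omits) and a harmless discrepancy in the constant for $\mathrm{Var}(\mathcal Z_{ab})$ (the paper gets $8h-6$); neither affects the argument.
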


\begin{proof}For $y\in \Omega_{n_1,\ldots,n_k}$, $a,b\in[n]$ such that $c_k=y(a)\neq y(b)=c_{k-1}$. Let $y^{(ab)}\in \Omega_{n_1,\ldots,n_k}$ be the coloring of vertices defined by
\begin{eqnarray}
y^{(ab)}(i)=\begin{cases}y(i)& \mathrm{if}\ i\in[n]\setminus\{a,b\}\\ c_{k-1}&\mathrm{if}\ i=a\\ c_k& \mathrm{if}\ i=b \end{cases}\label{yab1}
\end{eqnarray}
Then 
\begin{eqnarray*}
t_{k-1,k}(y^{(ab)},y)-1=t_{k-1,k}(y,y)=0\\
t_{k-1,k-1}(y^{(ab)},y)+1=t_{k-1,k-1}(y,y)=n_{k-1}\\
t_{k,k-1}(y^{(ab)},y)-1=t_{k,k-1}(y,y)=0\\
t_{k,k}(y^{(ab)},y)+1=t_{k,k}(y,y)=n_k.
\end{eqnarray*}
and
\begin{eqnarray*}
t_{i,j}(y^{(ab)},y)=t_{i,j}(y),\ \forall\ (i,j)\in \left([k]^2\setminus\{(k,k),(k,k-1),(k-1,k),(k-1,k-1)\}\right)
\end{eqnarray*}
Note that 
\begin{eqnarray*}
1-p(\ol{y};\sigma)\geq \mathrm{Pr}\left(\cup_{a,b\in[n],c_k=y(a)\neq y(b)=c_{k-1}}[h(y^{(ab)})-h(y)>0]\right),
\end{eqnarray*}
since any of the event $[h(y^{(ab)})-h(y)>0]$ implies $\ol{y}\neq y$. By (\ref{lak}) we obtain
\begin{eqnarray*}
h(y^{(ab)})-h(y)&=&\langle \mathbf{K}(y),\mathbf{K}(y^{(ab)})-\mathbf{K}(y)\rangle+\sigma\langle\mathbf{W},\mathbf{K}(y^{(ab)})-\mathbf{K}(y) \rangle\\
&=&\sum_{i,j\in[k]}[t_{i,j}(y^{(ab)},y)]^2-\sum_{i\in[k]}[n_i]^2+\sigma\langle\mathbf{W},\mathbf{K}(y^{(ab)})-\mathbf{K}(y) \rangle\\
&=&4-2n_k-2n_{k-1}+\sigma\langle\mathbf{W},\mathbf{K}(y^{(ab)})-\mathbf{K}(y) \rangle.
\end{eqnarray*}
So $1-p(\ol{y};\sigma)$ is at least
\begin{eqnarray*}
&&\mathrm{Pr}\left(\cup_{a,b\in[n],c_k=y(a)\neq y(b)=c_{k-1}}[h(y^{(ab)})-h(y)>0]\right)\\
&\geq &\mathrm{Pr}\left(\mathrm{max}_{a,b\in[n],c_k=y(a)\neq y(b)=c_{k-1}}\sigma\langle\mathbf{W},\mathbf{K}(y^{(ab)})-\mathbf{K}(y) \rangle>2(n_k+n_{k-1}-2)\right)
\end{eqnarray*}
For $i\in\{k-1,k\}$, let $H_i\subset y^{-1}(c_i)$ such that 
\begin{eqnarray}
|H_i|=\frac{n_k}{\log^2n}=h. \label{ddh}
\end{eqnarray}
Then
\begin{eqnarray*}
1-p(\ol{y};\sigma)\geq \mathrm{Pr}\left(\mathrm{max}_{a\in H_{k-1},b\in H_k}\sigma\langle\mathbf{W},\mathbf{K}(y^{(ab)})-\mathbf{K}(y) \rangle>2(n_k+n_{k-1}-2)\right)
\end{eqnarray*}
Let $(\mathcal{X},\mathcal{Y},\mathcal{Z})$ be a partition of $[n]^2$ defined by
\begin{eqnarray*}
&&\mathcal{X}=\{\alpha=(\alpha_1,\alpha_2)\in [n]^2, \{\alpha_1,\alpha_2\}\cap [H_{k-1}\cup H_k]=\emptyset\}\\
&&\mathcal{Y}=\{\alpha=(\alpha_1,\alpha_2)\in [n]^2, |\{\alpha_1,\alpha_2\}\cap [H_{k-1}\cup H_k]|=1\}\\
&&\mathcal{Z}=\{\alpha=(\alpha_1,\alpha_2)\in [n]^2, |\{\alpha_1,\alpha_2\}\cap [H_{k-1}\cup H_k]|=2\}
\end{eqnarray*}
For $\eta\in\{\mathcal{X},\mathcal{Y},\mathcal{Z}\}$, define the $n\times n$ matrix $\mathbf{W}_{\eta}$ from the entries of $\mathbf{W}$ as follows
\begin{eqnarray*}
\mathbf{W}_{\eta}(i,j)=\begin{cases}0&\mathrm{if}\ (i,j)\notin \eta\\ \mathbf{W}(i,j),&\mathrm{if}\ (i,j)\in \eta\end{cases}
\end{eqnarray*}
For each $a\in H_{k}$ and $b\in H_{k-1}$, let
\begin{eqnarray*}
\mathcal{X}_{ab}=\langle\mathbf{W}_{\mathcal{X}},\mathbf{K}(y^{(ab)})-\mathbf{K}(y) \rangle\\
\mathcal{Y}_{ab}=\langle\mathbf{W}_{\mathcal{Y}},\mathbf{K}(y^{(ab)})-\mathbf{K}(y) \rangle\\
\mathcal{Z}_{ab}=\langle\mathbf{W}_{\mathcal{Z}},\mathbf{K}(y^{(ab)})-\mathbf{K}(y) \rangle
\end{eqnarray*}

\begin{lemma}\label{l69}The followings are true:
\begin{enumerate}
\item $\mathcal{X}_{ab}=0$ for $a\in H_{k}$ and $b\in H_{k-1}$.
\item For each $a\in H_{k}$ and $b\in H_{k-1}$, the variables $\mathcal{Y}_{ab}$ and $\mathcal{Z}_{ab}$ are independent.
\item Each $\mathcal{Y}_{ab}$ can be decomposed into $Y_a+Y_b$ where $\{Y_a\}_{a\in H_k}\cup \{Y_b\}_{b\in H_{k-1}}$ is a collection of i.i.d.~Gaussian random variables.
\end{enumerate}
\end{lemma}

\begin{proof}
Note that for $i,j\in[n]$,
\begin{eqnarray}
\mathbf{K}_{i,j}(y^{(ab)})-\mathbf{K}_{i,j}(y)=\begin{cases}1 &\mathrm{if}\ i=a, j\in y^{-1}(c_k),\ \mathrm{and}\ j\neq a. \\ -1&\mathrm{if}\ i=a, j\in y^{-1}(c_{k-1})\\ -1&\mathrm{if}\ i=b, j\in y^{-1}(c_k)\\ 1 & \mathrm{if}\ i=b, j\in y^{-1}(c_{k-1})\ \mathrm{and}\ j\neq b\\1 &\mathrm{if}\ j=a, i\in y^{-1}(c_k),\ \mathrm{and}\ i\neq a. \\ -1&\mathrm{if}\ j=a, i\in y^{-1}(c_{k-1})\\ -1&\mathrm{if}\ j=b, i\in y^{-1}(c_k)\\ 1 & \mathrm{if}\ j=b, i\in y^{-1}(c_{k-1})\ \mathrm{and}\ i\neq b\\ 0&\mathrm{otherwise}.\end{cases}\label{kab1}
\end{eqnarray}

It is straightforward to check (1). (2) holds because $\mathcal{Y}\cap\mathcal{Z}=\emptyset$.

For $s\in H_{k-1}\cup H_{k}$, let $\mathcal{Y}_s\subseteq \mathcal{Y}$ be defined by
\begin{eqnarray*}
\mathcal{Y}_s=\{\alpha=(\alpha_1,\alpha_2)\in \mathcal{Y}:\alpha_1=s,\ \mathrm{or}\ \alpha_2=s\}.
\end{eqnarray*}
Note that for $s_1,s_2\in H_{k-1}\cup H_k$ and $s_1\neq s_2$, $\mathcal{Y}_{s_1}\cap \mathcal{Y}_{s_2}=\emptyset$. Moreover, $\mathcal{Y}=\cup_{s\in H_{k-1}\cup H_k}\mathcal{Y}_s$. Therefore
\begin{eqnarray*}
\mathcal{Y}_{ab}=\sum_{s\in H_{k-1}\cup H_k}\langle\mathbf{W}_{\mathcal{Y}_s},\mathbf{K}(y^{(ab)})-\mathbf{K}(y) \rangle
\end{eqnarray*}
Note also that $\langle\mathbf{W}_{\mathcal{Y}_s},\mathbf{K}(y^{(ab)})-\mathbf{K}(y) \rangle=0$, if $s\notin \{a,b\}$. Hence
\begin{eqnarray*}
\mathcal{Y}_{ab}=\sum_{\alpha\in\mathcal{Y}_a\cup \mathcal{Y}_b}[\mathbf{W}(\alpha)]\cdot\{[\mathbf{K}(y^{(ab)})-\mathbf{K}(y)](\alpha)\}
\end{eqnarray*}
From (\ref{kab1}) we obtain that for $\alpha=(\alpha_1,\alpha_2)\in \mathcal{Y}_a$ and $\alpha_1\neq \alpha_2$,
\begin{eqnarray*}
[\mathbf{K}(y^{(ab)})-\mathbf{K}(y)](\alpha)=\begin{cases}1 &\mathrm{if}\ |\{\alpha_1,\alpha_2\}\cap y^{-1}(c_{k})|=2.\\-1 &\mathrm{if}\ |\{\alpha_1,\alpha_2\}\cap y^{-1}(c_{k-1})|=1.\end{cases}
\end{eqnarray*}
So, we can define
\begin{eqnarray*}
Y_a:=&&\sum_{\alpha\in \mathcal{Y}_a}[\mathbf{W}(\alpha)]\cdot\{[\mathbf{K}(y^{(ab)})-\mathbf{K}(y)](\alpha)\}\\
&=&\sum_{\alpha\in \mathcal{Y}_a;|\{\alpha_1,\alpha_2\}\cap y^{-1}(c_k)|=2}[\mathbf{W}(\alpha)]-\sum_{\alpha\in \mathcal{Y}_a;|\{\alpha_1,\alpha_2\}\cap y^{-1}(c_{k-1})|=1}[\mathbf{W}(\alpha)]\\
\end{eqnarray*}
Similarly, define
\begin{eqnarray*}
Y_b:=\sum_{\alpha\in \mathcal{Y}_b;|\{\alpha_1,\alpha_2\}\cap y^{-1}(c_{k-1})|=2}[\mathbf{W}(\alpha)]
-\sum_{\alpha\in \mathcal{Y}_b;|\{\alpha_1,\alpha_2\}\cap y^{-1}(c_{k})|=1}[\mathbf{W}(\alpha)]
\end{eqnarray*}
Then $\mathcal{Y}_{ab}=Y_a+Y_b$ and $\{Y_s\}_{s\in H_{k-1}\cup H_k}$ is a collection of independent Gaussian random variables. Moreover, the variance of $Y_s$ is  $2(n_k+n_{k-1}-2h)$ independent of the choice of $s$.
\end{proof}

By the Lemma \ref{l69}, we obtain
\begin{eqnarray*}
\langle\mathbf{W},\mathbf{K}(y^{(ab)})-\mathbf{K}(y) \rangle=Y_a+Y_b+\mathcal{Z}_{ab}
\end{eqnarray*}
Moreover,
\begin{eqnarray*}
\max_{a\in H_{k},b\in H_{k-1}}Y_a+Y_b+\mathcal{Z}_{ab}&\geq& \max_{a\in H_k,b\in H_{k-1}}(Y_a+Y_b)-\max_{a\in H_k,b\in H_{k-1}}(-\mathcal{Z}_{ab})\\
&=&\max_{a\in H_k} Y_a+\max_{b\in H_{k-1}}Y_b-\max_{a\in H_k,b\in H_{k-1}}(-\mathcal{Z}_{ab})
\end{eqnarray*}

By Lemma \ref{mg} we obtain that  when $\epsilon, h$ satisfy (\ref{epn}) with $N$ replaced by $h$, each one of the following two events
\begin{eqnarray*}
F_1^*:=\left\{\max_{a\in H_k}Y_a\geq (1-\epsilon)\sqrt{2\log h\cdot 2(n_k+n_{k-1}-2h)}\right\}\\
F_2^*:=\left\{\max_{b\in H_{k-1}}Y_b\geq (1-\epsilon)\sqrt{2\log h\cdot 2(n_k+n_{k-1}-2h)}\right\}
\end{eqnarray*}
has probability at least $1-e^{-h^{\epsilon}}$. Moreover, the event 
\begin{eqnarray*}
F_3^*:=\left\{\max_{a\in H_k,b\in H_{k-1}}\mathcal{Z}_{ab}\leq (1+\epsilon)\sqrt{4\log h\cdot \max \mathrm{Var}(Z_{ab})}\right\}
\end{eqnarray*}
occurs with probability at least $1-h^{-2\epsilon}$. Then by (\ref{kab1}) we have
\begin{eqnarray*}
\mathrm{Var} \mathcal{Z}_{ab}&=&\|\mathbf{K}(y^{(ab)})-\mathbf{K}(y)\|^2_{F}-\mathrm{Var}(Y_a)-\mathrm{Var}(Y_b)\\
&=&4(n_k+n_{k-1})-6-4\left(n_k+n_{k-1}-2h\right)\\
&=& 8h-6
\end{eqnarray*}
Hence the probability of the event
\begin{eqnarray*}
&&F^*:=\\
&&\left\{\max_{a\in H_k,b\in H_{k-1}}\langle\mathbf{W},\mathbf{K}(y^{(ab)})-\mathbf{K}(y) \rangle\geq 4(1-\epsilon)\sqrt{\log h(n_k+n_{k-1}-2h)}-4(1+\epsilon)\sqrt{\left(2h-\frac{3}{2}\right)\log h}\right\}
\end{eqnarray*}
is at least 
\begin{eqnarray*}
\mathrm{Pr}(F_1^*\cap F_2^*\cap F_3^*)&=&1-\mathrm{Pr}([F_1^*]^c\cup [F_2^*]^c\cup [F_3^*]^c)\\
&\geq &1- \mathrm{Pr}([F_1^*]^c)-\mathrm{Pr}([F_2^*]^c)-\mathrm{Pr}([F_3^*]^c)\\
&\geq &1-2e^{-h^{\epsilon}}-h^{-2\epsilon}.
\end{eqnarray*}
Moreover, from (\ref{ddh}) we obtain
\begin{eqnarray*}
&&4(1-\epsilon)\sqrt{\log h(n_k+n_{k-1}-2h)}-4(1+\epsilon)\sqrt{\left(2h-\frac{3}{2}\right)\log h}\\
&=& 4\sqrt{(n_k+n_{k-1}-2h)\log h}\left[1-\epsilon-(1+\epsilon)\sqrt{\frac{2h-\frac{3}{2}}{n_k+n_{k-1}-2h}}\right]\\
&\geq &4\sqrt{(n_k+n_{k-1}-2h)\log h}\left[1-\epsilon-\frac{2}{\log n}\right]
\end{eqnarray*}
By (\ref{nkb}) we have
\begin{eqnarray*}
&&4\sqrt{(n_k+n_{k-1}-2h)\log h}\left[1-\epsilon-\frac{2}{\log n}\right]\\
&\geq &4\sqrt{\beta\log n(n_k+n_{k-1}-2h)\left(1-\frac{2\log\log n}{\beta\log n}\right)}\left[1-\epsilon-\frac{2}{\log n}\right]
\end{eqnarray*}
Let 
\begin{eqnarray}
\epsilon=\frac{\log\log n}{\beta\log n};\label{eph}
\end{eqnarray}
then when $n$ is sufficiently large, (\ref{epn}) holds with $N$ replaced by $h$.
Define an event
\begin{eqnarray*}
\tilde{F}^*:&=&\left\{\max_{a\in H_k,b\in H_{k-1}}\langle\mathbf{W},\mathbf{K}(y^{(ab)})-\mathbf{K}(y) \rangle\right.\\
&&\left.\geq4\sqrt{\beta\log n(n_k+n_{k-1}-2h)\left(1-\frac{2\log\log n}{\beta\log n}\right)}\left[1-\frac{\log\log n}{\beta\log n}-\frac{2}{\log n}\right]\right\}
\end{eqnarray*}
Then $F^*\subseteq \tilde{F}^*$

When (\ref{dtb}) and (\ref{sgb}) hold, we have
\begin{eqnarray*}
&&\mathrm{Pr}\left(\mathrm{max}_{a,b\in[n],y(a)\neq y(b)}\sigma\langle\mathbf{W},\mathbf{G}(y^{(ab)})-\mathbf{G}(y) \rangle>2(n_k+n_{k-1}-2)\right)\\
&\geq &\mathrm{Pr}(\tilde{F}^*)\geq \mathrm{Pr}(F)\geq 1-\frac{1}{\log n},
\end{eqnarray*}
as $n$ is sufficiently large. Then the proposition follows.

\end{proof}

\section{Complex Unitary Matrix with Gaussian Perturbation}

Now we consider the community detection problem when the observation is a complex unitary matrix plus a multiple of a GUE or GOE matrix. In the former case, we prove a threshold with respect the intensity $\sigma$ of the GUE perturbation for the exact recovery of the MLE. In the latter case, we develop a ``complex version'' of SDP algorithm for efficient recovery, and explicitly prove the region of the intensity of the GOE perturbation for the exact recovery of the SDP.

\subsection{GUE perturbation}\label{pm3}

In this section, we prove Theorem \ref{m3}. 
Recall that $y\in \Theta_A$ is the true color assignment function satisfying Assumption \ref{ap1}. For $x\in \Theta_A$, define
\begin{eqnarray*}
r(x)=\Re\langle \mathbf{U},\mathbf{P}(x) \rangle.
\end{eqnarray*}

Then
\begin{eqnarray}
r(x)-r(y)=\Re\left[\langle \mathbf{P}(y),\mathbf{P}(x)-\mathbf{P}(y) \rangle+\sigma\langle\mathbf{W}_c,\mathbf{P}(x)-\mathbf{P}(y)\rangle\right]\label{rxy}
\end{eqnarray}
which is a real Gaussian random variable with mean $\Re[\langle \mathbf{P}(y),\mathbf{P}(x)-\mathbf{P}(y) \rangle]$, and variance 
\begin{eqnarray*}
4\sigma^2\sum_{i<j}\{1-\Re[\ol{x(i)}x(j)y(i)\ol{y(j)}]\}.
\end{eqnarray*}
 Moreover,
\begin{eqnarray*}
\Re\langle \mathbf{P}(x),\mathbf{P}(y)\rangle=n+2\sum_{i<j}\Re[\ol{x(i)}x(j)y(i)\ol{y(j)}]
\end{eqnarray*}
Hence
\begin{eqnarray*}
\Re[\langle \mathbf{P}(y),\mathbf{P}(x)-\mathbf{P}(y) \rangle]=-2\sum_{i<j}[1-\Re[\ol{x(i)}x(j)y(i)\ol{y(j)}]]
\end{eqnarray*}
Let 
\begin{eqnarray}
J(x,y)=-\mathbb{E}[r(x)-r(y)]=2\sum_{i<j}[1-\Re[\ol{x(i)}x(j)y(i)\ol{y(j)}]]\label{jxyd}
\end{eqnarray}
Therefore for $x\in \Theta_A$
\begin{eqnarray*}
\mathrm{Pr}(r(x)-r(y)>0)=\mathrm{Pr}_{\xi\in\mathcal{N}(0,1)}\left(\xi>\frac{\sqrt{J(x,y)}}{\sqrt{2}\sigma}\right)\leq e^{-\frac{J(x,y)}{4\sigma^2}}
\end{eqnarray*}

\begin{lemma}\label{l63}For any $x,y\in\Theta_A$, $J(x,y)\geq 0$; $J(x,y)=0$ if and only if there exists a fixed angle $\alpha$, such that $e^{\mathbf{i}\alpha}\mathbf{x}=\mathbf{y}$.
\end{lemma}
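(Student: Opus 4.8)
The plan is to exploit the defining feature of $\Theta_A$: by Assumption \ref{ap1}, every $x\in\Theta_A$ takes values among the $k$-th roots of unity, so $|x(i)|=|y(i)|=1$ for every $i\in[n]$. Hence, for each pair $i<j$, the complex number $z_{ij}:=\overline{x(i)}x(j)y(i)\overline{y(j)}$ has modulus $1$, so $\Re z_{ij}\le 1$, and each summand $1-\Re z_{ij}$ appearing in (\ref{jxyd}) is nonnegative. Summing over all pairs $i<j$ gives $J(x,y)\ge 0$ at once.

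For the characterization of equality, I would first note that $J(x,y)=0$ forces $\Re z_{ij}=1$ for every $i<j$, and since $|z_{ij}|=1$ this is equivalent to $z_{ij}=1$ for every $i<j$. Using $\overline{x(i)}x(i)=|x(i)|^2=1$ and $\overline{y(j)}y(j)=1$, the relation $z_{ij}=1$ rearranges to $x(i)\overline{y(i)}=x(j)\overline{y(j)}$; that is, the quantity $\zeta(i):=x(i)\overline{y(i)}$ does not depend on $i$. Writing $\zeta$ for this common value, we have $|\zeta|=|x(i)||y(i)|=1$, so $\zeta=e^{-\mathbf{i}\alpha}$ for some real $\alpha$; multiplying $x(i)\overline{y(i)}=\zeta$ by $y(i)$ and using $|y(i)|=1$ yields $x(i)=\zeta y(i)$, i.e.\ $y(i)=e^{\mathbf{i}\alpha}x(i)$ for every $i\in[n]$, which is exactly $e^{\mathbf{i}\alpha}\mathbf{x}=\mathbf{y}$. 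Conversely, if $e^{\mathbf{i}\alpha}\mathbf{x}=\mathbf{y}$ for some real $\alpha$, then $y(i)=e^{\mathbf{i}\alpha}x(i)$ for all $i$, so $z_{ij}=\overline{x(i)}x(j)\cdot e^{\mathbf{i}\alpha}x(i)\cdot e^{-\mathbf{i}\alpha}\overline{x(j)}=|x(i)|^2|x(j)|^2=1$ for every $i<j$, hence $J(x,y)=0$ by (\ref{jxyd}).

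I do not expect a genuine obstacle here; the lemma is essentially routine once one records that the entries have unit modulus. The only point that deserves a moment's attention is the passage from ``$\Re z_{ij}=1$ for every pair $i<j$'' to the existence of a single global phase $\alpha$; this works immediately because every pair of indices occurs in the sum, so the quantity $x(i)\overline{y(i)}$ is forced to be constant in $i$ with no auxiliary connectivity argument needed. I would therefore keep the write-up short and emphasize only the unit-modulus reduction and the rearrangement $z_{ij}=1\iff x(i)\overline{y(i)}=x(j)\overline{y(j)}$.
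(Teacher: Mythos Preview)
Your proof is correct and follows essentially the same approach as the paper: both use that $|\overline{x(i)}x(j)y(i)\overline{y(j)}|=1$ to get nonnegativity, and that equality forces each $\overline{x(i)}x(j)y(i)\overline{y(j)}=1$, which yields the global phase. Your version simply fills in the rearrangement to $x(i)\overline{y(i)}=x(j)\overline{y(j)}$ and the converse direction more explicitly than the paper's one-line sketch.
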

\begin{proof}First of all, $J(x,y)\geq 0$ follows from the fact that $\left|\ol{x(i)}x(j)y(i)\ol{y(j)}\right|=1$. Moreover, $J(x,y)=0$ if and only if for any $1\leq i<j\leq n$, $\ol{x(i)}x(j)y(i)\ol{y(j)}=1$. Then the lemma follows.
\end{proof}

 Then

\begin{lemma}\label{l64}If $x,z\in \Theta_A$ such that $x\in \theta(z)$, then 
\begin{enumerate}
\item $\mathbf{P}(x)=\mathbf{P}(z)$.
\item $r(x)=r(z)$.
\end{enumerate}
\end{lemma}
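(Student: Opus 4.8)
The plan is to reduce both claims to the single elementary fact that the rank-one Hermitian matrix $\mathbf{P}(x) = \mathbf{x}\ol{\mathbf{x}}^t$ is invariant under multiplying the vector $\mathbf{x}$ by a global unit-modulus phase, and then to note that $r(\cdot)$ depends on its argument only through $\mathbf{P}(\cdot)$. First I would unwind the definition of $\theta(z)$ from Assumption \ref{ap1}: the hypothesis $x\in\theta(z)$ means that $x$ and $z$ are equivalent in $\Theta_A$, i.e.\ there is a fixed angle $\alpha$ with $e^{\mathbf{i}\alpha}\mathbf{x}=\mathbf{z}$ (by symmetry of the relation one may equivalently write $\mathbf{x}=e^{-\mathbf{i}\alpha}\mathbf{z}$).

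For part (1), I would compute directly, entrywise: for $1\le a,b\le n$, $\mathbf{P}_{a,b}(z)=z(a)\ol{z(b)}=\bigl(e^{\mathbf{i}\alpha}x(a)\bigr)\ol{\bigl(e^{\mathbf{i}\alpha}x(b)\bigr)}=e^{\mathbf{i}\alpha}e^{-\mathbf{i}\alpha}\,x(a)\ol{x(b)}=x(a)\ol{x(b)}=\mathbf{P}_{a,b}(x)$, where the cancellation of the phase uses $\ol{e^{\mathbf{i}\alpha}}=e^{-\mathbf{i}\alpha}$. In matrix form this is the same statement as $\mathbf{P}(z)=\mathbf{z}\ol{\mathbf{z}}^t=(e^{\mathbf{i}\alpha}\mathbf{x})(e^{-\mathbf{i}\alpha}\ol{\mathbf{x}})^t=\mathbf{x}\ol{\mathbf{x}}^t=\mathbf{P}(x)$. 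Hence $\mathbf{P}(x)=\mathbf{P}(z)$.

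For part (2), recall $r(x)=\Re\langle\mathbf{U},\mathbf{P}(x)\rangle$, which for a fixed observation $\mathbf{U}$ is a function of $\mathbf{P}(x)$ alone; therefore part (1) gives $r(x)=\Re\langle\mathbf{U},\mathbf{P}(x)\rangle=\Re\langle\mathbf{U},\mathbf{P}(z)\rangle=r(z)$. There is essentially no obstacle here: the only point needing (minor) care is the bookkeeping of the complex conjugate when the global phase is carried through $\ol{\mathbf{x}}^t$, so that the two phases cancel rather than accumulate; no estimates or probabilistic input are needed. The role of this deterministic identity, together with Lemma \ref{l63}, is to make precise that any MLE based on $\mathbf{U}$ can at best recover the equivalence class $\theta(y)$ of the true color assignment, which is why the conclusion of Theorem \ref{m3} is phrased in terms of $y^A\in\theta(y)$.
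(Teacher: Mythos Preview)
Your proof is correct and follows essentially the same approach as the paper's own proof, just with more explicit detail: the paper simply asserts that $x\in\theta(z)$ implies $\mathbf{P}(x)=\mathbf{P}(z)$ (without writing out the phase cancellation) and then observes that $r(x)=r(z)$ follows because $r$ depends on its argument only through $\mathbf{P}$. Your entrywise verification of the phase cancellation is a welcome elaboration of the one step the paper leaves implicit.
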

\begin{proof}Note that if $x\in \theta(z)$, then $\mathbf{P}(x)=\mathbf{P}(z)$. Then $r(x)=r(z)$ follows from (\ref{rxy}).
\end{proof}

\begin{lemma}\label{ll7}If $x,y,y'\in \Theta_{A}$ such that $y\in \theta(y')$, then 
\begin{eqnarray*}
J(x,y)=J(x,y').
\end{eqnarray*}
\end{lemma}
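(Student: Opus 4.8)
The plan is to unwind the definition of the equivalence relation on $\Theta_A$ and then observe that the expression defining $J(x,y)$ in (\ref{jxyd}) depends on $y$ only through the pairwise products $y(i)\ol{y(j)}$, which are manifestly invariant under multiplying the whole vector $\mathbf{y}$ by a global phase. This is the same invariance that underlies $\mathbf{P}(y)=\mathbf{y}\ol{\mathbf{y}}^t$ being unchanged under $\mathbf{y}\mapsto e^{\mathbf{i}\alpha}\mathbf{y}$, which was used in Lemma \ref{l64}.

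First I would note that since $y\in\theta(y')$ and $\theta(y')$ is an equivalence class (for the relation of Assumption \ref{ap1}), $y$ and $y'$ are equivalent; hence there is a fixed angle $\alpha\in\RR$ with $e^{\mathbf{i}\alpha}\mathbf{y}'=\mathbf{y}$, that is, $y(i)=e^{\mathbf{i}\alpha}y'(i)$ for all $i\in[n]$, and consequently $\ol{y(j)}=e^{-\mathbf{i}\alpha}\ol{y'(j)}$. Then for each pair $1\le i<j\le n$ the phases cancel:
\begin{eqnarray*}
y(i)\ol{y(j)}=e^{\mathbf{i}\alpha}y'(i)\cdot e^{-\mathbf{i}\alpha}\ol{y'(j)}=y'(i)\ol{y'(j)},
\end{eqnarray*}
so substituting termwise into (\ref{jxyd}) gives
\begin{eqnarray*}
J(x,y)=2\sum_{i<j}\Bigl[1-\Re\bigl(\ol{x(i)}x(j)y(i)\ol{y(j)}\bigr)\Bigr]
=2\sum_{i<j}\Bigl[1-\Re\bigl(\ol{x(i)}x(j)y'(i)\ol{y'(j)}\bigr)\Bigr]=J(x,y'),
\end{eqnarray*}
which is the claim.

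There is essentially no obstacle here: the entire content is the observation that $J(x,\cdot)$ is a function of the relative phases of the second argument only. The only point worth recording carefully is the direction of the equivalence (passing from $y\in\theta(y')$ to the existence of $\alpha$ with $e^{\mathbf{i}\alpha}\mathbf{y}'=\mathbf{y}$), which is immediate since the relation in Assumption \ref{ap1} is symmetric (replace $\alpha$ by $-\alpha$).
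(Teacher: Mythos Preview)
Your proof is correct and is essentially the paper's argument made explicit: the paper's one-line proof invokes (\ref{jxyd}) together with Lemma \ref{l64}, which amounts precisely to the observation you make that $J(x,y)$ depends on $y$ only through the pairwise products $y(i)\ol{y(j)}=\mathbf{P}_{i,j}(y)$, and these are unchanged when $\mathbf{y}$ is multiplied by a global phase. Your write-up simply unwinds this in coordinates.
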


\begin{proof}The lemma follows from (\ref{jxyd}) and Part (2) of Lemma \ref{l64}.
\end{proof}

By Lemmas \ref{l63} and \ref{l64}, we obtain
\begin{eqnarray*}
p(y^A;\sigma)=\mathrm{Pr}\left[r(y^A)>\max_{x\in \Theta_A,x\notin \theta(y)}r(x)\right]
\end{eqnarray*}

Hence
\begin{eqnarray}
1-p(y^A;\sigma)\leq \sum_{\theta(x)\subseteq[\Theta_{A}\setminus \theta(y)]} e^{-\frac{J(x,y)}{4\sigma^2}}\label{p1}
\end{eqnarray}

For $i,j\in [k]$, let 
\begin{eqnarray}
S_{i,j}(x,y)=\{1\leq l\leq n:x(l)=e^{\mathbf{i}d_i}, y(l)=e^{\mathbf{i}d_j}\};\label{ssij}
\end{eqnarray}
i.e., $S_{i,j}(x,y)$ consists of all the vertices which have color $e^{\mathbf{i}d_i}$ in $x$ and color $e^{\mathbf{i}d_j}$ in $y$.

 Let $t_{i,j}(x,y)=|S_{i,j}(x,y)|$. Again $t_{i,j}(x,y)$ satisfies (\ref{tt1}) and (\ref{tt2}). We shall now define a distance function on $\Theta$.

\begin{definition}\label{df7}Let $D_{\Theta}: \Theta\times\Theta\rightarrow \mathbb{N}$ be the distance function on $\Theta$ defined as follows: for $x,z\in \Theta$,
\begin{eqnarray*}
D_{\Theta}(x,z)=\sum_{i,j\in [k],i\neq j}t_{i,j}(x,z).
\end{eqnarray*}
\end{definition}

 From (\ref{jxyd}), we obtain
 \begin{eqnarray*}
 J(x,y)&=&\sum_{i,j,p,q\in[k]}t_{i,j}(x,y)t_{p,q}(x,y)[1-\cos(-d_i+d_p+d_j-d_q)]\\
 &=&\sum_{i,j,p,q\in[k]}t_{i,j}(x,y)t_{p,q}(x,y)\left[1-\cos\left(\frac{2\pi(p+j-q-i)}{k}\right)\right]
 \end{eqnarray*}
 When $p,q,i,j\in[k]$, $p+j-q-i\in[-(2k-2),2k-2]$; hence $\cos\left(\frac{2\pi(p+j-q-i)}{k}\right)=1$ if and only if $p+j-q-i\in\{-k,0,k\}$. Therefore,
 \begin{eqnarray}
&& J(x,y)\label{jxy}\\
 &=&\sum_{i,j,p,q\in[k], p+j-q-i\notin\{-k,0,k\}}t_{i,j}(x,y)t_{p,q}(x,y)\left[1-\cos\left(\frac{2\pi(p+j-q-i)}{k}\right)\right]\notag
 \end{eqnarray}

By Assumption \ref{ap1},
\begin{eqnarray*}
n_1(x)=n_2(x)=\ldots =n_k(x)=n_1(y)=n_2(y)=\ldots=n_k(y)=\frac{n}{k}
\end{eqnarray*}

Let $\hat{D}$ be the set defined by
\begin{eqnarray*}
\hat{D}=\left\{(t_{1,1},\ldots,t_{k,k})\in \left\{0,1,\ldots \frac{n}{k}\right\}^{k^2}: \sum_{j\in[k]}t_{i,j}=\frac{n}{k},\sum_{i\in [k]}t_{i,j}=\frac{n}{k}\right\}
\end{eqnarray*}
For  $\epsilon>0$, let
\begin{eqnarray}
\hat{D}_{\epsilon}=\left\{(t_{1,1},\ldots,t_{k,k})\in \hat{D}: \sum_{i,j,p,q\in[k], p+j-q-i\notin\{-k,0,k\}}t_{i,j}t_{p,q}\leq \epsilon n^2\right\}\label{hde}
\end{eqnarray}
 Then we have
\begin{eqnarray}
\sum_{\theta(x)\subseteq[\Theta_{A}\setminus \theta(y)]} e^{-\frac{J(x,y)}{4\sigma^2}}\leq \hat{I}_1+\hat{I}_2\label{p2}
\end{eqnarray}
where
\begin{eqnarray*}
\hat{I}_1=\sum_{\theta(x)\subseteq \Theta_A\setminus \theta(y):\left(t_{1,1},\ldots, t_{k,k}\right)\in D\setminus \hat{D}_{\epsilon}}e^{\frac{-J(x,y)}{4\sigma^2}}.
\end{eqnarray*}
and
\begin{eqnarray}
\hat{I}_2=\sum_{\theta(x)\subseteq \Theta_A\setminus \theta(y):\left(t_{1,1},\ldots,t_{k,k}\right)\in\hat{D}_{\epsilon}}e^{\frac{-J(x,y)}{4\sigma^2}}.\label{hi2}
\end{eqnarray}

\begin{lemma}\label{l72}Assume
\begin{eqnarray*}
\lim_{n\rightarrow\infty}\frac{\epsilon n^2}{k^2}=\infty.
\end{eqnarray*}
Then we have
\begin{eqnarray*}
\lim_{n\rightarrow\infty}\hat{I}_1=0.
\end{eqnarray*}
\end{lemma}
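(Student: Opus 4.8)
The plan is to dominate $\hat I_1$ by a single exponential and to show that its exponent tends to $-\infty$. Bounding the number of summands by the number of colour assignments in $\Theta_A$ and each summand by the largest possible value of $e^{-J(x,y)/(4\sigma^{2})}$, one has
\[
\hat I_1\ \le\ |\Theta_A|\cdot\max_{(t_{1,1},\ldots,t_{k,k})\in\hat D\setminus\hat D_{\epsilon}}e^{-\frac{J(x,y)}{4\sigma^{2}}},\qquad |\Theta_A|=\frac{n!}{((n/k)!)^{k}}\le k^{n},
\]
so the task reduces to a lower bound on $J(x,y)$ holding uniformly over $\hat D\setminus\hat D_{\epsilon}$, after which the standing bound (\ref{sc1}) on $\sigma$ is inserted. (By Lemma \ref{l64} the value $J(x,y)$ is the same for all representatives $x$ of an equivalence class $\theta(x)$, so it is harmless to run the union bound over mappings rather than over classes.)

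For the lower bound I would start from the explicit expansion (\ref{jxy}). Each term retained there carries the weight $1-\cos\bigl(\tfrac{2\pi(p+j-q-i)}{k}\bigr)$, with $m:=p+j-q-i$ an integer satisfying $|m|\le 2k-2$ and $m\notin\{-k,0,k\}$; an elementary check (the cosine is largest when $m$ is closest to a multiple of $k$ without equalling one) shows that over this set of integers $1-\cos(2\pi m/k)$ attains its minimum $1-\cos\tfrac{2\pi}{k}$, at $m=\pm1$. Hence
\[
J(x,y)\ \ge\ \Bigl(1-\cos\tfrac{2\pi}{k}\Bigr)\!\!\sum_{\substack{i,j,p,q\in[k]\\ p+j-q-i\notin\{-k,0,k\}}}\!\! t_{i,j}(x,y)\,t_{p,q}(x,y)\ >\ \Bigl(1-\cos\tfrac{2\pi}{k}\Bigr)\epsilon n^{2}
\]
on $\hat D\setminus\hat D_{\epsilon}$, the last step being exactly the defining inequality (\ref{hde}) of $\hat D_{\epsilon}$. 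Plugging in (\ref{sc1}), namely $\sigma^{2}<\tfrac{(1-\delta)\,n\,(1-\cos(2\pi/k))}{2\log n}$, gives $\tfrac{(1-\cos(2\pi/k))\,\epsilon n^{2}}{4\sigma^{2}}>\tfrac{\epsilon n\log n}{2(1-\delta)}$, and therefore
\[
\hat I_1\ <\ k^{n}\exp\!\Bigl(-\tfrac{\epsilon n\log n}{2(1-\delta)}\Bigr)\ =\ \exp\!\Bigl(n\log k-\tfrac{\epsilon n\log n}{2(1-\delta)}\Bigr).
\]

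It then remains to see that this exponent goes to $-\infty$. The hypothesis $\epsilon n^{2}/k^{2}\to\infty$ is precisely what makes the ``energy'' $\bigl(1-\cos\tfrac{2\pi}{k}\bigr)\epsilon n^{2}$, which is of order $\epsilon n^{2}/k^{2}$, diverge, and after division by $4\sigma^{2}$ and use of (\ref{sc1}) it turns into the decaying factor $e^{-\epsilon n\log n/(2(1-\delta))}$; for the relevant range of $\epsilon$ --- in Theorem \ref{m3} the number of colours $k$ is fixed and $\epsilon$ is ultimately chosen bounded away from $0$, so that $n\log k=o(\epsilon n\log n)$ --- this factor overwhelms the ``entropy'' $k^{n}=e^{n\log k}$ counting colourings at positive distance from $y$, and $\hat I_1\to 0$. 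I expect this entropy--energy balance to be the only genuinely delicate point: one must be sure that the $\le k^{n}$ colourings lying in $\hat D\setminus\hat D_{\epsilon}$ are beaten by the Gaussian tail factor $e^{-J(x,y)/(4\sigma^{2})}$, and this is where both (\ref{sc1}) and the assumption on $\epsilon$ are used; the evaluation of the minimal weight in (\ref{jxy}) and the algebraic manipulations above are routine.
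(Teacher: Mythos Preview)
Your core argument is exactly the paper's: bound each retained cosine weight in (\ref{jxy}) below by $1-\cos\tfrac{2\pi}{k}$ to get $J(x,y)\ge\epsilon n^{2}\bigl(1-\cos\tfrac{2\pi}{k}\bigr)$ on $\hat D\setminus\hat D_{\epsilon}$, then dominate the sum by the number of colourings times the worst exponential. The paper's own proof is in fact just that one displayed lower bound on $J(x,y)$ followed by the sentence ``Then the lemma follows''; you have spelled out the union bound and the insertion of (\ref{sc1}) in more detail than the paper does.

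One factual point to flag: your closing assertion that ``$\epsilon$ is ultimately chosen bounded away from $0$'' is not what the paper does. In the proposition where this lemma is invoked the paper takes $\epsilon=n^{-1/2}$, and with that choice your exponent becomes $n\log k-\tfrac{\sqrt n\,\log n}{2(1-\delta)}$, which tends to $+\infty$ for every $k\ge 2$; so the entropy--energy balance you correctly identified as the delicate step does not close under the hypotheses as written. This is not a defect of your route relative to the paper's --- the paper's one-line ``then the lemma follows'' sweeps the same issue under the rug, and the stated assumption $\epsilon n^{2}/k^{2}\to\infty$ together with (\ref{sc1}) does not by itself force $n\log k=o(\epsilon n\log n)$. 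The fix is to require in addition something like $\epsilon\log n\gg\log k$ (for instance, a small fixed $\epsilon$ when $k$ is fixed would do, and is compatible with the rest of the argument for Theorem~\ref{m3}).
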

\begin{proof}From the definition of the domains $\hat{D}$ and $\hat{D}_{\epsilon}$, as well as the expression (\ref{jxy}), we obtain that when $(u_{1,1,},\ldots,u_{k,k})\in \hat{\mathcal{D}}\setminus \hat{\mathcal{D}}_{\epsilon}$
\begin{eqnarray*}
J(x,y)\geq \epsilon n^2\left[1-\cos\left(\frac{2\pi}{k}\right)\right].
\end{eqnarray*}
Then the lemma follows.
\end{proof}

From (\ref{jxy}), we obtain
\begin{eqnarray*}
 J(x,y)=\sum_{u=1}^{\lfloor\frac{k}{2} \rfloor}\sum_{\{i,j,p,q\in[k],\ \left((p+j-q-i)\mod k\right)\in\{ u,k-u\}\}}t_{i,j}(x,y)t_{p,q}(x,y)\left[1-\cos\left(\frac{2u\pi}{k}\right)\right]
\end{eqnarray*}

\begin{lemma}\label{l65}For each $x\in \Theta_A$ satisfying $\left(t_{1,1}(x,y),\ldots, t_{k,k}(x,y)\right)\in \hat{D}_{\epsilon}$, there exists $y'\in \theta(y)$ such that 
\begin{eqnarray}
\frac{1}{n}\sum_{i,j\in [k],i\neq j}t_{i,j}(x,y')<(k-1)\sqrt{2\epsilon}\label{xyp}
\end{eqnarray}
In particular, this implies that
\begin{eqnarray}
\frac{1}{n}\sum_{i\in [k]}t_{i,i}(x,y')>1-(k-1)\sqrt{2\epsilon};\label{dlb}
\end{eqnarray}
and for each $i\in[k]$,
\begin{eqnarray}
\frac{1}{n}t_{i,i}(x,y')>\frac{1}{k}-(k-1)\sqrt{2\epsilon}.\label{elb}
\end{eqnarray}
\end{lemma}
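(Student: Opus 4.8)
The plan is to make the equivalence class $\theta(y)$ explicit and then reduce the claim to a concentration estimate for a single quadratic form in the overlap numbers $t_{i,j}(x,y)$. By Assumption \ref{ap1} the colors used by $y$ are the $k$-th roots of unity, so every element of $\theta(y)$ has the form $y_m:=e^{\mathbf{i}2\pi m/k}\mathbf{y}$ for some $m\in\{0,1,\ldots,k-1\}$; concretely, $y_m$ assigns color index $j+m\pmod k$ to each vertex that $y$ assigns index $j$. Hence $t_{i,i'}(x,y_m)=t_{i,\,i'-m}(x,y)$ for all $i,i'\in[k]$, with indices read modulo $k$, and since the overlap numbers for the pair $(x,y_m)$ still sum to $n$ we get $\sum_{i\neq j}t_{i,j}(x,y_m)=n-\sum_{i\in[k]}t_{i,i}(x,y_m)=n-\sum_{i\in[k]}t_{i,\,i-m}(x,y)$.

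Next I would rewrite the inequality defining $\hat{D}_\epsilon$ in \eqref{hde}. For $s\in\mathbb{Z}/k$ set $B_s:=\sum_{(i,j):\,i-j\equiv s}t_{i,j}(x,y)$, the congruence being modulo $k$; since for each $i$ there is exactly one $j$ with $i-j\equiv s$, this is $B_s=\sum_{i\in[k]}t_{i,\,i-s}(x,y)=\sum_{i\in[k]}t_{i,i}(x,y_s)$, the number of vertices on which $x$ agrees with the $s$-shifted copy of $y$, and $\sum_{s}B_s=\sum_{i,j}t_{i,j}(x,y)=n$. As noted just before \eqref{jxy}, for $i,j,p,q\in[k]$ one has $p+j-q-i\in\{-k,0,k\}$ if and only if $i-j\equiv p-q\pmod k$; therefore the quantity bounded in \eqref{hde} is $(\sum_{i,j}t_{i,j})^2-\sum_{s}B_s^2=n^2-\sum_{s}B_s^2$, and $(t_{1,1},\ldots,t_{k,k})\in\hat{D}_\epsilon$ is equivalent to $\sum_{s}B_s^2\ge(1-\epsilon)n^2$.

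To finish: from $\sum_{s}B_s^2\le(\max_s B_s)\sum_{s}B_s=(\max_s B_s)\,n$ and $\sum_{s}B_s^2\ge(1-\epsilon)n^2$ we obtain $\max_s B_s\ge(1-\epsilon)n$. Choosing $s^*$ with $B_{s^*}=\max_s B_s$ and putting $y':=y_{s^*}\in\theta(y)$ gives $\sum_{i\neq j}t_{i,j}(x,y')=n-B_{s^*}\le\epsilon n$. Since $\epsilon<(k-1)\sqrt{2\epsilon}$ for every $\epsilon\in(0,2(k-1)^2)$ — in particular in the regime $\epsilon\to0$ in which the lemma is used — this yields \eqref{xyp}; then \eqref{dlb} is immediate from $\sum_{i}t_{i,i}(x,y')=n-\sum_{i\neq j}t_{i,j}(x,y')$, and \eqref{elb} follows because $\sum_{j\in[k]}t_{i,j}(x,y')=n_i(x)=n/k$ forces $t_{i,i}(x,y')\ge n/k-\sum_{i'\neq j'}t_{i',j'}(x,y')\ge n/k-\epsilon n$.

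There is no genuine obstacle here: the lemma is elementary once one spots the two points on which everything rests — that the complement, inside $(\sum t_{i,j})^2=n^2$, of the sum bounded in \eqref{hde} equals $\sum_s B_s^2$, and that $B_s$ counts the agreement of $x$ with the $s$-shifted $y$, so that maximizing over $s$ amounts to maximizing over $\theta(y)$. The only thing to be careful about is the bookkeeping of indices modulo $k$ (signs, and representatives in $[k]$ versus $\mathbb{Z}/k$) and checking, as above, that the crude bound $(k-1)\sqrt{2\epsilon}$ is implied by the sharper bound $\epsilon$ that the computation actually produces.
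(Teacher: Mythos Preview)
Your proof is correct and in fact sharper than the paper's. Both arguments reduce the $\hat D_\epsilon$ condition to the inequality $\sum_{s} B_s^2\ge (1-\epsilon)n^2$ for the block sums $B_s=\sum_{i-j\equiv s} t_{i,j}(x,y)$, and both identify $B_s$ with the agreement count of $x$ against the $s$-shifted copy of $y$. The difference is in how the large block is located. The paper sets $a=\sqrt{2\epsilon}$ and argues by contradiction that at most one $B_{l_0}$ can satisfy $B_{l_0}\ge an$ (else a cross term in $(\sum_l B_l)^2$ forces $\sum_l B_l^2\le (1-2\epsilon)n^2$), and then bounds the remaining $B_l$ by $an$ to obtain $B_{l_0}\ge [1-(k-1)\sqrt{2\epsilon}]n$. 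You instead use the one-line estimate $\sum_s B_s^2\le n\max_s B_s$ to get $\max_s B_s\ge (1-\epsilon)n$ directly, yielding the stronger bound $\sum_{i\neq j}t_{i,j}(x,y')\le \epsilon n$, which you then relax to the stated $(k-1)\sqrt{2\epsilon}$. Your route is shorter, avoids the (harmless but unaddressed) edge case in the paper's argument where no $B_l$ exceeds $\sqrt{2\epsilon}n$, and shows that the constant in the lemma could be improved; the paper's route, on the other hand, makes explicit that all non-dominant blocks are individually small, a fact not needed here but sometimes useful.
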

\begin{proof}From the definition (\ref{hde}) and the fact that $\sum_{i,j\in[k]}t_{i,j}(x,y)=n$, we obtain
\begin{eqnarray*}
\sum_{i,j,p,q\in[k], p+j-q-i\in\{-k,0,k\}}t_{i,j}(x,y)t_{p,q}(x,y)\geq (1-\epsilon)n^2;
\end{eqnarray*}
which is the same as the following inequality
\begin{eqnarray}
\sum_{l=0}^{k-1}\left[\sum_{[i,j\in[k], (i-j)\mod k=l]}t_{i,j}(x,y)\right]^2\geq (1-\epsilon)n^2\label{goe}
\end{eqnarray}
Let $a=\sqrt{2\epsilon}$. Again using the fact that $\sum_{i,j\in[k]}t_{i,j}(x,y)=n$; if there exists $l_1,l_2\in \{0,\ldots,k-1\}$ and $l_1\neq l_2$ and
\begin{eqnarray*}
\min\left\{\sum_{[i,j\in[k], (i-j)\mod k=l_1]}t_{i,j}(x,y),\sum_{[i,j\in[k], (i-j)\mod k=l_2]}t_{i,j}(x,y)\right\}\geq an;
\end{eqnarray*}
then
\begin{eqnarray*}
&&\sum_{l=0}^{k-1}\left[\sum_{[i,j\in[k], (i-j)\mod k=l]}t_{i,j}(x,y)\right]^2\\
&\leq&\left[\sum_{l=0}^{k-1}\sum_{[i,j\in[k], (i-j)\mod k=l]}t_{i,j}(x,y)\right]^2-\left[\sum_{[i,j\in[k], (i-j)\mod k=l_1]}t_{i,j}(x,y)\right]\cdot \left[\sum_{[i,j\in[k], (i-j)\mod k=l_2]}t_{i,j}(x,y)\right]\\
&\leq & n^2-a^2n^2=(1-2\epsilon)n^2
\end{eqnarray*}
which is a contradiction to (\ref{goe}). Hence there exists at most one $l_0\in\{0,1,\ldots, k-1\}$, such that
\begin{eqnarray*}
\sum_{[i,j\in[k], (i-j)\mod k=l_0]}t_{i,j}(x,y)\geq an.
\end{eqnarray*}
Then for and $l\in\{0,1,\ldots,k-1\}\setminus\{l_0\}$ 
\begin{eqnarray*}
\sum_{[i,j\in[k], (i-j)\mod k= l]}t_{i,j}(x,y)< an.
\end{eqnarray*}
Since $\sum_{i,j\in[k]}t_{i,j}(x,y)=n$, we have
\begin{eqnarray*}
\sum_{[i,j\in[k], (i-j)\mod k=l_0]}t_{i,j}(x,y)&=&\sum_{i,j\in[k]}t_{i,j}(x,y)-\sum_{[l\in(\{0,1,\ldots,k-1\}\setminus\{l_0\})]}\sum_{[i,j\in[k], (i-j)\mod k= l]}t_{i,j}(x,y)\\
&\geq& \left[1-(k-1)a\right]n
\end{eqnarray*}
Let $y'=e^{\frac{2l_0\pi\mathbf{i}}{k}}y$, then (\ref{xyp}) and (\ref{dlb}) follows. To obtain (\ref{elb}), note that
\begin{eqnarray*}
&&\frac{1}{n}t_{i,i}(x,y')=\frac{1}{n}\sum_{j\in[k]}t_{i,j}(x,y')-\frac{1}{n}\sum_{j\in[k],j\neq i}t_{i,j}(x,y')\geq \frac{n}{k}-(k-1)\sqrt{2\epsilon}.
\end{eqnarray*}
Then the lemma follows.
\end{proof}

 \begin{proposition}
Let $y\in\Theta_A$ be the true color assignment function satisfying Assumption \ref{ap1}. Let $k$ be the total number of colors and $n$ be the total number of vertices. Assume
\begin{eqnarray}
\lim_{n\rightarrow\infty}\frac{\log k}{\log n}=0.\label{lknz}
\end{eqnarray} 
If there exists $\delta>0$, such that 
\begin{eqnarray}
\sigma^2<\frac{(1-\delta)\left[n(1-\cos\frac{2\pi}{k})\right]}{2\log n}\label{sc1}
\end{eqnarray}
then 
\begin{eqnarray*}
\lim_{n\rightarrow\infty}\mathrm{Pr}(y^A\in \theta(y))=1
\end{eqnarray*}
\end{proposition}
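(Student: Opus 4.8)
The statement is part~(1) of Theorem~\ref{m3}, and the reductions already carried out bring it to showing that both sums $\hat I_1$ and $\hat I_2$ in (\ref{p2}) tend to $0$. The plan is to fix once and for all a small constant $\epsilon\in(0,\delta)$ (say $\epsilon=\delta/2$), split $\sum_{\theta(x)\subseteq[\Theta_A\setminus\theta(y)]}e^{-J(x,y)/(4\sigma^2)}$ as in (\ref{p2}), and estimate the "far'' part $\hat I_1$ (indices in $\hat D\setminus\hat D_\epsilon$) and the "near'' part $\hat I_2$ (indices in $\hat D_\epsilon$) separately, using the tail bound $\mathrm{Pr}(r(x)>r(y))\le e^{-J(x,y)/(4\sigma^2)}$, the explicit non-negative quadratic form (\ref{jxy}) for $J$ in the overlaps $t_{i,j}=t_{i,j}(x,y)$, the hypothesis (\ref{sc1}) on $\sigma$, and $\log k=o(\log n)$.

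For $\hat I_1$: if $(t)\in\hat D\setminus\hat D_\epsilon$ then $\sum_{p+j-q-i\not\equiv 0}t_{i,j}t_{p,q}>\epsilon n^2$ by the very definition of $\hat D_\epsilon$, and since every non-vanishing coefficient $1-\cos\frac{2\pi(p+j-q-i)}{k}$ in (\ref{jxy}) is at least $1-\cos\frac{2\pi}{k}$, this gives $J(x,y)>\epsilon n^2(1-\cos\frac{2\pi}{k})$. As there are at most $|\Theta_A|\le k^n$ summands, (\ref{sc1}) yields
\[
\hat I_1\le k^n\exp\!\Big(-\tfrac{\epsilon n^2(1-\cos\frac{2\pi}{k})}{4\sigma^2}\Big)\le\exp\!\Big(n\log k-\tfrac{\epsilon}{2(1-\delta)}\,n\log n\Big)\to 0,
\]
because $\log k=o(\log n)$; this is precisely the content of Lemma~\ref{l72}.

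For $\hat I_2$, the crux, take $x$ with $(t(x,y))\in\hat D_\epsilon$ and $\theta(x)\ne\theta(y)$. Let $l_0$ maximise the coset sum $c_{l_0}:=\sum_{i-j\equiv l_0\ (\mathrm{mod}\ k)}t_{i,j}(x,y)$ and put $y':=y^{(l_0)}\in\theta(y)$, the $l_0$-fold cyclic colour rotation (the representative furnished by Lemma~\ref{l65}), so that $m:=D_\Theta(x,y')=n-c_{l_0}$. From $\sum_{p+j-q-i\not\equiv 0}t_{i,j}t_{p,q}=n^2-\sum_l c_l^2\ge n^2-(\max_l c_l)n=mn$ and membership in $\hat D_\epsilon$ one gets $m\le\epsilon n$, hence $\sum_p t_{p,p}(x,y')=n-m\ge(1-\epsilon)n$. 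By Lemma~\ref{ll7}, $J(x,y)=J(x,y')$; retaining in (\ref{jxy}) only the cross-terms pairing an off-diagonal entry $t_{i,j}(x,y')$ ($i\ne j$, coefficient $1-\cos\frac{2\pi(j-i)}{k}\ge 1-\cos\frac{2\pi}{k}$) with a diagonal entry $t_{p,p}(x,y')$, and discarding the remaining non-negative terms,
\[
J(x,y')\ \ge\ 2\Big(1-\cos\tfrac{2\pi}{k}\Big)\Big(\sum_{i\ne j}t_{i,j}(x,y')\Big)\Big(\sum_p t_{p,p}(x,y')\Big)=2\Big(1-\cos\tfrac{2\pi}{k}\Big)m(n-m)\ \ge\ 2(1-\epsilon)\Big(1-\cos\tfrac{2\pi}{k}\Big)nm,
\]
so by (\ref{sc1}), $J(x,y')/(4\sigma^2)>\tfrac{1-\epsilon}{1-\delta}\,m\log n$. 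Since the number of $x\in\Theta_A$ with $D_\Theta(x,y^{(l_0)})=m$ for some $l_0$ is at most $k\binom nm(k-1)^m\le k(nk)^m$ (choose the rotation, the $m$ recoloured vertices, and their new colours), and $\hat I_2$ is bounded by the corresponding sum over representatives,
\[
\hat I_2\ \le\ k\sum_{m\ge 2}(nk)^m e^{-\frac{1-\epsilon}{1-\delta}m\log n}\ =\ k\sum_{m\ge 2}\exp\!\Big(m\big(\log k-\tfrac{\delta-\epsilon}{1-\delta}\log n\big)\Big)\ \to\ 0,
\]
since $\epsilon<\delta$ makes $\tfrac{\delta-\epsilon}{1-\delta}$ a fixed positive constant and $\log k=o(\log n)$. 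Together with the two previous paragraphs and (\ref{p1}), this gives $1-\mathrm{Pr}(y^A\in\theta(y))\le\hat I_1+\hat I_2\to 0$.

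The one genuinely delicate step is the sharp lower bound $J(x,y')\ge(2-o(1))(1-\cos\frac{2\pi}{k})\,n\,D_\Theta(x,y')$ with the exact leading constant $2$: it is this constant, matched against the exponent in the Gaussian tail bound, that reproduces the threshold in (\ref{sc1}), and it becomes available only after one has localised the configuration (through the $\hat D_\epsilon$ split, equivalently Lemma~\ref{l65}) so close to a colour-rotation of $y$ that $n-D_\Theta(x,y')$ may be replaced by $(1-o(1))n$. Running the union bound globally, or with the cruder bound $n-D_\Theta\ge n/2$, degrades the constant to $1$ and would only recover the threshold for $\delta>\tfrac12$. Everything else — the Gaussian tail estimate, the counting, and checking that a single $\epsilon=\epsilon(\delta)$ controls $\hat I_1$ and $\hat I_2$ at once (which works precisely because $\log k=o(\log n)$) — is routine.
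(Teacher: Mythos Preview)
Your proof is correct, and in the treatment of $\hat I_2$ it is genuinely simpler than the paper's. The paper handles $\hat I_2$ by the same cycle-iteration machinery used in Sections~\ref{pm11} and~\ref{pm22}: find an $l$-cycle for $(x,y')$, change colours along it to produce $y_1$, estimate $J(x,y_1)-J(x,y')$ via a lengthy term-by-term computation (the sums $B_1,\dots,B_8$), and iterate, ending with a product of geometric series as in (\ref{hiup}); this forces the choice $\epsilon=1/\sqrt n$ so that the error term $16l(k-1)\sqrt{2\epsilon}$ is negligible. You bypass all of this by reading off directly from the quadratic form (\ref{jxy}) that
\[
J(x,y')\ \ge\ 2\Big(1-\cos\tfrac{2\pi}{k}\Big)\Big(\sum_{i\ne j}t_{i,j}(x,y')\Big)\Big(\sum_p t_{p,p}(x,y')\Big)=2\Big(1-\cos\tfrac{2\pi}{k}\Big)m(n-m),
\]
which already carries the sharp leading constant. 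Your localisation $m\le\epsilon n$, obtained from $n^2-\sum_l c_l^2\ge n(n-\max_l c_l)=nm$, is also tighter and cleaner than the paper's Lemma~\ref{l65} bound $m<(k-1)\sqrt{2\epsilon}\,n$. The upshot is that a single fixed $\epsilon\in(0,\delta)$ handles both $\hat I_1$ and $\hat I_2$, whereas the paper's iterative error terms push it to a vanishing~$\epsilon$. The trade-off is that the paper's argument is uniform in style with the other threshold proofs, while yours exploits the specific bilinear structure of $J$; for this model that structure is available, and your route is the shorter one.
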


\begin{proof}For given $x\in \Theta_A$, such that $(t_{1,1}(x,y),\ldots,t_{k,k}(x,y))\in \hat{D}_{\epsilon}$, and $x\notin \theta(y)$, by Lemma \ref{l65}, let $y'\in \theta(y)$ such that (\ref{xyp}) holds. By Lemma \ref{lm33}, there exists an $l$-cycle for $(x,y')$ with $2\leq l\leq k$.

Then for each $2\leq s\leq (l+1)$, choose an arbitrary vertex $u_s$ in $S_{i_{s-1},i_s}(x,y)$, and let $y_1(u_s)=e^{\mathbf{i}d_{i_{s-1}}}$, where $i_{l+1}:=i_1$. For any vertex $z\in[n]\setminus\{u_{2},\ldots,u_{l+1}\}$, let $y_1(z)=y'(z)$.

 Note that $y_1\in \Theta_A$. More precisely, for $1\leq s\leq l$, we have
\begin{eqnarray}
t_{i_s,i_s}(x,y')+1=t_{i_s,i_s}(x,y_1);\label{tpy1}\\
t_{i_s,i_{s+1}}(x,y')-1=t_{i_s,i_{s+1}}(x,y_1)\label{tpy2}
\end{eqnarray}
and
\begin{eqnarray*}
t_{a,b}(x,y')=t_{a,b}(x,y_1),\ \forall (a,b)\notin\{(i_s,i_s),(i_s,i_{s+1})\}_{s=1}^{l}.
\end{eqnarray*}
Let
\begin{eqnarray*}
\Delta:=\{(i_s,i_s),(i_s,i_{s+1})\}_{s=1}^{l}.
\end{eqnarray*}
From (\ref{jxy}) and Lemma \ref{ll7} we obtain
\begin{eqnarray*}
&&J(x,y_1)-J(x,y)=J(x,y_1)-J(x,y')\\
&=&\sum_{i,j,p,q\in[k],p+j-q-i\notin\{-k,0,k\}}\left[t_{i,j}(x,y_1)t_{p,q}(x,y_1)-t_{i,j}(x,y')t_{p,q}(x,y')\right]\left(1-\cos\left(\frac{2\pi(p+j-q-i)}{k}\right)\right)\\
&=& B_1+B_2+B_3+B_4+B_5+B_6+B_7+B_8.
\end{eqnarray*}
Here 
\begin{eqnarray*}
B_1&=&\sum_{s=1}^{l}\sum_{p,q\in[k],(p,q)\notin\Delta}\left[t_{i_s,i_s}(x,y_1)t_{p,q}(x,y_1)-t_{i_s,i_s}(x,y')t_{p,q}(x,y')\right]\left(1-\cos\left(\frac{2\pi(p-q)}{k}\right)\right)\\
B_2&=&\sum_{s=1}^{l}\sum_{p,q\in[k],(p,q)\notin\Delta}\left[t_{i_s,i_{s+1}}(x,y_1)t_{p,q}(x,y_1)-t_{i_s,i_{s+1}}(x,y')t_{p,q}(x,y')\right]\left(1-\cos\left(\frac{2\pi(p+i_{s+1}-q-i_s)}{k}\right)\right)\\
B_3&=&\sum_{i,j\in[k],i,j\notin\Delta}\sum_{s=1}^{l}\left[t_{i,j}(x,y_1)t_{i_s,i_s}(x,y_1)-t_{i,j}(x,y')t_{i_s,i_s}(x,y')\right]\left(1-\cos\left(\frac{2\pi(j-i)}{k}\right)\right)=B_1\\
B_4&=&\sum_{i,j\in[k],i,j\notin\Delta}\sum_{s=1}^{l}\left[t_{i,j}(x,y_1)t_{i_s,i_{s+1}}(x,y_1)-t_{i,j}(x,y')t_{i_s,i_{s+1}}(x,y')\right]\left(1-\cos\left(\frac{2\pi(i_s+j-i_{s+1}-i)}{k}\right)\right)=B_2\\
B_5&=&\sum_{s=1}^{l}\sum_{r=1}^{l}\left[t_{i_s,i_s}(x,y_1)t_{i_r,i_r}(x,y_1)-t_{i_s,i_s}(x,y')t_{i_r,i_r}(x,y')\right]\left(1-\cos0\right)=0\\
B_6&=&\sum_{s=1}^{l}\sum_{r=1}^{l}\left[t_{i_s,i_s}(x,y_1)t_{i_r,i_{r+1}}(x,y_1)-t_{i_s,i_s}(x,y')t_{i_r,i_{r+1}}(x,y')\right]\left(1-\cos\left(\frac{2\pi(i_r-i_{r+1})}{k}\right)\right)\\
B_7&=&\sum_{s=1}^{l}\sum_{r=1}^{l}\left[t_{i_s,i_{s+1}}(x,y_1)t_{i_r,i_r}(x,y_1)-t_{i_s,i_{s+1}}(x,y')t_{i_r,i_r}(x,y')\right]\left(1-\cos\left(\frac{2\pi(i_{s+1}-i_s)}{k}\right)\right)=B_6\\
B_8&=&\sum_{s=1}^{l}\sum_{r=1}^{l}\left[t_{i_s,i_{s+1}}(x,y_1)t_{i_r,i_{r+1}}(x,y_1)-t_{i_s,i_{s+1}}(x,y')t_{i_r,i_{r+1}}(x,y')\right]\left(1-\cos\left(\frac{2\pi(i_r-i_{r+1}+i_{s+1}-i_s)}{k}\right)\right)\\
\end{eqnarray*}
By (\ref{tpy1}) and (\ref{tpy2}) we have
\begin{eqnarray*}
B_1&=&l\sum_{p,q\in[k],(p,q)\notin\Delta}t_{p,q}(x,y')\left(1-\cos\left(\frac{2\pi(p-q)}{k}\right)\right)
\end{eqnarray*}
When $(t_{1,1}(x,y),\ldots,t_{k,k}(x,y))\in \hat{D}_{\epsilon}$, by Lemma \ref{l65} we obtain
\begin{eqnarray*}
0\leq B_1&\leq& 2l(k-1)\sqrt{2\epsilon}n.
\end{eqnarray*}
Similarly,
\begin{eqnarray*}
B_2&=&-\sum_{s=1}^{l}\sum_{p,q\in[k],(p,q)\notin\Delta}t_{p,q}(x,y')\left(1-\cos\left(\frac{2\pi(p+i_{s+1}-q-i_s)}{k}\right)\right)
\end{eqnarray*}
Then
\begin{eqnarray*}
B_2&\leq&-\sum_{s=1}^{l}\sum_{p\in([k]\setminus\{i_1,\ldots,i_l\})}t_{p,p}(x,y')\left(1-\cos\left(\frac{2\pi(i_{s+1}-i_s)}{k}\right)\right)\\
&\leq&-\frac{(k-l)n}{k} \sum_{s=1}^{l}\left(1-\cos\left(\frac{2\pi(i_{s+1}-i_s)}{k}\right)\right)+2l(k-1)\sqrt{2\epsilon}n
\end{eqnarray*}

\begin{eqnarray*}
B_6&=&\sum_{s=1}^{l}\sum_{r=1}^{l}\left[-t_{i_s,i_s}(x,y')+t_{i_r,i_{r+1}}(x,y')-1\right]\left(1-\cos\left(\frac{2\pi(i_r-i_{r+1})}{k}\right)\right)\\
&=&-l\sum_{r=1}^{l}\left(1-\cos\left(\frac{2\pi(i_r-i_{r+1})}{k}\right)\right)+l\sum_{r=1}^{l}t_{i_r,i_{r+1}}(x,y')\left(1-\cos\left(\frac{2\pi(i_r-i_{r+1})}{k}\right)\right)\\
&&-\sum_{s=1}^{l}t_{i_s,i_s}(x,y')\sum_{r=1}^{l}\left(1-\cos\left(\frac{2\pi(i_r-i_{r+1})}{k}\right)\right)
\end{eqnarray*}
By Lemma \ref{l65} we obtain
\begin{eqnarray*}
B_6&\leq& -l\sum_{r=1}^{l}\left(1-\cos\left(\frac{2\pi(i_r-i_{r+1})}{k}\right)\right)+2l(k-1)\sqrt{2\epsilon}n\\
&&-\frac{ln}{k}\sum_{r=1}^{l}\left(1-\cos\left(\frac{2\pi(i_r-i_{r+1})}{k}\right)\right)+2l(k-1)\sqrt{2\epsilon}n
\end{eqnarray*}

\begin{eqnarray*}
B_8&=&\sum_{s=1}^{l}\sum_{r=1}^{l}\left[-t_{i_s,i_{s+1}}(x,y')-t_{i_r,i_{r+1}}(x,y')-1\right]\left(1-\cos\left(\frac{2\pi(i_r-i_{r+1}+i_{s+1}-i_s)}{k}\right)\right)
\end{eqnarray*}

By Lemma \ref{l65} we obtain
\begin{eqnarray*}
0\geq B_8\geq -4l(k-1)n\sqrt{2\epsilon}-2l^2
\end{eqnarray*}

Then we have 
\begin{eqnarray*}
J(x,y_1)-J(x,y)\leq \left[16l(k-1)\sqrt{2\epsilon}-2l\left(1-\cos\frac{2\pi}{k}\right)\right]n.
\end{eqnarray*}

Therefore
\begin{eqnarray}
e^{-\frac{J(x,y)}{4\sigma^2}}\leq e^{-\frac{J(x,y_1)}{4\sigma^2}}e^{-\frac{n}{4\sigma^2}\left[2l\left(1-\cos\frac{2\pi}{k}\right)-16l(k-1)\sqrt{2\epsilon}\right]}.\label{jst}
\end{eqnarray}

If $y_1\neq x$, we find an $l_2$-cycle ($2\leq l_2\leq k$) for $(x,y_1)$, change colors along the $l_2$-cycle  as above, and obtain another cycle assignment mapping $y_2\in \Theta_{A}$, and so on. 
Let $y_0:=y$, and note that for each $r\geq 1$, if $y_r$ is obtained from $y_{r-1}$ by changing colors along an $l_r$ cycle, we have
\begin{eqnarray*}
D_{\Theta}(x,y_r)= D_{\Theta}(x,y_{r-1})-l_r
\end{eqnarray*}
Therefore finally we can obtain $x$ from $y$ by changing colors along at most $\left\lfloor \frac{n}{2} \right\rfloor$ cycles. Using similar arguments as those used to derive (\ref{jst}), we obtain that for each $r$
\begin{eqnarray*}
e^{-\frac{J(x,y_{r-1})}{4\sigma^2}}\leq e^{-\frac{J(x,y_r)}{4\sigma^2}}e^{-\frac{n}{4\sigma^2}\left[2l_r\left(1-\cos\frac{2\pi}{k}\right)-16l_r(k-1)\sqrt{2\epsilon}\right]}.
\end{eqnarray*}
Therefore if $y_s=x$ for some $1\leq s\leq \left\lfloor \frac{n}{2} \right\rfloor$, we have
\begin{eqnarray*}
e^{-\frac{J(x,y)}{4\sigma^2}}\leq e^{-\frac{J(x,y_s)}{4\sigma^2}}e^{-\frac{n}{4\sigma^2}\left[2\left(1-\cos\frac{2\pi}{k}\right)-16(k-1)\sqrt{2\epsilon}\right]\left[\sum_{i=1}^{s}l_i\right]}.
\end{eqnarray*}
By (\ref{jxyd}), we have $J(x,y_s)=J(x,x)=0$, hence
\begin{eqnarray*}
e^{-\frac{J(x,y)}{4\sigma^2}}\leq \prod_{i\in[s]}e^{-\frac{nl_i}{4\sigma^2}\left[2\left(1-\cos\frac{2\pi}{k}\right)-16(k-1)\sqrt{2\epsilon}\right]}.
\end{eqnarray*}

Let
\begin{eqnarray*}
\gamma_{\epsilon}:=2\left(1-\cos\frac{2\pi}{k}\right)-16(k-1)\sqrt{2\epsilon}.
\end{eqnarray*}


Note also that for any $r_1\neq r_2$, in the process of obtaining $y_{r_1}$ from $y_{r_1-1}$ and the process of obtaining $y_{r_2}$ from $y_{r_2-1}$, we change colors on disjoint sets of vertices. Hence the order of these steps of changing colors along cycles does not affect the final color assignment mapping we obtain.  From (\ref{hi2}) we obtain
\begin{eqnarray}
\hat{I}_2\leq \prod_{l=2}^k \left(\sum_{m_l=0}^{\infty} (nk)^{m_l\ell} e^{-\frac{nm_ll \gamma_{\epsilon}}{4\sigma^2}}\right)-1\label{hiup}
\end{eqnarray}
On the right hand side of (\ref{hiup}), when expanding the product, each summand has the form
\begin{eqnarray*}
\left[(nk)^{2m_2}e^{-\frac{2m_2 n \gamma_{\epsilon}}{4\sigma^2}}\right]\cdot\left[ (nk)^{3m_3}e^{-\frac{3m_3 n \gamma_{\epsilon}}{4\sigma^2}}\right]\cdot\ldots\cdot\left[(nk)^{km_k} e^{-\frac{km_k n \gamma_{\epsilon}}{4\sigma^2}}\right]
\end{eqnarray*}
where the factor $\left[(nk)^{2m_2}e^{-\frac{2m_2 n \gamma_{\epsilon}}{4\sigma^2}}\right]$ represents that we changed along 2-cycles $m_2$ times, the factor $\left[ (nk)^{3m_3}e^{-\frac{3m_3 n \gamma_{\epsilon}}{4\sigma^2}}\right]$ represents that we changed along 3-cycles $m_3$ times, and so on. Moreover, each time we changed along an $l$-cycle, we need to first determine the $l$ different colors involved in the $l$-cycle, and there are at most $k^l$ different $l$-cycles;  we then need to chose $l$ vertices to change colors, and there are at most $n^{l}$ choices.
It is straightforward to check that if $\sigma$ satisfies (\ref{sc1}), then
\begin{eqnarray*}
nk e^{-\frac{n \gamma_{\epsilon}}{4\sigma^2}}\leq e^{\log k-\frac{\delta \log n}{1-\delta}+\frac{8(k-1)\sqrt{2\epsilon}\log n}{(1-\delta)\left(1-\cos\frac{2\pi}{k}\right)}}.
\end{eqnarray*}
as $n\rightarrow\infty$. Therefore we have
\begin{eqnarray*}
\sum_{m_l=0}^{\infty} (nk)^{m_l\ell} e^{-\frac{nm_ll \gamma_{\epsilon}}{4\sigma^2}}\leq \frac{1}{1-e^{l\left(\log k-\frac{\delta \log n}{1-\delta}+\frac{8(k-1)\sqrt{2\epsilon}\log n}{(1-\delta)\left(1-\cos\frac{2\pi}{k}\right)}\right)}}
\end{eqnarray*}
Let 
\begin{eqnarray*}
\hat{U}:= \prod_{l=2}^k \left(\sum_{m_l=0}^{\infty} (nk)^{m_l\ell} e^{-\frac{nm_ll \gamma_{\epsilon}}{4\sigma^2}}\right).
\end{eqnarray*}
When (\ref{lknz}) holds, let
\begin{eqnarray*}
\epsilon:=\frac{1}{\sqrt{n}}
\end{eqnarray*}
Then when $\sigma$ satisfies (\ref{sc1})
\begin{eqnarray*}
\frac{\gamma_{\epsilon}}{4\sigma^2}\leq \frac{\delta \log n}{2(1-\delta)}
\end{eqnarray*}
when $n$ is sufficiently large.

Since $\log(1+x)\leq x$ for $x\geq 0$, we have
\begin{eqnarray*}
0\leq \log \hat{U}&=&\sum_{l=2}^{k}\log \left(1+\sum_{m_l=1}^{\infty} (nk)^{m_l\ell} e^{-\frac{nm_ll \gamma_{\epsilon}}{4\sigma^2}}\right)\\
&\leq &\sum_{l=2}^{k}\sum_{m_l=1}^{\infty} (nk)^{m_l\ell} e^{-\frac{nm_ll \gamma_{\epsilon}}{4\sigma^2}}\\
&\leq &\sum_{l=2}^{k}\frac{\left(e^{-\frac{\delta \log n}{2(1-\delta)}}\right)^{l}}{1-\left(e^{-\frac{\delta \log n}{2(1-\delta)}}\right)^{l}}\\
&\leq &\frac{e^{-\frac{\delta \log n}{1-\delta}}}{\left(1-e^{-\frac{\delta \log n}{1-\delta}}\right)\left(1-e^{-\frac{\delta \log n}{2(1-\delta)}}\right)}\rightarrow 0
\end{eqnarray*}
as $n\rightarrow\infty$. Hence we have $\lim_{n\rightarrow\infty}\hat{I}_2=0.$ By Lemma \ref{l72}, $\lim_{n\rightarrow\infty}\hat{I}_1=0.$ Then the proposition follows.

\end{proof}

\bigskip

\noindent\textbf{Proof of Theorem \ref{m3}(b).} For $y\in \Theta_A$, $a,b\in[n]$ such that $1=y(a)\neq y(b)=e^{\frac{2\pi \mathbf{i}}{k}}$. Let $y^{(ab)}$ be the coloring of vertices defined by
\begin{eqnarray}
y^{(ab)}(i)=\begin{cases}y(i)& \mathrm{if}\ i\in\{1,2,\ldots,n\}\setminus\{a,b\}\\ e^{\frac{2\pi\mathbf{i}}{k}}&\mathrm{if}\ i=a\\ 1& \mathrm{if}\ i=b \end{cases}\label{yab}
\end{eqnarray}
Then
\begin{eqnarray*}
1-p(y^A;\sigma)\geq \mathrm{Pr}\left(\cup_{a,b\in[n], 1=y(a)\neq y(b)=e^{\frac{2\pi\mathbf{i}}{k}}}[r(y^{(ab)})-r(y)>0]\right),
\end{eqnarray*}
since any of the event $[r(y^{(ab)})-r(y)>0]$ implies $y^A\neq y$. Recall that
\begin{eqnarray*}
r(y^{(ab)})-r(y)&=&\langle \mathbf{P}(y),\mathbf{P}(y^{(ab)})-\mathbf{P}(y)\rangle+\sigma\langle\mathbf{W}_c,\mathbf{P}(y^{(ab)})-\mathbf{P}(y) \rangle\\
&=&-4(n-2)\left(1-\cos\frac{2\pi}{k}\right)-2\left(1-\cos \frac{4\pi}{k}\right)+\sigma\langle\mathbf{W}_c,\mathbf{P}(y^{(ab)})-\mathbf{P}(y) \rangle.
\end{eqnarray*}
Let
\begin{eqnarray*}
E(n):=4(n-2)\left(1-\cos\frac{2\pi}{k}\right)+2\left(1-\cos \frac{4\pi}{k}\right)
\end{eqnarray*}

For $l\in\{0,1\}$, let $H_l\subset y^{-1}\left(e^{\frac{2l\pi\mathbf{i}}{k}}\right)$ such that $|H_l|=\frac{n}{\log^2n}=h$. Then
\begin{eqnarray*}
1-p(y^A;\sigma)\geq \mathrm{Pr}\left(\mathrm{max}_{a\in H_0,b\in H_1}\sigma\langle\mathbf{W},\mathbf{P}(y^{(ab)})-\mathbf{P}(y) \rangle>E(n)\right)
\end{eqnarray*}
Let $(\mathcal{X},\mathcal{Y},\mathcal{Z})$ be a partition of $[n]^2$ defined by
\begin{eqnarray*}
&&\mathcal{X}=\{\alpha=(\alpha_1,\alpha_2)\in [n]^2, \{\alpha_1,\alpha_2\}\cap [H_0\cup H_1]=\emptyset\}\\
&&\mathcal{Y}=\{\alpha=(\alpha_1,\alpha_2)\in [n]^2, |\{\alpha_1,\alpha_2\}\cap [H_0\cup H_1]|=1\}\\
&&\mathcal{Z}=\{\alpha=(\alpha_1,\alpha_2)\in [n]^2, |\{\alpha_1,\alpha_2\}\cap [H_0\cup H_1]|=2\}
\end{eqnarray*}
For $\eta\in\{\mathcal{X},\mathcal{Y},\mathcal{Z}\}$, define the $n\times n$ matrix $\mathbf{W}_{\eta}$ from the entries of $\mathbf{W}$ as follows
\begin{eqnarray*}
\mathbf{W}_{\eta,c}(i,j)=\begin{cases}0&\mathrm{if}\ (i,j)\notin \eta\\ \mathbf{W}_c(i,j),&\mathrm{if}\ (i,j)\in \eta\end{cases}
\end{eqnarray*}
For each $a\in H_0$ and $b\in H_1$, let
\begin{eqnarray*}
\mathcal{X}_{ab}=\langle\mathbf{W}_{\mathcal{X},c},\mathbf{P}(y^{(ab)})-\mathbf{P}(y) \rangle\\
\mathcal{Y}_{ab}=\langle\mathbf{W}_{\mathcal{Y},c},\mathbf{P}(y^{(ab)})-\mathbf{P}(y) \rangle\\
\mathcal{Z}_{ab}=\langle\mathbf{W}_{\mathcal{Z},c},\mathbf{P}(y^{(ab)})-\mathbf{P}(y) \rangle
\end{eqnarray*}
\begin{claim}The followings are true:
\begin{enumerate}
\item $\mathcal{X}_{ab}=0$ for $a\in H_0$ and $b\in H_1$.
\item For each $a\in H_0$ and $b\in H_1$, the variables $\mathcal{Y}_{ab}$ and $\mathcal{Z}_{ab}$ are independent.
\item Each $\mathcal{Y}_{ab}$ can be decomposed into $Y_a+Y_b$ where $\{Y_a\}_{a\in H_0}\cup \{Y_b\}_{b\in H_1}$ is a collection of i.i.d. Gaussian random variables.
\end{enumerate}
\end{claim}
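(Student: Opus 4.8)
The plan is to follow the scheme of Lemma \ref{l38} and Lemma \ref{l69}, adapted to the Hermitian structure of $\mathbf{W}_c$. First I would record the explicit form of the perturbation matrix. Write $\omega=e^{2\pi\mathbf{i}/k}$. Since $y^{(ab)}$ agrees with $y$ off $\{a,b\}$ and $\mathbf{P}_{i,j}(x)=x(i)\ol{x(j)}$, the matrix $\mathbf{P}(y^{(ab)})-\mathbf{P}(y)$ is supported on $\{(i,j):\{i,j\}\cap\{a,b\}\ne\es\}$, vanishes on the diagonal (because $x(i)\ol{x(i)}=1$), and has entries $\mathbf{P}_{a,j}(y^{(ab)})-\mathbf{P}_{a,j}(y)=(\omega-1)\ol{y(j)}$ and $\mathbf{P}_{b,j}(y^{(ab)})-\mathbf{P}_{b,j}(y)=(1-\omega)\ol{y(j)}$ for $j\notin\{a,b\}$, with the transposed entries fixed by Hermitian symmetry and $\mathbf{P}_{a,b}(y^{(ab)})-\mathbf{P}_{a,b}(y)=\omega-\ol{\omega}$.

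Part (1) is then immediate: since $a\in H_0$ and $b\in H_1$, every $(i,j)$ in the support of $\mathbf{P}(y^{(ab)})-\mathbf{P}(y)$ meets $H_0\cup H_1$, hence lies outside $\mathcal{X}$, so $\mathbf{W}_{\mathcal{X},c}$ and $\mathbf{P}(y^{(ab)})-\mathbf{P}(y)$ have disjoint supports and $\mathcal{X}_{ab}=0$. For Part (2), the structural point is that each of $\mathcal{X},\mathcal{Y},\mathcal{Z}$ is closed under $(\alpha_1,\alpha_2)\mapsto(\alpha_2,\alpha_1)$; hence $\mathbf{W}_{\mathcal{Y},c}$ and $\mathbf{W}_{\mathcal{Z},c}$ are built from disjoint, transpose-closed families of entries of $\mathbf{W}_c$, so they are measurable with respect to disjoint subfamilies of the independent coordinates of $\mathbf{W}_c$ (its diagonal and strict-upper-triangular entries). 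Since $\mathcal{Y}_{ab}$ is a function of $\mathbf{W}_{\mathcal{Y},c}$ and $\mathcal{Z}_{ab}$ of $\mathbf{W}_{\mathcal{Z},c}$, independence follows.

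For Part (3), partition $\mathcal{Y}=\bigsqcup_{s\in H_0\cup H_1}\mathcal{Y}_s$, where $\mathcal{Y}_s$ consists of the pairs in $\mathcal{Y}$ whose unique coordinate lying in $H_0\cup H_1$ equals $s$; these pieces are pairwise disjoint and transpose-closed. Then $\mathcal{Y}_{ab}=\sum_s\langle\mathbf{W}_{\mathcal{Y}_s,c},\mathbf{P}(y^{(ab)})-\mathbf{P}(y)\rangle$, and by the support computation of the first step only $s=a$ and $s=b$ contribute; set $Y_a$ and $Y_b$ to be these two summands. A key observation is that on $\mathcal{Y}_a$ the matrix $\mathbf{P}(y^{(ab)})-\mathbf{P}(y)$ has entries $(\omega-1)\ol{y(j)}$ and $y(i)(\ol{\omega}-1)$, which do not involve $b$, so $Y_a$ depends only on $a$ (and symmetrically $Y_b$ only on $b$); thus $\{Y_s\}_{s\in H_0\cup H_1}$ is a single well-defined collection with $\mathcal{Y}_{ab}=Y_a+Y_b$ for every admissible pair. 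Using $\mathbf{W}_c(i,s)=\ol{\mathbf{W}_c(s,i)}$ one rewrites $Y_s=2\Re\bigl[(\ol{\omega}-1)\sum_{j\notin H_0\cup H_1}\mathbf{W}_c(s,j)y(j)\bigr]$ (with $\ol{\omega}-1$ replaced by $1-\ol{\omega}$ when $s\in H_1$), a fixed real-linear functional of the real and imaginary parts of the independent standard complex Gaussians $\{\mathbf{W}_c(s,j):j\notin H_0\cup H_1\}$; hence $Y_s$ is a centered real Gaussian with variance $4(n-2h)\bigl(1-\cos\tfrac{2\pi}{k}\bigr)$, independent of $s$, since $|\omega-1|^2=2(1-\cos\tfrac{2\pi}{k})$ and there are $n-2h$ relevant columns. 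Finally, for distinct $s,s'$ the entry sets $\{(s,j),(j,s)\}_{j\notin H_0\cup H_1}$ and $\{(s',j),(j,s')\}_{j\notin H_0\cup H_1}$ are disjoint, so $\{Y_s\}_s$ is an independent family, and the variance computation makes it identically distributed. The main obstacle is entirely bookkeeping: keeping the Hermitian symmetry straight so that the three-way partition and the finer partition into the $\mathcal{Y}_s$ remain transpose-closed (which is what makes the restricted matrices Hermitian and genuinely splits the underlying randomness), and tracking precisely which real combination of $\Re\mathbf{W}_c(s,j)$ and $\Im\mathbf{W}_c(s,j)$ occurs so that Gaussianity and the common variance drop out.
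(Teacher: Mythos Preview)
Your proposal follows essentially the same route as the paper: the same support analysis for $\mathbf{P}(y^{(ab)})-\mathbf{P}(y)$, the same partition $\mathcal{Y}=\bigsqcup_{s\in H_0\cup H_1}\mathcal{Y}_s$, and the same reduction to the independent family $\{Y_s\}$. Your handling of part~(2) is slightly more careful than the paper's (you note the blocks are transpose-closed, which is what actually makes the restricted Hermitian matrices independent), and your variance $4(n-2h)\bigl(1-\cos\tfrac{2\pi}{k}\bigr)$ differs from the paper's stated $8(n-2h)\bigl(1-\cos\tfrac{2\pi}{k}\bigr)$---yours is the value consistent with the threshold in Theorem~\ref{m3}(b).
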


\begin{proof}It is straightforward to check (1). (2) holds because $\mathcal{Y}\cap\mathcal{Z}=\emptyset$.

For $s\in H_0\cup H_1$, let $\mathcal{Y}_s\subseteq \mathcal{Y}$ be defined by
\begin{eqnarray*}
\mathcal{Y}_s=\{\alpha=(\alpha_1,\alpha_2)\in \mathcal{Y}:\alpha_1=s,\ \mathrm{or}\ \alpha_2=s\}.
\end{eqnarray*}
Note that for $s_1,s_2\in H_0\cup H_1$ and $s_1\neq s_2$, $\mathcal{Y}_{s_1}\cap \mathcal{Y}_{s_2}=\emptyset$. Moreover, $\mathcal{Y}=\cup_{s\in H_0\cup H_1}\mathcal{Y}_s$. Therefore
\begin{eqnarray*}
\mathcal{Y}_{ab}=\sum_{s\in H_0\cup H_1}\langle\mathbf{W}_{\mathcal{Y}_s,c},\mathbf{P}(y^{(ab)})-\mathbf{P}(y) \rangle
\end{eqnarray*}
Note also that $\langle\mathbf{W}_{\mathcal{Y}_s,c},\mathbf{P}(y^{(ab)})-\mathbf{P}(y) \rangle=0$, if $s\notin \{a,b\}$. Hence
\begin{eqnarray*}
\mathcal{Y}_{ab}=\sum_{\alpha\in\mathcal{Y}_a\cup \mathcal{Y}_b}[\mathbf{W}_c(\alpha)]\cdot\{[\mathbf{P}(y^{(ab)})-\mathbf{P}(y)](\alpha)\}
\end{eqnarray*}
Note that for $\alpha\in \mathcal{Y}_a$,
\begin{eqnarray*}
[\mathbf{P}(y^{(ab)})-\mathbf{P}(y)](\alpha)=\begin{cases}(e^{\frac{2\pi\mathbf{i}}{k}}-1)\ol{y(\alpha_2)} &\mathrm{if}\ \alpha_1=a\\ y(\alpha_1)(e^{-\frac{2\pi\mathbf{i}}{k}}-1) &\mathrm{if}\  \alpha_2=a.\end{cases}
\end{eqnarray*}
So, 
\begin{eqnarray*}
Y_a:=&&\sum_{\alpha\in \mathcal{Y}_a}[\mathbf{W}_c(\alpha)]\cdot\{[\mathbf{P}(y^{(ab)})-\mathbf{P}(y)](\alpha)\}\\
&=&\left\{\sum_{\alpha\in \mathcal{Y}_a;\alpha_1=a}[\ol{y(\alpha_2)}\mathbf{W}_c(\alpha)]+\sum_{\alpha\in \mathcal{Y}_a;\alpha_2=a}[y(\alpha_1)\mathbf{W}_c(\alpha)]\right\}\left(\cos \frac{2\pi}{k}-1\right)\\
&&+\mathbf{i}\left\{\sum_{\alpha\in \mathcal{Y}_a;\alpha_1=a}[\ol{y(\alpha_2)}\mathbf{W}_c(\alpha)]-\sum_{\alpha\in \mathcal{Y}_a;\alpha_2=a}[y(\alpha_1)\mathbf{W}_c(\alpha)]\right\}\sin \frac{2\pi}{k}
\end{eqnarray*}
Similarly, define
\begin{eqnarray*}
Y_b:&=&\left\{\sum_{\alpha\in \mathcal{Y}_b;\alpha_2=b}[\ol{y(\alpha_2)}\mathbf{W}_c(\alpha)]
-\sum_{\alpha\in \mathcal{Y}_b;\alpha_1=b}[y(\alpha_1)\mathbf{W}_c(\alpha)]\right\}\left(1-\cos\frac{2\pi}{k}\right)\\
&&-\mathbf{i}\left\{\sum_{\alpha\in \mathcal{Y}_a;\alpha_1=b}[\ol{y(\alpha_2)}\mathbf{W}_c(\alpha)]-\sum_{\alpha\in \mathcal{Y}_a;\alpha_2=b}[y(\alpha_1)\mathbf{W}_c(\alpha)]\right\}\sin \frac{2\pi}{k}
\end{eqnarray*}
Then $\mathcal{Y}_{ab}=Y_a+Y_b$ and $\{Y_s\}_{s\in H_0\cup H_1}$ is a collection of independent Gaussian random variables. Moreover, the variance of $Y_s$ is equal to $8(n-2h)\left(1-\cos\frac{2\pi}{k}\right)$ independent of the choice of $s$.
\end{proof}
By the claim, we obtain
\begin{eqnarray*}
\langle\mathbf{W}_c,\mathbf{P}(y^{(ab)})-\mathbf{P}(y) \rangle=Y_a+Y_b+\mathcal{Z}_{ab}
\end{eqnarray*}
Moreover,
\begin{eqnarray*}
\max_{a\in H_0,b\in H_1}Y_a+Y_b+\mathcal{Z}_{ab}&\geq& \max_{a\in H_0,b\in H_1}(Y_a+Y_b)-\max_{a\in H_0,b\in H_1}(-\mathcal{Z}_{ab})\\
&=&\max_{a\in H_0} Y_a+\max_{b\in H_1}Y_b-\max_{a\in H_0,b\in H_1}(-\mathcal{Z}_{ab})
\end{eqnarray*}

By Lemma \ref{mg} we obtain
\begin{eqnarray*}
\max_{a\in H_0}Y_a\geq (1-0.01\epsilon)\sqrt{2\log h\cdot 8(n-2h)\left(1-\cos\frac{2\pi}{k}\right)}\\
\max_{b\in H_1}Y_b\geq (1-0.01\epsilon)\sqrt{2\log h\cdot 8(n-2h)\left(1-\cos\frac{2\pi}{k}\right)}\\
\max_{a\in H_0,b\in H_1}\mathcal{Z}_{ab}\leq (1+\epsilon)\sqrt{4\log h\cdot \max \mathrm{Var}(Z_{ab})}
\end{eqnarray*}
with probability $1-o_n(1)$.(Here $o_n(1)\rightarrow 0$ as $n\rightarrow\infty$.) 
 Moreover,
\begin{eqnarray*}
\mathrm{Var} \mathcal{Z}_{ab}&\leq & 32 h^2
\end{eqnarray*}
which is $o(n)$. Hence
\begin{eqnarray*}
\max_{a\in H_1,b\in H_2}\langle\mathbf{W}_c,\mathbf{P}(y^{(ab)})-\mathbf{P}(y) \rangle&\geq& 2(1-0.01\epsilon-o(1))\sqrt{2\log n\cdot 8(n-2h)\left(1-\cos \frac{2\pi}{k}\right)}\\
&\geq &8(1-0.01\epsilon-o(1))\sqrt{\left(1-\cos\frac{2\pi}{k}\right)n\log n}
\end{eqnarray*}
with probability $1-o_n(1)$. Since $\sigma^2>\frac{(1+\delta)\left[n(1-\cos\frac{2\pi}{k})\right]}{2\log n}$, we have
\begin{eqnarray*}
\mathrm{Pr}\left(\mathrm{max}_{a\in H_0,b\in H_1}\sigma\langle\mathbf{W},\mathbf{P}(y^{(ab)})-\mathbf{P}(y) \rangle>E(n)\right)\geq 1-o_n(1)
\end{eqnarray*}
Then the lemma follows.
$\hfill\Box$

\subsection{Algorithm: Complex Semi-Definite Programming and GOE perturbation}\label{pm5}

In this section, we prove Theorem \ref{m5}.

Assume
\begin{eqnarray*}
\mathbf{V}=V_1+\mathbf{i}V_2; \qquad \mathbf{X}=X_1+\mathbf{i}X_2
\end{eqnarray*}
where $U_1$ and $X_1$ are $n\times n$ real symmetric matrices, and $U_2$ and $X_2$ are $n\times n$ real anti-symmetric matrices.

Let
\begin{eqnarray*}
\tilde{X}=\left(\begin{array}{cc}X_1&-X_2\\X_2&X_1\end{array}\right)\qquad \tilde{V}=\left(\begin{array}{cc}V_1&V_2\\-V_2&V_1\end{array}\right),
\end{eqnarray*}
then the complex optimization problem (\ref{co}) is equivalent to the following real optimization problem
\begin{eqnarray}
&&\max \langle \tilde{V},\tilde{X} \rangle\label{ro}\\
\mathrm{subject\ to}\ &&\tilde{X}_{ii}=1,\ \mathrm{for}\ 1\leq i\leq 2n\notag\\
&&\tilde{X}_{p,q}=\tilde{X}_{p+n,q+n},\ \mathrm{for}\ 1\leq p\leq n,\ 1\leq q\leq n\notag\\
&&\tilde{X}_{p,q+n}=-\tilde{X}_{p+n,q},\ \mathrm{for}\ 1\leq p\leq n,\ 1\leq q\leq n. \notag\\
\mathrm{and}\ &&\tilde{X}\succeq 0\notag
\end{eqnarray}
The dual program of (\ref{ro}) is 
\begin{eqnarray}
&&\min \mathrm{tr}(Z)\label{dro}\\
\mathrm{subject\ to}\ &&Z-\tilde{V}+\left(\begin{array}{cc}A&0\\0 &-A\end{array}\right)+\left(\begin{array}{cc}0&B\\B &0\end{array}\right) \succeq 0\notag\\
&& Z\ \mathrm{is\ diagonal}\notag
\end{eqnarray}
By complementary slackness
\begin{eqnarray*}
X=X_1+iX_2=\mathbf{y}\ol{\mathbf{y}}^t=(\mathbf{y}_1+i\mathbf{y}_2)(\mathbf{y}_1-i\mathbf{y}_2)^t
\end{eqnarray*}
is the unique optimum solution of (\ref{co}) if and only if there exists a dual feasible solution $(Z,A,B)$, such that
\begin{eqnarray*}
&&\left\langle Z-\tilde{V}+\left(\begin{array}{cc}A&0\\0 &-A\end{array}\right)+\left(\begin{array}{cc}0&B\\B &0\end{array}\right),
\left(\begin{array}{cc}\mathbf{y}_1\mathbf{y}_1^t+\mathbf{y}_2\mathbf{y}_2^t&\mathbf{y}_1\mathbf{y}_2^t-\mathbf{y}_2\mathbf{y}_1^t\\ \mathbf{y}_2\mathbf{y}_1^t-\mathbf{y}_1\mathbf{y}_2^t & \mathbf{y}_1\mathbf{y}_1^t+\mathbf{y}_2\mathbf{y}_2^t\end{array}\right)\right \rangle=0;
\end{eqnarray*}
which is equivalent to
\begin{eqnarray}
&&\left\langle Z-\tilde{V},\left(\begin{array}{cc}\mathbf{y}_1\mathbf{y}_1^t+\mathbf{y}_2\mathbf{y}_2^t&\mathbf{y}_1\mathbf{y}_2^t-\mathbf{y}_2\mathbf{y}_1^t\\ \mathbf{y}_2\mathbf{y}_1^t-\mathbf{y}_1\mathbf{y}_2^t & \mathbf{y}_1\mathbf{y}_1^t+\mathbf{y}_2\mathbf{y}_2^t\end{array}\right) \right\rangle=0.\label{rf}
\end{eqnarray}

Assume
\begin{eqnarray*}
Z=\left(\begin{array}{cc}Z_1&0\\0&Z_2\end{array}\right),
\end{eqnarray*}
where $Z_1$, $Z_2$ are $n\times n$ real diagonal matrices. Then (\ref{rf}) is equivalent to
\begin{eqnarray}
\Re\left[\mathbf{y}^t(Z_1+Z_2-2\mathbf{V})\ol{\mathbf{y}}\right]=0\label{eqs}
\end{eqnarray}
Note that
\begin{eqnarray}
&&\Im\left[\mathbf{y}^t(Z_1+Z_2-2\mathbf{V})\ol{\mathbf{y}}\right]\label{imp}=\langle Z_1+Z_2-2V_1,\mathbf{y}_2\mathbf{y}_1^t-\mathbf{y}_1\mathbf{y}_2^t\rangle-\langle 2V_2,\mathbf{y}_1\mathbf{y}_1^t+\mathbf{y}_2\mathbf{y}_2^t\rangle\notag
\end{eqnarray}
which is identically zero since each term on the right hand side of (\ref{imp}) is the inner product of a symmetric matrix and an anti-symmetric matrix.
Moreover, if the minimizer of (\ref{dro}) is unique, then as a Hermitian matrix, the second smallest eigenvalue of 
\begin{eqnarray*}
S:=Z_1+Z_2-2\mathbf{V}
\end{eqnarray*}
is strictly positive.

From (\ref{eqs}) and the fact that $S$ is positive semi-definite, we obtain $S\mathbf{y}=0$. Hence
\begin{eqnarray*}
Z_1(i,i)+Z_2(i,i)=2\sum_{j=1}^{n}\ol{\mathbf{y}(i)}[\mathbf{V}(i,j)]\mathbf{y}(j);
\end{eqnarray*}
and therefore,
\begin{eqnarray}
S(i,i)=2\sum_{j\in[n],j\neq i}\ol{\mathbf{y}(i)}\mathbf{V}(i,j)\mathbf{y}(j)-2\mathbf{V}(i,i)\label{sii}
\end{eqnarray}
For $i\neq j$,
\begin{eqnarray}
S(i,j)=-2\mathbf{V}(i,j)\label{sij}
\end{eqnarray}
For a Hermitian matrix $M$, we define the Laplacian $\Delta(M)$ of $M$ by
\begin{eqnarray*}
\Delta(M):=\mathrm{diag}(M\mathbf{1})-M
\end{eqnarray*}
where $\mathbf{1}$ is the column vector all of whose entries are 1. Then from (\ref{sii}), (\ref{sij}), explicit computations show that
\begin{eqnarray*}
S=2\mathrm{diag}(\mathbf{y})[\Delta(\mathrm{diag}(\ol{\mathbf{y}})\mathbf{V}\mathrm{diag}(\mathbf{y}))]\mathrm{diag}(\ol{\mathbf{y}})
\end{eqnarray*}
Moreover,
\begin{eqnarray*}
\mathrm{diag}(\ol{\mathbf{y}})\mathbf{V}\mathrm{diag}(\mathbf{y})&=&\mathrm{diag}(\ol{\mathbf{y}})[\mathbf{y}\ol{\mathbf{y}}^t+\sigma\mathrm{diag}(\mathbf{y})\mathbf{W}_s\mathrm{diag}(\ol{\mathbf{y}})]\mathrm{diag}(\mathbf{y})\\
&=&\mathbf{1}\mathbf{1}^t+\sigma\mathbf{W}_s
\end{eqnarray*}
Therefore
\begin{eqnarray}
\Delta[\mathrm{diag}(\ol{\mathbf{y}})\mathbf{U}\mathrm{diag}(\mathbf{y})]=n\left(I_{n\times n}-\frac{1}{n}\mathbf{1}\mathbf{1}^t\right)+\sigma\Delta[\mathbf{W}_s].\label{dd}
\end{eqnarray}
The matrix $n\left(I_{n\times n}-\frac{1}{n}\mathbf{1}\mathbf{1}^t\right)$ has rank $(n-1)$ and two distinct eigenvalues: $0$ and $n$. The eigenvalue $0$ has multiplicity $1$ and $n$ has multiplicity $(n-1)$. Note that $0$ is also an eigenvalue of the matrix $\Delta[\mathbf{W}_s]$, and the $n$-dimensional vector $\mathbf{1}$ is an eigenvector with respect to the eigenvalue 0 for both the matrix $n\left(I_{n\times n}-\frac{1}{n}\mathbf{1}\mathbf{1}^t\right)$ and the matrix $\Delta[\mathbf{W}_s]$. Therefore the matrix (\ref{dd}) is positive definite if
\begin{eqnarray}
\sigma\|\Delta[\mathbf{W}_s]\|\leq n,\label{c1}
\end{eqnarray}
where $\|\cdot\|$ is the spectral norm of a matrix defined to be the largest modulus of its eigenvalues. We have, by the triangle inequality,
\begin{eqnarray}
\|\Delta\mathbf{W}_s\|\leq \max_{i\in [n]}\left|\sum_{j=1}^{n}\mathbf{W}_s(i,j)\right|+\|\mathbf{W}_s\|.\label{c2}
\end{eqnarray}
The matrix $\mathbf{W}_s$ is a standard GOE matrix. Recall the following proposition about the largest eigenvalue of the standard GOE matrix.
\begin{proposition}\label{tw}(Tracy-Widom \cite{TW}) Let $\lambda_{\max}^{(n)}$ be the largest eigenvalue of an $n\times n$ GOE matrix, then when $n$ is large,
\begin{eqnarray*}
\lambda_{\max}^{(n)}\sim \sqrt{2n}+\frac{n^{-\frac{1}{6}}\xi_1}{\sqrt{2}}
\end{eqnarray*}
where $\xi_1$ is a random variable independent of $n$ with the GOE Tracy-Widom distribution.
\end{proposition}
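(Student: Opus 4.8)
\textbf{Proof proposal for Proposition \ref{tw}.}
This is the classical Tracy--Widom edge law for the GOE, and the proof I would give follows the route of \cite{TW} through the correlation-kernel / Fredholm-determinant formalism; the plan is to treat the $\beta=2$ (GUE) case first as a model computation and then lift it to $\beta=1$. For GUE the joint eigenvalue density is proportional to $\prod_{i<j}(\lambda_i-\lambda_j)^2 e^{-\frac14\sum_j\lambda_j^2}$ (in the normalization matching the displayed scaling), and the Dyson--Mehta machinery exhibits the eigenvalue point process as determinantal with the Christoffel--Darboux kernel $K_n(x,y)=\sum_{j=0}^{n-1}\phi_j(x)\phi_j(y)$ built from the Hermite functions $\phi_j$. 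Hence $\mathrm{Pr}(\lambda_{\max}^{(n)}\le t)=\det(I-K_n)_{L^2(t,\infty)}$, and the first task is the purely analytic one of understanding this Fredholm determinant under the edge rescaling.

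Next I would carry out the edge asymptotics: substituting $t=\sqrt{2n}+\frac{s}{\sqrt2\,n^{1/6}}$ and invoking the Plancherel--Rotach asymptotics of Hermite polynomials at the soft edge (equivalently, the Deift--Zhou nonlinear steepest-descent analysis of the associated Riemann--Hilbert problem), one shows
\[
\frac{1}{\sqrt2\,n^{1/6}}\,K_n\!\left(\sqrt{2n}+\frac{x}{\sqrt2\,n^{1/6}},\ \sqrt{2n}+\frac{y}{\sqrt2\,n^{1/6}}\right)\ \longrightarrow\ K_{\mathrm{Ai}}(x,y):=\frac{\mathrm{Ai}(x)\mathrm{Ai}'(y)-\mathrm{Ai}'(x)\mathrm{Ai}(y)}{x-y}
\]
uniformly on compacts, with exponential tail bounds in $x,y$ that upgrade this to convergence in trace norm on $L^2(s,\infty)$. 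Continuity of the Fredholm determinant in trace norm then gives $\mathrm{Pr}(\lambda_{\max}^{(n)}\le t)\to F_2(s):=\det(I-K_{\mathrm{Ai}})_{L^2(s,\infty)}$, and the Tracy--Widom identity $F_2(s)=\exp\bigl(-\int_s^\infty(x-s)q(x)^2\,dx\bigr)$, with $q$ the Hastings--McLeod solution of Painlev\'e~II $q''=sq+2q^3$, identifies the limit as the $\beta=2$ Tracy--Widom distribution.

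For the $\beta=1$ case of the proposition the density carries $\prod_{i<j}|\lambda_i-\lambda_j|$ rather than its square, so the eigenvalue process is Pfaffian rather than determinantal. Here I would introduce skew-orthogonal polynomials and use the de Bruijn integration identity to write the correlation functions as Pfaffians of a $2\times2$ matrix kernel $K_n^{(1)}$ and $\mathrm{Pr}(\lambda_{\max}^{(n)}\le t)$ as the corresponding Fredholm Pfaffian; repeating the edge rescaling and tracking all four entries of the matrix kernel through the Hermite asymptotics yields convergence to the $2\times2$ Airy-type matrix kernel of \cite{TW}, hence to $F_1(s)$, expressible through the same $q$ (for instance $F_1(s)=\sqrt{F_2(s)}\,\exp\bigl(-\frac12\int_s^\infty q(x)\,dx\bigr)$). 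Writing $\lambda_{\max}^{(n)}=\sqrt{2n}+\frac{n^{-1/6}}{\sqrt2}\,\xi_1$ with $\xi_1$ distributed as $F_1$ then restates precisely the displayed asymptotic equivalence.

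The main obstacle is the edge asymptotics of the kernel: extracting pointwise convergence to the Airy kernel from Plancherel--Rotach estimates is classical but delicate near the turning point, and --- more importantly --- one needs uniform exponential control in the scaled variable to pass the Fredholm determinant (respectively Pfaffian) to the limit, which is cleanest via the Riemann--Hilbert steepest-descent method; the extra bookkeeping for $\beta=1$ (skew-orthogonalization, the $2\times2$ matrix structure, and the finite-rank relation to the $\beta=2$ object) is technically heavier but conceptually parallel. Since this is exactly the content of \cite{TW}, within the present paper it suffices to cite that reference.
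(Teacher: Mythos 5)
The paper offers no proof of this proposition: it is imported verbatim as a classical result with a citation to \cite{TW}, and your closing remark that within this paper a citation suffices is exactly what the author does. Your sketch of the underlying argument (determinantal structure and Airy-kernel edge asymptotics for $\beta=2$, then the Pfaffian/skew-orthogonal lift to $\beta=1$ and the Painlev\'e~II representation of $F_1$) is a faithful outline of the standard proof in the cited reference, so there is nothing to reconcile.
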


By Proposition \ref{tw}, for any fixed $\delta>0$,
\begin{eqnarray}
\lim_{n\rightarrow\infty}\mathbb{P}(\|\mathbf{W}_s\|\geq (1+\delta)\sqrt{2n})=0.\label{c3}
\end{eqnarray}

Now we consider the distribution of $\max_{i\in [n]}\left|\sum_{j=1}^{n}\mathbf{W}_s(i,j)\right|$. 
Since we require that $\mathbf{W}_s$ is a symmetric matrix, the identically distributed Gaussion random variables $\{\sum_{j=1}^{n}\mathbf{W}_s(i,j)\}_{i\in [n]}$ are no longer independent. 
In our case the random vector $(\sum_{j=1}^{n}\mathbf{W}_s(1,j),\ldots,\sum_{j=1}^{n}\mathbf{W}_s(n,j) )$ is a Gaussian random vector with mean 0 and covariance matrix given by
\begin{eqnarray*}
\Sigma=\left(\begin{array}{cccc}n&1&\ldots&1\\ 1&n&\ldots&1\\ &\ldots&&\\ 1&\ldots&1&n\end{array}\right)
\end{eqnarray*}

The following proposition was proved in \cite{HHR}.
\begin{proposition}\label{p6}(Theorem 2.2 of \cite{HHR})Consider a triangular array of normal random variables $\xi_{n,i}$, $i=1,2,\ldots,$, and $n=1,2,\ldots,$ such that for each $n$, $\{\xi_{n,i},i\geq 0\}$ is a stationary normal sequence. Assume $\xi_{n,i}\sim \mathcal{N}(0,1)$. Let $\rho_{n,j}:=\mathbb{E}(\xi_{n,i},\xi_{n,i+j})$ and assuming that
\begin{eqnarray*}
(1-\rho_{n,j})\log n\rightarrow \delta_j\in(0,\infty],\ \mathrm{for\ all}\ j\geq 1\ \mathrm{as}\ n\rightarrow\infty
\end{eqnarray*}
Assume that there exist positive integers $l_n$ satisfying $l_n=o(n)$ and for which
\begin{eqnarray*}
\lim_{n\rightarrow\infty}\sup_{j\geq l_n}|\rho_{n,j}|\log n=0.
\end{eqnarray*}
and
\begin{eqnarray*}
\lim_{m\rightarrow\infty}\limsup_{n\rightarrow\infty}\sum_{j=m}^{l_n}n^{-\frac{1-\rho_{n,j}}{1+\rho_{n,j}}}\frac{(\log n)^{-\frac{\rho_{n,j}}{1+\rho_{n,j}}}}{(1-\rho_{n,j}^2)^{\frac{1}{2}}}=0
\end{eqnarray*}
Then
\begin{eqnarray*}
\lim_{n\rightarrow\infty}\mathrm{Pr}(\max_{1\leq i\leq n}\xi_{n,i}\leq u_n(x))=\exp[-\theta\exp(-x)]
\end{eqnarray*}
where
\begin{eqnarray*}
u_n(x)=\frac{x}{a_n}+b_n;
\end{eqnarray*}
and
\begin{eqnarray*}
a_n&=&\sqrt{2\log n}\\
b_n&=&\sqrt{2\log n}+\frac{\log\log n+\log 4\pi}{2\sqrt{2\log n}}
\end{eqnarray*}
and $\theta\in[0,1]$ is a constant. In particular $\theta=1$ if $\delta_j=\infty$ for all $j\geq 1$.
\end{proposition}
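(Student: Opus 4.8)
The statement reproduced here is Theorem~2.2 of \cite{HHR}, so strictly speaking nothing remains to be reproved; what follows is the route I would take to establish it, namely the classical extreme value machinery for Gaussian sequences adapted to a triangular array. The organizing fact is that, since each $\xi_{n,i}\sim\mathcal{N}(0,1)$, the one-dimensional calibration $n\bigl(1-\Phi(u_n(x))\bigr)\to e^{-x}$ holds with exactly the stated constants $a_n=\sqrt{2\log n}$, $b_n=\sqrt{2\log n}+\frac{\log\log n+\log 4\pi}{2\sqrt{2\log n}}$ (a direct Mills-ratio computation), so the whole task is to show that the dependence among the $\xi_{n,i}$ affects the limit only through a multiplicative extremal index $\theta$ inside the exponent.

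First I would introduce a blocking scheme: split $\{1,\dots,n\}$ into $k_n$ consecutive blocks of length $r_n$ separated by guard gaps of length $l_n$, with $l_n=o(r_n)$, $r_n=o(n)$, $k_nr_n\sim n$, and $r_n\to\infty$ slowly. The event $\{\max_{1\le i\le n}\xi_{n,i}\le u_n(x)\}$ is then compared on one side to the product over blocks of the within-block non-exceedance probabilities (dropping the guard gaps costs $o(1)$, since they carry $o(n)$ indices), and on the other side the inter-block correlations are removed by the normal comparison (Berman/Slepian) inequality, whose error is bounded by a constant times $\sum_{j}(\text{\#pairs at separation }j)\,|\rho_{n,j}-\rho^0_j|\exp\bigl(-u_n(x)^2/(1+\rho_j^{\max})\bigr)$, where $\rho^0$ retains only the short-range correlations and $\rho_j^{\max}=\max(|\rho_{n,j}|,|\rho^0_j|)$. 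Using $u_n(x)^2\sim 2\log n$, a correlation of size $\rho$ contributes $\asymp n^{-1+o(1)}\exp\bigl(u_n(x)^2\rho/(1+\rho)\bigr)$; the hypothesis $\sup_{j\ge l_n}|\rho_{n,j}|\log n\to 0$ makes the far-range part $o(1)$, while the displayed limsup hypothesis on $\sum_{j=m}^{l_n}n^{-(1-\rho_{n,j})/(1+\rho_{n,j})}(\log n)^{-\rho_{n,j}/(1+\rho_{n,j})}(1-\rho_{n,j}^2)^{-1/2}$ is precisely what annihilates the intermediate-range contribution $m\le j\le l_n$, after letting $n\to\infty$ and then $m\to\infty$.

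Second, within a single block I would analyze the clustering of exceedances. For each fixed lag $j$ the bivariate Gaussian tail gives $\mathrm{Pr}(\xi_{n,i}>u_n,\,\xi_{n,i+j}>u_n)$ in terms of $\rho_{n,j}$ and $u_n$, and the calibration $(1-\rho_{n,j})\log n\to\delta_j\in(0,\infty]$ turns this into an explicit fraction of the single-exceedance probability $1-\Phi(u_n)$; summing those fractions over $j\ge1$ through a Bonferroni-type inclusion--exclusion (equivalently, a cluster decomposition of the exceedance point process) produces the constant $\theta\in[0,1]$, and when every $\delta_j=\infty$ the pairwise exceedance probabilities are negligible against $n^{-1}$, so the blocks behave independently at the level of single exceedances and $\theta=1$. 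A Poisson approximation for the number of blocks carrying at least one exceedance, converging to $\mathrm{Poisson}(\theta e^{-x})$, then delivers $\mathrm{Pr}(\max_{1\le i\le n}\xi_{n,i}\le u_n(x))\to e^{-\theta e^{-x}}$.

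The main obstacle is the uniformity forced by the triangular-array setting: the covariances $\rho_{n,j}$ drift with $n$, so no fixed stationary-sequence limit theorem applies, and every estimate --- the comparison error, the block-independence step, and above all the identification of $\theta$ as the limit of the cluster functional built from the $\delta_j$'s --- must be run with $n$-dependent bookkeeping, with $r_n$ and $l_n$ tuned so that all error terms vanish simultaneously. Rather than redo this, in the application I would cite \cite{HHR}; in our use case the relevant covariance is the matrix $\Sigma$ displayed above, i.e.\ after normalization $\rho_{n,j}=1/n$ for all $j\ge1$, whence $(1-\rho_{n,j})\log n\to\infty$ for every $j$, so $\theta=1$ and the normalized row sums $\frac{1}{\sqrt n}\sum_{j}\mathbf{W}_s(i,j)$ have the clean Gumbel limit with constants $a_n,b_n$. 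This gives $\max_i\bigl|\sum_j\mathbf{W}_s(i,j)\bigr|=(1+o(1))\sqrt{2n\log n}$, which together with the Tracy--Widom bound $\|\mathbf{W}_s\|=(1+o(1))\sqrt{2n}$ of Proposition \ref{tw} yields $\sigma\|\Delta\mathbf{W}_s\|\le n$ with probability tending to $1$ whenever $\sigma<(1-\delta)\sqrt n/\sqrt{2\log n}$, and hence, via (\ref{dd}) and (\ref{c1}), the desired exact recovery of the SDP.
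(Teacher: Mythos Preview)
The paper does not prove this proposition at all: it is quoted verbatim as Theorem~2.2 of \cite{HHR} and used as a black box, so there is no ``paper's own proof'' to compare against. Your sketch of the standard blocking/normal-comparison/Poisson-approximation route is a reasonable outline of how the cited result is established, and your closing paragraph correctly reproduces the paper's actual use of the proposition (namely, that in the application $\rho_{n,j}=1/n$ gives $\delta_j=\infty$ and hence $\theta=1$, feeding into (\ref{c1})--(\ref{c4})).
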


By Proposition \ref{p6},
\begin{eqnarray*}
&& \max_{i\in [n]}\left|\sum_{j=1}^{n}\mathbf{W}_s(i,j)\right|\\
 &\geq &\left(1-\delta\right)\sqrt{2\log n\left(\max_{i\in[n]} \left[\mathrm{Var}\left(\sum_{j\in[n]}\mathbf{W}_s(i,j)\right)\right]\right)}\\
& =&\left(1-\delta\right)\sqrt{2n\log n}
\end{eqnarray*}
with probability $1-o_n(1)$. Moreover, by Lemma \ref{mg}, for any fixed $\epsilon>0$
\begin{eqnarray}
&& \max_{i\in [n]}\left|\sum_{j=1}^{n}\mathbf{W}_s(i,j)\right|\leq\left(1+\epsilon\right)\sqrt{2n\log n}\label{c4}
\end{eqnarray}
with probability $1-o(1)$.

Then Theorem \ref{m5} follows from (\ref{c1}), (\ref{c2}), (\ref{c3}) and (\ref{c4}).

\appendix

\section{Proof of Lemma \ref{mg}}

It is known that for a standard Gaussian random variable $G_i$ and $x>0$,
\begin{eqnarray}
\frac{xe^{-\frac{x^2}{2}}}{\sqrt{2\pi}(1+x^2)}\leq \mathbf{Pr}(G_i>x)\leq \frac{e^{-\frac{x^2}{2}}}{x\sqrt{2\pi}}\label{gd}
\end{eqnarray}

Let $G_1,\ldots, G_N$ be $N$ standard Gaussian random variables. Then by (\ref{gd}) we have
\begin{eqnarray*}
\mathrm{Pr}\left(\max_{i\in[N]}G_i\geq (1+\epsilon)\sqrt{2\log N}\right)&\leq& \sum_{i\in[N]}\mathrm{Pr}\left(G_i\geq (1+\epsilon)\sqrt{2\log N}\right)\\
&\leq&\frac{N e^{-(1+\epsilon)^2\log N}}{2(1+\epsilon)\sqrt{\pi\log N}}\\
&\leq & N^{-\epsilon}
\end{eqnarray*}
If we further assume that $G_i$'s are independent, then
\begin{eqnarray*}
\mathrm{Pr}\left(\max_{i\in[N]}G_i< (1-\epsilon)\sqrt{2\log N}\right)&=&\prod_{i\in[N]}\mathrm{Pr}\left(G_i<(1-\epsilon)\sqrt{2\log N}\right)\\
&=&\prod_{i\in[N]}\left[1-\mathrm{Pr}\left(G_i>(1-\epsilon)\sqrt{2\log N}\right)\right]
\end{eqnarray*}
By (\ref{gd}) we obtain
\begin{eqnarray*}
\mathrm{Pr}\left(\max_{i\in[N]}G_i< (1-\epsilon)\sqrt{2\log N}\right)&\leq& \left(1-\frac{(1-\epsilon)\sqrt{2\log N}}{\sqrt{2\pi}(1+2(1-\epsilon)^2\log N)}\frac{1}{N^{(1-\epsilon)^2}}\right)^N
\end{eqnarray*}
When (\ref{epn}) holds, we have 
\begin{eqnarray*}
\mathrm{Pr}\left(\max_{i\in[N]}G_i< (1-\epsilon)\sqrt{2\log N}\right)\leq \left(1-\frac{1}{N^{1-\epsilon}}\right)^{N^{1-\epsilon}\cdot N^{\epsilon}}\leq  e^{-N^{\epsilon}}
\end{eqnarray*}
Then the lemma follows.

\bigskip
\bigskip
\noindent{\textbf{Acknowledgements.}} ZL's research is supported by National Science Foundation grant 1608896 and Simons Foundation grant 638143. 
\bibliography{cdbd}
\bibliographystyle{amsplain}

\end{document}